\newcommand{\tpfstr}[2]{{\texorpdfstring{{#1}}{{#2}}}}
\newcommand{\NSFSupport}[1]{This material is based upon work supported by the National Science Foundation under Grant No. {#1}}
\newcommand{\MAHNSF}{DMS-1811189}
\newcommand{\MAHAddress}{University of California Los Angeles, Los Angeles, CA 90095}
\newcommand{\MAHemail}{\tt{mikehill@math.ucla.edu}}
\newcommand{\TLAddress}{University of Minnesota, Minneapolis, MN 55455}
\newcommand{\TLemail}{\tt{tlawson@umn.edu}}
\newcommand{\DSAddress}{University of Chicago, Chicago, IL 60637}
\newcommand{\DSemail}{\tt{dannyshixl@gmail.com}}
\newcommand{\MZAddress}{Utrecht University, Utrecht, 3584 CD, the Netherlands}
\newcommand{\MZemail}{\tt{mingcongzeng@gmail.com}}
\newcommand{\ABAddress}{University of Colorado Boulder, Boulder, CO 80309}
\newcommand{\ABemail}{\tt{agnes.beaudry@colorado.edu}}
\newcommand{\mycases}[1]{\left\{\begin{array}{ll}#1\end{array}\right.}
\newcommand{\N}{{\mathbb N}}
\newcommand{\F}{{\mathbb F}}
\newcommand{\co}{\colon\thinspace}
\newcommand{\RP}{{\mathbb{RP}}}
\newcommand{\T}{{\mathbb{T}}}
\renewcommand{\SS}{{\mathbb{S}}}
\newcommand{\HA}{A}
\newcommand{\aquot}{/\!/}
\newcommand{\HAn}[1][k]{\HA\langle #1\rangle}
\newcommand{\barHAn}[1][k]{\overline{\HA\langle#1\rangle}}
\newcommand{\tRn}[1][k]{{\tilde{\HA}\langle #1\rangle}}
\newcommand{\tRnast}[1][k]{{\tRn[#1]_{\ast}}}
\newcommand{\HAk}{{\HA\langle k\rangle}}
\newcommand{\barHAm}{{\overline{\HA\langle m\rangle}}}
\newcommand{\tRk}{{\tilde{\HA}\langle k\rangle}}
\newcommand{\tRkast}{{\tilde{\HA}\langle k\rangle_{\ast}}}
\newcommand{\Rkmn}[2]{\HAn[k]/(\zeta_{#1+1},\dots,\zeta_{#2})}
\newcommand{\RTwoFour}{{\tilde{\HA}\langle 2\rangle}}
\newcommand{\RThreeSix}{{\tilde{\HA}\langle 3\rangle}}
\newcommand{\Neg}{{\mhyphen}}
\newcommand{\Sksig}[1][k]{S(#1\sigma)}
\newcommand{\smashover}[1]{{\underset{#1}{\wedge}}}
\newcommand{\Boxover}[1]{\underset{#1}{\Box}}
\DeclareMathOperator{\Hom}{Hom}
\DeclareMathOperator{\Ext}{Ext}
\DeclareMathOperator{\Tor}{Tor}
\DeclareMathOperator{\Map}{Map}
\DeclareMathOperator{\End}{End}
\newcommand{\hocolim}{\mathrm{hocolim}}
\newcommand{\from}{\leftarrow}
\mathchardef\mhyphen=45
\numberwithin{equation}{section}
\newtheorem{theorem}{Theorem}[section]
\newtheorem{lemma}[theorem]{Lemma}
\newtheorem{corollary}[theorem]{Corollary}
\newtheorem{proposition}[theorem]{Proposition}
\newtheorem*{theorem*}{Theorem}
\newtheorem*{proposition*}{Proposition}
\theoremstyle{remark}
\newtheorem{remark}[theorem]{Remark}
\newtheorem{example}[theorem]{Example}
\newtheorem{notation}[theorem]{Notation}
\theoremstyle{definition}
\newtheorem{definition}[theorem]{Definition}
\title{Quotient rings of \(H\F_2 \wedge H\F_2\)}
\author[AB]{Agn\`es Beaudry}
\address{\ABAddress}
\email{\ABemail}
\author[MAH]{Michael A.~Hill}
\address{\MAHAddress}
\email{\MAHemail}
\author[TL]{Tyler Lawson}
\address{\TLAddress}
\email{\TLemail}
\author[XDS]{XiaoLin Danny Shi}
\address{\DSAddress}
\email{\DSemail}
\author[MZ]{Mingcong Zeng}
\address{\MZAddress}
\email{\MZemail}
\thanks{\NSFSupport{DMS-1906227 and \MAHNSF}}
\begin{document}

\maketitle
\begin{abstract}
We study modules over the commutative ring spectrum \(H\mathbb F_2\wedge H\mathbb F_2\), whose coefficient groups are quotients of the dual Steenrod algebra by collections of the Milnor generators. We show that very few of these quotients admit algebra structures, but those that do can be constructed simply: killing a generator \(\xi_k\) in the category of associative algebras freely kills the higher generators \(\xi_{k+n}\). Using new information about the conjugation operation in the dual Steenrod algebra, we also consider quotients by families of Milnor generators and their conjugates. This allows us to produce a family of associative \(H\mathbb F_2\wedge H\mathbb F_2\)-algebras whose coefficient rings are finite-dimensional and exhibit unexpected duality features. We then use these algebras to give detailed computations of the homotopy groups of several modules over this ring spectrum.
\end{abstract}

\section{Introduction}

In stable homotopy theory, the mod-2 Moore spectrum \(M\) is a ``quotient'' of the sphere that does not admit a unital multiplication. This has several proofs: one uses mod-2 cohomology, while another observes that \(\pi_* M\) does not admit the structure of a ring. As a result, this particular example is often employed to illustrate a barrier between stable homotopy theory and more classical algebra. It is tempting to blame these problems on the non-algebraic nature of the stable homotopy category itself, the difficult nature of the coefficient ring \(\pi_* \SS\), general unpleasant behavior of the prime \(2\), or similar factors. Examples of similar phenomena have been found in stable module categories by Langer \cite{LangerStableModule} and in exotic triangulated categories by Muro--Schwede--Strickland \cite{MuroSchwedeStricklandTriangulated}.

If \(H\F_2\) is the mod-2 Eilenberg--Mac Lane spectrum, the smash product
\[
\HA = H\F_2 \wedge H\F_2
\]
is a commutative ring spectrum. The coefficient ring is Milnor's dual Steenrod algebra
\[
{\HA_{\ast}} \cong \F_2[\xi_1, \xi_2, \dots],
\]
a polynomial algebra on generators \(\xi_i\) in degree \(2^i-1\) \cite{milnor}. We denote by \(\zeta_i\) the conjugate of \(\xi_i\).

The spectrum \(\HA\) is an algebra over \(H\F_2\). By work of Shipley \cite{ShipleyAlgebras}, this implies that it is equivalent to a differential graded algebra over \(\F_2\). Even further, it can be shown that it is formal: this differential graded \(\F_2\)-algebra is equivalent to the graded polynomial algebra \({\HA_{\ast}}\) with zero differential, and as an associative \(H\F_2\)-algebra \(\HA\) is equivalent to the generalized Eilenberg--Mac Lane spectrum \(H(\HA_\ast)\). This gives the category of left \(\HA\)-modules a concrete algebraic model: the category of differential graded \({\HA_{\ast}}\)-modules.

However, these equivalences ignore commutativity in the multiplicative structure: \(\HA\) is not equivalent to \(H(\HA_\ast)\) as a commutative algebra, and these two rings give incompatible monoidal structures on the category of left \(\HA\)-modules. Our goal in this paper is to study the gap between the theory of algebras and modules over \(\HA\) and \({\HA_{\ast}}\). We will find that examples behaving like the mod-2 Moore spectrum are abundant, despite left \(\HA\)-modules being very algebraic and despite the coefficient ring \(\HA_\ast\) being very tame.

\subsection*{Overview and results}
 One of our chief approaches in this paper is to use the relative homology 
 \[
 H_*^\HA(M) = \pi_*(H\F_2 \smashover{\HA} M)
 \]
 and the corresponding relative Adams spectral sequence. These relative groups give us an indispensable tool for the study of \(\HA\)-modules and \(\HA\)-algebras.
 
 Section~\ref{sec:mappingcones} is dedicated to these topics. In \S\ref{sec:relASS}, we discuss this relative Adams spectral sequence: Baker--Lazarev's \(H \F_2\)-based Adams spectral sequence in the category of \(\HA\)-modules \cite{BakerLazarev}. Computing with it requires an understanding of relative homology groups as comodules over the Hopf algebra 
 \[
 \Gamma=\pi_*(H \F_2 \smashover{\HA} H\F_2),
 \]
 which we will identify with B\"okstedt's topological Hochschild homology.

In \S\ref{sec:homofmappcones}, we compute the relative homology of quotients of \(\HA\) by classes in its homotopy, and describe their comodule structure over \(\Gamma\). We find that, homotopically, coning off a class does not kill it. This brings us to our first main result, which we prove in \S\ref{sec:double}.

\begin{theorem}\label{thm:introcone}
  Given any element \(\alpha \in {\HA_{\ast}}\) such that \(\alpha \equiv \xi_i\) mod decomposable elements, with mapping cone \(C(\alpha)\), the multiplication-by-\(\alpha\) map induces the zero map on \(\pi_* C(\alpha)\) but not on \(\pi_*(C(\alpha) \smashover{\HA} C(\alpha))\). In particular, \(\alpha\) is not the trivial self-map of \(C(\alpha)\) as an \(\HA\)-module.
\end{theorem}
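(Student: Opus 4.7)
The plan separates the two claims. For the first, the cofiber sequence $\Sigma^{|\alpha|}\HA \xrightarrow{\alpha} \HA \to C(\alpha)$ together with the fact that $\HA_* \cong \F_2[\xi_1,\xi_2,\ldots]$ is an integral domain (so $\alpha$ is a non-zero divisor) gives $\pi_* C(\alpha) \cong \HA_*/(\alpha)$, on which multiplication by $\alpha$ is tautologically zero. The final ``in particular'' statement will be a formal consequence of the second claim: if $\alpha$ were nullhomotopic as a self-map of $C(\alpha)$, then smashing with $C(\alpha)$ would force $\alpha$ to act by zero on $\pi_*(C(\alpha)\smashover{\HA}C(\alpha))$.

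For the second claim, I would smash the cofiber sequence with $C(\alpha)$ over $\HA$ to produce
\begin{equation*}
C(\alpha) \xrightarrow{\alpha} C(\alpha) \to C(\alpha) \smashover{\HA} C(\alpha).
\end{equation*}
The first claim makes the left map zero on homotopy, so the long exact sequence splits into the short exact sequence
\begin{equation*}
0 \to \HA_*/(\alpha) \to \pi_*(C(\alpha) \smashover{\HA} C(\alpha)) \to \Sigma^{|\alpha|+1}\HA_*/(\alpha) \to 0,
\end{equation*}
on whose outer terms $\alpha$ acts by zero. The theorem therefore reduces to exhibiting a hidden $\alpha$-extension: for any lift $\tilde e$ of the degree $|\alpha|+1$ generator of the quotient, one must show $\alpha \cdot \tilde e \neq 0$ in the submodule $\HA_*/(\alpha)$.

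To produce this hidden extension, I would use the Baker--Lazarev relative Adams spectral sequence and the $\Gamma$-comodule structure on $H_*^\HA(C(\alpha))$ developed earlier in the paper. A direct computation with the cofiber sequence (using $H_*^\HA(\HA) = \F_2$) gives $H_*^\HA(C(\alpha)) \cong \F_2\{1, \bar e\}$ with $|\bar e| = |\alpha|+1$. The crucial input is that $\bar e$ is \emph{not} primitive: the $\Gamma$-coaction satisfies
\begin{equation*}
\psi(\bar e) = 1 \otimes \bar e + [\alpha] \otimes 1,
\end{equation*}
where, under the B\"okstedt identification $\Gamma \cong \THH_*(H\F_2)$ and the hypothesis $\alpha \equiv \xi_i$ modulo decomposables, the class $[\alpha] \in \Gamma_{|\alpha|+1}$ is a nonzero representative of the generator $\sigma\xi_i$. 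Feeding this into the relative Adams spectral sequence for $C(\alpha)\smashover{\HA}C(\alpha)$ produces exactly the hidden extension $\alpha \cdot \tilde e = [\alpha] \cdot \tilde 1 \neq 0$. The main obstacle is this coaction computation itself: decomposable corrections to $\alpha$ perturb $[\alpha]$, and one must show no perturbation can cancel the leading $\sigma\xi_i$ term --- an Adams-filtration argument on $\Gamma$ of precisely the flavor the earlier sections of the paper are designed to carry out.
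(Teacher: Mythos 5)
Your proposal is correct and follows essentially the same route as the paper. The first claim (multiplication by $\alpha$ is zero on $\pi_*C(\alpha)\cong\HA_*/(\alpha)$ because $\HA_*$ is a domain) is the obvious one; for the second claim, your cofiber-sequence decomposition matches Remark~\ref{rem:CofiberForm}, and your key inputs --- the comodule structure $\psi(\bar e) = 1\otimes\bar e + u^{2^{i-1}}\otimes 1$ depending only on $\alpha$ modulo decomposables, fed through the K\"unneth isomorphism into the relative Adams spectral sequence for $C(\alpha)\smashover{\HA}C(\alpha)$ --- are exactly Propositions~\ref{prop:muzeroNull} and~\ref{prop:DoubleKilling} and Corollary~\ref{cor:AdamsMkBarMk}. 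One caution: you call the non-primitive summand ``$\sigma\xi_i$,'' but in $\Gamma\cong\F_2[u]$ the relevant class is the primitive $u^{2^{i-1}}$, dual to the $\Ext^1$ class $[u^{2^{i-1}}]$ detecting $\xi_i$; and the relation $\xi_i\cdot e_{2^i} = \xi_{i+1}\cdot e_0$ is already visible on the relative Adams $E_2$-page rather than being an extension hidden by filtration, so the remaining work is the (easy, degree-based) check that the spectral sequence collapses so that the $E_2$ relation survives to homotopy.
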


As in the case of the mod-\(2\) Moore spectrum, a consequence of this calculation is that these cones \(C(\alpha)\) have no \(\HA\)-linear multiplication. It may, at first, seem that Theorem~\ref{thm:introcone} is at odds with our algebraic description of the category of left modules. However, the multiplication-by-\(\alpha\) map \(M \to M\) does not naturally have the structure of a map of left modules without assuming some commutativity from \(\HA\). 

Iterated cones are related to endomorphism algebras in \S\ref{sec:endo}, and we compute the relative Adams \(E_2\)-page for these and other related endomorphism objects.

While Section~\ref{sec:mappingcones} is mostly focused on modules, in Section~\ref{sec:killing} we consider the effect of coning off classes in the category of associative algebras. We begin with general definitions and results about associative algebra quotients in \S\ref{sec:killinggens}: given an element \(\alpha \in \pi_* \HA\), there is a universal associative \(\HA\)-algebra \(\HA\aquot\alpha\) with a chosen nullhomotopy of \(\alpha\). In \S\ref{sec:apptoAmod}, we apply the relative Adams spectral sequence to calculate with these associative algebra quotients of \(\HA\). By computing the relative homology and Adams \(E_2\)-term, we obtain a surprising result that identifies the associative algebra quotient \(\HA\aquot \xi_{k+1} \) with one of the iterated cones \(\HA/(\xi_{k+1}, \xi_{k+2}, \ldots) \) from Section~\ref{sec:mappingcones}. 

\begin{theorem}
The universal associative \(\HA\)-algebra \(\HAn := \HA\aquot\xi_{k+1}\) with a chosen nullhomotopy of \(\xi_{k+1}\) has, as coefficient ring, the following \(\HA_\ast\)-algebra:
\[
  \HAn_\ast = {\HA_{\ast}}/(\xi_{k+1},\xi_{k+2},\dots).
\]
Similarly, \(\barHAn := \HA\aquot\zeta_{k+1}\) has the following coefficient ring:
\[
  \barHAn_\ast = {\HA_{\ast}}/(\zeta_{k+1},\zeta_{k+2},\dots).
\]
\end{theorem}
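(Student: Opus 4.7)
The strategy is to compute $\pi_*(\HAn)$ using the relative Adams spectral sequence of \S\ref{sec:relASS}, starting from a computation of $H_*^\HA(\HAn)$ via the universal property of the associative algebra quotient.

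For the relative homology, I would exploit the fact that $\HAn = \HA\aquot\xi_{k+1}$ is by construction a pushout of associative $\HA$-algebras, and that base change $H\F_2 \smashover{\HA} (-)$ preserves such pushouts, to obtain
\[
H\F_2 \smashover{\HA} \HAn \simeq H\F_2 \aquot (\xi_{k+1}|_{H\F_2}).
\]
Since $|\xi_{k+1}| = 2^{k+1}-1 > 0$ and $\pi_* H\F_2$ is concentrated in degree zero, the class $\xi_{k+1}$ restricts to zero on $H\F_2$, so the right-hand side is the universal associative $H\F_2$-algebra equipped with a nullhomotopy of a zero map, i.e., the free associative $H\F_2$-algebra on a generator $y_{k+1}$ in degree $2^{k+1}$. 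At the prime $2$ this is a polynomial algebra, giving
\[
H_*^\HA(\HAn) \cong \F_2[y_{k+1}], \qquad |y_{k+1}| = 2^{k+1}.
\]
The identical argument, with $\zeta_{k+1}$ replacing $\xi_{k+1}$, identifies $H_*^\HA(\barHAn)$ with a polynomial algebra on one generator of the same degree.

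Next I would determine the $\Gamma$-comodule structure on $\F_2[y_{k+1}]$, where $\Gamma = \pi_*(H\F_2 \smashover{\HA} H\F_2)$ is the Bökstedt ring identified in \S\ref{sec:relASS}. Because $y_{k+1}$ is a chosen nullhomotopy of $\xi_{k+1}$, its coaction should be a transgression of the Milnor coproduct
\[
\psi(\xi_{k+1}) = \sum_{i+j=k+1} \xi_i^{2^j} \otimes \xi_j,
\]
with multiplicativity propagating the coaction to all of $\F_2[y_{k+1}]$. Computing $\Ext_\Gamma^{s,t}(\F_2, \F_2[y_{k+1}])$ via the cobar complex then yields the Adams $E_2$-page, which I would identify with the associated graded of $\HA_*/(\xi_{k+1},\xi_{k+2},\dots)$ under the Adams filtration; each surviving $\xi_i$ for $i \leq k$ appears as a class in positive filtration detected by a specific component of the coaction. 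Showing that the spectral sequence collapses and that the multiplicative structure matches then completes the identification, with the $\barHAn$ case handled by the same argument applied to $\psi(\zeta_{k+1})$.

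The main obstacle is this Ext computation. The ``surprise'' in the statement -- that coning off the single generator $\xi_{k+1}$ in associative $\HA$-algebras automatically trivializes all the higher $\xi_{k+n}$ -- is invisible on $H_*^\HA(\HAn) = \F_2[y_{k+1}]$ and must emerge from the spectral sequence: the candidate classes $\xi_{k+n}$ for $n \geq 1$ need to fail to survive as permanent cycles, because the primitives of $\F_2[y_{k+1}]$ in those degrees are already accounted for by filtrations detecting only the $\xi_i$ with $i \leq k$. Making this precise requires a careful combinatorial analysis of the cobar complex under the coaction described above, and that is the technical heart of the argument.
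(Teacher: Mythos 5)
Your strategy matches the paper's up through the computation of relative homology and the relative Adams \(E_2\)-term: the base-change identification \(H\F_2 \smashover{\HA} \HAn \simeq \T_{H\F_2}(S^{2^{k+1}})\) giving \(H_\ast^\HA(\HAn) \cong \F_2[u^{2^k}]\) is exactly Proposition~\ref{prop:HomologyAmodmodalpha}, and the change-of-rings isomorphism yields \(E_2 \cong \F_2[\xi_1,\dots,\xi_k]\) with collapse at \(E_2\). Two corrections and one genuine gap follow.

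First, a misidentification of the coaction: the comodule structure on \(H_\ast^\HA(\HAn) \cong \F_2[u^{2^k}]\) lives over the B\"okstedt Hopf algebra \(\Gamma = \F_2[u]\), where \(u\) is primitive for degree reasons. The Milnor coproduct \(\psi(\xi_{k+1}) = \sum \xi_i^{2^j}\otimes\xi_j\) in the dual Steenrod algebra is not the relevant structure; the coaction on \(u^{2^k}\) is the freshman-binomial coproduct inherited from \(\F_2[u]\). The relevant Ext computation is therefore not a ``careful combinatorial analysis'' of a complicated cobar complex but an immediate change-of-rings over the quotient Hopf algebra \(\F_2[u]/(u^{2^k})\).

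Second, and more substantively: your claim that the classes \(\xi_{k+n}\) ``need to fail to survive as permanent cycles'' misses that they never appear on the \(E_2\)-page at all. The spectral sequence collapses and gives you an abstract ring isomorphism \(\pi_\ast(\HAn) \cong \F_2[\xi_1,\dots,\xi_k]\), which determines the \emph{size} and even the multiplicative structure of the homotopy ring. But the theorem asserts something stronger: that the kernel of the \(\HA_\ast\)-algebra map \(\HA_\ast \to \pi_\ast(\HAn)\) is exactly the ideal \((\xi_{k+1},\xi_{k+2},\dots)\). The spectral sequence alone only tells you that each \(\xi_{k+n}\) lands in strictly higher filtration than its Adams filtration predicts, i.e., maps to something decomposable; it cannot distinguish \(\xi_{k+n}\mapsto 0\) from \(\xi_{k+n}\mapsto\) some nonzero decomposable element. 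Knowing the size of the quotient does not pin down the ideal, since many distinct ideals have quotients abstractly isomorphic to \(\F_2[\xi_1,\dots,\xi_k]\). This is exactly the gap the paper flags just before Theorem~\ref{thm:RingStructure}, and the paper's actual input is Corollary~\ref{cor:killingq1}: the kernel of the unit map to any associative \(R\)-algebra is closed under the cup-one power operation \(Q_1\), so by Steinberger's formula \(Q_1(\zeta_j)=\zeta_{j+1}\) (for the \(\barHAn\) case, then transported to \(\HAn\) by the conjugation self-equivalence), the ideal is forced to contain all higher generators. This operadic ingredient is invisible to the relative Adams spectral sequence and is the real content of the theorem; your proposal omits it.
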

This follows from a general result that we prove only later in Appendix~\ref{sec:CupkAlgs}.

\begin{theorem}\label{thm:kernelq1}
Suppose that \(B\) has a multiplication in \(\HA\)-modules with unit \(\eta\co \HA \to B\). Then the ideal
  \[
    \ker(\eta) \subset {\HA_{\ast}}
  \]
must be closed under the Dyer--Lashof operation \(Q_1\).
\end{theorem}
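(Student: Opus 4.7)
The plan is to exploit the smash square $C(\alpha)\smashover{\HA}C(\alpha)$ studied for Theorem~\ref{thm:introcone} and to transport information about its $\HA$-module structure to $B$ through the multiplication. Given $\alpha\in\ker(\eta)$, a choice of null-homotopy of $\eta\circ\alpha$ produces an $\HA$-module extension $\phi\co C(\alpha)\to B$ of $\eta$ along the bottom-cell inclusion $\HA\to C(\alpha)$. Pairing $\phi$ with itself through the multiplication on $B$ then yields an $\HA$-module map
\[
\Phi\co C(\alpha)\smashover{\HA}C(\alpha)\xrightarrow{\phi\smashover{\HA}\phi}B\smashover{\HA}B\xrightarrow{\mu}B
\]
whose restriction to the bottom cell $\HA\hookrightarrow C(\alpha)\smashover{\HA}C(\alpha)$ recovers $\eta$ via the unit axiom.

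Next I would locate $Q_1\alpha$ inside $\pi_*(C(\alpha)\smashover{\HA}C(\alpha))$ as an extension class. Smashing the defining cofiber sequence for $C(\alpha)$ with $C(\alpha)$, and using that $\alpha$ acts as zero on $\pi_*C(\alpha)\cong\HA_{\ast}/\alpha$, yields a short exact sequence
\[
0\to\HA_{\ast}/\alpha\to\pi_*(C(\alpha)\smashover{\HA}C(\alpha))\to\Sigma^{|\alpha|+1}\HA_{\ast}/\alpha\to 0
\]
of $\HA_{\ast}$-modules. For any lift $\tau$ in degree $|\alpha|+1$ of the generator of the right-hand term, the element $\alpha\cdot\tau$ is forced into the bottom summand. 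The key input---essentially the content of Theorem~\ref{thm:introcone}, and presumably the content of the $\cup_1$-calculation in Appendix~\ref{sec:CupkAlgs}---is that this extension class equals $Q_1\alpha$ modulo the ideal $(\alpha)\subset\HA_{\ast}$. This is where the commutativity of $\HA$, or more precisely its $\cup_1$-refinement, enters.

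To conclude, I would apply $\Phi_*$, using that it is $\HA_{\ast}$-linear and that, in the presence of the unital multiplication on $B$, the $\HA_{\ast}$-action on $\pi_*B$ factors through $\eta_*\co\HA_{\ast}\to\pi_*B$ (an immediate consequence of $\HA$-linearity of $\mu$ together with the unit axiom). On one hand, $\Phi_*(\alpha\cdot\tau)=\eta(\alpha)\cdot\Phi_*(\tau)=0$; on the other, the extension identification gives $\Phi_*(\alpha\cdot\tau)=\eta(Q_1\alpha)$ modulo $\eta((\alpha))=0$. Combining, $\eta(Q_1\alpha)=0$, and hence $Q_1\alpha\in\ker(\eta)$.

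The main obstacle I anticipate is the identification of the extension class $\alpha\cdot\tau$ with $Q_1\alpha$ modulo $(\alpha)$. This is a statement about $\HA$ alone, but it is precisely the point at which one must extract the $\cup_1$-structure of $\HA$; the $\cup_k$-language developed in Appendix~\ref{sec:CupkAlgs} seems tailored to propagate $Q_1$ through any $\HA$-linear multiplication, regardless of the higher commutativity of the target.
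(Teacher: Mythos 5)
Your reductions outside the central step are all correct: the extension $\phi\co C(\alpha)\to B$ of $\eta$ exists because $\eta_*(\alpha)=0$; the map $\Phi=\mu\circ(\phi\smashover{\HA}\phi)$ restricts to $\eta$ on the bottom cell by unitality; the short exact sequence for $\pi_*(C(\alpha)\smashover{\HA}C(\alpha))$ and the well-definedness of $\alpha\cdot\tau$ are right; and the vanishing $\Phi_*(\alpha\cdot\tau)=\eta_*(\alpha)\cdot\Phi_*(\tau)=0$ is exactly the correct use of the unital multiplication. But the step you flag as the ``main obstacle'' is a genuine gap, not a deferral to something proved elsewhere: the identification $\alpha\cdot\tau\equiv Q_1(\alpha)\bmod(\alpha)$ is established nowhere in the paper. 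Theorem~\ref{thm:introcone} together with Corollary~\ref{cor:AdamsMkBarMk} only shows that $\alpha\cdot\tau$ is \emph{nonzero} and identifies its leading term in the relative Adams filtration (for $\alpha=\xi_k$, the term $\xi_{k+1}\cdot e_0$ modulo higher monomial degree). Your argument needs the class on the nose modulo $(\alpha)$: if all you know is $\alpha\cdot\tau\equiv Q_1(\alpha)+(\text{higher filtration})\bmod(\alpha)$, you only conclude that $\eta(Q_1\alpha)$ lies in the image of higher-filtration elements, which is not enough to show the ideal is closed under $Q_1$ (and hence not enough for the application, where closure under $Q_1$ is iterated against Steinberger's exact formula $Q_1(\zeta_j)=\zeta_{j+1}$). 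The identification you want is true and is essentially Strickland-type folklore about smash squares of quotients of $E_2$-rings, but proving it requires precisely the cup-$1$ bookkeeping you are trying to avoid, so as written the proposal is circular at its crux.

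It is worth seeing how the paper sidesteps your lemma entirely. Rather than locating $Q_1(\alpha)$ inside the cell structure of $C(\alpha)\smashover{\HA}C(\alpha)$, Appendix~\ref{sec:CupkAlgs} takes the extended-power \emph{definition} of $Q_1(\alpha)$ as a component of a map $Q(\alpha)\co \HA\wedge\Sigma^{n}\RP_{n}^{n+1}\to \HA$ built from the $C_2$-equivariant structure map out of $\Sksig[2]$, and Proposition~\ref{prop:killpowers} shows directly that $\eta\circ Q(\alpha)$ is null by gluing the cube encoding a nullhomotopy of $\eta\circ\alpha$ to the cube encoding centrality of $\HA$ in $B$. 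No comparison between the Thom-spectrum model of the extended power and the cofiber smash square is ever needed. If you want to complete your route instead, the missing lemma is exactly such a comparison: you would need to show that the attaching data of the top cell of $C(\alpha)\smashover{\HA}C(\alpha)$ over the bottom cell is computed by the restriction of $Q(\alpha)$ to $\Sigma^n\RP_n^{n+1}$, which is a nontrivial statement about the cup-$1$ structure of $\HA$ and would constitute a separate section of argument.
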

This is perhaps unexpected: the operation \(Q_1\) usually does not have to be preserved by \(\eta\) because it is not even defined on \(B\). This result very strongly constrains the ideals that can appear as kernels of quotients because the operation \(Q_1\) on \({\HA_{\ast}}\) is highly nontrivial by work of Steinberger \cite[\S III.2]{Hinfinity}. 

The \(C_2\)-action on \(\HA\) makes the quotient algebras \({\HAn}\) and \(\barHAn\) conjugate to each other, and this lets us extend \(\HA\) to a \(C_2\)-equivariant algebra \({\HAn} \smashover{\HA} \barHAn\) that we would like to understand for future applications to equivariant homotopy theory. As a prerequisite to understanding further quotient rings of \(\HA\) like these, Section~\ref{sec:regsteenrodquotients} (which is largely independent) addresses purely algebraic questions about the dual Steenrod algebra \(\HA_*\) and its quotients. In it, we prove the following nontrivial fact about the conjugation operation.
\begin{theorem}\label{thm:introregu}
  In the quotient \({\HAn_{\ast}}\) of the dual Steenrod algebra, the sequence of elements 
  \[\zeta_{m+1}, \zeta_{m+2}, \dots, \zeta_{m+k}\]
  is regular for any \(m \geq 0\), and all higher \(\zeta_i\) are in the ideal generated by them.
  
  Consequently, \(\HAn_\ast / (\zeta_{m+1},\dots,\zeta_{m+k})\) is a commutative, graded Frobenius algebra over \(\F_2\) with top class in degree \(2^{m+k+1} - 2^{m+1} - 2^{k+1} + 2.\)
\end{theorem}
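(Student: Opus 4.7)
The plan is to reduce regularity to the geometric statement that the common vanishing locus of $\zeta_{m+1}, \ldots, \zeta_{m+k}$ in $\mathbb{A}^{k}_{\overline{\F}_2}$ is exactly the origin, and then to invoke Cohen--Macaulayness of the polynomial ring $\HAn_\ast = \F_2[\xi_1, \ldots, \xi_k]$. The second half of the theorem (higher $\zeta$'s in the ideal) follows from a direct recursion, and the Frobenius conclusion is then formal.

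Set $I := (\zeta_{m+1}, \ldots, \zeta_{m+k}) \subset \HAn_\ast$. Milnor's antipode identity $\sum_{i+j=n}\xi_i^{2^j}\zeta_j = 0$ collapses in $\HAn_\ast$ (where $\xi_j = 0$ for $j>k$) to
\[
  \zeta_n \;=\; \sum_{i=1}^{k} \xi_i^{2^{n-i}}\,\zeta_{n-i} \qquad (n > k).
\]
An induction on $n > m+k$ immediately gives $\zeta_n \in I$, since then each index $n-i$ with $1\le i\le k$ lies in $\{m+1,\ldots,n-1\}$.

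For regularity, associate to a point $x=(x_1,\ldots,x_k) \in \overline{\F}_2^{k}$ the element $f_x = 1 + x_1 F + \cdots + x_k F^k$ of the twisted polynomial ring $\overline{\F}_2\{F\}$, where $F a = a^2 F$. Its inverse in the completion $\overline{\F}_2\{\{F\}\}$ has the form $g_x = \sum_{n\ge 0} a_n F^n$, and Milnor's antipode identity is exactly the statement that $\sum \xi_i F^i$ and $\sum \zeta_j F^j$ are mutually inverse in the analogous ring over $\HA_\ast$; consequently $a_n = \zeta_n(x)$. If $x \in V(I)$, so $a_{m+1} = \cdots = a_{m+k} = 0$, then reapplying the recursion inductively forces $a_n = 0$ for every $n \ge m+1$, making $g_x$ a polynomial in $F$ of degree at most $m$. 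But the $F$-degree is additive under multiplication in $\overline{\F}_2\{F\}$, so $f_x g_x = 1$ with both factors polynomial forces $\deg_F f_x = \deg_F g_x = 0$, whence $f_x = 1$ and $x = 0$. Thus $V(I)(\overline{\F}_2) = \{0\}$, so $I$ is $\mathfrak{m}$-primary; since $\HAn_\ast$ is Cohen--Macaulay of dimension $k$, any sequence of $k$ homogeneous elements cutting out an $\mathfrak{m}$-primary ideal is a regular sequence.

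Finally, $\HAn_\ast/I$ is a graded Artinian complete intersection over $\F_2$, hence Gorenstein, hence a commutative graded Frobenius algebra. Its Hilbert series is $\prod_{j=1}^k(1-t^{2^{m+j}-1})\big/\prod_{i=1}^k(1-t^{2^i-1})$, a polynomial of top degree
\[
  \sum_{j=1}^{k}(2^{m+j}-1) \;-\; \sum_{i=1}^{k}(2^i-1) \;=\; 2^{m+k+1} - 2^{m+1} - 2^{k+1} + 2.
\]
The main obstacle I expect is bookkeeping the dictionary between additive polynomials over $\overline{\F}_2$ and elements of $\overline{\F}_2\{F\}$, so that composition of additive polynomials corresponds to multiplication in the correct order and the coefficients of $f_x^{-1}$ really record the evaluations of the $\zeta_j$; once this translation is pinned down, the regularity and Frobenius steps are very short.
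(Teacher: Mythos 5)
Your proposal is correct, and it reaches the regularity statement by a genuinely different route from the paper's, even though the two arguments share their key arithmetic input. The paper first proves (for arbitrary cofinite families of killed $\xi$'s and $\zeta$'s) that the quotient is zero-dimensional, by observing that over any integral domain two composition-inverse \emph{polynomial} automorphisms of the additive formal group must be linear; it then deduces that $\F_2[\xi_1,\dots,\xi_k]$ is a finite module over the subring $Z_{m+1,m+k}=\F_2[\zeta_{m+1},\dots,\zeta_{m+k}]$, proves injectivity of $Z_{m+1,m+k}\hookrightarrow \F_2[\xi_1,\dots,\xi_k]$ by a Krull-dimension count, and invokes miracle flatness to conclude freeness, from which regularity of the sequence follows by flat base change from $Z_{m+1,m+k}$. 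Your argument packages the same leading-coefficient observation as degree-additivity in the twisted polynomial ring $\overline{\F}_2\{F\}$ (this is literally the paper's ``the lead coefficient of the composite is a product of powers of leading coefficients''), concludes that $V(\zeta_{m+1},\dots,\zeta_{m+k})$ is the origin, and then passes directly from ``homogeneous system of parameters in a Cohen--Macaulay graded ring'' to ``regular sequence.'' This is shorter and avoids the injectivity and flatness steps entirely; what the paper's longer route buys is the additional conclusion, recorded in its Theorem~\ref{thm:RegularSequence} and used later, that $\F_2[\xi_1,\dots,\xi_k]$ is actually \emph{free} over $Z_{m+1,m+k}$. Your treatment of the higher $\zeta_n$ via the recursion $\zeta_n=\sum_{i=1}^k\xi_i^{2^{n-i}}\zeta_{n-i}$ and of the Frobenius/top-degree conclusion via the complete-intersection Hilbert series coincides with the paper's.
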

Our proof of the regularity of these sequences will require some commutative algebra; our efforts to give this fact a simple proof using Milnor's formula for conjugation have not been successful.

In Section~\ref{sec:splitting}, we return to homotopy theory and the study of associative algebras under \(\HAk\smashover{\HA}\barHAm\). The surprise continues: many quotients by a finite sequence of classes \(\zeta_i\) beyond the regularity range still admit ring structures. 

\begin{theorem}
For any natural numbers \(k, m, n\geq 0\), there is an associative algebra structure on the iterated mapping cone
  \[
    \Rkmn{m}{m+k+n},
  \]
  and these all receive associative \(\HA\)-algebra maps from
  \[
    \HAn\smashover{\HA}\barHAm.
  \]
    When \(n=0\), the coefficient ring of \(   \Rkmn{m}{m+k}\) is the quotient
  \[
    \F_2[\xi_1,\dots,\xi_k]/(\zeta_{m+1},\dots,\zeta_{m+k}).
  \]
  When \(n > 0\), there is a splitting into indecomposable \(\HA\)-modules:
  \[
    \Rkmn{m}{m+k+n} \simeq \Rkmn{m}{m+k} \wedge \left(\bigvee_{j=0}^{2^n-1} S^{j(2^{m+k+1})}\right)
  \]
\end{theorem}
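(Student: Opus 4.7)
The plan is to realize $\Rkmn{m}{m+k+n}$ as an associative-algebra quotient of the pushout $A := \HAn \smashover{\HA} \barHAm$, and to extract the $\HA$-module splitting for $n > 0$ from a direct cone calculation. First I would compute $\pi_* A$ via the K\"unneth/Tor spectral sequence
\[
\Tor^{\HA_*}_*(\HAn_*, \barHAm_*) \Rightarrow \pi_* A,
\]
using the Koszul resolution of $\HAn_* = \HA_*/(\xi_{k+1}, \xi_{k+2},\dots)$ over $\HA_*$ and tensoring with $\barHAm_*$. The resulting complex is $\barHAm_* \otimes \Lambda[\xi_{k+1},\xi_{k+2},\dots]$ with differential $d(\xi_i) = \xi_i$; the conjugate form of Theorem~\ref{thm:introregu} (swapping $\xi \leftrightarrow \zeta$ and $k \leftrightarrow m$) implies that in $\barHAm_*$ the sequence $\xi_{k+1},\dots,\xi_{k+m}$ is regular with all higher $\xi_i$ in the ideal it generates, so homology gives
\[
\pi_* A \;\cong\; R \otimes_{\F_2} \Lambda_{\F_2}[\sigma\zeta_{m+k+1}, \sigma\zeta_{m+k+2}, \dots],
\]
where $R = \F_2[\xi_1,\dots,\xi_k]/(\zeta_{m+1},\dots,\zeta_{m+k})$ is the Frobenius algebra of Theorem~\ref{thm:introregu} and $|\sigma\zeta_i| = 2^i$. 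Degree considerations rule out multiplicative and Adams extensions.

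I would then define $\Rkmn{m}{m+k+n}$ as the associative $\HA$-algebra obtained from $A$ by successively forming associative-algebra quotients $A\aquot\sigma\zeta_i$ for every $i > m+k+n$ (all such generators when $n=0$). By construction it sits under $A$, so it inherits the required associative $\HA$-algebra map from $\HAn \smashover{\HA} \barHAm$. The expected coefficient ring is $R \otimes \Lambda_{\F_2}[\sigma\zeta_{m+k+1},\dots,\sigma\zeta_{m+k+n}]$; specialising to $n = 0$ gives $R$, which is exactly the stated identification.

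For the $\HA$-module splitting when $n > 0$, I would proceed inductively from the $n = 0$ case. Every $\zeta_{m+k+j}$ with $j \geq 1$ is null on $\Rkmn{m}{m+k}$ because by Theorem~\ref{thm:introregu} it lies in $(\zeta_{m+1},\dots,\zeta_{m+k})$ inside $\HAn_*$; hence the mapping cone of $\zeta_{m+k+j}$ on $\Rkmn{m}{m+k+j-1}$ splits as $M \vee \Sigma^{2^{m+k+j}} M$. Iterating, the accumulated wedge summands are indexed by subsets $S \subseteq \{1,\dots,n\}$ in degrees $\sum_{j\in S} 2^{m+k+j} = 2^{m+k+1} \sum_{j\in S} 2^{j-1}$; these exponents fill exactly the $2^n$ multiples of $2^{m+k+1}$ from $0$ through $(2^n-1)\cdot 2^{m+k+1}$, matching the claimed decomposition. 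Indecomposability of each summand follows from $R$ being a connected local Frobenius algebra with unique bottom and top classes.

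The main obstacle will be verifying that the associative-algebra quotient of $A$ by the exterior classes $\sigma\zeta_i$ does not introduce spurious new homotopy beyond truncating the exterior factor, i.e., that the quotient agrees with the module-level iterated mapping cone. A priori, killing a class in an associative algebra can pick up content via Massey products, Dyer--Lashof-like operations, or mixed multiplications; the principal tool for excluding this is Theorem~\ref{thm:kernelq1}, which requires the ideal $(\sigma\zeta_i : i > m+k+n) \subset \pi_* A$ to be closed under $Q_1$. The check reduces to computing $Q_1$ on each $\sigma\zeta_i$ and placing it inside the target ideal (typically as a higher $\sigma\zeta_j$, or as a product with an element of $R$), which can be extracted from the relative Adams spectral sequence of Section~\ref{sec:relASS}. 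Once this closure is confirmed, the quotient's homotopy is forced, its underlying $\HA$-module agrees with the iterated mapping cone, and both the coefficient-ring statement for $n = 0$ and the splitting for $n > 0$ follow.
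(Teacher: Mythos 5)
There are two genuine gaps here. The first is in your module-level splitting: you conclude that multiplication by \(\zeta_{m+k+j}\) is a null-homotopic self-map of \(\Rkmn{m}{m+k+j-1}\) \emph{because} \(\zeta_{m+k+j}\) lies in the ideal \((\zeta_{m+1},\dots,\zeta_{m+k})\) and hence acts by zero on homotopy. That inference is precisely the fallacy this paper is organized around: Theorem~\ref{thm:introcone} and \S\ref{sec:double} show that a class can act by zero on \(\pi_*\) of a quotient while remaining a nontrivial self-map, and your reasoning applied to \(\zeta_k\) acting on \(M_k\) would ``prove'' a splitting of \(M_k\smashover{\HA}\overline{M}_k\) that Corollary~\ref{cor:AdamsdThree} refutes. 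The correct justification is a connectivity argument: the top nonzero homotopy group of \(\Rkmn{m}{m+k+j-1}\) lies in degree \(2^{m+k+j}-2^{m+1}-2^{k+1}+2 < 2^{m+k+j}-1\), so the source \(\Sigma^{2^{m+k+j}-1}\Rkmn{m}{m+k+j-1}\) of the multiplication map is too highly connected to admit a nonzero map to the target; only then does the cone split.

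The second gap is the associative algebra structure. Your plan to build it as an iterated associative quotient \(A\aquot\sigma\zeta_i\) is left unfinished, and the tool you propose for the obstacle you correctly identify does not resolve it: closure of the kernel under \(Q_1\) (Theorem~\ref{thm:kernelq1}) is a \emph{necessary} condition on any quotient, used in the paper only as a lower bound on the ideal killed, and it says nothing about whether \(A\aquot\alpha\) acquires spurious homotopy. Worse, your classes \(\sigma\zeta_i\) sit in relative Adams filtration \(0\) with nonzero image in \(H_*^{\HA}(A)\), so Proposition~\ref{prop:HomologyAmodmodalpha} and Example~\ref{ex:killingzero} do not apply and the entire upper-bound analysis would have to be rebuilt from scratch. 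The paper sidesteps all of this: once the splitting (proved by connectivity) exhibits \(\Rkmn{m}{m+k+n}\) as the \(\delta\)-th Postnikov truncation of \(\HAn\smashover{\HA}\barHAm\), the multiplicative Postnikov tower of Dugger--Shipley supplies the associative structure and the algebra map simultaneously. A smaller error along the way: multiplicative extensions in \(\pi_*A\) are not excluded by degree reasons, since \((\sigma\zeta_i)^2\) lands exactly in the degree \(2^{i+1}\) of \(\sigma\zeta_{i+1}\); indeed Corollary~\ref{cor:HomotopyRkn} shows the answer is a truncated polynomial algebra over the Frobenius algebra, not an exterior one.
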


In fact, these quotients \( \Rkmn{m}{m+k+n}\) arise as Postnikov stages of \(\HAn \smashover{\HA} \barHAm\). The splitting into suspensions of \(\Rkmn{m}{m+k}\) allows us to conclude that their relative homology and relative Adams spectral sequences are completely determined by that of of \(\Rkmn{m}{m+k}\).

In Section~\ref{sec:duality} we discuss duality. We first isolate a few facts about connective associative ring spectra whose coefficient ring is a Frobenius algebra, which turn out to include the quotients \( \Rkmn{m}{m+k+n}\). The duality on their coefficient rings has consequences, including spectral Gorenstein duality and duality for the coefficient groups of modules.

\begin{theorem}
    Fix any natural numbers \(k,m,n\geq 0\), and let  \[
    \delta = \delta(k,m,n)= 2^{m+k+n+1} - 2^{m+1} - 2^{k+1} + 2.
    \]
    For any dualizable \(\HA\)-module \(M\), the \(\HA\)-module Spanier--Whitehead dual \(D_\HA M\) has an isomorphism of \(\HAn_\ast/(\zeta_{m+1},\dots,\zeta_{m+k})\)-modules:
    \[
    \pi_\ast\!\big(\Rkmn{m}{m+k+n} \smashover{\HA} D_A M\big) \cong\Big( \pi_{\delta-\ast}\!\big(\Rkmn{m}{m+k+n} \smashover{\HA} M\big)\Big)^\vee
    \]
    Here the right-hand side is the \(\F_2\)-linear dual.
    In particular,
    \[
    \pi_d\big(\Rkmn{m}{m+k+n}\big) \cong\Big( \pi_{\delta-d}\big(\Rkmn{m}{m+k+n}\big)\Big)^\vee.
    \]
\end{theorem}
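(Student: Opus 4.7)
The plan is to reduce this to the general spectral Gorenstein duality theorem for connective associative $\HA$-algebras with Frobenius coefficient rings that is isolated at the start of Section~\ref{sec:duality}, and then to perform a formal adjunction computation.

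First I would verify the hypotheses of the general theorem for $R := \Rkmn{m}{m+k+n}$. The required associative algebra structure is provided by the theorem in Section~\ref{sec:splitting}. For the Frobenius property, the splitting
\[
R \simeq \Rkmn{m}{m+k} \wedge \bigvee_{j=0}^{2^n-1} S^{j \cdot 2^{m+k+1}}
\]
exhibits $\pi_\ast R$ as a free $\pi_\ast \Rkmn{m}{m+k}$-module of rank $2^n$, concentrated in degrees $0$ through $\delta(k,m,n)$. Combining this with Theorem~\ref{thm:introregu} (which handles the $n=0$ case) and a check of the induced ring structure identifies $\pi_\ast R$ with the tensor product $\pi_\ast \Rkmn{m}{m+k} \otimes_{\F_2} \F_2[y]/(y^{2^n})$, where $|y| = 2^{m+k+1}$. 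A tensor product of Frobenius $\F_2$-algebras is again Frobenius with top class in the sum of the top degrees, which is exactly $\delta(k,m,n)$.

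Applying the general Gorenstein duality theorem then produces a trace $\tau\co R \to \Sigma^\delta H\F_2$ inducing an equivalence of $R$-modules $R \simeq \Sigma^\delta I_R$, where $I_R$ is the Brown--Comenetz-type dualizing $R$-module characterized by $\pi_\ast F_R(N, I_R) \cong \Hom_{\F_2}(\pi_{-\ast} N, \F_2)$ for all $R$-modules $N$ (we use that $R$ is an $H\F_2$-algebra, so $\F_2$-linear duality suffices). For a dualizable $\HA$-module $M$, standard adjunctions give
\[
R \smashover{\HA} D_\HA M \simeq F_\HA(M, R) \simeq F_R\bigl(R \smashover{\HA} M,\, R\bigr),
\]
and substituting $R \simeq \Sigma^\delta I_R$ on the right and taking $\pi_\ast$ yields
\[
\pi_\ast\bigl(R \smashover{\HA} D_\HA M\bigr) \cong \Hom_{\F_2}\bigl(\pi_{\delta-\ast}(R \smashover{\HA} M),\, \F_2\bigr),
\]
which is the claimed duality. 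The isomorphism is $\pi_\ast R$-equivariant by construction, hence in particular equivariant for the subring $\HAn_\ast/(\zeta_{m+1},\dots,\zeta_{m+k}) \subset \pi_\ast R$.

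The main obstacle is the Gorenstein duality input itself, namely lifting the algebraic Frobenius pairing on $\pi_\ast R$ to an $R$-module equivalence $R \simeq \Sigma^\delta I_R$. The delicate step is to produce $\tau$ as an $\HA$-module map picking out the socle in degree $\delta$ and to verify that the adjoint map $R \to F_\HA(R, \Sigma^\delta H\F_2)$ is a $\pi_\ast$-isomorphism; once this is established, everything downstream is purely formal. The splitting theorem simplifies this task by reducing it to the base case $n=0$, where the coefficient ring is exactly the Frobenius algebra from Theorem~\ref{thm:introregu}, with the general case following by smashing with the wedge of spheres.
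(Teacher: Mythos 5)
Your overall architecture matches the paper's: verify the Frobenius property of $\pi_\ast\Rkmn{m}{m+k+n}$ (via Corollary~\ref{cor:HomotopyRkn}, which exhibits it as a tensor product of the Frobenius algebra from Theorem~\ref{thm:introregu} with a truncated polynomial algebra), reduce to a general duality theorem for connective ring spectra with Frobenius coefficients, and finish with the adjunctions $R\smashover{\HA}D_\HA M\simeq F_\HA(M,R)\simeq F_R(R\smashover{\HA}M,R)$ valid for dualizable $M$. This is precisely what the paper does in Theorem~\ref{thm:finiteduality} and Proposition~\ref{prop:dualityformodules}.

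The one place where you diverge is the proof of the key duality input, which you flag as ``the main obstacle.'' You propose to construct an explicit $\HA$-module trace $\tau\co R\to\Sigma^\delta H\F_2$ and verify that its adjoint realizes $R\simeq\Sigma^\delta I_R$, reducing to $n=0$ via the splitting; but this sketch is vaguer than it needs to be and, as phrased, slightly off (there is, for instance, no nonzero $R$-module map $R\to\Sigma^\delta H\F_2$; you would need to work with Postnikov data or the dualizing module directly, and the ``verify the adjoint is a $\pi_\ast$-iso'' step is exactly the hard part). The paper's proof of Theorem~\ref{thm:finiteduality} avoids constructing anything: because a Frobenius algebra is self-injective (Proposition~\ref{prop:selfinjective}), the universal coefficient spectral sequence $\Ext_{\pi_\ast R}^{s,t}(\pi_\ast N,\pi_\ast R)\Rightarrow\pi_{t-s}F_R(N,R)$ is concentrated in $s=0$ and degenerates, yielding $\pi_\ast D_R N\cong\Hom_{\pi_\ast R}(\pi_\ast N,\pi_\ast R)\cong\Hom_{\F_2}(\pi_{\delta-\ast}N,\F_2)$ in one stroke. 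This is simultaneously cleaner and more general than the explicit-trace approach, and it removes the ``obstacle'' you identified. You would do well to replace your sketch of that step with the self-injectivity/UCT argument; everything else in your proposal is correct and matches the paper.
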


This fact implies that any iterated quotient of \(\Rkmn{m}{m+k}\) by classes in the homotopy of \(\HA\) is self-dual up to a shift. 

In Section~\ref{sec:Examples}, we illustrate the effectiveness of the theory by computing the relative Adams spectral sequence for a series of examples. Our first result in \S\ref{sec:ExampleRkn} is to prove a family of differentials in the Adams spectral sequence of \(\HAk/(\zeta_{m+1},\dots,\zeta_{m+k})\).  In \S\ref{sec:Atilde2} we completely compute the relative Adams spectral sequence of \(\HAn[2]/(\zeta_3,\zeta_4)\), in \S\ref{sec:AllButXiTwo} that of \(\HAn[2]/(\xi_1, \zeta_1, \zeta_3,\zeta_4)\), and in \S\ref{sec:AllButXiThree} that of \(\HAn[3]/(\xi_1, \zeta_1, \xi_2,\zeta_2, \zeta_4, \zeta_5, \zeta_6)\). In these examples, duality proves to be extremely useful for solving extension problems.

Since it is technical and thematically significantly different from the rest of the paper, we have left the proof of Theorem~\ref{thm:kernelq1}, about the Dyer--Lashof operation \(Q_1\), to Appendix~\ref{sec:CupkAlgs}. We start in \S\ref{sec:cupkfirst} by defining cup-\(k\) algebras and discussing an inductive approach to understanding them. In \S\ref{sec:cupkops} we review the associated power operations. In \S\ref{sec:cupkcentrality} we define a notion of centrality for a cup-\((k-1)\) algebra over a cup-\(k\) algebra and explore a few consequences of this definition. Finally, in \S\ref{sec:cupktransgression}, we prove that for a central for a cup-\((k-1)\) algebra  \(B\) over a a cup-\(k\) algebra \(A\), the kernel of \(\pi_*A \to \pi_*B\) is closed under power operations.

\subsection{Acknowledgements}

The authors would like to thank
Ian Coley,
Drew Heard,
Mike Hopkins,
Lennart Meier,
Haynes Miller,
and
Doug Ravenel
for discussions related to this work.

\section{The relative Adams spectral sequence and mapping cones}\label{sec:mappingcones}

\subsection{The relative Adams spectral sequence}\label{sec:relASS}

The key computational tool we use is Baker--Lazarev's relative Adams spectral sequence based on \(H\F_2\)-homology in the category of \(\HA\)-modules \cite{BakerLazarev}. For this, we need to compute the \(\HA\)-module version of the dual Steenrod algebra. In our case, however, this is due to B\"okstedt \cite{BokstedtTHH}; see also Franjou--Lannes--Schwartz for a published reference \cite{FranjouLannesSchwartz}. 

\begin{theorem}\label{thmBokstedt}
  The commutative ring spectrum \(H\F_2\smashover{\HA} H\F_2\) is the topological Hochschild homology of \(H\F_2\). The coefficient ring is
  \[
  \pi_\ast \big(H\F_2\smashover{\HA} H\F_2\big)\cong
  \F_2[{u}],
  \]
  where \(|{u}|=2\).
\end{theorem}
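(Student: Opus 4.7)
My plan is to attack this in two stages: a formal identification of the ring spectrum with topological Hochschild homology, and then a computation of its homotopy groups.

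First, I would observe that for any commutative ring spectrum \(A\), the opposite algebra \(A^{op}\) coincides with \(A\), so the enveloping algebra \(A^e = A \wedge A^{op}\) is simply \(A \wedge A\). The standard presentation of topological Hochschild homology as a two-sided bar construction then yields an equivalence of commutative ring spectra
\[
\THH(A) \simeq A \smashover{A \wedge A} A.
\]
Specializing to \(A = H\F_2\) gives exactly \(\THH(H\F_2) \simeq H\F_2 \smashover{\HA} H\F_2\), which identifies the ring spectrum.

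For the homotopy calculation, my plan would be to run the B\"okstedt spectral sequence
\[
E^2_{s,t} = HH_s^{\F_2}(\HA_*)_t \Longrightarrow \pi_{s+t}\THH(H\F_2).
\]
The \(E^2\)-page is easy: since \(\HA_* = \F_2[\xi_1, \xi_2, \ldots]\) is a polynomial algebra over \(\F_2\), Hochschild homology is the free graded-commutative algebra
\[
HH_*^{\F_2}(\HA_*) \cong \HA_* \otimes \Lambda_{\F_2}(\sigma\xi_1, \sigma\xi_2, \ldots),
\]
where \(\sigma\xi_i\) sits in homological filtration \(1\) and internal degree \(2^i - 1\), contributing total degree \(2^i\).

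The hard part will be identifying the higher differentials and resolving the multiplicative extensions. Here I would follow B\"okstedt's original method: exploit the Dyer--Lashof structure on \(\THH\) of a commutative ring spectrum together with the circle action, which provides enough differentials (roughly, that \(\sigma\xi_{i+1}\) is hit by a Dyer--Lashof operation on \(\sigma\xi_i\)) to annihilate all polynomial generators \(\xi_i\) and all \(\sigma\xi_i\) for \(i \geq 2\). What survives is the polynomial algebra generated by \(u := \sigma\xi_1\) in degree \(2\), and a multiplicative extension argument rules out any hidden relations. As an alternative that avoids the bulk of the spectral sequence bookkeeping, one could invoke the Franjou--Lannes--Schwartz analysis of \(\THH(H\F_2)\), which pins down the answer together with its natural structure at once. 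Either way, the essential technical point is leveraging the Dyer--Lashof structure that appears because \(H\F_2\) is a commutative ring spectrum.
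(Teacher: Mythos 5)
The paper does not prove this statement at all: it is quoted as a known result of B\"okstedt, with Franjou--Lannes--Schwartz cited as a published reference. So the relevant comparison is with the standard literature proof, and your outline is essentially that proof. Your first step --- the identification \(\THH(A) \simeq A \smashover{A \wedge A} A\) for commutative \(A\), specialized to \(A = H\F_2\) --- is correct and is exactly how the paper implicitly understands the statement.

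Your computational sketch, however, misdescribes the mechanism in two ways. First, the B\"okstedt spectral sequence as usually stated has abutment \(H_\ast(\THH(H\F_2);\F_2) \cong \HA_\ast \otimes \pi_\ast\THH(H\F_2)\), not \(\pi_\ast\THH(H\F_2)\); if you want to land directly in homotopy you should instead use the K\"unneth spectral sequence \(\Tor^{\HA_\ast}_{\ast,\ast}(\F_2,\F_2) \Rightarrow \pi_\ast(H\F_2 \smashover{\HA} H\F_2)\), whose \(E_2\)-page is the exterior algebra \(\Lambda_{\F_2}(\sigma\xi_1,\sigma\xi_2,\dots)\) with \(|\sigma\xi_i| = 2^i\) in total degree. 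Second, and more importantly, at \(p=2\) there are \emph{no} higher differentials to find: the Hilbert series of \(\Lambda(\sigma\xi_1,\sigma\xi_2,\dots)\) is \(\prod_{i\geq 1}(1+t^{2^i}) = (1-t^2)^{-1}\) by uniqueness of dyadic expansions, which already matches \(\F_2[u]\), so the spectral sequence collapses at \(E_2\) for dimension reasons. The entire content of B\"okstedt's theorem is the multiplicative extension problem, namely that \((\sigma\xi_i)^2 = \sigma\xi_{i+1}\) rather than zero; this is where the Dyer--Lashof structure enters, via \((\sigma x)^2 = Q^{|\sigma x|}(\sigma x) = \sigma Q^{2^i}(\xi_i)\), Steinberger's formula \(Q^{2^i}\xi_i \equiv \xi_{i+1}\) modulo decomposables, and the fact that the suspension \(\sigma\) annihilates decomposables. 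So your phrasing that differentials ``annihilate all polynomial generators \(\xi_i\) and all \(\sigma\xi_i\) for \(i\geq 2\)'' is not what happens; those classes survive and are identified with powers of \(u\). These are repairable slips rather than a failed strategy --- you correctly locate the essential input in the Dyer--Lashof structure and correctly offer Franjou--Lannes--Schwartz as an alternative --- but as written the sketch would not assemble into a proof without replacing the differential-hunting step by the extension argument.
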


Since \(\pi_\ast\big(H\F_2\smashover{\HA} H\F_2\big)\) is flat over \(\pi_\ast H\F_2\), the pair 
\[
(A,\Gamma)=\big(\F_2,\F_2[{u}]\big)
\]
forms a Hopf algebra, and \(u\) is primitive for degree reasons. By Baker--Lazarev's extension of Adams' original argument, the relative homology
\[
H_\ast^\HA(M) = \pi_\ast(H\F_2 \smashover{\HA} M)
\]
of any \(\HA\)-module is a comodule over this Hopf algebra, and we have a relative Adams spectral sequence. This particular relative Adams spectral sequence has been employed in equivariant homotopy theory by Hahn--Wilson \cite{HahnWilson}.

\begin{corollary}\label{cor:AdamsSS}
  For any \(\HA\)-modules \(M\) and \(N\), there is a relative Adams spectral sequence with 
  \[
  E_{2}^{s,t}=\Ext_{\F_2[{u}]}^{s,t}\!\big(H_\ast^\HA (M),H_\ast^\HA (N)\big),
  \]
  converging (conditionally) to the homotopy classes of \(\HA\)-module maps from \(M\) to \(N\).
\end{corollary}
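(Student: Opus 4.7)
The plan is to run the standard $H\F_2$-based Adams spectral sequence construction inside the category of $\HA$-modules, with all the essential inputs now provided by Theorem~\ref{thmBokstedt}. Specifically, $\Gamma = \pi_*(H\F_2 \smashover{\HA} H\F_2) = \F_2[u]$ is automatically flat over $A = \F_2$, which makes $(A,\Gamma)$ a graded commutative Hopf algebra and $H_*^\HA(-)$ a functor into $\Gamma$-comodules.

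First, I would form the canonical Adams resolution of $M$. Let $I$ denote the fiber of the unit $\HA \to H\F_2$ in $\HA$-modules, set $M_0 = M$, and inductively define $M_{s+1} = I \smashover{\HA} M_s$, producing cofiber sequences
$$M_{s+1} \to M_s \to H\F_2 \smashover{\HA} M_s.$$
Applying $[-, N]^\HA_*$ to this tower yields an exact couple and hence a spectral sequence with
$$E_1^{s,t} = [H\F_2 \smashover{\HA} M_s, \Sigma^t N]^\HA.$$

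Second, I would identify the $E_2$-page with $\Ext_\Gamma$. Using the flatness of $\Gamma$ over $A$, iterated smashing with $H\F_2$ over $\HA$ realizes $H_*^\HA(M_s)$ as the $s$th term of the cobar complex for the $\Gamma$-comodule $H_*^\HA(M)$. Since $H\F_2 \smashover{\HA} M_s$ is free (extended) as an $H\F_2$-module, the mapping groups $[H\F_2 \smashover{\HA} M_s, \Sigma^t N]^\HA$ translate, via a change-of-rings argument, into $\Hom$-groups between the corresponding $\Gamma$-comodules, and the $d_1$-differential matches the cobar differential. Passing to cohomology yields $E_2^{s,t} = \Ext_\Gamma^{s,t}(H_*^\HA(M), H_*^\HA(N))$.

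The main obstacle, and the step I would invoke Baker--Lazarev \cite{BakerLazarev} for rather than reprove from scratch, is conditional convergence. This requires identifying the homotopy limit of the tower $\{M_s\}$ with the $H\F_2$-nilpotent completion of $M$ in the $\HA$-module category and showing that mapping into $N$ detects classes of $[M,N]^\HA$ in the appropriate completed sense. The argument rests on the connectivity behavior of the fiber $I$: because $\HA_*$ is concentrated in nonnegative degrees with $\HA_0 = \F_2$, smashing repeatedly with $I$ drives the tower to higher and higher Adams filtration, which is what powers conditional convergence in the Boardman sense.
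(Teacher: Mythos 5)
Your overall strategy---running the $H\F_2$-based Adams construction internally to $\HA$-modules, with Theorem~\ref{thmBokstedt} supplying the flatness of $\Gamma=\F_2[u]$---is the right framework, and the convergence discussion is reasonable. But there is a genuine gap in your identification of the $E_1$- and $E_2$-pages: you build the canonical resolution on the \emph{source} $M$ and then map into $N$, and the change-of-rings isomorphism you invoke does not exist in that direction. Extension of scalars along $\HA\to H\F_2$ is a \emph{left} adjoint, so it identifies maps out of $H\F_2\smashover{\HA}M_s$ only when the target is an $H\F_2$-module. Since $N$ is merely an $\HA$-module, what you actually get is the right adjoint to restriction (coinduction):
\[
[H\F_2\smashover{\HA}M_s,\,N]^{\HA}_{\ast}\;\cong\;[H\F_2\smashover{\HA}M_s,\,F_\HA(H\F_2,N)]^{H\F_2}_{\ast}\;\cong\;\Hom_{\F_2}\bigl(H_\ast^\HA(M_s),\,\pi_\ast F_\HA(H\F_2,N)\bigr),
\]
and $\pi_\ast F_\HA(H\F_2,N)$ is \emph{not} $H_\ast^\HA(N)=\pi_\ast(H\F_2\smashover{\HA}N)$. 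Already for $N=\HA$ it is $\pi_\ast D_\HA(H\F_2)$, the homotopy of the dual of the infinite cell module $H\F_2\simeq\hocolim_k M_{\leq k}$ (a coconnective limit of the shifted modules $\Sigma^{-(2^{k+1}-2)}M_{\leq k}$, with $\pi_0=0$), rather than $\F_2$. Even setting this aside, your tower realizes $H_\ast^\HA(M_\bullet)$ as the cobar, i.e.\ \emph{cofree--injective}, resolution of $H_\ast^\HA(M)$ in the \emph{first} variable of $\Ext_\Gamma$; comodule $\Ext$ is computed from an injective resolution of the \emph{second} variable (comodule categories do not in general have enough projectives), so even a formal $\Hom$-identification would not produce $\Ext_\Gamma(H_\ast^\HA(M),H_\ast^\HA(N))$.

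The repair is to resolve the \emph{target}, which is what Baker--Lazarev (and the paper, by citation) do: set $N_0=N$, $N_{s+1}=I\smashover{\HA}N_s$ with cofiber sequences $N_{s+1}\to N_s\to H\F_2\smashover{\HA}N_s$, and apply $[M,-]^\HA$. Now the adjunction points the right way:
\[
[M,\,H\F_2\smashover{\HA}N_s]^\HA\;\cong\;[H\F_2\smashover{\HA}M,\,H\F_2\smashover{\HA}N_s]^{H\F_2}\;\cong\;\Hom_{\F_2}\bigl(H_\ast^\HA(M),H_\ast^\HA(N_s)\bigr)\;\cong\;\Hom_{\Gamma}\bigl(H_\ast^\HA(M),\,\Gamma\otimes H_\ast^\HA(N_s)\bigr),
\]
the complex $\Gamma\otimes H_\ast^\HA(N_\bullet)$ is the cobar injective resolution of $H_\ast^\HA(N)$ as a $\Gamma$-comodule (here flatness of $\Gamma$ over $\F_2$ is used), and the spectral sequence conditionally converges to $[M,\operatorname{holim}_s N/N_s]^\HA$, the $H\F_2$-nilpotent completion of $N$ in $\HA$-modules. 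The paper itself offers no argument beyond verifying the flatness hypothesis and citing this construction, so your write-up should either do the same or carry out the target-resolution version.
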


When we take \(M=N=\HA\), giving the relative Adams spectral sequence for the homotopy of \(\HA\), this takes the form
\[
\Ext_{\F_2[{u}]}^{s,t}(\F_2, \F_2) \Rightarrow \F_2[\xi_1, \xi_2, \dots],
\]
The dual to the coalgebra \(\F_2[u]\) is an exterior algebra on duals to the primitives \(u^{2^i}\). The \(E_2\)-term is therefore polynomial on classes 
\[
[{u}^{2^{k-1}}]\in\Ext^{1,2^k}_{(\F_2,\F_2[{u}])}(\F_2,\F_2),
\]
and comparing with the abutment, we see that the spectral sequence collapses at \(E_2\). The class \([{u}^{2^{k-1}}]\) detects the homotopy element \(\xi_{k}\). This gives us an algebraic interpretation of the relative Adams filtration: the relative Adams filtration of \(\HA_\ast\) is the filtration by monomial degree in the ungraded sense, while the topological degree records the fact that \(\HA_\ast\) is a graded ring.

More generally, for any \(\HA\)-module \(M\) the relative Adams spectral sequence
\[
\Ext_{\F_2[u]}^{s,t}\!\big(\F_2, H_\ast^\HA(M)\big) \Rightarrow \pi_\ast M
\]
is always a module over the relative Adams spectral sequence for \(\HA\) itself, and this module structure recovers the associated graded of the action of \({\HA_{\ast}}\) on \(\pi_* M\).

\subsection{Homology of Mapping Cones}\label{sec:homofmappcones}
We can now apply the relative Adams spectral sequence machinery of \S\ref{sec:relASS} to study the effect of killing various classes in the homotopy of \(\HA\).

\begin{definition}
  For any \(\alpha \in {\HA_{\ast}} = \pi_* \HA\), let \(C(\alpha) = \HA/\alpha\) denote the mapping cone of the self-map \(\Sigma^{|\alpha|} \HA \to \HA\).
  
  Let \(M_k\) be the \(\HA\)-module \(C(\xi_k) = \HA/\xi_k\). More generally, for any subset \(I\) of the positive natural numbers, let
  \[
  M_I:=\bigwedge_{i\in I}^{\HA}M_i.
  \]
\end{definition}
There is a natural map \(\HA \to M_I\), and as an \(\HA_\ast\)-module, we have
\[
\pi_\ast(M_I) \cong \HA_\ast / (\xi_i \mid i \in I).
\]
This implies that
\(
M_{\mathbb N_{>0}}\cong H\F_2,
\)
and, more generally, we can view any of the modules \(M_{I}\) as augmented to \(H\F_2\).

We can compute the relative homology of any of these cones fairly directly. Recall a key observation about the relative Adams filtration: if \(f\colon N\to M\) is a map of relative Adams filtration at least \(1\), then the relative homology of the cone on \(f\), \(C(f)\), sits in an exact sequence
\[
0\to H_\ast^{\HA}(M)\to H_\ast^{\HA}\big(C(f)\big)\to \Sigma H_\ast^{\HA}(N) \to 0.
\]
Moreover, this extension of comodules is the extension corresponding to the class in \(\Ext^1\) detecting \(f\). If \(f\) has relative Adams filtration at least \(2\), then the extension is trivial. 
The classes \({u}^{2^{k-1}}\) are the primitives in the Hopf algebra \(\Gamma\), which generically makes them appear in \(\Ext^1\). The following proposition expands upon this when \(N\) is a shift of \(\HA\).

\begin{proposition}\label{prop:muzeroNull}
Let \(M\) be an \(\HA\)-module and \(y_n\in \pi_nM\) with relative Hurewicz image \(\bar{y}_n \in H_n^\HA(M)\). Let \(\alpha \in \pi_{k-1} \HA\) be any element that maps to zero in \(\F_2\), and consider the cofiber sequence
\[
\Sigma^{k+n-1} \HA \xrightarrow{\alpha y_n} M \to C(\alpha y_n)
\]
so that \(C(\alpha y_n)\) is the mapping cone on \(\alpha y_n\) in \(\HA\)-modules. Then
\begin{enumerate}
    \item the relative homology of \(C(\alpha y_n)\) is given by
    \[
    H_\ast^{\HA}\big(C(\alpha y_n)\big)\cong H_\ast^{\HA}(M)\oplus\F_2\cdot\bar{x}_{k+n},
    \]
    where \(|\bar{x}_{k+n}|=k+n\), and 
    \item the coaction is determined by 
    \[
    \psi(\bar{x}_{k+n})=\begin{cases}
        1\otimes\bar{x}_{k+n}+{u}^{2^{n-1}}\otimes \bar{y}_n & \text{if }\alpha\equiv \xi_n\mod\text{ decomposables,} \\
        1\otimes \bar{x}_{k+n}& \text{otherwise.}
        \end{cases}
    \]
\end{enumerate}
\end{proposition}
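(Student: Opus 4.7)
The plan is to obtain the additive splitting in (1) from the standard short exact sequence on relative homology, and then identify the comodule extension class in (2) by computing the corresponding detector in $\Ext^1_\Gamma$ via multiplicativity of the relative Adams spectral sequence. For (1), the action of $\pi_\ast\HA$ on $H_\ast^\HA(M)$ factors through $\pi_\ast H\F_2 = \F_2$, and since $\alpha$ maps to zero in $\F_2$ by hypothesis, $\alpha$ acts trivially on $\bar y_n$. Hence $\alpha y_n\co \Sigma^{k+n-1}\HA \to M$ induces zero on $H_\ast^\HA$ and has relative Adams filtration at least one. The discussion preceding the proposition then produces the short exact sequence of $\Gamma$-comodules
\[
0 \to H_\ast^\HA(M) \to H_\ast^\HA\bigl(C(\alpha y_n)\bigr) \to \F_2\cdot \bar x_{k+n} \to 0
\]
with $|\bar x_{k+n}| = k+n$, and identifies the extension class with the $\Ext^1_\Gamma$-detector of $\alpha y_n$ in the relative Adams spectral sequence for $\pi_\ast M$.

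The relative Adams spectral sequence for $M$ is a module over the one for $\HA$, so by multiplicativity the detector of $\alpha y_n$ is $[\alpha]\cdot \bar y_n$, where $\bar y_n \in H_n^\HA(M) = E_2^{0,n}$ detects $y_n$ in filtration zero and $[\alpha]$ is the detector of $\alpha$ in the Adams spectral sequence for $\HA$. The earlier identification of that $E_2$-page as the polynomial algebra on $[u^{2^{i-1}}]\in\Ext^{1,2^i}$ (detecting $\xi_i$), with Adams filtration equal to monomial degree in the $\xi_i$, shows that $[\alpha]$ lies in $\Ext^1$ precisely when $\alpha$ is congruent to one of the polynomial generators modulo decomposables. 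In the first case of the proposition, the degree constraint $|\alpha|=k-1=2^n-1$ pins down this generator as $\xi_n$, giving $[\alpha] = [u^{2^{n-1}}]$ and detector $[u^{2^{n-1}}]\cdot \bar y_n$; in the second case $\alpha$ is decomposable, $[\alpha y_n]$ vanishes in $\Ext^1$, and the extension splits.

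Unpacking these detectors under the standard identification of $\Ext^1_\Gamma(\F_2,-)$ with equivalence classes of comodule extensions of $\F_2\cdot \bar x_{k+n}$ by $H_\ast^\HA(M)$ yields the two coactions displayed in the statement. The main technical point demanding care is the multiplicativity of the relative Adams spectral sequence together with the matching of extension classes with $\Ext^1$-detectors; both are standard features of the Baker--Lazarev framework but require careful bookkeeping with the $(s,t)$-bigrading and with the Hopf algebra structure of $\Gamma = \F_2[u]$.
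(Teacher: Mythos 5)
Your proposal is correct and follows essentially the same route the paper intends: the paper states this proposition without a separate proof, deriving it from the ``key observation'' about maps of relative Adams filtration at least \(1\) together with the module structure of the relative Adams spectral sequence over that of \(\HA\), and your argument is exactly that derivation (filtration \(\geq 1\) gives the short exact sequence of comodules, the extension class is the \(\Ext^1\)-detector of \(\alpha y_n\), multiplicativity identifies that detector as \([u^{2^{n-1}}]\cdot\bar y_n\) precisely when \(\alpha\equiv\xi_n\) mod decomposables, and otherwise \(\alpha\) has filtration \(\geq 2\) so the extension splits). The only minor imprecision is writing \(H_n^\HA(M)=E_2^{0,n}\), where \(E_2^{0,n}\) is only the primitives; since the relative Hurewicz image \(\bar y_n\) is automatically primitive, this does not affect the argument.
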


\begin{corollary}\label{cor:MkRelHomology}
  The relative homology of \(M_k\), as a comodule, is 
  \[    
  H_\ast^\HA(M_k)\cong \F_2\{b_0,b_{2^k}\},
  \]
  with \(|b_{i}|=i\), where the coaction is given by
  \[
  \psi(b_{2^k})=1\otimes b_{2^k}+{u}^{2^{k-1}}\otimes b_0
  \]
  and \(b_0\) is primitive.
\end{corollary}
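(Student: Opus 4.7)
The plan is to deduce Corollary~\ref{cor:MkRelHomology} directly from Proposition~\ref{prop:muzeroNull} by taking the special case $M = \HA$, $y = 1 \in \pi_0\HA$, and $\alpha = \xi_k \in \pi_{2^k-1}\HA$. First I would verify the hypotheses: the relative Hurewicz image of $y = 1$ is the generator of $H_0^\HA(\HA) = \F_2$, and $\xi_k$ lies in positive degree so it maps to zero in $\F_2 = \pi_* H\F_2$. The product $\alpha \cdot y$ is just $\xi_k$, so the cofiber of $\alpha \cdot y$ in $\HA$-modules is exactly $M_k$.

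Applying part (i) of the proposition then yields
\[
H_\ast^\HA(M_k) \cong H_\ast^\HA(\HA)\oplus \F_2\{\bar{x}\},
\]
which I would rename by setting $b_0$ to be the generator of $H_0^\HA(\HA) = \F_2$ (the Hurewicz image of $y$) and $b_{2^k}$ to be the new class $\bar{x}$, whose degree is $|\xi_k|+|y|+1 = 2^k$. The primitivity of $b_0$ is immediate: the coaction on $H_0^\HA(\HA) = \F_2$ is forced to be trivial for degree reasons, since $\Gamma = \F_2[u]$ is connected with $u$ in degree $2$.

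For the coaction on $b_{2^k}$, I would invoke part (ii) of the proposition. The element $\alpha = \xi_k$ is itself a Milnor generator, so $\alpha \equiv \xi_k$ modulo decomposables trivially, and the nontrivial branch of the formula applies to give
\[
\psi(b_{2^k}) = 1\otimes b_{2^k} + u^{2^{k-1}}\otimes b_0,
\]
with $u^{2^{k-1}}$ the unique primitive of $\Gamma$ in the correct degree $2^k$. Since the corollary is a direct specialization of Proposition~\ref{prop:muzeroNull}, there is no real obstacle to the deduction itself; all actual content has been absorbed into the proof of that proposition, where one must identify which primitive of $\Gamma$ detects multiplication by a given Milnor generator, using the structure of the relative Adams filtration established in \S\ref{sec:relASS}.
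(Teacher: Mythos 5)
Your proposal is correct and matches the paper's approach: the paper states this corollary as an immediate specialization of Proposition~\ref{prop:muzeroNull} (with \(M=\HA\), \(y=1\), \(\alpha=\xi_k\)) and gives no separate argument. You carry out that specialization carefully, including the right reading of the exponent in the coaction formula as \(u^{2^{k-1}}\), the unique primitive in degree \(2^k\).
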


For any of our modules \(M_k\), the natural maps 
\[
\HA\to M_k \to H\F_2
\]
are isomorphisms on \(\pi_0\). On relative homology, these maps become
\[
\F_2 \to \F_2\{b_0, b_{2^k}\} \to \F_2[u].
\]
The class \(b_0\) is the image of \(1\) under the first map, while the second map sends \(b_0\) to \(1\) and \(b_{2^k}\) to \({u}^{2^{k-1}}\).

The general K\"unneth spectral sequence for modules over a ring spectrum, defined in Elmendorf--Kriz--Mandell--May \cite[Theorem IV.4.1]{EKMM}, then allows us to deduce the more general case, since everything is flat over \(\F_2\).

\begin{corollary}
  If \(I\) is a subset of the positive natural numbers, then the relative homology of \(M_I\), as a comodule, is
  \[
  H_\ast^\HA(M_I)\cong\bigotimes_{i\in I} \F_2\{b_0,b_{2^i}\},
  \]
  with \(|b_j|=j\) and where the coaction is the one on the  tensor product induced by the Hopf algebra structure.
\end{corollary}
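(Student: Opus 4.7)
My approach is to reduce to Corollary~\ref{cor:MkRelHomology} using the multiplicativity of the relative homology functor \(F(-) := H\F_2 \smashover{\HA} (-)\). Since smash products over \(\HA\) commute with filtered colimits and \(F\) preserves colimits, it suffices to handle finite \(I\); the infinite case then follows by expressing \(M_I\) as the filtered colimit of \(M_J\) over finite \(J \subseteq I\) and identifying the resulting colimit of finite tensor products with the claimed infinite tensor product.

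For finite \(I\), I would first establish the underlying vector space identity. Base change along the commutative \(\HA\)-algebra map \(\HA \to H\F_2\) provides a symmetric monoidal functor \(F\) from \(\HA\)-modules to \(H\F_2\)-modules, so
\[
F(M_I) \simeq \bigwedge_{i \in I}^{H\F_2} F(M_i).
\]
Taking \(\pi_\ast\), the EKMM K\"unneth spectral sequence collapses because \(\pi_\ast H\F_2 = \F_2\) is a field (everything is trivially flat), yielding
\[
H_\ast^\HA(M_I) \cong \bigotimes_{i \in I} H_\ast^\HA(M_i) \cong \bigotimes_{i \in I} \F_2\{b_0, b_{2^i}\}
\]
by Corollary~\ref{cor:MkRelHomology}, with all tensor products over \(\F_2\).

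For the comodule structure, the crucial observation is that \(F\) lifts to a symmetric monoidal functor from \(\HA\)-modules into \(\Gamma\)-comodules, where the monoidal structure on \(\Gamma\)-comodules is the tensor product over \(\F_2\) equipped with the tensor coaction determined by the bialgebra multiplication on \(\Gamma\). Consequently, the coaction on the tensor product decomposition above is precisely the tensor coaction induced by the Hopf algebra structure, exactly as claimed. The main subtle point — and what I expect to be the hardest piece to make rigorous — is verifying that the K\"unneth isomorphism of \(\F_2\)-vector spaces intertwines the coactions. This is a formal consequence of the symmetric monoidality of base change along \(\HA \to H\F_2\) and the commutativity of \(H\F_2\) as an \(\HA\)-algebra (so that the coaction map \(F(M) \to F(H\F_2) \smashover{H\F_2} F(M)\) is naturally a map of monoidal functors), but the diagram chase deserves explicit attention.
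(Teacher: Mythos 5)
Your proof is correct and takes essentially the same approach as the paper, which deduces this corollary from Corollary~\ref{cor:MkRelHomology} by applying the EKMM K\"unneth spectral sequence and observing that it collapses since everything is flat over \(\F_2\). You have spelled out a few points the paper leaves implicit — the filtered-colimit reduction for infinite \(I\), and the symmetric monoidality of base change along \(\HA \to H\F_2\) as the mechanism that makes the K\"unneth isomorphism one of \(\Gamma\)-comodules — but these are elaborations of the same underlying argument rather than a different route.
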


Note that in this tensor product, each factor has a primitive element \(b_0\) in degree zero and a unique non-primitive element \(b_{2^i}\). Each of these non-primitive elements is in a distinct degree, so we can rewrite this using dyadic expansions.

\begin{notation}\label{not:Dyadic}
If \(I\) is a subset of the natural numbers, then let 
\[
D_{I}=\left\{\sum_{i \in S} 2^i \mid S \subset I\text{ finite}\right\}
\]
be the set of natural numbers whose dyadic expansions only involve terms of the form \(2^i\) with \(i\in I\).
\end{notation}

\begin{corollary}\label{cor:HomologyOfMI}
  If \(I\) is a subset of the positive natural numbers, then the relative homology of \(M_I\) is
  \[
  H_\ast^\HA(M_I)\cong
  \F_2\{b_j\mid j\in D_I\},
  \]
  with \(|b_j|=j\). The coproduct is given by
  \[
  \psi(b_j)=\sum_{i\in D_{I}} \binom{j}{i} {u}^{\big(\frac{j-i}{2}\big)}\otimes b_{i}.
  \]
  The augmentation
  \[
  H_\ast^\HA(M_I) \to H_\ast^\HA(H\F_2) = \F_2[u]
  \]
  is injective, and sends \(b_j\) to \(u^{j/2}\).
\end{corollary}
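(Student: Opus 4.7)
The plan is to derive the formula from the tensor-product description in the preceding corollary by reorganizing the basis through dyadic expansions. Since the non-primitive class $b_{2^i}$ of the factor $\F_2\{b_0,b_{2^i}\}$ sits in degree $2^i$, a pure tensor $b_{2^{i_1}}\otimes\cdots\otimes b_{2^{i_r}}$ (with $b_0$ in the remaining slots) lives in degree $j=\sum_k 2^{i_k}$, and all such degrees are distinct. This gives a bijection between the monomial basis of the tensor product and the set $D_I$, and I would define $b_j$ to be the pure tensor associated to the dyadic expansion of $j$.

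For the coproduct formula, I would simply distribute. On each tensor factor, $\psi(b_0)=1\otimes b_0$ and $\psi(b_{2^i})=1\otimes b_{2^i}+u^{2^{i-1}}\otimes b_0$, so for $j=\sum_k 2^{i_k}\in D_I$ with distinct $i_k\in I$,
\[
\psi(b_j)=\prod_{k=1}^{r}\bigl(1\otimes b_{2^{i_k}}+u^{2^{i_k-1}}\otimes b_0\bigr)=\sum_{S\subseteq\{i_1,\dots,i_r\}} u^{(j-i_S)/2}\otimes b_{i_S},
\]
where $i_S=\sum_{k\in S}2^{i_k}$ and the exponent of $u$ in each term equals $\sum_{k\notin S}2^{i_k-1}=(j-i_S)/2$. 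The subsets $S$ parametrize exactly those $i\in D_I$ whose binary support is contained in that of $j$, and by Lucas' theorem these are precisely the indices $i$ with $\binom{j}{i}\equiv 1\pmod 2$. Thus the sum rewrites as $\sum_{i\in D_I}\binom{j}{i}u^{(j-i)/2}\otimes b_i$, matching the stated formula.

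For the augmentation, the natural map $M_I\to H\F_2$ is the smash product of the maps $M_i\to H\F_2$, each of which sends $b_{2^i}$ to $u^{2^{i-1}}$ and $b_0$ to $1$ by Corollary~\ref{cor:MkRelHomology}. Multiplicativity of the K\"unneth isomorphism over $\F_2[u]$ therefore sends $b_j$ to $u^{\sum_k 2^{i_k-1}}=u^{j/2}$. Injectivity is immediate, since distinct $j\in D_I$ produce distinct powers of $u$.

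The only nontrivial piece is the translation of the sum over subsets $S$ into the sum with mod-$2$ binomial coefficients, which is exactly Lucas' theorem (equivalently, the fact that $\binom{j}{i}$ is odd iff the dyadic digits of $i$ are pointwise $\leq$ those of $j$), so I anticipate no real obstacle — the result is essentially a repackaging of the previous corollary through this characteristic-$2$ identity.
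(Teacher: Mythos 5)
Your proof is correct and follows essentially the same route as the paper, which offers no separate argument for this corollary beyond the remark that the tensor-product basis can be reindexed by dyadic expansions; you have simply filled in the routine details (distributing the coproduct, the Lucas-theorem translation of subset sums into mod-$2$ binomial coefficients, and the multiplicativity of the augmentation).
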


This gives an algebraic ``no-go theorem'' for ring structures. Although the homotopy groups  \(\pi_* M_I\) form a quotient ring of \(\HA_\ast\), these spectra typically cannot have \(\HA\)-algebra structures.

\begin{theorem}
  Let \(I\) be a subset of the positive natural numbers. If \(M_I\) is a unital ring object in the homotopy category of \(\HA\)-modules, then whenever \(i\in I\) we must have \((i+1)\in I\). In particular, if \(I\) is nonempty and finite there can be no such ring structure.
\end{theorem}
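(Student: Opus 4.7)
The plan is to exploit the ring structure on relative homology and its compatibility with the $\F_2[u]$-coaction. If $M_I$ has a unital multiplication in $\HA$-modules, then because the base change functor $H\F_2 \smashover{\HA}(-)$ to $H\F_2$-modules is symmetric monoidal, $H\F_2 \smashover{\HA} M_I$ inherits a ring structure, making $H_\ast^\HA(M_I)$ a graded associative $\F_2$-algebra. Applying the base change functor a second time, the canonical map $H\F_2 \smashover{\HA} M_I \to H\F_2 \smashover{\HA} H\F_2 \smashover{\HA} M_I$ is the unit of extension of scalars applied to the algebra $H\F_2 \smashover{\HA} M_I$, and is therefore a ring map. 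On homotopy this says that the coaction $\psi\colon H_\ast^\HA(M_I) \to \F_2[u] \otimes H_\ast^\HA(M_I)$ is an $\F_2$-algebra homomorphism.

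Given $i \in I$, I would then compute: by Corollary~\ref{cor:HomologyOfMI},
\[
\psi(b_{2^i}) = 1 \otimes b_{2^i} + u^{2^{i-1}} \otimes 1.
\]
The two summands are pure tensors in complementary factors and so commute in $\F_2[u] \otimes H_\ast^\HA(M_I)$; squaring in characteristic two therefore yields
\[
\psi(b_{2^i}^2) = \psi(b_{2^i})^2 = 1 \otimes b_{2^i}^2 + u^{2^i} \otimes 1.
\]
Since $u^{2^i} \otimes 1$ is a nonzero basis element of $\F_2[u] \otimes H_\ast^\HA(M_I)$, the right-hand side is nonzero, forcing $b_{2^i}^2 \neq 0$. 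But $b_{2^i}^2$ lies in degree $2^{i+1}$, and by Corollary~\ref{cor:HomologyOfMI} that graded piece of $H_\ast^\HA(M_I)$ is nontrivial iff some $b_j$ with $j \in D_I$ has $j = 2^{i+1}$, which by uniqueness of binary expansions happens iff $(i+1) \in I$. So we must have $(i+1) \in I$. The ``in particular'' clause then follows by taking $i = \max I$ for any finite nonempty $I$.

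The main obstacle is verifying that the coaction is genuinely a ring map in the homotopy category, since that is the step linking the hypothesized ring structure on $M_I$ to the algebraic identity that drives the contradiction; this is formal from the symmetric monoidal nature of base change along $\HA \to H\F_2$, and the remainder of the argument is a clean degree-and-basis computation in $H_\ast^\HA(M_I)$.
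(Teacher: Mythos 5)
Your proposal is correct and takes essentially the same approach as the paper's proof: show that the coaction map must be a ring homomorphism, square the coaction on $b_{2^i}$ to force $b_{2^i}^2 \neq 0$, and conclude by degree considerations that $(i+1) \in I$. The only slight difference is cosmetic: you spell out in more detail why the coaction is multiplicative (via symmetric monoidality of base change), and you write $b_0$ as $1$, which the paper explicitly justifies by noting that $b_0$ is the only element in degree zero and hence the only possible image of the unit — a point you use implicitly and should state if writing this up.
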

\begin{proof}
  The class \(b_{2^i} \in H_{2^i}^\HA(M_I)\) has coproduct
  \[
  \psi(b_{2^i})=1\otimes b_{2^i}+{u}^{2^{i-1}}\otimes b_0,
  \]
  and hence if there be a ring structure, we must have 
  \[
  \psi(b_{2^i}^2)=1\otimes b_{2^i}^2+{u}^{2^{i}}\otimes b_0^2.
  \]
  The element \(b_0\) is the only possible image of the unit, and hence the square must be again \(b_0\), making this coproduct non-zero. This forces \(b_{2^i}^2 \neq 0\). However, the only possible non-zero class in degree \(2^{i+1}\) would be \(b_{2^{i+1}}\), which exists if and only if \((i+1) \in I\).
\end{proof}

\begin{remark}
    Our identification of the comodule structure in Proposition~\ref{prop:muzeroNull} shows that the comodule structure depends only on the image of a class modulo decomposables. The same ``no-go theorem'' holds for any sequence of classes 
    \[
    \{\chi_k\mid k\in I, \chi_k\equiv \xi_k\mod\text{ decomposables}\}.
    \]
\end{remark}
This remark applies to the conjugate classes \(\zeta_k\), but there is also a general procedure for studying conjugates.

\begin{definition}
  If \(M\) is an \(\HA\)-module, then let \(\overline{M}\) denote the pullback of \(M\) along the automorphism \(\HA\to\HA\) induced by the swap on \(H\F_2 \wedge H\F_2\).
\end{definition}

\begin{example}
  For any subset \(I\) of the positive natural numbers, \(\overline{M}_{I}\) is the quotient of \(\HA\) by the classes \(\zeta_i\) with \(i\in I\).
\end{example}

We pause to connect the coaction on a module \(M\) to that on \(\overline{M}\). 

\begin{proposition}\label{prop:MvsBarM}
  For any \(\HA\)-module \(M\), we have a natural isomorphism of \(\F_2[{u}]\)-comodules
  \[
  H_\ast^\HA (M)\cong
  H_\ast^\HA (\overline{M}).
  \]
\end{proposition}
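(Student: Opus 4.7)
The plan is to exploit the swap automorphism \(\tau\colon \HA \to \HA\) as a symmetric monoidal self-equivalence of \(\HA\)-modules. Since \(\tau\) is an isomorphism of commutative ring spectra, the restriction functor \(\tau^*\) sending \(M\) to \(\overline{M}\) is a symmetric monoidal equivalence whose underlying functor on spectra is the identity.

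The crucial input is the identity \(\mu \circ \tau = \mu\), where \(\mu\colon \HA \to H\F_2\) is the multiplication; this is just the commutativity of \(H\F_2\). Since \(\mu\) endows \(H\F_2\) with its \(\HA\)-module structure, this identity gives \(\overline{H\F_2} \simeq H\F_2\) as \(\HA\)-modules. Applying \(\tau^*\) to the relative smash product then yields a natural equivalence
\[
\overline{H\F_2 \smashover{\HA} M} \simeq \overline{H\F_2} \smashover{\HA} \overline{M} \simeq H\F_2 \smashover{\HA} \overline{M}.
\]
Taking \(\pi_*\), and using that \(\tau^*\) is the identity on underlying spectra, produces a natural isomorphism \(H_*^\HA(M) \cong H_*^\HA(\overline{M})\) of graded abelian groups.

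For the \(\F_2[u]\)-comodule structure, I would observe that the coaction on \(H_*^\HA(M)\) is encoded by the natural map
\[
H\F_2 \smashover{\HA} M \to H\F_2 \smashover{\HA} H\F_2 \smashover{\HA} M
\]
obtained by inserting the unit \(\HA \to H\F_2\) in the middle factor. Applying \(\tau^*\) to this map sends it to the analogous coaction for \(\overline{M}\), and the chain of equivalences above intertwines the two. The main subtlety to check is that \(\tau^*\) fixes \(\Gamma\) as a Hopf algebra rather than twisting it by its antipode; this holds because \(\tau^*\) acts diagonally, twisting the \(\HA\)-action on each of the two factors of \(\Gamma = H\F_2 \smashover{\HA} H\F_2\) simultaneously rather than exchanging them.
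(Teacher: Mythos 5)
Your argument is correct and is essentially the paper's: both proofs pull back along the swap automorphism \(\tau\) of \(\HA\), using that \(\mu \circ \tau = \mu\) fixes \(H\F_2\) as an \(\HA\)-algebra, and then verify that the induced automorphism of \(\Gamma = \F_2[u]\) is trivial so that the two coactions are literally intertwined. The only divergence is in that last step, where the paper invokes the fact that conjugation is the identity modulo decomposables, while you argue structurally that the diagonal action of \(\tau^*\) on \(H\F_2 \smashover{\HA} H\F_2\) is the identity on underlying spectra; both are valid (and in any case \(\F_2[u]\) admits no nontrivial graded ring automorphism).
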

\begin{proof}
  In this instance, it can be clarifying to discuss the general case first. For a ring homomorphism \(f\co R \to S\), an \(S\)-algebra \(E\), and an \(S\)-module \(M\), the map \(f\) induces a commutative diagram
  \[
  \begin{tikzcd}
    E \smashover{R} f^*(M) \ar[r] \ar[d] &
    (E \smashover{R} E) \smashover{E} (E \smashover{R} f^*(M)) \ar[d] \\
    E \smashover{S} M \ar[r] &
    (E \smashover{S} E) \smashover{E} (E \smashover{S} M).
  \end{tikzcd}
  \]
  We will assume \(\Gamma_R = \pi_*(E \smashover{R} E)\) and \(\Gamma_S = \pi_*(E \smashover{S} E)\) are flat over \(\pi_* E\). On homotopy groups, this becomes a commutative diagram
  \[
  \begin{tikzcd}
    E_\ast^R(f^*(M)) \ar[r, "\psi"] \ar[d] &
    \Gamma_R \otimes_{\pi_* E} E_\ast^R(f^*(M)) \ar[d] \\
    E_\ast^S(M) \ar[r,"\psi"] &
    \Gamma_S \otimes_{\pi_* E} E_\ast^S(M).
  \end{tikzcd}
  \]
  The map \(\bar f\co (\pi_* E, \Gamma_R) \to (\pi_* E, \Gamma_S)\) induced by \(f\) is a map of Hopf algebroids, and so this diagram expresses the existence of a natural map of comodules
  \[
  (\bar f)_* \left(E_\ast^R( f^* M)\right) \to E_\ast^S (M)
  \]
  over \((\pi_* E, \Gamma_S)\). If \(f\) is an equivalence, then this natural map of comodules is an isomorphism.
  
  Now take \(f\) to be the swap automorphism of \(\HA\), so that \(f^* M = \overline{M}\) by definition. We get an isomorphism of \(\F_2[{u}]\)-comodules
  \[
  (\bar f)_* H_\ast^\HA\big(\overline M\big) \to H_\ast^\HA(M).
  \]
  In this case, however, the conjugation automorphism \(f\) of \(\HA\) induces the trivial automorphism of \(\F_2[{u}]\): the conjugation in \({\HA_{\ast}}\) is the identity mod decomposables. Therefore, \((\bar f)_*\) is the identity.
\end{proof}

\begin{remark}
    We see the algebraic isomorphism of Proposition~\ref{prop:MvsBarM} reflected in an identification between the corresponding relative Adams spectral sequences. This is simply recording that modulo elements of higher monomial degree, \(\xi_i\equiv \zeta_i\).
\end{remark}

\subsection{Double killing classes resurrects them}\label{sec:double}
One of the most curious features of this framework is that iterated quotients by classes often see multiplication by those classes reappear. This was an early observation that led us to this general framework, starting with the guess that in \(M_1\smashover{\HA}\overline{M}_1\), multiplication by \(\xi_1\) is non-trivial. 

The following results on comodule structure follow from the calculation of the relative homology of \(M_k\) by the K\"unneth theorem.
\begin{proposition}\label{prop:DoubleKilling}
We have isomorphisms
\[
H_\ast^\HA\big(M_k \smashover{\HA} M_k\big) \cong H_{\ast}^\HA\big(M_k\smashover{\HA}\overline{M}_k\big)\cong \F_2\{b_0\otimes b_0, b_{2^k}\otimes b_0, b_0\otimes b_{2^k}, b_{2^k}\otimes b_{2^k}\},
\]
and the coproduct on \(b_{2^k}\otimes b_{2^k}\) is
\[
\psi(b_{2^k}\otimes b_{2^k})=1\otimes b_{2^k}\otimes b_{2^k}+{u}^{2^{k-1}}\otimes \big(b_{2^k}\otimes b_0+b_0\otimes b_{2^k}\big)+{u}^{2^k}\otimes b_0\otimes b_0.
\]
\end{proposition}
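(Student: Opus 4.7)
The plan is to reduce everything to the single--variable computation of Corollary~\ref{cor:MkRelHomology} together with a standard K\"unneth argument.

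First, I would observe that by Proposition~\ref{prop:MvsBarM}, $H_\ast^\HA(\overline M_k)$ is isomorphic to $H_\ast^\HA(M_k)$ as a comodule over $\F_2[u]$, so the two assertions about $M_k \smashover{\HA} M_k$ and $M_k \smashover{\HA} \overline M_k$ will be formally identical. Next, since by Corollary~\ref{cor:MkRelHomology} the comodule $H_\ast^\HA(M_k) = \F_2\{b_0, b_{2^k}\}$ is free (hence flat) over $\F_2$, the K\"unneth spectral sequence for modules over $\HA$ (\cite[Theorem~IV.4.1]{EKMM}), as already invoked for $M_I$ after Corollary~\ref{cor:MkRelHomology}, collapses to give an isomorphism of $\F_2[u]$-comodules
\[
H_\ast^\HA\bigl(M_k \smashover{\HA} M_k\bigr) \;\cong\; H_\ast^\HA(M_k) \otimes_{\F_2} H_\ast^\HA(M_k),
\]
which gives the $\F_2$-basis $\{b_0\otimes b_0,\,b_{2^k}\otimes b_0,\,b_0\otimes b_{2^k},\,b_{2^k}\otimes b_{2^k}\}$ displayed in the statement.

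For the coproduct on $b_{2^k}\otimes b_{2^k}$, I would simply apply the tensor product comodule structure: if $\psi(x) = \sum x'_i \otimes x''_i$ and $\psi(y) = \sum y'_j \otimes y''_j$, then in a tensor product of comodules over a commutative Hopf algebra one has $\psi(x\otimes y) = \sum x'_i y'_j \otimes (x''_i \otimes y''_j)$. Plugging in $\psi(b_{2^k}) = 1\otimes b_{2^k} + u^{2^{k-1}} \otimes b_0$ from Corollary~\ref{cor:MkRelHomology} and expanding the product of the two copies in $\F_2[u]$ (using that $u^{2^{k-1}} \cdot u^{2^{k-1}} = u^{2^k}$) yields exactly the four terms
\[
1 \otimes b_{2^k}\otimes b_{2^k} + u^{2^{k-1}}\otimes\bigl(b_{2^k}\otimes b_0 + b_0 \otimes b_{2^k}\bigr) + u^{2^k}\otimes b_0\otimes b_0.
\]

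There is no real obstacle here: the only thing requiring a moment of care is confirming that the comodule K\"unneth isomorphism identifies the coaction on the smash product with the tensor product coaction, which follows from naturality of the map $H\F_2\smashover{\HA}(M\smashover{\HA}N) \to (H\F_2\smashover{\HA}H\F_2)\smashover{H\F_2}(H\F_2\smashover{\HA}M)\smashover{H\F_2}(H\F_2\smashover{\HA}N)$ used to define the coaction in Corollary~\ref{cor:AdamsSS}, combined with flatness of $H_\ast^\HA(M_k)$.
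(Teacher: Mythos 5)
Your proof is correct and follows exactly the paper's own route: reduce $\overline M_k$ to $M_k$ via Proposition~\ref{prop:MvsBarM}, then apply the K\"unneth theorem (degenerate by flatness over $\F_2$) and compute the tensor-product coaction. You simply spell out the steps that the paper leaves implicit.
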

\begin{proof}
Proposition~\ref{prop:MvsBarM} shows that the relative homologies of \(M_k\) and \(\overline{M}_k\) agree as comodules. The result follows from the K\"unneth theorem.
\end{proof}

Note that since both the left and right copies of \(b_{2^k}\) in Proposition~\ref{prop:DoubleKilling} support the same coproduct, the sum
\[
e_{2^k}:=b_{2^k}\otimes b_0+b_0\otimes b_{2^k}
\]
is actually a primitive element in the comodule. Of course, the element \(e_0:=b_0\otimes b_0\) is also primitive.

\begin{remark}The calculation of the comodule structure here is essentially the same as the calculation that the smash square of the mod \(2\) Moore spectrum has a non-trivial \(Sq^2\) connecting the bottom and top cells. In the category of \(\HA\)-modules, this is part of a family.
\end{remark}

An immediate consequence of Proposition~\ref{prop:DoubleKilling} is that we have a non-trivial multiplication by \(\xi_k\) in the relative Adams spectral sequence for \(M_k\smashover{\HA}\overline{M}_k\).

\begin{corollary}\label{cor:AdamsMkBarMk}
The relative Adams \(E_2\)-term for \(M_k \smashover{\HA} M_k\) or \(M_k\smashover{\HA}\overline{M}_k\) is
\[
\F_2[\xi_1,\dots]\{e_0, e_{2^k}\}/(\xi_k\cdot e_0, \xi_k\cdot e_{2^k}+\xi_{k+1}\cdot e_{0}),
\]
where the bidegree \((t-s,s)\) of \(e_{j}\) is \((j,0)\) and that of \(\xi_i\) is \((2^i-1,1)\).
\end{corollary}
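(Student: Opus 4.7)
Let $P = \F_2[\xi_1, \xi_2, \ldots]$, so that $\Ext^*_{\F_2[u]}(\F_2, \F_2) = P$ as in \S\ref{sec:relASS}. By Proposition~\ref{prop:MvsBarM} together with Proposition~\ref{prop:DoubleKilling}, the comodule $N := H_*^\HA(M_k \smashover{\HA} M_k)$ is isomorphic to $H_*^\HA(M_k \smashover{\HA} \overline{M}_k)$, so it suffices to treat the first case. My plan is to identify $e_0$ and $e_{2^k}$ as the $P$-module generators of $\Ext^*_{\F_2[u]}(\F_2, N)$, read off the two relations from the cobar complex, and then confirm completeness via an additive dimension count.

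The primitives $e_0$ and $e_{2^k}$ give classes in $\Ext^0(\F_2, N)$, hence a $P$-module map $P\{e_0, e_{2^k}\} \to \Ext^*(\F_2, N)$. The relations come from the reduced coactions on the other two basis elements. In the reduced cobar complex $C^*(\Gamma; N) = \bar\Gamma^{\otimes *} \otimes N$, a $0$-cochain $n \in N$ has differential $d(n) = \bar\psi(n)$. For $f := b_{2^k} \otimes b_0$ one computes $\bar\psi(f) = u^{2^{k-1}} \otimes e_0$, so $df = u^{2^{k-1}} \mid e_0$; since $u^{2^{k-1}}$ detects $\xi_k$, this witnesses the relation $\xi_k e_0 = 0$. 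For $g := b_{2^k} \otimes b_{2^k}$, Proposition~\ref{prop:DoubleKilling} gives $\bar\psi(g) = u^{2^{k-1}} \otimes e_{2^k} + u^{2^k} \otimes e_0$, and since $u^{2^k}$ detects $\xi_{k+1}$ this forces $\xi_k e_{2^k} + \xi_{k+1} e_0 = 0$.

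To rule out further relations I would compute $\Ext^*(\F_2, N)$ additively. The short exact sequence of comodules $0 \to \F_2\{b_0\} \to H_*^\HA(M_k) \to \Sigma^{2^k} \F_2\{b_{2^k}\} \to 0$ has connecting map multiplication by $\xi_k$, and since $\xi_k$ is a non-zero-divisor in $P$ this gives $\Ext^*(\F_2, H_*^\HA(M_k)) \cong P/(\xi_k)$. Tensoring that sequence with $H_*^\HA(M_k)$ (which is flat over $\F_2$) yields $0 \to H_*^\HA(M_k) \to N \to \Sigma^{2^k} H_*^\HA(M_k) \to 0$, whose connecting map is again multiplication by $\xi_k$ and therefore vanishes on $P/(\xi_k)$. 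Hence additively $\Ext^*(\F_2, N) \cong (P/\xi_k)\{e_0\} \oplus (P/\xi_k)\{e_{2^k}\}$, which matches the $\F_2$-dimension of the claimed presentation once one uses the second relation to trade every $\xi_k$-multiple of $e_{2^k}$ for a $\xi_{k+1}$-multiple of $e_0$. The main obstacle is exactly this hidden $P$-module extension: the additive splitting conceals the reappearance of $\xi_k e_{2^k}$ as $\xi_{k+1} e_0$, and only the cobar computation on $g$ makes it visible.
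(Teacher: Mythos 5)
Your proof is correct, and it follows the route the paper evidently has in mind: the paper states the corollary without proof, but it supplies exactly the two ingredients you use, namely the comodule structure on $H_*^\HA(M_k \smashover{\HA} M_k)$ from Proposition~\ref{prop:DoubleKilling} and the cofiber-sequence decomposition of Remark~\ref{rem:CofiberForm} that furnishes the additive computation. You spell out what the paper leaves implicit: the cobar coboundaries of $b_{2^k}\otimes b_0$ and $b_{2^k}\otimes b_{2^k}$ give the two relations, and the long exact sequence in $\Ext$ coming from $0 \to H_*^\HA(M_k) \to N \to \Sigma^{2^k} H_*^\HA(M_k) \to 0$ (with vanishing connecting map because $\xi_k$ already annihilates $P/\xi_k$) pins down the Hilbert series, which matches the presented module since the two relations form a regular sequence in $P\{e_0,e_{2^k}\}$. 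This is the same strategy as the paper's Remark~\ref{rem:CofiberForm}, just carried out in detail.
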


\begin{remark}\label{rem:CofiberForm}
    Either module \(M=M_k \smashover{\HA} M_k\) or \(M=M_k\smashover{\HA}\overline{M}_k\) sits in a cofiber sequence
    \[
    M_k\to M \to \Sigma^{2^k} M_k.
    \]
    The relative Adams spectral sequence for \(M_k\) collapses at \(E_2\) to
    \[
    \F_2[\xi_1,\dots]\{e_0\}/\xi_k\cdot e_0,
    \]
    and similarly for the \(2^k\)-fold suspension. Corollary~\ref{cor:AdamsMkBarMk} says that this is a non-trivial extension on relative Adams \(E_2\)-terms.
\end{remark}

Using Remark~\ref{rem:CofiberForm}, we can also determine the homotopy. The connecting map in the long exact sequence on homotopy induced by this cofiber sequence is exactly multiplication by \(\xi_k\) or \(\zeta_k\) in the module
\[
\F_2[\xi_1,\dots] \{e_0\}/\xi_k\cdot e_0.
\]
In the former case, the map is identically zero and the spectral sequence collapses at \(E_2\). In the latter, Milnor's formula tells us
\[
\sum_{i=0}^k \xi_i^{2^{k-i}}\zeta_{k-i}=0,
\]
so modulo \(\xi_k\), we find two cases:
\begin{enumerate}
    \item if \(k=1\), then \(\zeta_k=\xi_k\equiv 0\), while
    \item if \(k>1\), then \(\zeta_k\equiv \xi_1\xi_{k-1}^2+\dots\), where the elided terms are elements of monomial degree at least \(5\).
\end{enumerate}

\begin{corollary}\label{cor:AdamsdThree}
    In the relative Adams spectral sequence for \(M_k\smashover{\HA}\overline{M}_k\), when \(k > 1\) we have a \(d_3\)-differential:
    \[
    d_3(e_{2^k})=\xi_1\xi_{k-1}^2\cdot e_0
    \]
    The spectral sequence collapses at \(E_4\).
\end{corollary}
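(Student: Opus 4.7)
The plan is to analyze the cofiber sequence from Remark~\ref{rem:CofiberForm}, namely
\[
M_k \to M_k\smashover{\HA}\overline{M}_k \to \Sigma^{2^k} M_k,
\]
and to locate the only source of nontrivial differentials in its relative Adams spectral sequence. Because \(\overline{M}_k=\HA/\zeta_k\), this cofiber sequence is obtained by smashing the defining cofiber of \(\overline{M}_k\) with \(M_k\) over \(\HA\), so its connecting map \(\Sigma^{2^k-1}M_k \to M_k\) is multiplication by \(\zeta_k\). The relative Adams spectral sequences for the outer two terms collapse at \(E_2\); therefore every differential in the middle spectral sequence must be produced by this connecting map acting on \(\pi_\ast M_k = \F_2[\xi_1,\xi_2,\ldots]/\xi_k\).

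The next step is to expand \(\zeta_k\) modulo \(\xi_k\) using Milnor's formula \(\sum_{i=0}^{k}\xi_i^{2^{k-i}}\zeta_{k-i}=0\):
\[
\zeta_k \equiv \xi_{k-1}^{2}\zeta_1 + \xi_{k-2}^{4}\zeta_2 + \cdots + \xi_1^{2^{k-1}}\zeta_{k-1} \pmod{\xi_k}.
\]
Since \(\zeta_1 = \xi_1\), the term \(\xi_{k-1}^{2}\zeta_1 = \xi_1\xi_{k-1}^2\) has monomial degree \(3\), while every other summand \(\xi_{k-j}^{2^j}\zeta_j\) with \(j\ge 2\) has monomial degree at least \(2^j+1 \ge 5\). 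Because monomial degree coincides with relative Adams filtration, the leading contribution of \(\zeta_k\cdot e_0\) is \(\xi_1\xi_{k-1}^2\cdot e_0\) in filtration \(3\), giving \(d_3(e_{2^k})\) as a candidate value.

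I would then pin down the differential precisely. The target bidegree \((t-s,s)=(2^k-1,3)\) contains, on the \(E_2\)-page, only the monomial \(\xi_1\xi_{k-1}^2\cdot e_0\): a brief dyadic-expansion argument (write \(2^k+2\) as a sum of three powers of \(2\), none equal to \(2^k\)) forces the combination \(2+2^{k-1}+2^{k-1}\), and no monomial \(\xi_I e_{2^k}\) lies in this bidegree. Hence \(d_3(e_{2^k})\) is either \(0\) or \(\xi_1\xi_{k-1}^2\cdot e_0\). To rule out zero, I would compute \(\pi_\ast M\) directly from the long exact sequence of the cofiber above: since \(\zeta_k\) is a nonzero divisor in the polynomial ring \(\F_2[\xi_1,\ldots]/\xi_k\), one obtains \(\pi_\ast M \cong \F_2[\xi_1,\ldots]/(\xi_k,\zeta_k)\), and a Poincar\'e-series comparison with the \(E_2\)-page forces the \(d_3\) to be nontrivial.

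Finally, for the collapse at \(E_4\), I would use the relation \(\xi_k e_{2^k} = \xi_{k+1}e_0\) from Corollary~\ref{cor:AdamsMkBarMk} to reduce every \(e_{2^k}\)-type class to one of the form \(\xi_I\cdot e_{2^k}\) with \(\xi_I \in \F_2[\xi_i : i\ne k]\). By \(\HA_\ast\)-linearity, each satisfies \(d_3(\xi_I e_{2^k}) = \xi_I\xi_1\xi_{k-1}^2\cdot e_0\), nonzero in the polynomial ring on the surviving generators, so no \(e_{2^k}\)-type class survives to \(E_4\). The remaining classes form \(\F_2[\xi_i : i\ne k]/(\xi_1\xi_{k-1}^2)\cdot e_0\); all of these are images of permanent cycles in the (collapsed) spectral sequence for \(M_k\), hence are themselves permanent by functoriality. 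The main obstacle is showing that \(d_3\) is truly nonzero rather than merely a candidate, and for that I expect the Poincar\'e-series comparison with the independently-computed \(\pi_\ast M\) to be the cleanest tool.
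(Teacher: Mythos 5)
Your proposal takes essentially the same route as the paper: the cofiber sequence \(M_k \to M_k\smashover{\HA}\overline M_k \to \Sigma^{2^k}M_k\), the Milnor expansion \(\zeta_k \equiv \xi_1\xi_{k-1}^2 \pmod{\xi_k,\ \text{monomial degree}\ \geq 5}\), and injectivity of multiplication by \(\xi_1\xi_{k-1}^2\) on \(\F_2[\xi_1,\dots]\{e_0\}/(\xi_k e_0)\) to close off the spectral sequence at \(E_4\). The one place you meander is in upgrading \(d_3(e_{2^k})=\xi_1\xi_{k-1}^2 e_0\) from a ``candidate'' to a theorem. Your bidegree count correctly forces the filtration-3 target, but the proposed Poincar\'e-series comparison by itself only shows that \emph{some} differential on \(e_{2^k}\) is nonzero; it does not on its own exclude, say, \(d_3(e_{2^k})=0\) and a higher \(d_r\) hitting a different filtration-\(r\) class in the same topological degree. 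The paper's argument is more direct and uses the observation you already made but did not fully leverage: since \(\zeta_k\cdot e_0 = 0\) in \(\pi_*M\), and \(\xi_k e_0=0\) already on \(E_2\), the lowest-filtration surviving term \(\xi_1\xi_{k-1}^2\cdot e_0\) of the expansion of \(\zeta_k e_0\) must vanish at \(E_\infty\); as \(e_{2^k}\) (filtration \(0\), degree \(2^k\)) is the only available source, the killing differential is precisely a \(d_3\). With that substitution your write-up closes the gap, and the rest (the \(E_4\)-page identification via injectivity, permanence of the \(e_0\)-multiples by comparison with the collapsed sequence for \(M_k\)) is sound and matches the paper's reasoning.
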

\begin{proof}
    The image \(\xi_1 \xi_k^2 \cdot e_0\) of \(\zeta_k \cdot e_0\) must be zero on the \(E_\infty\) page, and the only class available to support this differential is \(e_{2^k}\). It remains only to show that the spectral sequence collapses at \(E_4\). Multiplication by \(\xi_1\xi_{k-1}^2\) is injective in the  module
    \[
    \F_2[\xi_1,\dots]\{e_0\}/(\xi_k\cdot e_0),
    \]
    so the \(E_4\)-page is
    \[
    \F_2[\xi_1,\dots]\{e_0\}/(\xi_k, \xi_1\xi_{k-1}^2)\cdot e_0.
    \]
    For degree reasons, \(e_0\) is a permanent cycle, and therefore all multiples of \(e_0\) are as well.
\end{proof}

The elided terms in the formula for \(\zeta_k\) will reappear as multiplicative extensions: the class \(\xi_1\xi_{k-1}^2\) is only the leading term of \(\zeta_k\) modulo \(\xi_k\). We will expand on this in \S\ref{sec:ExampleRkn}.

\subsection{Endomorphism algebras of cones}\label{sec:endo}

The endomorphism ring of a module is a natural example of an associative algebra. When we consider finite cell \(\HA\)-modules, the underlying homotopy type can be easily determined, such as in the case of the algebras
\[
\End_{\HA}(M_I)
\]
when \(I\) is a finite set. 

Since the cones \(M_k\) are self dual in the sense that the \(\HA\)-module Spanier--Whitehead dual \(D_\HA M_k\) has an equivalence
\[
D_{\HA} M_k\simeq \Sigma^{-2^k} M_k,
\]
we have an equivalence of \(\HA\)-module spectra:
\[
\End_\HA(M_k) \simeq \Sigma^{-2^k} M_k \smashover{\HA} M_k
\]

\begin{notation}
  We will write \(E(S)\) for the exterior algebra on a graded set \(S\).
\end{notation}

\begin{proposition}
    The homotopy of \(\End_{\HA}(M_k)\), as an \(\HA_\ast\)-algebra, is
    \[
    \HA_\ast\otimes E(\beta_{\Neg2^k})/(\xi_k\beta_{\Neg2^k}, r_k),
    \]
    where the relation \(r_k\) satisfies
    \[
    r_k\equiv\xi_k+\xi_{k+1}\beta_{\Neg2^k}\mod\text{ higher order monomials}.
    \]
\end{proposition}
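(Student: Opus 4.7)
The plan is to combine the self-duality $D_\HA M_k \simeq \Sigma^{-2^k} M_k$ stated just above with the relative Adams spectral sequence computation of Corollary~\ref{cor:AdamsMkBarMk} to read off $\pi_\ast\End_\HA(M_k)$, and then to verify the exterior relation $\beta_{-2^k}^2 = 0$ by a separate degree argument.

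First, I would use the self-duality to write $\End_\HA(M_k) \simeq \Sigma^{-2^k}\big(M_k \smashover{\HA} M_k\big)$, reducing the problem to computing $\pi_\ast(M_k \smashover{\HA} M_k)$ and shifting by $-2^k$. Corollary~\ref{cor:AdamsMkBarMk} gives the $E_2$-term
\[
\F_2[\xi_1,\xi_2,\dots]\{e_0,\ e_{2^k}\}/(\xi_k e_0,\ \xi_k e_{2^k} + \xi_{k+1} e_0),
\]
and, as observed in the discussion right after Remark~\ref{rem:CofiberForm}, for $M_k \smashover{\HA} M_k$ (as opposed to $M_k \smashover{\HA} \overline{M}_k$) the connecting map in the cofiber sequence $M_k \to M_k \smashover{\HA} M_k \to \Sigma^{2^k} M_k$ is multiplication by $\xi_k$ on $\pi_\ast M_k = \HA_\ast/\xi_k$, hence zero. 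Therefore the spectral sequence collapses at $E_2$.

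Next, I would shift by $-2^k$ and match the generators with the ring structure on $\End_\HA(M_k)$: the class $e_{2^k}$ becomes the unit $1$ in degree $0$, and $e_0$ becomes an element $\beta_{-2^k}$ of degree $-2^k$. The $E_2$-relations then translate into $\xi_k\beta_{-2^k}=0$ and a relation whose leading term is $\xi_k + \xi_{k+1}\beta_{-2^k}$. Since the relative Adams filtration coincides with the filtration by monomial degree in the $\xi_i$'s (as explained in \S\ref{sec:relASS}), any correction to this relation coming from passage to the actual homotopy groups consists of terms of strictly greater monomial degree, giving the relation $r_k$ of the stated form.

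Finally, I would check $\beta_{-2^k}^2 = 0$ by a degree argument: this class lives in $\pi_{-2^{k+1}}\End_\HA(M_k) \cong \pi_{-2^k}(M_k \smashover{\HA} M_k)$, and the $E_2$-page above is supported in non-negative topological degree (both generators $e_0, e_{2^k}$ sit in degree $\geq 0$ and every Adams class $\xi_i$ has positive degree), so this homotopy group vanishes. The main obstacle is really just the bookkeeping in the second step: one has to confirm that, under the shift and the duality equivalence, $e_{2^k}$ is indeed the multiplicative unit of $\End_\HA(M_k)$ so that the $E_2$-relation involving $\xi_k e_{2^k}$ translates directly into the stated leading form of $r_k$. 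Beyond that, everything is forced by the collapse of the spectral sequence and the identification of Adams filtration with monomial degree.
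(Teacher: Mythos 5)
Your proposal is correct and follows essentially the same route as the paper: apply the self-duality $\End_\HA(M_k)\simeq \Sigma^{-2^k}M_k\smashover{\HA}M_k$, read off the $E_2$-term from Corollary~\ref{cor:AdamsMkBarMk}, observe collapse, and kill $\beta_{\Neg2^k}^2$ by connectivity. The only cosmetic difference is that you argue collapse by counting via the cofiber sequence (as in the discussion after Remark~\ref{rem:CofiberForm}) whereas the paper's proof invokes multiplicative generation by permanent cycles; both work.
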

\begin{proof}
Corollary~\ref{cor:AdamsMkBarMk} says that the \(E_2\)-term of the relative Adams spectral sequence is the following:
\[
\HA_\ast \otimes E(\beta_{\Neg2^k}) / (\xi_k\beta_{\Neg2^k}, \xi_{k+1} \beta_{\Neg2^k} + \xi_k),
\]
where \(\beta_{\Neg2^k}\) is in degree \((-2^k)\). For degree reasons, \(\beta_{\Neg2^k}\) is necessarily exterior and a permanent cycle.

This spectral sequence degenerates at \(E_2\) because all generators of this algebra are permanent cycles. In \(\pi_\ast \End_\HA(M_k)\), therefore, \(\xi_k\) is divisible by \(\xi_{k+1}\) mod terms that are quadratic and higher in \(\HA_\ast\).
\end{proof}

\begin{remark}
One consequence of this theorem is that the ``generating hypothesis'', as in Lockridge \cite{LockridgeGeneratingHyp}, fails in the category of \(\HA\)-modules: there are nontrivial self-maps of the finite complex \(M_k\) that induce the zero map on homotopy groups. 
\end{remark}

When \(k=1\), there is no indeterminacy in the relation \(r_1\): the relation \(\xi_1\beta_{\Neg2}=0\) implies that in degree \(1\), the only possible relation \(r_1\) must be that \(\xi_1=\xi_2\beta_{\Neg2}\). Working inductively, we get a family of algebras where we have good control: the algebras \(\End_\HA(M_{\leq k})\).

\begin{proposition}
The relative Adams spectral sequence for \(\End_\HA(M_{\leq k})\) collapses at \(E_2\), and there is an isomorphism of algebras
\[
\pi_\ast \End_\HA(M_{\leq k}) \cong 
\F_2[\xi_{k+1}, \xi_{k+2}, \dots] \otimes E(\beta_{\Neg2},\dots,\beta_{\Neg2^k}).
\]
For \(j \leq k\), the image of \(\xi_j\) in this algebra is \(\xi_{j+1} \beta_{\Neg2^j}\).
\end{proposition}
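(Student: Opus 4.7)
The approach will be to exploit dualizability of \(M_{\leq k}\) together with multiplicativity of the relative Adams spectral sequence, building on the single-factor computation from the preceding proposition. Since \(D_\HA M_i \simeq \Sigma^{-2^i} M_i\) for each \(i\), the smash product \(M_{\leq k}\) is \(\HA\)-dualizable and the standard identification of endomorphism objects of smash products of dualizable objects gives an equivalence of \(\HA\)-algebras
\[
\End_\HA(M_{\leq k}) \;\simeq\; \bigwedge_{i=1}^{k} \End_\HA(M_i),
\]
with the smash taken over \(\HA\). Since \(H\F_2\) is a field, K\"unneth identifies \(H_*^\HA(\End_\HA(M_{\leq k}))\) with the tensor product of the \(H_*^\HA(\End_\HA(M_i))\) as \(\F_2[u]\)-comodule algebras.

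I would then compute the \(E_2\)-page using multiplicativity of the spectral sequence and naturality with respect to the inclusions \(\End_\HA(M_j) \to \End_\HA(M_{\leq k})\) (acting by the identity on the other factors). These transport the single-factor relations from the preceding proposition to the product, yielding
\[
E_2 \;\cong\; \F_2[\xi_1,\xi_2,\dots] \otimes E(\beta_{\Neg2},\dots,\beta_{\Neg2^k}) \Big/ \big(\xi_j\beta_{\Neg2^j},\; \xi_j + \xi_{j+1}\beta_{\Neg2^j}\,:\,1\le j\le k\big).
\]
Recursively using \(\xi_j = \xi_{j+1}\beta_{\Neg2^j}\) to eliminate \(\xi_1,\dots,\xi_k\), and noting that \(\xi_j\beta_{\Neg2^j} = \xi_{j+1}\beta_{\Neg2^j}^2 = 0\) then holds automatically since each \(\beta_{\Neg2^j}\) is exterior, simplifies \(E_2\) to \(\F_2[\xi_{k+1},\xi_{k+2},\dots] \otimes E(\beta_{\Neg2},\dots,\beta_{\Neg2^k})\). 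The spectral sequence collapses at \(E_2\): each \(\xi_i\) is a permanent cycle by naturality from the collapsing spectral sequence for \(\HA\), and each \(\beta_{\Neg2^j}\) has negative topological degree and filtration zero, so no differential target exists.

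Finally, to promote \(\xi_j = \xi_{j+1}\beta_{\Neg2^j}\) from an identity on \(E_2\) to a true equality in \(\pi_*\), I would argue by degrees: any correction must be a monomial in \(\xi_i\)'s with \(i \ge k+1\) and \(\beta_{\Neg2^m}\)'s, with at least two \(\xi\)-factors. Two such \(\xi\)'s contribute topological degree at least \(2(2^{k+1}-1)\), while the \(\beta\)'s subtract at most \(2^{k+1}-2\), so any such correction has degree at least \(2^{k+1}\); since \(|\xi_j| = 2^j - 1 < 2^{k+1}\) for \(j \le k\), no correction exists. The main obstacle is the \(E_2\)-page identification itself, since \(\Ext_{\F_2[u]}\) does not in general convert tensor products of comodules into tensor products of Ext groups; the remedy is to work multiplicatively and use the naturality inclusions from each \(\End_\HA(M_j)\)-factor to transport the single-factor relations, rather than attempt a direct K\"unneth calculation at the Ext level.
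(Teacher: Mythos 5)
Your overall strategy mirrors the paper's: decompose \(\End_\HA(M_{\leq k})\) as the \(k\)-fold smash over \(\HA\) of the \(\End_\HA(M_j)\), identify the relative homology by K\"unneth, deduce collapse from naturality and degree considerations, and settle the multiplicative extensions in \(\pi_*\) by a sparsity argument. Your degree bound for the extensions (two \(\xi\)'s with \(i\geq k+1\) contribute at least \(2(2^{k+1}-1)\), the \(\beta\)'s subtract at most \(2^{k+1}-2\), so any correction lives in total degree \(\geq 2^{k+1}\)) matches the paper's observation that the first class above filtration one is \(\xi_{k+1}^2\beta_{\Neg2}\cdots\beta_{\Neg2^k}\) in total degree \(2^{k+1}\); both are fine.

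The gap is precisely where you flag it. You correctly note that \(\Ext_{\F_2[u]}\) does not take tensor products of comodules to tensor products of Ext groups, but the proposed remedy — ``work multiplicatively and transport the single-factor relations by naturality'' — does not actually compute \(E_2\). Multiplicativity gives a ring map from the tensor product of \(E_2\)-pages over \(E_2(\HA)\) into \(E_2(\End_\HA(M_{\leq k}))\), and naturality shows certain elements and relations are present, but neither tells you that this map is surjective, that the kernel is what you wrote, or that no further classes occur; in fact each factor \(E_2(\End_\HA(M_j))\cong \HA_\ast\otimes E(\beta_{\Neg2^j})/(\xi_j\beta_{\Neg2^j},\ \xi_j+\xi_{j+1}\beta_{\Neg2^j})\) is \emph{not} flat over \(\F_2[\xi_1,\dots]\) (it contains nilpotents for the \(\xi_j\)-action), so no generic K\"unneth isomorphism at the Ext level is available. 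The paper closes this gap with a genuine computation: the relative homology \(H_\ast^\HA(\End_\HA(M_{\leq k}))\cong \Sigma^{2-2^{k+1}}H_\ast^\HA(M_{\leq k})\otimes H_\ast^\HA(M_{\leq k})\) is cofree as a comodule over the quotient coalgebra \(\F_2[u]/(u^{2^k})\cong H_\ast^\HA(M_{\leq k})\), so the Cartan--Eilenberg change-of-rings spectral sequence degenerates and yields
\[
E_2\cong \Ext_{\F_2[u^{2^k}]}\bigl(\F_2,\ \Sigma^{2-2^{k+1}}H_\ast^\HA(M_{\leq k})\bigr)\cong \F_2[\xi_{k+1},\xi_{k+2},\dots]\otimes E(\beta_{\Neg2},\dots,\beta_{\Neg2^k}).
\]
You should replace the hand-wave about transporting relations with this cofreeness/change-of-rings step; after that, your collapse and extension arguments go through.
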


\begin{proof}
We use the equivalence \(M_{\leq k}=M_1\smashover{\HA}  \ldots \smashover{\HA} M_k \) to deduce that
\[
\End_\HA(M_{\leq k}) \simeq \End_\HA(M_1) \smashover{\HA} \dots \smashover{\HA} \End_\HA(M_k).
\]
The homology
\[
H_\ast^\HA(\End_\HA(M_k)) \cong \Sigma^{2-2^{k+1}} H_\ast^\HA(M_{\leq k}) \otimes H_\ast^\HA(M_{\leq k})
\]
is cofree over the quotient coalgebra \(\F_2[u]/(u^{2^k}) \cong H_\ast^\HA(M_{\leq k})\). The Cartan-Eilenberg spectral sequence then degenerates, and so the \(E_2\)-term of the relative Adams spectral sequence is
\[
\Ext_{\F_2[u^{2^k}]} (\F_2, \Sigma^{2-2^{k+1}} H_\ast^\HA(M_{\leq k})) \cong \F_2[\xi_{k+1}, \xi_{k+2}, \dots] \otimes E(\beta_{\Neg2},\dots,\beta_{\Neg2^k}).
\]
This again collapses at \(E_2\): the maps \(\End_\HA(M_j) \to \End_\HA(M_{\leq k})\) show, by naturality of the relative Adams spectral sequence, that the elements \(\beta_{\Neg2^j}\) are permanent cycles that square to zero and that the extensions \(\xi_j = \xi_{j+1} \beta_{\Neg2^j}\) persist to this \(E_2\)-term.

The first nonzero element higher than the \(1\)-line is \(\xi_{k+1}^2 \beta_{\Neg2} \beta_{\Neg4} \dots \beta_{\Neg2^k}\) in total degree \(2^{k+1}\). This means that there are no classes in the same total degree as \(\xi_j + \xi_{j+1} \beta_{\Neg 2^j}\) of higher relative Adams filtration, so the relations on \(E_2\) are in fact exact.
\end{proof}

\begin{proposition}
For all \(k\), there are equivalences
\[
\End_\HA(M_{\leq k}) \smashover{\HA} M_{k+1} \simeq \End_\HA(M_{\leq k}) \smashover{\HA} \overline M_{k+1}.
\]
\end{proposition}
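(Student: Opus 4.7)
The plan is to prove the stronger statement that $\xi_{k+1}$ and $\zeta_{k+1}$ have the same image in $\pi_\ast R$, where $R := \End_\HA(M_{\leq k})$, from which the desired equivalence follows at once: the modules $M_{k+1}$ and $\overline{M}_{k+1}$ are by construction the mapping cones of the self-maps $\xi_{k+1}, \zeta_{k+1}\co \Sigma^{2^{k+1}-1}\HA \to \HA$, and smashing with $R$ over $\HA$ expresses the two sides as cofibers of multiplication by the images of $\xi_{k+1}$ and $\zeta_{k+1}$ under the unit map $\HA_\ast \to \pi_\ast R$.

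To carry this out, I would first invoke Milnor's conjugation formula (as recalled in Section~\ref{sec:double}), which in characteristic 2 reads
\[
\zeta_{k+1} = \xi_{k+1} + \sum_{i=1}^{k}\xi_i^{2^{k+1-i}}\zeta_{k+1-i}
\]
in $\HA_\ast$. In $\pi_\ast R$, the previous proposition gives $\xi_i = \xi_{i+1}\beta_{\Neg 2^i}$ with $\beta_{\Neg 2^i}$ exterior, for each $i \leq k$. For each such $i$, the exponent $2^{k+1-i}$ is at least $2$, so $\beta_{\Neg 2^i}^{2^{k+1-i}} = 0$ and hence $\xi_i^{2^{k+1-i}} = 0$ in $\pi_\ast R$. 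Every correction term in Milnor's formula therefore vanishes, so the images of $\zeta_{k+1}$ and $\xi_{k+1}$ coincide.

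There is no serious obstacle here: the only nontrivial input beyond the previous proposition is Milnor's formula, and the exterior structure of the $\beta_{\Neg 2^i}$ exactly suffices to kill each correction term. In fact the argument shows more than needed: the two smash products arise as cofibers of literally the same self-map of $R$, not merely equivalent ones.
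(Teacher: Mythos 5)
Your proposal is correct and matches the paper's argument essentially verbatim: the paper likewise uses the previous proposition to get $\xi_j = \xi_{j+1}\beta_{\Neg 2^j}$ with square zero for $j \leq k$, then applies Milnor's conjugation formula to conclude that $\xi_{k+1}$ and $\zeta_{k+1}$ have equal images in $\pi_\ast \End_\HA(M_{\leq k})$, so the two cofibers agree. Your write-up just spells out the vanishing of the correction terms in slightly more detail.
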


\begin{proof}
The previous proposition shows that, for \(j \leq k\), the image of the element \(\xi_j\) in \(\pi_\ast \End_\HA(M_{\leq k})\) is \(\xi_{j+1} \beta_{\Neg 2^j}\), and this squares to zero. The image of Milnor's formula for conjugation then shows that \(\xi_{k+1}\) and \(\zeta_{k+1}\) become equal in \(\pi_\ast \End_\HA(M_{\leq k})\), and so their mapping cones are equivalent as left modules over \(\End_\HA(M_{\leq k})\).
\end{proof}

\begin{corollary}
There is an equivalence of \(\HA\)-modules
\[
M_{\leq k} \smashover{\HA} \overline M_{\leq k} \simeq \Sigma^{2^{k+1}-2} \End_\HA(M_{\leq k}).
\]
\end{corollary}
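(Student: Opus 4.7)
My plan is to prove this by induction on $k$. The base case $k=0$ is trivial: $M_{\leq 0}$ and $\overline{M}_{\leq 0}$ are both the empty smash product $\HA$, and $\End_\HA(\HA) \simeq \HA$, with the shift $2^1 - 2 = 0$ doing nothing.

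For the inductive step, first decompose
\[
M_{\leq k} \smashover{\HA} \overline{M}_{\leq k} \simeq \big(M_{\leq k-1} \smashover{\HA} \overline{M}_{\leq k-1}\big) \smashover{\HA} M_k \smashover{\HA} \overline{M}_k,
\]
and apply the inductive hypothesis to the first factor to replace it with $\Sigma^{2^k - 2} \End_\HA(M_{\leq k-1})$. Next, apply the preceding proposition with the index $k$ there replaced by $k-1$: this identifies $\End_\HA(M_{\leq k-1}) \smashover{\HA} \overline{M}_k$ with $\End_\HA(M_{\leq k-1}) \smashover{\HA} M_k$ as left $\End_\HA(M_{\leq k-1})$-modules, and hence as $\HA$-modules. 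Smashing that equivalence with a further copy of $M_k$ over $\HA$ turns our expression into $\Sigma^{2^k - 2} \End_\HA(M_{\leq k-1}) \smashover{\HA} M_k \smashover{\HA} M_k$.

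The remaining manipulations are consequences of self-duality. Since $D_\HA M_k \simeq \Sigma^{-2^k} M_k$, we have $M_k \smashover{\HA} M_k \simeq \Sigma^{2^k} \End_\HA(M_k)$. Moreover, for dualizable $\HA$-modules $X$ and $Y$ there is a natural equivalence $\End_\HA(X) \smashover{\HA} \End_\HA(Y) \simeq \End_\HA(X \smashover{\HA} Y)$; taking $X = M_{\leq k-1}$ and $Y = M_k$ identifies $\End_\HA(M_{\leq k-1}) \smashover{\HA} \End_\HA(M_k)$ with $\End_\HA(M_{\leq k})$. Adding shifts, $2^k - 2 + 2^k = 2^{k+1} - 2$, so combining everything yields
\[
M_{\leq k} \smashover{\HA} \overline{M}_{\leq k} \simeq \Sigma^{2^{k+1} - 2} \End_\HA(M_{\leq k}),
\]
completing the induction.

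The real content is in the preceding proposition; what remains is bookkeeping of shifts together with standard facts about dualizable modules. The only potential subtlety is making sure the equivalence from the preceding proposition can be smashed with an additional copy of $M_k$, but this is automatic once we view it as an equivalence of $\HA$-modules, which follows from its stated form as an equivalence of left $\End_\HA(M_{\leq k-1})$-modules.
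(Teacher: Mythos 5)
Your proof is correct and matches the paper's approach: the paper's proof is simply ``this follows by induction on \(k\),'' and the intended induction is exactly the one you carry out, reducing via the preceding proposition \(\End_\HA(M_{\leq k-1}) \smashover{\HA} M_k \simeq \End_\HA(M_{\leq k-1}) \smashover{\HA} \overline{M}_k\) together with the self-duality \(D_\HA M_k \simeq \Sigma^{-2^k} M_k\) and the multiplicativity of \(\End_\HA\) on smash products. The bookkeeping of shifts and the justification for smashing the key equivalence with another copy of \(M_k\) are both handled correctly.
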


\begin{proof}
 This follows by induction on \(k\).
\end{proof}

\section{Killing \tpfstr{\(\xi_i\)}{Xi-i}s in associative algebras}\label{sec:killing}
The ring \(\HA = H\F_2 \wedge H\F_2\) is an associative ring spectrum, so we can universally kill off elements in homotopy by attaching \(A_\infty\)-cells. Doing so {\emph{in the category of \(\HA\)-algebras}} has an added wrinkle that arises from ensuring that the new null-homotopy commutes with \(\HA\). Using results from Appendix~\ref{sec:CupkAlgs}, we will see that killing an element in homotopy in associative algebras has the effect of killing a swath of Dyer--Lashof operations on that element.

\subsection{Killing generators}\label{sec:killinggens}

The method of killing generators here is closely related to work of Baker--Richter and Szymik \cite{BakerRichterQuasisymmetric, SzymikCharacteristic}.

\begin{definition}
  For a commutative ring spectrum \(R\) and an \(R\)-module \(M\), we define \(\T_R(M)\) to be the free associative \(R\)-algebra on \(M\):
  \[
  \T_R(M) = \bigvee_{n \geq 0} M^{\smashover{R} n}.
  \]
\end{definition}

\begin{definition}
  Suppose that \(R\) is a commutative ring spectrum and \(\alpha \in \pi_n(R)\), represented by a map \(\Sigma^n R \to R\). The \emph{associative quotient} \(R\aquot\alpha\) is the (homotopy) pushout
  \[
    \begin{tikzcd}
      \T_R(\Sigma^n R) \ar[r] \ar[d, swap, "\alpha"] &
      \T_R(C \Sigma^n R) \ar[d] \\
      R \ar[r] & R\aquot\alpha
    \end{tikzcd}
  \]
  in the category of associative \(R\)-algebras. Here \(C \Sigma^n R\) is the cone \(\Sigma^n R \wedge I \simeq \ast \).
\end{definition}

\begin{proposition}
  The algebra \(R\aquot\alpha\) is the free associative \(R\)-algebra on a nullhomotopy of \(\alpha\) in the following sense. For an associative \(R\)-algebra \(B\), a map \(R\aquot\alpha \to B\) of associative \(R\)-algebras is equivalent to a choice of nullhomotopy of the composite map \(\Sigma^n R \stackrel{\alpha}{\to} R \to B\) of \(R\)-modules.
\end{proposition}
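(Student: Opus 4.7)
The plan is to unwind the defining homotopy pushout by combining its universal property with the adjunction between the free associative algebra functor \(\T_R\) and the forgetful functor to \(R\)-modules. There is essentially no geometric content here; the statement is formal.

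First, apply the universal property of the homotopy pushout. A map \(R\aquot\alpha \to B\) in associative \(R\)-algebras is the same as a commutative square of associative \(R\)-algebras
\[
\begin{tikzcd}
\T_R(\Sigma^n R) \ar[r] \ar[d, swap, "\alpha"] & \T_R(C\Sigma^n R) \ar[d] \\
R \ar[r] & B,
\end{tikzcd}
\]
where the bottom map is (essentially uniquely) the unit of the associative \(R\)-algebra structure on \(B\), and the left map is the one induced by \(\alpha\) under the adjunction.

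Next, invoke the adjunction \(\T_R \leftadjoint U\), where \(U\) is the forgetful functor from associative \(R\)-algebras to \(R\)-modules. This identifies associative \(R\)-algebra maps \(\T_R(M) \to B\) with \(R\)-module maps \(M \to B\). Applying this to both \(\T_R(\Sigma^n R)\) and \(\T_R(C\Sigma^n R)\), the square above is equivalent to a commutative square of \(R\)-modules
\[
\begin{tikzcd}
\Sigma^n R \ar[r] \ar[d, swap, "\alpha"] & C\Sigma^n R \ar[d] \\
R \ar[r] & B.
\end{tikzcd}
\]
Finally, since \(C\Sigma^n R\) is the cone on \(\Sigma^n R\), such a commutative square whose top-right corner is \(C\Sigma^n R\) is exactly the data of a nullhomotopy of the composite \(\Sigma^n R \xrightarrow{\alpha} R \to B\) of \(R\)-modules.

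The one point requiring care is that each step must be read in the homotopical or \(\infty\)-categorical sense: the pushout defining \(R\aquot\alpha\) is a homotopy pushout; the adjunction \(\T_R \leftadjoint U\) is a Quillen or \(\infty\)-categorical adjunction between associative \(R\)-algebras and \(R\)-modules; and ``nullhomotopy'' is interpreted as a specified filling, so that the space of such choices (rather than a mere set) is what is being identified. Modulo these standard conventions, the chain of equivalences above is the complete argument.
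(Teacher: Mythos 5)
Your proof is correct, and it unpacks exactly the formal argument the paper leaves implicit: the paper states this proposition without proof, treating it as an immediate consequence of the homotopy pushout definition of \(R\aquot\alpha\), the initiality of \(R\) among associative \(R\)-algebras, and the free--forgetful adjunction \(\T_R \leftadjoint U\). Your caveat about reading each step \(\infty\)-categorically (or in a suitable model-categorical sense) is the right thing to flag and is what makes the chain of equivalences go through.
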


\begin{proposition}\label{prop:aquotbasechange}
  For a map \(f\co R \to S\) of commutative ring spectra and \(\alpha \in \pi_* R\), there is an equivalence
  \[
  S \smashover{R} (R\aquot\alpha) \simeq S\aquot f(\alpha)
  \]
  of associative algebras over \(S\).
\end{proposition}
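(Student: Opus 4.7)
My plan is to apply base change to the defining pushout square of $R\aquot\alpha$ and identify each corner with the corresponding piece of the universal construction for $S\aquot f(\alpha)$.

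First, observe that for a map $f\co R \to S$ of commutative ring spectra, the functor $S \smashover{R} (-)$ from associative $R$-algebras to associative $S$-algebras is left adjoint to restriction of scalars along $f$. As a left adjoint it preserves all (homotopy) colimits, including the pushout that defines $R\aquot\alpha$. Thus, applying $S \smashover{R} (-)$ to the defining square produces a pushout square of associative $S$-algebras
\[
\begin{tikzcd}
S \smashover{R} \T_R(\Sigma^n R) \ar[r] \ar[d] & S \smashover{R} \T_R(C\Sigma^n R) \ar[d] \\
S \ar[r] & S \smashover{R} (R\aquot\alpha).
\end{tikzcd}
\]

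Next, I would identify the two upper corners. For any $R$-module $M$, the free associative $R$-algebra construction commutes with base change: there is a natural equivalence $S \smashover{R} \T_R(M) \simeq \T_S(S \smashover{R} M)$. This can be verified either by checking the universal property (a map from either side to an associative $S$-algebra $B$ is naturally equivalent to an $R$-module map $M \to B$), or by applying base change wedge-summand-wise to the sum $\bigvee_n M^{\smashover{R} n}$ and using that $S \smashover{R} (-)$ is symmetric monoidal from $R$-modules to $S$-modules. In particular we obtain $S \smashover{R} \T_R(\Sigma^n R) \simeq \T_S(\Sigma^n S)$ and $S \smashover{R} \T_R(C\Sigma^n R) \simeq \T_S(C\Sigma^n S)$, with the horizontal map on top induced by the canonical nullhomotopy.

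Finally, I would check that the left-hand vertical map is, up to these identifications, multiplication by $f(\alpha) \in \pi_n(S)$. This is essentially tautological: base change of the $R$-module map $\alpha \co \Sigma^n R \to R$ along $f$ is the map $f(\alpha) \co \Sigma^n S \to S$, and $\T_S$ of this nullhomotopy is the map appearing in the pushout defining $S\aquot f(\alpha)$. The resulting pushout is therefore $S\aquot f(\alpha)$ by definition, giving the stated equivalence of associative $S$-algebras. I do not expect any real obstacle here; the only thing that needs any care is verifying that $\T_R$ commutes with base change, which is a straightforward adjunction/universal property argument.
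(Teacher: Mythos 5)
The paper states this proposition without proof, so there is no written argument to compare against. Your proof is correct and complete: since $S \smashover{R} (-)$ is left adjoint to restriction of scalars (both at the module level and, because it is a strong symmetric monoidal functor between the module categories of commutative rings, at the level of associative algebras), it preserves the defining homotopy pushout; and the free associative algebra construction commutes with base change, which you justify cleanly both by the universal property and by the explicit wedge formula $\T_R(M) = \bigvee_n M^{\smashover{R} n}$. Identifying the resulting pushout with the defining square for $S \aquot f(\alpha)$ is then immediate. This is the natural argument and the one the authors surely had in mind.
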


\begin{example}\label{ex:killingzero}
  A choice of nullhomotopy of the zero map \(S^n \to B\) is equivalent to a map \(S^{n+1} \to B\). As a result, if \(\alpha = 0\) in \(\pi_* R\), then the universal property shows that \(R \aquot \alpha\) is a free algebra:
  \[
  R \aquot \alpha \cong \T_R(\Sigma^{n+1} R) \simeq \bigvee_{k \geq 0} \Sigma^{k(n+1)}R.
  \]
  More generally, if \(f\co R \to S\) is a map of commutative ring spectra such that \(f(\alpha) = 0\), then this shows that there is an equivalence
  \[
  S \smashover{R} (R \aquot \alpha) \simeq \T_S(\Sigma^{n+1} S)
  \]
  of \(S\)-algebras.
\end{example}

\subsection{Applications to \tpfstr{\(\HA\)}{A}-modules}\label{sec:apptoAmod}

\begin{proposition}\label{prop:HomologyAmodmodalpha}
  Suppose that \(\alpha \in {\HA_{\ast}}\) is such that \(\alpha \equiv \xi_{k+1}\) mod decomposables. Then there is an isomorphism 
  \[
  \pi_*(H\F_2 \smashover{\HA} A\aquot \alpha) \cong \F_2[{u}^{2^k}]
  \]
  of comodules over \(\F_2[{u}]\).
\end{proposition}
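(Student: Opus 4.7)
The plan is to reduce the calculation to a trivial base-change case, then pin down the comodule structure by comparison with the module-level cone. By Proposition~\ref{prop:aquotbasechange} applied to the augmentation $f\co \HA \to H\F_2$, we have
\[
H\F_2 \smashover{\HA} (\HA\aquot\alpha) \simeq H\F_2 \aquot f(\alpha)
\]
as associative $H\F_2$-algebras. Since $\alpha$ has positive degree $2^{k+1}-1$ and $\pi_{>0} H\F_2 = 0$, we have $f(\alpha) = 0$, so Example~\ref{ex:killingzero} identifies the right-hand side with the free $H\F_2$-algebra $\T_{H\F_2}(\Sigma^{2^{k+1}} H\F_2)$. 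Its homotopy is then the polynomial ring $\F_2[v]$ on a class $v$ in degree $2^{k+1}$, which matches $\F_2[u^{2^k}]$ as an underlying $\F_2$-algebra.

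To upgrade this to an isomorphism of $\F_2[u]$-comodules, I would exploit naturality of the coaction. The composite of $\HA$-module maps
\[
\HA/\alpha \longrightarrow \HA\aquot\alpha \longrightarrow H\F_2,
\]
where the second map is the augmentation (which exists by the universal property of $\HA\aquot\alpha$ since $f(\alpha)=0$), gives a commutative diagram of $\F_2[u]$-comodule maps $H_*^\HA(\HA/\alpha) \to \F_2[v] \to \F_2[u]$. By Corollary~\ref{cor:MkRelHomology} together with the remark following Proposition~\ref{prop:muzeroNull} that the comodule structure on the module-level cone depends only on $\alpha$ modulo decomposables, the generator $b_{2^{k+1}}$ of $H_{2^{k+1}}^\HA(\HA/\alpha)$ has image the nonzero class $u^{2^k} \in \F_2[u]$ under the outer composite. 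Since $v$ is the unique nonzero class of degree $2^{k+1}$ in $\F_2[v]$, this forces $b_{2^{k+1}} \mapsto v$ and the augmentation on relative homology to send $v \mapsto u^{2^k}$. As the augmentation is an algebra map, $v^n \mapsto u^{n \cdot 2^k}$, exhibiting $\F_2[v]$ as isomorphic to the subalgebra $\F_2[u^{2^k}] \subset \F_2[u]$, which is a subcomodule by a routine application of Lucas's theorem to $\binom{n \cdot 2^k}{i}$.

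The main delicacy is identifying the comodule structure rather than just the underlying algebra, since the base-change argument reveals only the multiplicative structure. That obstacle is removed by pulling back information from the module-level cone $\HA/\alpha$, whose comodule structure is fully determined by the calculations of \S\ref{sec:homofmappcones}: once we know the augmentation sends $v$ to \emph{some} nonzero class of degree $2^{k+1}$ in $\F_2[u]$, degrees leave $u^{2^k}$ as the only possibility and further multiplicative extension is automatic.
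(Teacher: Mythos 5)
Your argument is correct and follows essentially the same route as the paper's proof: base change along the augmentation $\HA \to H\F_2$ plus Example~\ref{ex:killingzero} to identify the underlying algebra as a polynomial algebra on a degree-$2^{k+1}$ class, then the factorization $C(\alpha) \to \HA\aquot\alpha \to H\F_2$ on relative homology to pin down the comodule structure as the subalgebra $\F_2[u^{2^k}] \subset \F_2[u]$. The extra details you supply (the degree count forcing $b_{2^{k+1}} \mapsto v$, and Lucas's theorem confirming $\F_2[u^{2^k}]$ is a subcomodule) are fine elaborations of steps the paper leaves implicit.
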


\begin{proof}
  The element \(\alpha\) maps to zero in \(H\F_2\). By Example~\ref{ex:killingzero}, there is an equivalence of \(H\F_2 \smashover{\HA} (\HA\aquot \alpha)\) with the free algebra \(\T_{H\F_2}(S^{2^{k+1}})\) . In particular,
  \[
  H_\ast^\HA(\HA \aquot \alpha) = \pi_*(H\F_2 \smashover{\HA} \HA\aquot \alpha) \cong \F_2[b_{2^{k+1}}].
  \]
  
  The universal property of \(\HA\aquot \alpha\) gives it a map \(\HA\aquot \alpha \to H\F_2\) of \(\HA\)-algebras, factoring the map \(C(\alpha) \to H\F_2\). Applying \(H_\ast^\HA( -)\), we get a factorization
  \[
  \F_2\{b_0,b_{2^{k+1}}\} \to \F_2[b_{2^{k+1}}] \to \F_2[{u}],
  \]
  where the composite sends \(b_0\) to \(1\) and \(b_{2^{k+1}}\) to \({u}^{2^k}\). Since the second map is a map of algebras, this identifies \(\pi_*(H\F_2 \smashover{\HA} \HA\aquot \alpha)\), as a comodule, with the subalgebra \(\F_2[{u}^{2^k}]\).
\end{proof}

Calculation of the relative Adams \(E_2\)-term then gives the following.

\begin{proposition}
\label{prop:changeofrings}
Let \(\alpha\) be as in Proposition~\ref{prop:HomologyAmodmodalpha}. The relative Adams spectral sequence computing the homotopy of \(\HA\aquot \alpha \smashover{\HA}M\) has a change-of-rings isomorphism
\[
E_2^{s,t}\cong \Ext_{\F_2[{u}]/{(u}^{2^k})}^{s,t}\big(\F_2, H_\ast^{\HA}(M)\big).
\]
\end{proposition}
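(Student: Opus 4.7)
The plan is to combine a K\"unneth identification of the relevant relative homology with a Shapiro-type change-of-rings for the Hopf algebra surjection \(\F_2[u] \twoheadrightarrow \F_2[u]/(u^{2^k})\).

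First I would use Proposition~\ref{prop:HomologyAmodmodalpha} together with the flatness of everything in sight over \(\F_2\) to deduce via K\"unneth an isomorphism
\[
H_\ast^{\HA}\bigl(\HA\aquot\alpha \smashover{\HA} M\bigr) \cong \F_2[u^{2^k}] \otimes_{\F_2} H_\ast^{\HA}(M)
\]
of \(\F_2[u]\)-comodules, where the right-hand side carries the tensor-product coaction built from the sub-Hopf-algebra inclusion \(\F_2[u^{2^k}] \hookrightarrow \F_2[u]\) and the given coaction on \(H_\ast^{\HA}(M)\). In characteristic~\(2\), the calculation \(\Delta(u^{2^k}) = (1 \otimes u + u \otimes 1)^{2^k} = 1 \otimes u^{2^k} + u^{2^k} \otimes 1\) shows \(u^{2^k}\) is primitive, so \((u^{2^k})\) is a Hopf ideal and \(\F_2[u^{2^k}] \hookrightarrow \F_2[u] \twoheadrightarrow \F_2[u]/(u^{2^k})\) is an extension of Hopf algebras; Lucas's congruence for mod-\(2\) binomial coefficients provides a splitting \(\F_2[u] \cong \F_2[u^{2^k}] \otimes \F_2[u]/(u^{2^k})\) as right \(\F_2[u]/(u^{2^k})\)-comodules, so this extension is cofree.

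Next I would identify the tensor-product \(\F_2[u]\)-comodule \(\F_2[u^{2^k}] \otimes H_\ast^{\HA}(M)\) with the coinduced comodule \(\F_2[u] \,\square_{\F_2[u]/(u^{2^k})}\, H_\ast^{\HA}(M)\), where \(H_\ast^{\HA}(M)\) is viewed as a \(\F_2[u]/(u^{2^k})\)-comodule by composing its \(\F_2[u]\)-coaction with the quotient map. The isomorphism is the natural map \(a \otimes n \mapsto \sum a \cdot n_{(-1)} \otimes n_{(0)}\), where \(\psi(n) = \sum n_{(-1)} \otimes n_{(0)}\) denotes the \(\F_2[u]\)-coaction on \(H_\ast^{\HA}(M)\). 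Shapiro's lemma for this coinduction along the surjection \(\F_2[u] \twoheadrightarrow \F_2[u]/(u^{2^k})\) (a standard change-of-rings for Hopf algebra quotients with cofree fiber; see, for instance, Ravenel's green book, Appendix~A1) then yields
\[
\Ext^{s,t}_{\F_2[u]}\bigl(\F_2, \F_2[u^{2^k}] \otimes H_\ast^{\HA}(M)\bigr) \cong \Ext^{s,t}_{\F_2[u]/(u^{2^k})}\bigl(\F_2, H_\ast^{\HA}(M)\bigr),
\]
which is the claimed \(E_2\)-identification.

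The main obstacle is this last comodule identification: verifying that the proposed natural map is a \(\F_2[u]\)-comodule isomorphism onto the cotensor product, which intertwines the tensor-product coaction on the left with the left \(\F_2[u]\)-coaction inherited from the cotensor product on the right. This is a Sweedler-notation calculation made tractable by the cofree splitting of the previous paragraph; once it is in hand, the remainder of the argument is formal.
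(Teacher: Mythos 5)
Your argument is correct and is essentially the paper's own proof: the paper likewise observes that \(\F_2[u^{2^k}]\) is cotensored up from \(\F_2\) along the quotient \(q\co\F_2[u]\to\F_2[u]/(u^{2^k})\), identifies \(\F_2[u^{2^k}]\otimes N\cong \F_2[u]\,\Box_{\F_2[u]/(u^{2^k})}\,q_\ast N\), and concludes by the standard change-of-rings isomorphism. Your additional details (the K\"unneth identification of the relative homology, the primitivity of \(u^{2^k}\), the cofreeness of \(\F_2[u]\) over the quotient, and the explicit untwisting map \(a\otimes n\mapsto \sum a\,n_{(-1)}\otimes n_{(0)}\)) are exactly the steps the paper leaves implicit.
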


\begin{proof}
The Hopf algebra \(\F_2[{u}^{2^k}]\) is cotensored up from \(\F_2\) along the quotient map 
\[
q:\F_2[{u}]\to \F_2[{u}]/({u}^{2^k}).
\]
In particular, for any comodule \(M\), we have an isomorphism of comodules
\[
\F_2[{u}^{2^k}]\otimes M\cong \F_2[{u}]\Boxover{\F_2[{u}]/{u}^{2^k}} q_\ast M,
\]
where \(q_\ast M\) is the ``restriction'' of \(M\) along the surjective map of Hopf algebras \(q\).
\end{proof}

\begin{corollary}
  Let \(\alpha\) be as in Proposition~\ref{prop:HomologyAmodmodalpha}.
  The relative Adams spectral sequence for \(\pi_*(\HA\aquot \alpha)\) degenerates at \(E_2\), with \(E_{\infty}\)-page given by
  \[
  \F_2[\xi_1,\dots,\xi_k].
  \]
\end{corollary}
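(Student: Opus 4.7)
The plan is to read off the \(E_2\)-page directly from Proposition~\ref{prop:changeofrings}, and then force collapse by naturality along the unit map from \(\HA\).

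First I would take \(M = \HA\) in Proposition~\ref{prop:changeofrings}. Since \(H_\ast^\HA(\HA) = \F_2\), change-of-rings yields
\[
E_2^{s,t} \cong \Ext^{s,t}_{\F_2[u]/(u^{2^k})}(\F_2, \F_2).
\]
The Hopf algebra \(\F_2[u]/(u^{2^k})\) has exactly the primitives \(u^{2^j}\) for \(j = 0, 1, \dots, k-1\); by the same argument used in \S\ref{sec:relASS} for \(\HA\) itself, this \(\Ext\) is polynomial on the duals to these primitives. Writing \(\xi_i = [u^{2^{i-1}}]\) in bidegree \((2^i-1, 1)\), we obtain
\[
E_2 \cong \F_2[\xi_1, \dots, \xi_k].
\]

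For collapse, I would exploit the naturality of the relative Adams spectral sequence along the unit map of \(\HA\)-algebras \(\HA \to \HA\aquot\alpha\). On relative homology this unit realizes the comodule map \(\F_2 \hookrightarrow \F_2[u^{2^k}]\) sending \(1 \mapsto 1\), and hence induces a map of relative Adams spectral sequences whose \(E_2\)-map is the surjection
\[
\F_2[\xi_1, \xi_2, \dots] \twoheadrightarrow \F_2[\xi_1, \dots, \xi_k]
\]
that sends \(\xi_i\) to \(\xi_i\) for \(i \leq k\) and to \(0\) otherwise, since the primitive \(u^{2^{i-1}}\) survives in \(\F_2[u]/(u^{2^k})\) precisely when \(2^{i-1} < 2^k\). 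The source spectral sequence collapses at \(E_2\) (this is the calculation of \(\HA_\ast\) recalled in \S\ref{sec:relASS}), so every \(\xi_i\) is a permanent cycle on the source, and by naturality each of \(\xi_1, \dots, \xi_k\) is a permanent cycle on the target. Multiplicativity of the relative Adams spectral sequence for the associative algebra \(\HA\aquot\alpha\) then propagates permanence to all of \(E_2\), giving the desired collapse.

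I do not anticipate any substantive obstacle: the corollary is essentially immediate once Proposition~\ref{prop:changeofrings} is in hand. The only step that warrants explicit care is the identification of the \(E_2\)-comparison map, which amounts to tracking a single primitive through the surjection \(\F_2[u] \twoheadrightarrow \F_2[u]/(u^{2^k})\) and is straightforward.
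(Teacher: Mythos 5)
Your argument is correct, and it fills in exactly what the paper leaves implicit (the paper simply states ``Calculation of the relative Adams \(E_2\)-term then gives the following'' before this corollary, without a separate proof). Taking \(M = \HA\) in Proposition~\ref{prop:changeofrings} and identifying
\(\Ext_{\F_2[u]/(u^{2^k})}(\F_2,\F_2)\) as a polynomial algebra on \(k\) generators dual to the primitives \(u^{2^j}\), \(0 \le j \le k-1\), is the natural reading of the \(E_2\)-term, and your naturality argument for collapse --- pulling permanence of \(\xi_1,\dots,\xi_k\) back from the unit map \(\HA \to \HA\aquot\alpha\), whose \(E_2\)-map is the quotient \(\F_2[\xi_1,\xi_2,\dots] \twoheadrightarrow \F_2[\xi_1,\dots,\xi_k]\), and then propagating via multiplicativity --- is clean and standard. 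One small remark: a sparsity argument alone does not close the gap here (the internal degrees of all classes are even, which rules out even differentials \(d_r\), but odd \(d_r\) are a priori possible), so the naturality step is genuinely doing work and is the right move. The only point worth making explicit if you wrote this up is that the identification of the \(E_2\)-comparison map as the quotient homomorphism is seen by applying the Hopf algebra surjection \(\F_2[u] \to \F_2[u]/(u^{2^k})\) termwise in the cobar complex, but you have already flagged that, so there is no gap.
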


This gives the \(\pi_\ast\HA\)-algebra structure up to associated graded. We do not know {\emph{a priori}} the images of \(\xi_{j}\) for \(j > k\); we know only that \(\xi_{j}\) is zero modulo elements of monomial degree at least \(2\), and this may depend on the exact terms in \(\alpha\). Identifying the relations imposed will require an operadic digression in Appendix~\ref{sec:CupkAlgs}.

\begin{definition}
  Let \({\HAn} = \HA \aquot \xi_{k+1}\) be the free associative \(\HA\)-algebra with \(\xi_{k+1} = 0\), and \(\barHAn = \HA \aquot \zeta_{k+1}\) be its conjugate.
\end{definition}

\begin{theorem}\label{thm:RingStructure}
As an \({\HA_{\ast}}\)-algebra, the coefficient ring \(\pi_* ({\HAn})\) is
  \[
    {\HA_{\ast}} / (\xi_{k+1}, \xi_{k+2}, \dots) \cong \F_2[\xi_1,\xi_2,\dots,\xi_k].
  \]
  Similarly, the coefficient ring \(\pi_*(\barHAn)\) is
  \[
    {\HA_{\ast}} / (\zeta_{k+1}, \zeta_{k+2}, \dots) \cong
    \F_2[\zeta_1,\zeta_2,\dots,\zeta_k].
  \]
\end{theorem}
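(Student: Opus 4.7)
The plan is to combine the relative Adams spectral sequence computation (which pins down $\pi_*(\HAn)$ as a graded vector space) with the Dyer--Lashof constraints from Appendix~\ref{sec:CupkAlgs} to identify the remaining multiplicative extensions. Ingredients 1 and 2 are already in place in the excerpt; the new content is bootstrapping these together.

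First, I would invoke the corollary immediately after Proposition~\ref{prop:changeofrings}: the relative Adams spectral sequence for $\HA\aquot\xi_{k+1}$ collapses at $E_2$ with $E_\infty$-page $\F_2[\xi_1,\dots,\xi_k]$. This fixes the Poincar\'e series of $\HAn_\ast$ and shows that the unit map $\HA_\ast \to \HAn_\ast$ is surjective with image generated as an algebra by the images of $\xi_1,\dots,\xi_k$. Equivalently, it produces a surjection $\HA_\ast / I \twoheadrightarrow \HAn_\ast$ where $I = \ker(\HA_\ast \to \HAn_\ast)$, and the Poincar\'e series of $\HA_\ast/I$ equals that of $\F_2[\xi_1,\dots,\xi_k]$.

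Second, by the defining universal property of $\HA\aquot\xi_{k+1}$, the class $\xi_{k+1}$ lies in $I$. The crucial new input is Theorem~\ref{thm:kernelq1}: because $\HAn$ is a unital associative $\HA$-algebra, the ideal $I$ is closed under the Dyer--Lashof operation $Q_1$. Steinberger's computation in \cite[\S III.2]{Hinfinity} of the action of $Q_1$ on the Milnor generators then lets us iterate: starting from $\xi_{k+1} \in I$, repeated application of $Q_1$ forces every $\xi_j$ with $j > k$ into $I$, with a short degree and induction argument to absorb the decomposable correction terms into the previously established part of $I$. Hence $(\xi_{k+1}, \xi_{k+2}, \dots) \subseteq I$.

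Third, combining the two steps gives a chain of surjections
\[
\F_2[\xi_1,\dots,\xi_k] = \HA_\ast / (\xi_{k+1}, \xi_{k+2}, \dots) \twoheadrightarrow \HA_\ast / I \twoheadrightarrow \HAn_\ast
\]
between graded $\F_2$-vector spaces of the same Poincar\'e series (by Step~1), so both arrows are isomorphisms. This yields the claimed identification $\HAn_\ast \cong \F_2[\xi_1,\dots,\xi_k]$. The statement for $\barHAn$ follows by applying the swap automorphism of $\HA$, using Proposition~\ref{prop:aquotbasechange} to identify $\HA \aquot \zeta_{k+1}$ with the conjugate of $\HA \aquot \xi_{k+1}$, and noting that $\zeta_j \equiv \xi_j$ modulo decomposables so the same Steinberger--$Q_1$ argument propagates to the conjugate generators. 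The main obstacle is Theorem~\ref{thm:kernelq1}, whose proof requires the cup-$k$ algebra machinery of Appendix~\ref{sec:CupkAlgs} and is the reason the authors defer it; a secondary subtlety is making the iterated Steinberger argument rigorous, i.e.\ verifying that at each step the decomposable error in $Q_1(\xi_j) - \xi_{j+1}$ already lies in the inductively constructed portion of $I$.
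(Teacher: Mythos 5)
Your proof is correct and follows essentially the same route as the paper: the collapse of the relative Adams spectral sequence fixes the Poincar\'e series, closure of the kernel under \(Q_1\) (Theorem~\ref{thm:kernelq1}) together with Steinberger's formulas forces all higher generators into the ideal, and conjugation transfers the result between the \(\xi\) and \(\zeta\) cases. The only difference is cosmetic: the paper treats \(\barHAn\) first, where \(Q_1(\zeta_j)=\zeta_{j+1}\) holds on the nose, whereas you start with \(\HAn\) and absorb the decomposable correction in \(Q_1(\xi_j)=\xi_{j+1}+\xi_1\xi_j^2\) into the ideal --- which is exactly the alternative argument the paper records in the remark following the theorem, and your worry about that absorption is unfounded since \(\xi_1\xi_j^2\) lies in the ideal as soon as \(\xi_j\) does.
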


\begin{proof}
  We will prove the statement for \(\barHAn = \HA \aquot \zeta_{k+1}\) first. The elements \(\zeta_1, \dots, \zeta_k\) of \({\HA_{\ast}}\) have the same images as \(\xi_1, \dots, \xi_k\) in the relative Adams spectral sequence, and so they determine polynomial generators of the \(E_\infty\) page of the relative Adams spectral sequence for \(\pi_\ast(\HA \aquot \alpha)\). In particular, there are no possible hidden multiplicative relations between them.
  
  Therefore, the composite
  \[
  \F_2[\zeta_1, \dots, \zeta_k] \subset {\HA_{\ast}} \to \pi_*(\barHAn)
  \]
  is an isomorphism, and so \({\HA_{\ast}} \to \pi_* \barHAn\) is surjective. This forces
  \[
      \pi_* (\barHAn) \cong {\HA_{\ast}} / J
  \]
  as an \({\HA_{\ast}}\)-algebra. We would like to determine this ideal \(J\); the element \(\zeta_{k+1}\) is in \(J\) by construction.
  By Corollary~\ref{cor:killingq1} in \S\ref{sec:cupktransgression}, the ideal \(J\) is closed under the Dyer--Lashof operation \(Q_1\). In \cite[\S III.2]{Hinfinity}, Steinberger showed that in the dual Steenrod algebra, the Dyer--Lashof operations satisfy 
  \[
  Q_1(\zeta_j) = \zeta_{j+1}.
  \]
  Therefore, the map \(\HA_\ast \to \HA_\ast/J\) factors through \({\HA_{\ast}}/(\zeta_{k+1}, \zeta_{k+2}, \dots)\) and this factorization is already forced to be an isomorphism.

  The same result holds for the \(\xi_k\) because conjugation is realized by a self-equivalence of \(H\F_2 \wedge H\F_2\) as a commutative ring spectrum.
\end{proof}
\begin{remark}
This can also be shown directly. If we apply \(Q_1\) to Milnor's formula \(\sum_{i+j=n} \xi_i^{2^j} \zeta_j = 0\) and use the Cartan formula \(Q_1(xy) = Q_1(x) y^2 + x^2 Q_1(y)\), it is possible to instead calculate directly that \(Q_1 \xi_j = \xi_{j+1} + \xi_1 \xi_j^2\). Since \(\xi_{k+1}\) maps to zero, all \(\xi_j\) for \(j > k\) must be in the ideal by induction.
\end{remark}

\begin{corollary}\label{cor:algquotientismodquotient}
  There is an equivalence of \(\HA\)-modules
  \[
    {\HAn} \simeq M_{> k}.
  \]
  Similarly, \(\barHAn \simeq \overline{M}_{>k}\).
\end{corollary}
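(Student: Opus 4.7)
The plan is to construct an $\HA$-module map $f\co M_{>k}\to \HAn$ and show it induces an isomorphism on homotopy groups. Both sides have homotopy $\F_2[\xi_1,\dots,\xi_k]$ as $\HA_\ast$-modules: for $\HAn$ this is the content of Theorem~\ref{thm:RingStructure}, and for $M_{>k}$ it follows by iterating the K\"unneth spectral sequence along the regular sequence $\xi_{k+1},\xi_{k+2},\dots$ in $\HA_\ast$, then taking a colimit that stabilizes in each fixed degree because $|\xi_{k+N+1}|\to\infty$.

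First I will build $f$ inductively from the unit $\HA\to\HAn$. Since $\xi_{k+1}=0$ in $\pi_\ast\HAn$ by Theorem~\ref{thm:RingStructure}, the composite $\Sigma^{|\xi_{k+1}|}\HA\xrightarrow{\xi_{k+1}}\HA\to\HAn$ is null, so the unit extends to $f_1\co M_{k+1}\to\HAn$. Given $f_N\co M_{\{k+1,\dots,k+N\}}\to\HAn$, the obstruction to extending across the defining cofiber sequence
\[
\Sigma^{|\xi_{k+N+1}|}M_{\{k+1,\dots,k+N\}}\xrightarrow{\cdot\,\xi_{k+N+1}}M_{\{k+1,\dots,k+N\}}\to M_{\{k+1,\dots,k+N+1\}}
\]
is the composite $f_N\circ(\cdot\,\xi_{k+N+1})$. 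By $\HA$-linearity this equals $\xi_{k+N+1}\cdot f_N$, which is null because multiplication by $\xi_{k+N+1}$ is the zero self-map of $\HAn$ (again Theorem~\ref{thm:RingStructure}). An extension $f_{N+1}$ therefore exists.

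Next I will pass to the sequential homotopy colimit $M_{>k}\simeq\hocolim_N M_{\{k+1,\dots,k+N\}}$ to obtain a map $f\co M_{>k}\to\HAn$. Because $f$ is by construction compatible with the units from $\HA$, it sends the generator of $\pi_\ast M_{>k}$ as an $\HA_\ast$-module to the corresponding generator of $\pi_\ast\HAn$. Since both homotopy groups are isomorphic as $\HA_\ast$-modules to $\HA_\ast/(\xi_{k+1},\xi_{k+2},\dots)$, this forces $f$ to be an isomorphism on $\pi_\ast$, hence an equivalence of $\HA$-modules. The conjugate case $\barHAn\simeq\overline{M}_{>k}$ then follows by pulling the whole construction back along the swap automorphism of $H\F_2\wedge H\F_2$, which simultaneously exchanges $\HAn\leftrightarrow\barHAn$ and $M_{>k}\leftrightarrow\overline{M}_{>k}$.

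The hard part is convincing oneself that the inductive extensions assemble coherently into a single map from the infinite smash product, but this is routine here: each obstruction vanishes strictly for the same reason (the element $\xi_{k+N+1}$ being genuinely zero in $\pi_\ast\HAn$), so no conflict arises between the choices made at different stages. A cleaner-sounding alternative would use the change-of-rings of Proposition~\ref{prop:changeofrings} to identify the relative Adams $E_2$-page for $[M_{>k},\HAn]_\HA$ and check that the identity of $\F_2[u^{2^k}]$ in bidegree $(0,0)$ is a permanent cycle realizing $f$, but the direct construction above seems the most transparent.
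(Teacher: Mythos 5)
Your argument is correct and is exactly the one the paper intends: the corollary is stated with no separate proof, as an immediate consequence of Theorem~\ref{thm:RingStructure}, via precisely the inductive extension over the cones followed by passage to the sequential colimit. The one step worth making explicit — since the distinction is a running theme of the paper (cf.\ Theorem~\ref{thm:introcone}) — is that multiplication by $\xi_{k+N+1}$ being a \emph{nullhomotopic} self-map of $\HAn$, rather than merely zero on homotopy groups, does not follow from the computation of $\pi_*\HAn$ alone but from the algebra structure: the self-map factors as $\Sigma^{|\xi_{k+N+1}|}\HAn \simeq \Sigma^{|\xi_{k+N+1}|}\HA\smashover{\HA}\HAn \to \HAn\smashover{\HA}\HAn \to \HAn$, where the first map is induced by $\eta(\xi_{k+N+1})$, a class that vanishes in $\pi_*\HAn$, and a map out of a shifted free $\HA$-module is null exactly when the corresponding homotopy class is zero. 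With that justification supplied, your obstruction-vanishing, the Milnor-sequence step for the colimit, and the identification of $f$ on homotopy are all fine.
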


\begin{proposition}\label{prop:HomologyAkm}
  As an \(\F_2[u]\)-comodule algebra, 
  \[
  H_\ast^{\HA}\big(\HAk\smashover{\HA} \barHAm\big)\cong\F_2[u^{2^k}]\otimes \F_2[u^{2^m}].
  \]
\end{proposition}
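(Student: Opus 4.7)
The plan is to split the computation into two factors using base change along the commutative $\HA$-algebra $H\F_2$ and then apply a K\"unneth theorem. Since $H\F_2$ is a commutative $\HA$-algebra, for any $\HA$-modules $M$ and $N$ there is a natural equivalence of $H\F_2$-modules
\[
H\F_2 \smashover{\HA} (M \smashover{\HA} N) \simeq (H\F_2 \smashover{\HA} M) \smashover{H\F_2} (H\F_2 \smashover{\HA} N).
\]
Taking $M = \HAk$ and $N = \barHAm$ identifies the target spectrum with the $H\F_2$-module smash product of the two one-sided base changes, so it suffices to compute the homotopy of the right-hand side.

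By Proposition~\ref{prop:HomologyAmodmodalpha}, each one-sided base change has known homotopy as an $\F_2[u]$-subcomodule of $\F_2[u]$: $\pi_\ast(H\F_2 \smashover{\HA} \HAk) \cong \F_2[u^{2^k}]$ and $\pi_\ast(H\F_2 \smashover{\HA} \barHAm) \cong \F_2[u^{2^m}]$. Both are polynomial in a single variable and hence free over $\F_2 = \pi_\ast H\F_2$, so the EKMM K\"unneth spectral sequence over $H\F_2$ collapses and yields
\[
\pi_\ast\bigl((H\F_2 \smashover{\HA} \HAk) \smashover{H\F_2} (H\F_2 \smashover{\HA} \barHAm)\bigr) \cong \F_2[u^{2^k}] \otimes_{\F_2} \F_2[u^{2^m}].
\]

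The $\F_2[u]$-comodule structure is inherited from the tensor product of the subcomodule structures of the two factors by naturality of the coaction under base change. The algebra structure follows from Proposition~\ref{prop:aquotbasechange} and Example~\ref{ex:killingzero}, which identify each factor with a free associative $H\F_2$-algebra, $\T_{H\F_2}(\Sigma^{2^{k+1}} H\F_2)$ and $\T_{H\F_2}(\Sigma^{2^{m+1}} H\F_2)$ respectively, whose homotopy is polynomial in a single generator and hence commutative; the smash product then carries a well-defined ring structure giving the tensor product of the two polynomial rings.

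The main obstacle is making the base-change equivalence rigorous; this comes down to the equivalence $H\F_2 \simeq H\F_2 \smashover{H\F_2} H\F_2$ together with the commutativity of $H\F_2$ as an $\HA$-algebra, which allows the middle copy to be absorbed into either factor. Once this identity is in hand, the polynomial structure of the factor homotopies makes the remaining K\"unneth and comodule calculations routine.
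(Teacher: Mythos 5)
Your proof is correct and takes essentially the same route as the paper, combining Proposition~\ref{prop:HomologyAmodmodalpha} with the K\"unneth theorem. The only minor deviation is that you apply Proposition~\ref{prop:HomologyAmodmodalpha} directly to \(\barHAm = \HA\aquot\zeta_{m+1}\) (valid because \(\zeta_{m+1} \equiv \xi_{m+1}\) modulo decomposables), whereas the paper instead routes through Proposition~\ref{prop:MvsBarM} to transfer the computation from \(\HAn[m]\) to its conjugate.
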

\begin{proof}
  The follows immediately from Propositions~\ref{prop:HomologyAmodmodalpha} and \ref{prop:MvsBarM}, as well as the K\"unneth theorem.
\end{proof}

\begin{corollary}\label{cor:RewrittenHomologyAkm}
  If \(m\geq k\), then we have an isomorphism of \(\F_2[u]\)-comodule algebras
  \[
    H_\ast^{\HA}\big(\HAk\smashover{\HA} \barHAm\big)\cong \F_2[u^{2^k}]\otimes \F_2[e_{2^{m+1}}],
  \]
  where \(e_{2^{m+1}}\) is a primitive element in degree \(2^{m+1}\).
\end{corollary}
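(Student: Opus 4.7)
The plan is to begin from Proposition~\ref{prop:HomologyAkm}, which identifies
\[
H_\ast^\HA(\HAk \smashover{\HA} \barHAm) \cong \F_2[u^{2^k}] \otimes \F_2[u^{2^m}]
\]
as $\F_2[u]$-comodule algebras, and then to perform a change of polynomial generators so that the second factor becomes a polynomial algebra on a primitive generator. Writing $a$ for the first polynomial generator (of degree $2^{k+1}$, coming from $u^{2^k}$ in the first tensor factor) and $b$ for the second (of degree $2^{m+1}$, coming from $u^{2^m}$ in the second tensor factor), the inclusion of each factor into $\F_2[u]$ as a sub-comodule algebra via the augmentation determines the coactions
\[
\psi(a) = 1 \otimes a + u^{2^k} \otimes 1, \qquad \psi(b) = 1 \otimes b + u^{2^m} \otimes 1,
\]
because $u$ is primitive in $\F_2[u]$ and hence so is each $u^{2^i}$ by the Frobenius identity in characteristic $2$.

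The key computation is to set $e := b + a^{2^{m-k}}$. This element lives in degree $2^{m+1}$ precisely because the hypothesis $m \geq k$ makes $2^{m-k}$ a nonnegative integer. Using that $\psi$ is an algebra map and applying Frobenius once more, one gets
\[
\psi(a^{2^{m-k}}) = (1 \otimes a + u^{2^k} \otimes 1)^{2^{m-k}} = 1 \otimes a^{2^{m-k}} + u^{2^m} \otimes 1,
\]
and the two contributions $u^{2^m} \otimes 1$ then cancel, yielding $\psi(e) = 1 \otimes e$. Thus $e$ is primitive in the sense used in Corollary~\ref{cor:MkRelHomology}, and because $b = e + a^{2^{m-k}}$ the polynomial presentation is unchanged: $\F_2[a, b] = \F_2[a, e]$ as $\F_2$-algebras. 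Since $e$ is primitive, the subalgebra $\F_2[e]$ is a trivial sub-comodule algebra (each $e^n$ remains primitive by multiplicativity of $\psi$), producing the desired factorization $\F_2[u^{2^k}] \otimes \F_2[e_{2^{m+1}}]$ of comodule algebras.

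There is no serious obstacle to this argument; the substantive content has already been extracted in Proposition~\ref{prop:HomologyAkm}, and what remains is a one-line change of variables. The only point worth flagging is that the hypothesis $m \geq k$ is used in exactly one place: it guarantees that $a^{2^{m-k}}$ is a legitimate monomial of the correct degree, so that a correction term making $e$ primitive is actually available in $\F_2[a,b]$.
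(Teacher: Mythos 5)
Your proof is correct and is essentially the paper's own argument: the paper also takes $e_{2^{m+1}} = u^{2^m}\otimes 1 + 1\otimes u^{2^m}$, which is exactly your $e = b + a^{2^{m-k}}$ since $a^{2^{m-k}} = u^{2^m}\otimes 1$, and observes that the two summands have the same coproduct so their sum is primitive. You have simply written out the change-of-variables and the role of the hypothesis $m \geq k$ in more detail.
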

\begin{proof}
  If \(m\geq k\), then both \(u^{2^m}\otimes 1\) and \(1\otimes u^{2^m}\) have the same coproduct. Their sum \(e_{2^{m+1}}\) is therefore primitive, and the two classes \(u^{2^k} \otimes 1\) and \(e_{2^{m+1}}\) are generators for the polynomial algebra.
\end{proof}

This gives the relative Adams \(E_2\)-term, via the change-of-rings isomorphism.

\begin{corollary}
  For \(m \geq k\), the relative Adams \(E_2\)-term for \(\HAk\smashover{\HA} \barHAm\) is
  \[
  \F_2[\xi_1,\dots,\xi_k]\otimes\F_2[e_{2^{m+1}}].
  \]
\end{corollary}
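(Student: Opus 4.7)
My plan is to combine the homology calculation in Corollary~\ref{cor:RewrittenHomologyAkm} with the change-of-rings isomorphism underlying Proposition~\ref{prop:changeofrings}. By Corollary~\ref{cor:AdamsSS}, the relative Adams $E_2$-term for $\HAk\smashover{\HA}\barHAm$ is
\(
\Ext_{\F_2[u]}^{*,*}\bigl(\F_2, H_\ast^{\HA}(\HAk\smashover{\HA}\barHAm)\bigr),
\)
and when $m \geq k$ Corollary~\ref{cor:RewrittenHomologyAkm} identifies this input as $\F_2[u^{2^k}] \otimes \F_2[e_{2^{m+1}}]$, with $e_{2^{m+1}}$ primitive.

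The main step is then the change-of-rings argument behind Proposition~\ref{prop:changeofrings}. The subcoalgebra $\F_2[u^{2^k}] \subset \F_2[u]$ is cotensored up along the quotient $q\co \F_2[u] \to \F_2[u]/u^{2^k}$, so for any $\F_2[u]$-comodule $N$ there is a standard isomorphism
\[
\Ext_{\F_2[u]}^{*,*}\bigl(\F_2,\, \F_2[u^{2^k}] \otimes N\bigr) \cong \Ext_{\F_2[u]/u^{2^k}}^{*,*}(\F_2,\, q_\ast N).
\]
I would apply this with $N = \F_2[e_{2^{m+1}}]$. Since $e_{2^{m+1}}$ is primitive in $\F_2[u]$, the comodule $\F_2[e_{2^{m+1}}]$ is trivial as an $\F_2[u]$-comodule, and its restriction along $q$ is therefore a trivial $\F_2[u]/u^{2^k}$-comodule. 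Consequently $\F_2[e_{2^{m+1}}]$ splits off as a tensor factor, giving
\[
E_2 \cong \F_2[e_{2^{m+1}}] \otimes \Ext_{\F_2[u]/u^{2^k}}^{*,*}(\F_2,\F_2).
\]

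Finally, I would invoke the identification $\Ext_{\F_2[u]/u^{2^k}}^{*,*}(\F_2,\F_2) = \F_2[\xi_1,\dots,\xi_k]$, which is exactly the $E_\infty$-page of the relative Adams spectral sequence for $\pi_\ast \HAk$ recorded in the corollary following Proposition~\ref{prop:changeofrings}. Assembling gives $E_2 = \F_2[\xi_1,\dots,\xi_k] \otimes \F_2[e_{2^{m+1}}]$, as claimed. There is no real obstacle beyond tracking the cotensor isomorphism; the essential input is the primitivity of $e_{2^{m+1}}$, which lets its polynomial algebra pass through Ext as a clean tensor factor.
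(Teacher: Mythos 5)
Your proof is correct and follows the same route the paper intends: identify the relative homology via Corollary~\ref{cor:RewrittenHomologyAkm}, then apply the cotensor/change-of-rings isomorphism from the proof of Proposition~\ref{prop:changeofrings}. The paper leaves this as a one-line appeal to change of rings, and you have simply filled in the details, correctly exploiting that primitivity of \(e_{2^{m+1}}\) makes \(\F_2[e_{2^{m+1}}]\) restrict to a trivial \(\F_2[u]/u^{2^k}\)-comodule so that it factors out of the Ext computation.
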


However, we need more algebraic information before we can analyze the relative Adams spectral sequence completely.

\section{Regularity in dual Steenrod algebra quotients}\label{sec:regsteenrodquotients}

In this section, we prove a series of algebraic results about the dual Steenrod algebra. Many of the inputs we need are standard facts in commutative algebra, and for convenience we will largely refer to Matsumura's text \cite{Matsumura}.

\begin{definition}
  Let \({\HAn_{\ast}}\) be the (graded) \({\HA_{\ast}}\)-algebra
  \[
  {\HA_{\ast}} / (\xi_{k+1}, \xi_{k+2}, \dots) \cong \F_2[\xi_1,\dots,\xi_k],
  \]
  let \(Z_{m+1,m+k}\) be the (graded) algebra
  \[
  \F_2[\zeta_{m+1},\zeta_{m+2},\dots,\zeta_{m+k}] \subset {\HA_{\ast}},
  \]
  and let
  \[
  \tRnast = \HAn_\ast / (\zeta_{k+1},\dots,\zeta_{2k}).
  \]
\end{definition}

In this section, our goal is to analyze the structures of the (graded) \({\HA_{\ast}}\)-algebras \(\HA_{\ast} / (\zeta_{m+1},\dots,\zeta_{m+k})\), such as \(\tRnast\). 

\begin{proposition}\label{prop:DeadZetas}
  In \(\HAk_{\ast}\), we have \(\zeta_n \in (\zeta_{m+1}, \zeta_{m+2}, \dots, \zeta_{m+k})\) for all \(n > m\).
\end{proposition}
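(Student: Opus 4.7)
The plan is to run a strong induction on $n$, using Milnor's formula for the conjugation to express $\zeta_n$ in terms of lower $\zeta_j$'s (and the $\xi_i$'s). The key observation is that Milnor's relation
\[
\zeta_n \;=\; \xi_n \;+\; \sum_{i=1}^{n-1} \xi_i^{2^{n-i}}\,\zeta_{n-i}
\]
holds in $\HA_\ast$, and hence descends to the quotient $\HAk_\ast = \F_2[\xi_1,\dots,\xi_k]$. In $\HAk_\ast$, every $\xi_i$ with $i > k$ is zero, so when $n > k$ this relation simplifies to
\[
\zeta_n \;=\; \sum_{i=1}^{k} \xi_i^{2^{n-i}}\,\zeta_{n-i},
\]
expressing $\zeta_n$ as an $\HAk_\ast$-linear combination of the $k$ preceding conjugates $\zeta_{n-1},\dots,\zeta_{n-k}$.

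Now let $I = (\zeta_{m+1},\dots,\zeta_{m+k}) \subset \HAk_\ast$, and induct on $n > m$. The base cases $m+1 \le n \le m+k$ are trivial since each such $\zeta_n$ is a generator of $I$. (If $k=0$ the statement is vacuous, as $\HA\langle 0\rangle_\ast = \F_2$.) For the inductive step, assume $n > m+k$ and that $\zeta_j \in I$ for every $j$ with $m < j < n$. Since $n > m+k \geq k$, the simplified Milnor relation above applies; for each $i \in \{1,\dots,k\}$ the index $n-i$ satisfies $m < n-k \le n-i < n$, so by the induction hypothesis $\zeta_{n-i} \in I$. Hence $\zeta_n \in I$ as well, completing the induction.

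There is essentially no main obstacle here: the work is done by Milnor's formula and a bookkeeping induction. The only thing to verify carefully is that the window of $k$ predecessors $\zeta_{n-1},\dots,\zeta_{n-k}$ lies strictly inside the range $(m, n)$, which is exactly the inequality $n > m+k$ that triggers the inductive step. This proposition will later be used as the input for showing that the images of $\zeta_{m+1},\dots,\zeta_{m+k}$ cut $\HAk_\ast$ down to a finite-dimensional Frobenius algebra (Theorem~\ref{thm:introregu}), with regularity of the sequence handled separately by a commutative-algebra argument.
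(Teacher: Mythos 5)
Your proof is correct and follows essentially the same route as the paper's: reduce Milnor's conjugation formula modulo the vanishing of $\xi_i$ for $i>k$ to get $\zeta_n=\sum_{i=1}^k\xi_i^{2^{n-i}}\zeta_{n-i}$, take the generators as base cases, and induct on $n$ using the window $m<n-k\le n-i<n$. The extra bookkeeping you supply (the $k=0$ edge case and the explicit verification that the predecessors lie in the inductive range) is fine but not needed beyond what the paper records.
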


\begin{proof}
  This is true by definition for \(m < n \leq m+k\). Milnor's formula for the conjugate classes states that
  \[
    0 = \sum_{i+j = n} \xi_i^{2^j} \zeta_j.
  \]
  In the ring \(\HAk_\ast\), this becomes the identity
  \[
    \zeta_n = \sum_{i=1}^k \xi_i^{2^{n-i}} \zeta_{n-i}
  \]
  whenever \(n > k\). If \(n > m+k\), then \(n-i > m\) for all \(1 \leq i \leq k\) that appear in the sum. By induction on \(n\), these elements \(\zeta_{n-i}\) are in the ideal, and so the result follows.
\end{proof}

Recall that \({\HA_{\ast}}\), as a Hopf algebra, represents the automorphisms of the additive formal group law over rings of characteristic \(2\): the formal power series
\[
f(x)=\sum_{i\geq 0}\xi_ix^{2^i},
\]
gives the universal automorphism, with composition inverse the power series
\[
g(x)=\sum_{i\geq 0}\zeta_i x^{2^i}.
\]
Here \(\xi_0 = \zeta_0 = 1\) by convention. 

\begin{proposition}
    Let \(I,J\subset\mathbb N\) be co-finite sets of natural numbers. Then the ring 
    \[
    B:=\HA_\ast/\big(\xi_i,\zeta_j\mid i\in I, j\in J\big)
    \]
    is a zero-dimensional ring and is finite-dimensional as an \(\F_2\)-vector space.
\end{proposition}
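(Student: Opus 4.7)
The plan is to use the Hopf-algebra interpretation of \(\HA_*\) recalled just above: the universal power series \(f(x) = \sum_{i \geq 0} \xi_i x^{2^i}\) and \(g(x) = \sum_{j \geq 0} \zeta_j x^{2^j}\) are compositional inverses in \(\HA_*[[x]]\). Under the quotient \(\HA_* \twoheadrightarrow B\), the relations \(\xi_i = 0\) for \(i \in I\) and \(\zeta_j = 0\) for \(j \in J\) truncate both \(f\) and \(g\) to honest polynomials in \(x\) of bounded degree over \(B\) that remain compositional inverses. The point is that over an integral domain this is an extremely restrictive condition, because degree is multiplicative under composition of nonzero polynomials.

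Writing \(I^c = \{i_1, \ldots, i_p\}\), the ring \(B\) is a quotient of the Noetherian polynomial ring \(R = \F_2[\xi_{i_1}, \ldots, \xi_{i_p}]\) by the images of the \(\zeta_j\) for \(j \in J\), so \(B\) is itself a finitely generated \(\F_2\)-algebra and Noetherian. Since \(B\) is graded and finitely generated, finite-dimensionality over \(\F_2\) is equivalent to having Krull dimension zero, so it suffices to show that \(B/\mathfrak p = \F_2\) for every prime \(\mathfrak p \subset B\). Given such a prime, I would pass to the integral domain \(B/\mathfrak p\) and consider the reductions
\[
\bar f(x) = x + \sum_{i \in I^c} \xi_i x^{2^i},\qquad \bar g(x) = x + \sum_{j \in J^c} \zeta_j x^{2^j}
\]
of \(f\) and \(g\). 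These are actual polynomials of degree at most \(2^{\max I^c}\) and \(2^{\max J^c}\) respectively, and they satisfy \(\bar f(\bar g(x)) = x\). Since \(B/\mathfrak p\) is a domain, \(\deg(\bar f \circ \bar g) = \deg \bar f \cdot \deg \bar g\); both factors are at least \(1\), so both equal \(1\); this forces \(\bar f = \bar g = x\), so every \(\xi_{i_r}\) lies in \(\mathfrak p\), and consequently \(B/\mathfrak p = \F_2\).

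Assembling the pieces, every prime of \(B\) is maximal, so \(\dim B = 0\); a Noetherian zero-dimensional ring is Artinian, and an Artinian finitely generated \(\F_2\)-algebra is finite-dimensional over \(\F_2\). I do not expect any real obstacle here: the only conceptual step is recognizing that the defining relations force the universal \(f\) and \(g\) to descend to \emph{polynomials} of bounded degree over the quotient, after which degree-multiplicativity in the integral domain \(B/\mathfrak p\) and standard commutative algebra close things out.
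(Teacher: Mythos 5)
Your proposal is correct and follows essentially the same route as the paper: reduce modulo a prime, observe that the universal power series and its compositional inverse become polynomials over the integral domain \(B/\mathfrak p\), and use multiplicativity of degree (equivalently, of leading coefficients) to force both to be linear, whence every \(\xi_i\) lies in every prime. The only cosmetic difference is in the final bookkeeping — you invoke Noetherian/Artinian generalities where the paper simply notes that \(B\) is generated by finitely many nilpotents — but the substance is identical.
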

\begin{proof}
    Suppose that \(\mathfrak p\) is a (not necessarily graded) prime ideal of \(B\); this gives us a ring homomorphism
  \[
    q\colon \HA_\ast\to B/\mathfrak p,
  \]
  and hence automorphisms of the additive formal group law over \(B/\mathfrak p\)
  \[
    \alpha(x) = \sum_{i\geq 0} q(\xi_i)x^{2^i}\quad\text{and}\quad
    \beta(x) = \sum_{j\geq 0} q(\zeta_j)x^{2^j}
  \]
  which are composition-inverse to each other. By the co-finiteness assumption on \(I\) and \(J\), both of these automorphisms are polynomials in \(x\).
  
  However, the only composition-inverse polynomials with coefficients in an integral domain are linear, because the lead coefficient of the composite is a product of powers of leading coefficients. Therefore, for all \(i\), \(\xi_i = 0\) in \(B/\mathfrak p\), and hence \(\xi_i \in \mathfrak p\).

  This shows that the elements \(\xi_i\) are in every prime ideal, which means they are nilpotent. As \(B\) is generated by finitely many nilpotent generators, it is a finite-dimensional vector space. Moreover, the ideal \((\xi_1,\xi_2,\dots)\) is already maximal, so it is the only prime ideal of \(B\).
\end{proof}

\begin{corollary}\label{cor:finitedim}
  For any \(m \geq 0\), the algebra \({\HAk}_\ast/(\zeta_{m+1},\dots, \zeta_{m+k})\) is a finite-dimensional \(\F_2\)-vector space, and a zero-dimensional ring.
\end{corollary}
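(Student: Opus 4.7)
The plan is to deduce this corollary directly from the preceding proposition, whose hypothesis is that the quotient be by co-finite collections of $\xi_i$'s and $\zeta_j$'s. The ring in question, $\HAk_{\ast}/(\zeta_{m+1},\dots,\zeta_{m+k})$, is tautologically the quotient of $\HA_{\ast}$ by the $\xi_i$'s for $i > k$ together with only finitely many $\zeta_j$'s, so the proposition does not immediately apply.

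First, I would rewrite the ring as
\[
\HAk_{\ast}/(\zeta_{m+1},\dots,\zeta_{m+k}) \cong \HA_{\ast}\big/\big(\xi_{k+1},\xi_{k+2},\dots,\zeta_{m+1},\dots,\zeta_{m+k}\big).
\]
Then I would invoke Proposition~\ref{prop:DeadZetas}, which says that in $\HAk_{\ast}$ every $\zeta_n$ with $n > m$ already lies in the ideal $(\zeta_{m+1},\dots,\zeta_{m+k})$. Consequently, enlarging the defining ideal to include all of $\zeta_{m+1},\zeta_{m+2},\dots$ does not change the quotient, and so
\[
\HAk_{\ast}/(\zeta_{m+1},\dots,\zeta_{m+k}) \cong \HA_{\ast}\big/\big(\xi_i,\zeta_j\mid i>k,\ j>m\big).
\]

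At this point the right-hand side is precisely of the form treated by the preceding proposition, with the two co-finite index sets $I=\{k+1,k+2,\dots\}$ and $J=\{m+1,m+2,\dots\}$. Applying that proposition gives the desired conclusion: the ring is finite-dimensional over $\F_2$ and has Krull dimension zero.

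There is essentially no obstacle here beyond the recognition step; the only content is that Proposition~\ref{prop:DeadZetas} converts the finite list of relations $\zeta_{m+1},\dots,\zeta_{m+k}$ into the required co-finite list after passing to the quotient $\HAk_{\ast}$. The heavy lifting (the formal-group argument that forces the $\xi_i$ to be nilpotent once $\xi$'s and $\zeta$'s are simultaneously co-finitely killed) has already been done in the proposition, so the corollary is a one-line bookkeeping consequence.
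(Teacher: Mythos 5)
Your proposal is correct and is essentially the paper's own argument: the paper likewise uses Proposition~\ref{prop:DeadZetas} to observe that all $\zeta_j$ with $j>m$ already lie in the ideal, so that the quotient coincides with $\HA_\ast/(\xi_i,\zeta_j\mid i>k,\ j>m)$, to which the preceding proposition on co-finite families applies. No changes needed.
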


\begin{proof}
    By construction, we kill all but finitely many \(\xi_i\), and by Proposition~\ref{prop:DeadZetas}, only finitely many \(\zeta_j\) are non-zero modulo the ideal \((\zeta_{m+1}, \ldots, \zeta_{m+k})\).
\end{proof}

\begin{corollary}
  The ring \({\HAk_{\ast}}\) is a finitely-generated module over the ring \(Z_{m+1,m+k}\), with generators given by lifts of the basis of the quotient ring.
\end{corollary}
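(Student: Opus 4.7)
The plan is to invoke the graded Nakayama lemma. Both $\HAk_\ast$ and the subring $Z_{m+1,m+k}$ are non-negatively graded with degree-zero part equal to $\F_2$, and by Corollary~\ref{cor:finitedim} the quotient $\HAk_\ast/(\zeta_{m+1},\dots,\zeta_{m+k})$ is a finite-dimensional $\F_2$-vector space. So the first step is to choose a finite set of homogeneous elements $\bar e_1,\dots,\bar e_N \in \HAk_\ast$ whose images form an $\F_2$-basis of this quotient.

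Next, let $M \subset \HAk_\ast$ denote the $Z_{m+1,m+k}$-submodule generated by the $\bar e_i$. By the choice of lifts,
\[
\HAk_\ast = M + (\zeta_{m+1},\dots,\zeta_{m+k})\,\HAk_\ast
= M + Z_{m+1,m+k}^{+}\cdot \HAk_\ast,
\]
where $Z_{m+1,m+k}^{+}$ is the positive-degree part of $Z_{m+1,m+k}$. Setting $N = \HAk_\ast/M$, which inherits a grading as a $Z_{m+1,m+k}$-module, this identity becomes $N = Z_{m+1,m+k}^{+}\cdot N$.

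The final step is the standard graded Nakayama argument: since $N$ is concentrated in non-negative degrees and every element of $Z_{m+1,m+k}^{+}$ has strictly positive degree, any non-zero element of $N$ of minimal degree cannot lie in $Z_{m+1,m+k}^{+}\cdot N$, forcing $N = 0$. Hence $M = \HAk_\ast$, and the $\bar e_i$ generate $\HAk_\ast$ as a $Z_{m+1,m+k}$-module. There is no real obstacle here; the work has already been done in Corollary~\ref{cor:finitedim}, and the only subtlety is to verify that the ideal generated by $\zeta_{m+1},\dots,\zeta_{m+k}$ inside $\HAk_\ast$ coincides with $Z_{m+1,m+k}^{+}\cdot \HAk_\ast$, which is immediate from the definition of $Z_{m+1,m+k}$.
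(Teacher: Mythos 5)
Your proof is correct and is exactly the standard graded Nakayama argument that the paper implicitly relies on: the paper states this corollary with no proof, treating it as an immediate consequence of Corollary~\ref{cor:finitedim}, and the graded Nakayama lemma (lifting a basis of the finite-dimensional quotient and then showing the cokernel is annihilated by the augmentation ideal, hence zero) is the canonical way to fill that in. There is no divergence from the paper's intent.
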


\begin{proposition}
  The composite ring homomorphism 
  \[
  Z_{m+1,m+k} \to {\HA_{\ast}} \to {\HAk_{\ast}}
  \]
  is one-to-one. 
\end{proposition}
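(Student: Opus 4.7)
The plan is to prove injectivity by a Krull dimension argument using the finite module-generation established in the previous corollary. First, abbreviate $\phi\co Z_{m+1,m+k} \to {\HAk_{\ast}}$ for the composite in question, and let $I = \ker\phi$. By the previous corollary, ${\HAk_{\ast}}$ is a finitely generated module over $Z_{m+1,m+k}$ via $\phi$, and hence it is also a finitely generated module over the subring $\phi(Z_{m+1,m+k}) = Z_{m+1,m+k}/I \subset {\HAk_{\ast}}$.

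Next, I would invoke the standard fact that a finite injective ring extension preserves Krull dimension (for instance \cite{Matsumura}, so that
\[
\dim(Z_{m+1,m+k}/I) = \dim{\HAk_{\ast}} = \dim\F_2[\xi_1,\dots,\xi_k] = k.
\]
On the other hand, $Z_{m+1,m+k} = \F_2[\zeta_{m+1},\dots,\zeta_{m+k}]$ is itself a polynomial algebra on $k$ generators, hence an integral domain of Krull dimension $k$. For such a ring, any nonzero ideal $I$ must be contained in some height $\geq 1$ prime, so the quotient $Z_{m+1,m+k}/I$ has Krull dimension strictly less than $k$. Combining these two facts forces $I = 0$, and $\phi$ is one-to-one.

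The main subtlety I anticipate is simply ensuring that everything being used is properly justified in the graded setting; the relevant dimension-theoretic results (finite extensions preserve Krull dimension, polynomial rings over a field are Cohen--Macaulay integral domains) all carry over without issue to the graded-connective case since they depend only on the underlying ring structure. No difficult computation appears to be required; the work was already done in establishing the finite module-generation in the previous corollary, which rested on Corollary~\ref{cor:finitedim} and Proposition~\ref{prop:DeadZetas}.
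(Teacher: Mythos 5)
Your argument is correct and is essentially the paper's: both rest on the finiteness of \({\HAk_{\ast}}\) over \(Z_{m+1,m+k}\) from the preceding corollary, followed by a Krull-dimension count forcing the kernel to vanish (the paper phrases this by lifting the explicit chain \((0)\subsetneq(\xi_1)\subsetneq\dots\subsetneq(\xi_1,\dots,\xi_k)\) through the finite map, while you invoke the equivalent packaged fact that finite injective extensions preserve dimension). The only nitpick is your phrase ``contained in some height \(\geq 1\) prime''; what you need is that \emph{every} prime containing a nonzero ideal of the domain \(Z_{m+1,m+k}\) has height \(\geq 1\), which is what actually gives \(\dim(Z_{m+1,m+k}/I)<k\).
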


\begin{proof}
  We have shown that the ring homomorphism \(g\co Z_{m+1,m+k} \to {\HAk_{\ast}}\) is finite. Such ring homomorphisms preserve proper inclusions of
  primes: if \(\mathfrak p \subsetneq \mathfrak q\) is a proper inclusion of prime ideals of
  \({\HAk_{\ast}}\), then the inclusion \(g^{-1}(\mathfrak p) \subset g^{-1}(\mathfrak q)\) of prime ideals of \(Z_{m+1,m+k}\) is also proper \cite[Theorem 9.3]{Matsumura}. 

  We then have a chain of prime ideals of \(Z_{m+1,m+k}\) of length \((k+2)\):
  \[
    0 \subset g^{-1}(0) \subsetneq g^{-1}(\xi_1) \subsetneq g^{-1}(\xi_1,\xi_2) \subsetneq  \dots \subsetneq
    g^{-1}(\xi_1,\dots,\xi_k).
  \]
  However, \(Z_{m+1,m+k}\) is a polynomial algebra in \(k\) variables over a field, and so it has Krull dimension \(k\) \cite[Theorem 5.6]{Matsumura}: the maximal number of distinct prime
  ideals in a chain in \(Z_{m+1,m+k}\) is \((k+1)\). Therefore, the first containment must be an equality.
\end{proof}

\begin{proposition}
  The ring \({\HAk_{\ast}}\) is a flat module over \(Z_{m+1,m+k}\).
\end{proposition}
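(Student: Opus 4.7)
The plan is to apply Matsumura's miracle-flatness theorem \cite[Theorem 23.1]{Matsumura}: a local homomorphism $(A,\mathfrak{m}_A)\to (B,\mathfrak{m}_B)$ of Noetherian local rings, with $A$ regular and $B$ Cohen--Macaulay, is flat if and only if $\dim B=\dim A+\dim(B/\mathfrak{m}_A B)$. Since $\HAk_\ast$ is a finitely generated graded module over the connected graded $\F_2$-algebra $Z_{m+1,m+k}$, I would first verify the hypotheses of miracle flatness after localizing at the irrelevant maximal ideals, and then descend to global freeness using graded Nakayama.

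First, I set up the local picture. Let $\mathfrak{m}=(\zeta_{m+1},\dots,\zeta_{m+k})$ and $\mathfrak{n}=(\xi_1,\dots,\xi_k)$ be the irrelevant ideals of $Z_{m+1,m+k}$ and $\HAk_\ast$ respectively. Corollary~\ref{cor:finitedim} says that the fiber $\HAk_\ast/\mathfrak{m}\HAk_\ast$ is a finite-dimensional local $\F_2$-algebra with unique maximal ideal $\mathfrak{n}$, so $\mathfrak{m}\HAk_\ast$ is $\mathfrak{n}$-primary and the $Z_{m+1,m+k}$-module localization $(\HAk_\ast)_\mathfrak{m}$ coincides with the local ring $(\HAk_\ast)_\mathfrak{n}$. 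Both $(Z_{m+1,m+k})_\mathfrak{m}$ and $(\HAk_\ast)_\mathfrak{n}$ are regular local of dimension $k$, being localizations of polynomial algebras on $k$ variables at the ideal generated by those variables. Since the fiber has Krull dimension $0$, the dimension equation of miracle flatness, $k=k+0$, holds, and the theorem delivers flatness of $(\HAk_\ast)_\mathfrak{m}$ over $(Z_{m+1,m+k})_\mathfrak{m}$.

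To pass from local to global flatness, I would lift a homogeneous $\F_2$-basis of $\HAk_\ast/\mathfrak{m}\HAk_\ast$ to obtain, via graded Nakayama, a surjective homogeneous map $\phi\colon Z_{m+1,m+k}^n\to \HAk_\ast$ whose kernel $K$ is finitely generated and graded. The local flatness established above gives $(\HAk_\ast)_\mathfrak{m}\cong (Z_{m+1,m+k})_\mathfrak{m}^n$, so $K_\mathfrak{m}=0$; hence $K/\mathfrak{m}K=0$, and a second application of graded Nakayama forces $K=0$. Thus $\HAk_\ast$ is free, and in particular flat, over $Z_{m+1,m+k}$. The non-routine step is this last descent from flatness at the single irrelevant prime to flatness over the entire polynomial ring, which would fail for an arbitrary finitely generated module but succeeds here because both rings are graded connected $\F_2$-algebras.
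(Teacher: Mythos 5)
Your proof is correct and takes essentially the same route as the paper: reduce to the localizations at the irrelevant maximal ideals and apply Matsumura's miracle-flatness theorem, using the zero-dimensionality of the fiber from Corollary~\ref{cor:finitedim} to verify the dimension equality. The only difference is that you carry out the local-to-global descent explicitly via graded Nakayama, where the paper simply cites \cite[\S 22, Remark]{Matsumura} for the fact that flatness of a finitely generated graded module over a connected graded ring is detected at the irrelevant maximal ideal.
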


\begin{proof}
  The ring \({\HAk_{\ast}}\) is a finitely-generated graded module over the graded subring \(Z_{m+1,m+k}\). The ring \({\HAk_{\ast}}\) is polynomial over a field, and hence Cohen--Macaulay \cite[Theorem 17.7]{Matsumura}.

  Let \(\mathfrak m = (\zeta_{m+1},\dots,\zeta_{m+k})\) be the augmentation ideal of the graded ring \(Z_{m+1,m+k}\); the quotient \(\tilde{\HA}\langle k \rangle_{\ast} = {\HAk_{\ast}}/\mathfrak m\) is zero-dimensional by Corollary~\ref{cor:finitedim}. The ring \({\HAk_{\ast}}\) is a flat module over \(Z_{m+1,m+k}\) if and only if the (ungraded) localization \(\big({\HAk_{\ast}}\big)_{\mathfrak m}\) is a flat module over the local ring \(\big(Z_{m+1,m+k}\big)_{\mathfrak m}\) \cite[\S 22, Remark]{Matsumura}. Applying the ``miracle flatness''
  theorem \cite[Theorem 23.1]{Matsumura} to the local homomorphism 
  \[
  \big(Z_{m+1,m+k}\big)_{\mathfrak m} \to 
  \big({\HAk_{\ast}}\big)_{\mathfrak m},
  \]
  we find that \({\HAk_{\ast}}\) is flat, and hence free, as a \(Z_{m+1,m+k}\)-module.
\end{proof}

\begin{theorem}\label{thm:RegularSequence}
  For any \(m \geq 0\), the elements \(\zeta_{m+1}, \dots, \zeta_{m+k}\) form a regular
  sequence in \({\HAk_{\ast}}\), and  \({\HAk_{\ast}}\) is a free module over \(Z_{m+1,m+k}\).
\end{theorem}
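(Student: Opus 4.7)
The plan is to harvest both conclusions directly from the three preceding propositions, which establish that $Z_{m+1,m+k} = \F_2[\zeta_{m+1},\dots,\zeta_{m+k}]$ embeds into $\HAk_\ast$ and that $\HAk_\ast$ is a finitely generated, flat module over this polynomial subring. The main content is to upgrade flatness to freeness via a graded Nakayama argument; the regular sequence property is then immediate by flat base change.

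For the freeness statement, I would lift an $\F_2$-basis of the finite-dimensional graded quotient $\HAk_\ast / \mathfrak m \cdot \HAk_\ast$ (where $\mathfrak m = (\zeta_{m+1},\dots,\zeta_{m+k})$; finite-dimensional by Corollary~\ref{cor:finitedim}) to homogeneous elements $x_1,\dots,x_n \in \HAk_\ast$. The graded Nakayama lemma over the connected graded ring $Z_{m+1,m+k}$ shows these generate, producing a surjection $\phi$ from a finitely generated free graded $Z_{m+1,m+k}$-module $F$ onto $\HAk_\ast$. Its kernel $K$ is finitely generated because $Z_{m+1,m+k}$ is Noetherian, and flatness of $\HAk_\ast$ means that tensoring the short exact sequence $0 \to K \to F \to \HAk_\ast \to 0$ with $Z_{m+1,m+k}/\mathfrak m = \F_2$ remains short exact:
\[
0 \to K/\mathfrak m K \to \F_2^{\oplus n} \to \HAk_\ast / \mathfrak m \HAk_\ast \to 0.
\]
The right-hand map is an isomorphism by construction, forcing $K/\mathfrak m K = 0$. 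A second application of graded Nakayama then yields $K = 0$, so $\phi$ is an isomorphism and $\HAk_\ast$ is free over $Z_{m+1,m+k}$.

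For the regular sequence claim, the classes $\zeta_{m+1},\dots,\zeta_{m+k}$ are algebraically independent polynomial generators of $Z_{m+1,m+k}$ and hence form a regular sequence there, with quotient $\F_2$. Since $\HAk_\ast$ is flat (indeed free) over $Z_{m+1,m+k}$ and the quotient $\HAk_\ast / \mathfrak m \HAk_\ast = \tRnast$ is nonzero, the sequence remains regular after tensoring up to $\HAk_\ast$ \cite[Theorem~16.1]{Matsumura}; alternatively, one can observe that the Koszul complex on $\zeta_{m+1},\dots,\zeta_{m+k}$, which resolves $\F_2$ over $Z_{m+1,m+k}$, remains acyclic in positive degrees after tensoring with the flat module $\HAk_\ast$. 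I do not expect a real obstacle here: both upgrades are standard once the flatness of the preceding proposition is in hand, and the only care needed is in handling the graded conventions for Nakayama over the connected graded polynomial ring $Z_{m+1,m+k}$.
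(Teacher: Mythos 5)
Your proposal is correct and follows essentially the same route as the paper: the paper deduces freeness from the flatness established in the preceding proposition (finitely generated flat over the graded connected Noetherian ring $Z_{m+1,m+k}$ is free, which is exactly your graded Nakayama argument), and then observes that the regular sequence in $Z_{m+1,m+k}$ remains regular after the flat extension to $\HAk_\ast$. Your extra care in noting that the quotient $\tRnast$ is nonzero is a point the paper leaves implicit, but there is no substantive difference in method.
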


\begin{proof}
  These elements form a regular sequence in \(Z_{m+1,m+k}\), and this property is preserved by flat extensions.
\end{proof}

As a consequence of this regular sequence, we can determine the dimension function of these finite-dimensional quotients.

\begin{proposition}\label{prop:Poincare}
  The finite-dimensional graded ring \(\HAn_\ast / (\zeta_{m+1},\dots,\zeta_{m+k})\) has the following properties.
  \begin{enumerate}
  \item The Poincar\'e polynomial is given by the following rational function:
    \[
      \frac{\prod_{i=1}^k (1-t^{2^{m+i}-1})}{\prod_{i=1}^k
        (1-t^{2^i-1})}
    \]
  \item The dimension of \({\tRkast}\) as a vector space is the Gaussian
    binomial \(\binom{m+k}{k}_{q}\) evaluated at \(q=2\). 
  \item The maximal degree of a nonzero element is \(2(2^m-1)(2^k-1)\).
  \end{enumerate}
\end{proposition}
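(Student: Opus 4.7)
The plan is to deduce all three claims from Theorem~\ref{thm:RegularSequence}, which guarantees that \(\zeta_{m+1},\dots,\zeta_{m+k}\) is a regular sequence on \(\HAn_\ast=\F_2[\xi_1,\dots,\xi_k]\). The starting Poincar\'e polynomial is the standard one for a graded polynomial ring:
\[
P\big(\HAn_\ast;t\big)=\prod_{i=1}^{k}\frac{1}{1-t^{2^i-1}}.
\]
Because multiplication by a homogeneous non-zero-divisor \(\zeta_{m+i}\) of degree \(2^{m+i}-1\) is injective, the short exact sequence
\[
0\to \Sigma^{2^{m+i}-1}R \xrightarrow{\zeta_{m+i}} R \to R/(\zeta_{m+i})\to 0
\]
shows that quotienting by \(\zeta_{m+i}\) multiplies the Poincar\'e polynomial by \(1-t^{2^{m+i}-1}\). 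Iterating this along the regular sequence (or equivalently invoking the Koszul resolution) gives the formula in (1). This is the main substantive step; the rest is elementary manipulation.

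For (2), I evaluate the rational function of (1) at \(t=1\). Since each factor \((1-t^n)\) vanishes to first order at \(t=1\) with \((1-t^n)/(1-t)\to n\), the numerator and denominator both vanish to order \(k\), and
\[
\dim_{\F_2}\tRkast = \lim_{t\to 1}\frac{\prod_{i=1}^{k}(1-t^{2^{m+i}-1})}{\prod_{i=1}^{k}(1-t^{2^i-1})}=\prod_{i=1}^{k}\frac{2^{m+i}-1}{2^i-1}.
\]
The right-hand side is the standard product formula for the Gaussian binomial coefficient \(\binom{m+k}{k}_q\) evaluated at \(q=2\).

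For (3), since the rational function in (1) is actually a polynomial with non-negative integer coefficients (it is a graded Hilbert series of a finite-dimensional \(\F_2\)-algebra), the top-degree monomial equals the difference of degrees of numerator and denominator:
\[
\sum_{i=1}^{k}(2^{m+i}-1)-\sum_{i=1}^{k}(2^i-1)=\sum_{i=1}^{k}(2^{m+i}-2^i)=(2^m-1)\sum_{i=1}^{k}2^i=2(2^m-1)(2^k-1).
\]
The one thing worth verifying is that this top degree is actually attained, i.e.\ the leading coefficient of the polynomial is non-zero; this is immediate because all coefficients are non-negative and the polynomial is not identically zero (its value at \(t=1\) is the Gaussian binomial computed in (2), which is positive). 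I expect no real obstacle: the only conceptual input is the regularity of the sequence (already done), and the rest is a computation with Hilbert series.
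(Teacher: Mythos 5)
Your proposal is correct and follows essentially the same route as the paper: deduce the Poincar\'e polynomial from regularity of the sequence, evaluate at \(t=1\) for the dimension, and read off the top degree as the degree of the numerator minus that of the denominator. The only additions are a short-exact-sequence justification for step (1) and a remark that the top coefficient is nonzero, both of which the paper leaves implicit.
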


\begin{proof}
  The first formula is an immediate consequence of the fact that we have the quotient of a graded polynomial ring on generators in degrees \(2^i-1\) for \(1 \leq i \leq k\) by a regular sequence on generators in degrees \(2^{m+i}-1\) for \(1 \leq i \leq k\).

  The second formula is obtained by evaluating at \(t=1\), which we can calculate by either cancelling or via limit:
  \[
    \lim_{t \to 1} \frac{\prod_{i=1}^k (1-t^{2^{m+i}-1})}{\prod_{i=1}^k
        (1-t^{2^i-1})} = \frac{\prod_{i=1}^k (2^{m+i}-1)}{\prod_{i=1}^k
        (2^i-1)}
  \]
  This is the definition of the Gaussian binomial at \(q=2\).

  Finally, the degree of the Poincar\'e polynomial is the difference in degrees between numerator and denominator:
  \[
    \sum_{i=1}^k (2^{m+i}-1) - \sum_{i=1}^k(2^i - 1) = (2^{m+k+1} - 2^{m+1}) -
    (2^{k+1}-2) = 2(2^m-1)(2^k-1).\qedhere
  \]
\end{proof}

\begin{proposition}\label{prop:poincareduality}
  The ring \(\HAn_\ast / (\zeta_{m+1},\dots,\zeta_{m+k})\) is a (commutative, graded) \emph{Frobenius algebra} over \(\F_2\): 
  \begin{enumerate}
      \item if \(\delta=2(2^m-1)(2^k-1)\), then \(\HAn_\ast / (\zeta_{m+1},\dots,\zeta_{m+k})\) is a one-dimensional \(\mathbb F_2\)-vector space in degree \(\delta\), and
      \item for any \(t\), the multiplication between degrees \(t\) and \(\delta-t\) is a perfect pairing.
  \end{enumerate}
  As a result, it is injective as a module over itself.
\end{proposition}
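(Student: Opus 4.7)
The plan is to recognize $\HAn_\ast / (\zeta_{m+1},\dots,\zeta_{m+k})$ as a graded Artinian complete intersection and then invoke the classical commutative algebra fact that complete intersections are Gorenstein, which in the graded Artinian setting is equivalent to the Frobenius/Poincar\'e duality property being claimed.

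First, I would assemble the complete intersection property. Theorem~\ref{thm:RegularSequence} shows that $\zeta_{m+1},\dots,\zeta_{m+k}$ is a regular sequence in the polynomial algebra $\HAn_\ast \cong \F_2[\xi_1,\dots,\xi_k]$, which has Krull dimension $k$. Quotienting by a regular sequence of length equal to the Krull dimension yields a zero-dimensional complete intersection, and by Corollary~\ref{cor:finitedim} this quotient is a finite-dimensional $\F_2$-algebra. Hence it is Artinian and presented as a complete intersection of the polynomial ring by $k$ homogeneous elements.

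Next I would invoke the standard result (for instance Matsumura \S21, or the Koszul complex argument: the Koszul resolution of the quotient by the regular sequence computes $\Ext$ and is self-dual up to shift and twist, yielding that $\Ext^k_{\HAn_\ast}\bigl(\HAn_\ast/(\zeta_{m+1},\dots,\zeta_{m+k}),\HAn_\ast\bigr)$ is a free module of rank one over the quotient and all lower $\Ext^j$ vanish) that complete intersections are Gorenstein. For a graded Artinian algebra $R$ over a field, being Gorenstein is equivalent to the socle being one-dimensional, and by homogeneity the socle is concentrated in a single degree. By Proposition~\ref{prop:Poincare}, the Poincar\'e series is a polynomial of degree $\delta = 2(2^m-1)(2^k-1)$; its symmetry under $t \mapsto t^{\delta}\cdot (1/t)$ (evident from the product formula $\prod_{i=1}^k(1-t^{2^{m+i}-1})/\prod_{i=1}^k(1-t^{2^i-1})$ after clearing) forces the socle to be one-dimensional in degree $\delta$ and gives the desired perfect pairing $R_t \times R_{\delta-t} \to R_\delta \cong \F_2$.

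Finally, self-injectivity is automatic: a one-dimensional top class $\omega$ together with the perfect pairing gives an isomorphism of left $R$-modules $R \xrightarrow{\sim} \Hom_{\F_2}(R, \F_2)$ sending $x$ to the functional $y \mapsto \omega\text{-coefficient of }xy$, and the right-hand side is injective as an $R$-module because $\Hom_{\F_2}(-,\F_2)$ is an exact contravariant functor taking the free (hence projective) $R$-module $R$ to an injective. The main (and really only) obstacle is an invocation rather than a calculation: it is the appeal to the Gorenstein property of complete intersections. A more hands-on alternative, should one prefer to avoid black-box commutative algebra, would construct the pairing directly from the free $Z_{m+1,m+k}$-module structure on $\HAn_\ast$ supplied by Theorem~\ref{thm:RegularSequence} by choosing a homogeneous basis whose top element descends to a generator of the socle, and using the associated $Z_{m+1,m+k}$-linear projection to produce the Frobenius form after reducing modulo the augmentation ideal.
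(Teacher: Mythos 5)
Your proposal is correct and takes essentially the same route as the paper: recognize the quotient as a zero-dimensional complete intersection, invoke the Gorenstein property of complete intersections (the paper cites Matsumura Theorem 21.3 and Eisenbud--Green--Harris Proposition 7), and conclude the perfect pairing and self-injectivity. Your extra observations (the Koszul resolution, the Poincar\'e polynomial palindromy) are consistent supplements rather than a different argument.
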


\begin{proof}
  As the quotient of a polynomial ring by a regular sequence of length the Krull dimension, the ring \(\HAn_\ast / (\zeta_{m+1},\dots,\zeta_{m+k})\) is a zero-dimensional complete intersection ring. In particular, it is Gorenstein by \cite[Theorem 21.3]{Matsumura}, and by \cite[Proposition 7]{EisenbudGreenHarris} this means that the multiplication is a perfect pairing. By definition, this makes it a Frobenius algebra.
\end{proof}

We briefly recall the following result, re-expressing algebraic duality.

\begin{proposition}\label{prop:selfinjective}
  Suppose \(R\) is a graded Frobenius algebra over a field \(\F\), with top nonzero degree \(\delta\). For any graded left \(R\)-module \(M\), there is a natural isomorphism
  \[
  \Hom_R(M,R) \cong \Hom_\F(M, \Sigma^\delta\F).
  \]
  In particular, \(R\) is injective as a left module over itself. If \(R\) is commutative, then this natural isomorphism is one of \(R\)-modules.
\end{proposition}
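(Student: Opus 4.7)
The plan is to use the Frobenius pairing to produce the natural comparison map, check it is an isomorphism when \(M=R\), and then bootstrap from free resolutions.

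First, unpack what the Frobenius structure gives us. Projection onto the top-degree piece gives an \(\F\)-linear functional \(\lambda \co R \to \Sigma^\delta \F\), and the assumption that the multiplication \(R^t \otimes R^{\delta-t} \to R^\delta \cong \F\) is a perfect pairing for each \(t\) is precisely the statement that the map
\[
\rho\co R \to \Hom_\F(R, \Sigma^\delta \F), \qquad \rho(r)(s) = \lambda(rs),
\]
is an isomorphism of graded \(R\)-modules (in the commutative case; in general a map of \((R,R)\)-bimodules).

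Second, for a general graded left \(R\)-module \(M\), define the comparison map
\[
\Phi_M\co \Hom_R(M, R) \to \Hom_\F(M, \Sigma^\delta \F), \qquad \Phi_M(\phi) = \lambda \circ \phi.
\]
This is manifestly natural in \(M\), and in the commutative case it is \(R\)-linear because \(\lambda(rs) = \lambda(sr)\). When \(M = \Sigma^n R\), the map \(\Phi_M\) is a shift of \(\rho\) and so is an isomorphism. Since both \(\Hom_R(-,R)\) and \(\Hom_\F(-,\Sigma^\delta \F)\) convert direct sums into products, \(\Phi_F\) is an isomorphism for every free graded \(R\)-module \(F\).

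Third, choose a free presentation \(F_1 \to F_0 \to M \to 0\) in graded \(R\)-modules. The functor \(\Hom_R(-, R)\) is left exact, and \(\Hom_\F(-, \Sigma^\delta \F)\) is exact because \(\F\) is a field, so we obtain a commutative diagram with exact rows
\[
\begin{tikzcd}[column sep=small]
0 \ar[r] & \Hom_R(M,R) \ar[r] \ar[d, "\Phi_M"] & \Hom_R(F_0,R) \ar[r] \ar[d, "\Phi_{F_0}"] & \Hom_R(F_1,R) \ar[d, "\Phi_{F_1}"] \\
0 \ar[r] & \Hom_\F(M,\Sigma^\delta \F) \ar[r] & \Hom_\F(F_0,\Sigma^\delta \F) \ar[r] & \Hom_\F(F_1,\Sigma^\delta \F).
\end{tikzcd}
\]
The right two vertical maps are isomorphisms by the previous step, and the five-lemma (or a direct diagram chase) shows that \(\Phi_M\) is an isomorphism. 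The main thing to keep honest is that everything remains in the graded category, but since the functional \(\lambda\) lives in the correct graded degree this is automatic.

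Finally, injectivity of \(R\) as a left \(R\)-module is immediate: the isomorphism \(\Hom_R(-, R) \cong \Hom_\F(-, \Sigma^\delta \F)\) identifies the functor \(\Hom_R(-, R)\) with a functor that is exact (because \(\F\) is a field), so \(R\) is injective. The only real subtlety in the argument is Step~3 where one must verify that the comparison map respects the presentation; this is purely formal once naturality of \(\Phi\) is in hand.
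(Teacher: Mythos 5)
Your argument is correct, and it arrives at the same comparison map \(\Phi_M(\phi)=\lambda\circ\phi\) as the paper, but it proves \(\Phi_M\) is an isomorphism by a genuinely different route. The paper first upgrades the pairing to an isomorphism of left \(R\)-modules (and bimodules in the commutative case)
\[
\rho\colon R \xrightarrow{\;\cong\;} \Hom_\F(R,\Sigma^\delta\F),
\]
where the target carries the coinduced \(R\)-module structure, and then applies \(\Hom_R(M,-)\) together with the adjunction \(\Hom_R(M,\Hom_\F(R,V))\cong\Hom_\F(M,V)\) to conclude in one step. You instead verify the isomorphism on (possibly infinite) free modules and then run a dévissage via a free presentation and the five-lemma. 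Both are standard; the paper's adjunction argument is a bit more economical and makes the \(R\)-module structure in the commutative case fall out automatically, whereas your bootstrap is more elementary and does not require invoking the coinduction adjunction. One small point worth making explicit in your write-up: the role of commutativity is not to make the equality \(\Phi(r\phi)=r\Phi(\phi)\) hold (that is automatic from \(R\)-linearity of \(\phi\)), but to ensure that \(\Hom_\F(M,\Sigma^\delta\F)\) has a \emph{left} \(R\)-module structure at all, since for noncommutative \(R\) the linear dual of a left module is naturally only a right module.
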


\begin{proof}
  The multiplication pairing 
  \[
  \langle \mhyphen,\mhyphen\rangle\co R_t \otimes R_{\delta-t} \to \F_2
  \]
  is a perfect pairing that satisfies \(\langle xa,y\rangle = \langle x,ay\rangle\). As a result, \(y \mapsto \langle -,y\rangle\) is an isomorphism \(R \to \Hom_{\F_2}(R,\Sigma^\delta \F_2)\) of graded left \(R\)-modules. If \(R\) is commutative, we also have \(\langle ax, y\rangle = \langle x, ya\rangle\), extending it to an isomorphism of \(R\)-bimodules.

  Applying \(\Hom_R(M,-)\) gives the desired natural isomorphism of abelian groups, which is an isomorphism of \(R\)-modules in the commutative case (using the right module structure). Taking \(\F\)-linear duals is exact, so \(R\) is injective.
\end{proof}

\section{Finite \tpfstr{\(\HA\langle k\rangle \smashover{\HA}\overline{\HA\langle m\rangle}\)}{Akm}-algebras and splittings}\label{sec:splitting}
The graded algebra computation of \(\HAn_\ast\) actually greatly simplifies the study of \(\HAk\smashover{\HA}\barHAm\).
From Theorem~\ref{thm:RingStructure}, as an \(\HA\)-module, the ring \({\barHAm}\) is a colimit of modules of the form 
\[
\bigwedge_{j=m+1}^N \overline{M}_j, 
\]
and hence \({\HAk}\smashover{\HA} {\barHAm}\) is a colimit of left \({\HAk}\)-modules of the form 
\[
{\HAk}\smashover{\HA}\bigwedge_{j=m+1}^N \overline{M}_j.
\]

\begin{definition}
For natural numbers \(k,m,n \geq 0\), let
\[
    \Rkmn{m}{m+k+n}={\HAn}\smashover{\HA} \left(\bigwedge_{j=m+1}^{m+k+n} \overline{M}_j\right).
\]
For the special case \(k=m\) and \(n=0\), let
\[
\tRk = \HAk/(\zeta_{k+1},\ldots, \zeta_{2k}).
\]
\end{definition}

Theorem~\ref{thm:RegularSequence} shows that the collection of elements \(\zeta_{m+1},\dots,\zeta_{m+k}\) forms a regular sequence. In particular, the associated \(\Tor\) spectral sequence collapses.

\begin{corollary}\label{cor:TildeAkHomotopy}
  As an \({\HAk}_{\ast}\)-module, we have
  \[
  \pi_\ast\big(\HAn / (\zeta_{m+1},\dots,\zeta_{m+k})\big) \cong {\HAk}_\ast/(\zeta_{m+1},\dots,\zeta_{m+k}).
  \]
  In particular, \(\pi_\ast(\tRn) \cong \tRnast\).
\end{corollary}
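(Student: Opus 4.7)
The plan is to smash in the factors \(\overline{M}_{m+1}, \ldots, \overline{M}_{m+k}\) one at a time, using the regularity from Theorem~\ref{thm:RegularSequence} to ensure that each step simply quotients the homotopy by the next \(\zeta\). Set \(X_0 = \HAn\) and \(X_i = X_{i-1} \smashover{\HA} \overline{M}_{m+i}\) for \(1 \le i \le k\), so that \(X_k = \Rkmn{m}{m+k}\).

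Inductively, I would assume \(\pi_\ast X_{i-1} \cong \HAk_\ast / (\zeta_{m+1}, \ldots, \zeta_{m+i-1})\) as a \(\HAk_\ast\)-module, the base case \(i = 1\) being Theorem~\ref{thm:RingStructure}. Because \(\overline{M}_{m+i}\) is by construction the cofiber of \(\zeta_{m+i} \co \Sigma^{|\zeta_{m+i}|} \HA \to \HA\), smashing this cofiber sequence over \(\HA\) with \(X_{i-1}\) yields
\[
\Sigma^{|\zeta_{m+i}|} X_{i-1} \xrightarrow{\zeta_{m+i}} X_{i-1} \to X_i.
\]
Theorem~\ref{thm:RegularSequence} makes \(\zeta_{m+i}\) a non-zero divisor on the inductive quotient \(\HAk_\ast/(\zeta_{m+1},\ldots,\zeta_{m+i-1})\), so the long exact sequence in homotopy collapses to short exact sequences and I read off \(\pi_\ast X_i \cong \HAk_\ast/(\zeta_{m+1}, \ldots, \zeta_{m+i})\), completing the induction at \(i = k\).

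One can equivalently phrase this via the K\"unneth spectral sequence of Elmendorf--Kriz--Mandell--May: its \(E^2\)-page is \(\Tor^{\HA_\ast}(\HAn_\ast, \HA_\ast/(\zeta_{m+1},\ldots,\zeta_{m+k}))\), and regularity of the sequence in both \(\HA_\ast\) and \(\HAn_\ast\) makes the Koszul complex on the \(\zeta_{m+i}\) a free resolution of the second factor whose derived tensor with \(\HAn_\ast\) remains acyclic away from \(s=0\). The main pitfall to avoid in either argument is conflating the \(\HA_\ast\)-module structure that these methods produce with the advertised \(\HAk_\ast\)-module structure: since each \(X_i\) is a module spectrum over \(\HAn\), its homotopy is automatically a \(\pi_\ast \HAn = \HAk_\ast\)-module, and the \(\HA_\ast\)-action factors through the quotient \(\HA_\ast \twoheadrightarrow \HAk_\ast\) of Theorem~\ref{thm:RingStructure}. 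The special case \(\pi_\ast(\tRn) \cong \tRnast\) is then just \(m = k\).
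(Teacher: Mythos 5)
Your proposal is correct and matches the paper's argument: the corollary is deduced there in one line from Theorem~\ref{thm:RegularSequence} by noting that regularity of \(\zeta_{m+1},\dots,\zeta_{m+k}\) collapses the associated \(\Tor\) (K\"unneth) spectral sequence, which is exactly your second formulation. Your inductive cofiber-sequence argument is just the elementary unwinding of the same collapse, and your remark about the \(\HAk_\ast\)-module structure coming from the left \(\HAn\)-module structure is the right way to handle that point.
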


Since \({\HAk}_\ast/(\zeta_{m+1},\dots,\zeta_{m+k})\) is a finite algebra, with its top degree class in \(2(2^m-1)(2^k-1)\), the \(\HA\)-module \(\Rkmn{m}{m+k}\) is also co-connective. In particular, there are no maps from more highly connected modules. This splits the quotients by later \(\xi\)s and \(\zeta\)s.

\begin{proposition}
  For each \(j>m+k\), we have an equivalence of \(\HA\)-modules:
  \[
    \Rkmn{m}{m+k}\smashover{\HA} M_j\simeq \Rkmn{m}{m+k}\vee \Sigma^{2^j}\Rkmn{m}{m+k}
  \]
  The same result also holds for \(\overline{M}_j\).
\end{proposition}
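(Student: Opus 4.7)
The plan is to show that for $j>m+k$, both $\xi_j$ and $\zeta_j$ induce null-homotopic self-maps of $\Rkmn{m}{m+k}$ in $\HA$-modules. Granting this, smashing the cofiber sequence $\Sigma^{2^j-1}\HA\xrightarrow{\xi_j}\HA\to M_j$ with $\Rkmn{m}{m+k}$ over $\HA$ produces a cofiber sequence with null attaching map, immediately yielding the claimed wedge splitting; the case of $\overline{M}_j$ is identical, with $\zeta_j$ in place of $\xi_j$.

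I would work in the category of left $\HAk$-modules. As an $\HAk$-module,
\[
\Rkmn{m}{m+k} = \HAk\smashover{\HA}\big(\bar{M}_{m+1}\smashover{\HA}\cdots\smashover{\HA}\bar{M}_{m+k}\big)
\]
is a finite cell $\HAk$-module with $2^k$ cells $\Sigma^{d_S}\HAk$ indexed by subsets $S\subseteq\{m+1,\dots,m+k\}$, where $d_S = \sum_{i\in S}2^i\geq 0$. By Proposition~\ref{prop:Poincare}, its homotopy vanishes above $\delta = 2(2^m-1)(2^k-1)$. I claim that $[\Sigma^d\Rkmn{m}{m+k},\Rkmn{m}{m+k}]_{\HAk}=0$ for every $d>\delta$. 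This follows by a cell induction along a filtration $N_0\subset N_1\subset\dots\subset N_{2^k}=\Rkmn{m}{m+k}$ with $N_i/N_{i-1}\simeq\Sigma^{d_i}\HAk$: at each stage the long exact sequence of $[\Sigma^d(-),\Rkmn{m}{m+k}]_{\HAk}$ has one neighbor zero by induction and the other a cell contribution $\pi_{d+d_i}\Rkmn{m}{m+k}=0$ (since $d+d_i\geq d>\delta$).

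A direct check gives $2^j-1>\delta$ whenever $j>m+k$ (equivalent to $2^{m+1}+2^{k+1}>3$, which always holds). Therefore $|\xi_j|=|\zeta_j|=2^j-1>\delta$. Because $\HA$ is commutative, the unit $\HA\to\HAk$ is central, so multiplication by $\xi_j$ or $\zeta_j$ is a $\HAk$-linear self-map of $\Rkmn{m}{m+k}$, landing in the vanishing Hom-group from the previous paragraph. The main technical content is the cell-induction connectivity argument; passing to $\HAk$-modules is essential, because $\Rkmn{m}{m+k}$ is not a finite $\HA$-cell object (as $\HAk$ itself has infinitely many $\HA$-cells), whereas its $\HAk$-cell structure is finite and dovetails with the top-degree bound $\delta$ to give the needed vanishing uniformly.
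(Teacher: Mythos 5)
Your argument is correct and reaches the same conclusion by the same core idea as the paper: the multiplication-by-$\xi_j$ (or $\zeta_j$) self-map of $\Rkmn{m}{m+k}$ has source in degree $2^j-1 > \delta(k,m,0)$, while the target has vanishing homotopy above $\delta$, so the map is null and the cofiber splits. Where you diverge from the paper is in \emph{how} you justify the vanishing of the relevant mapping group. The paper stays in $\HA$-modules and invokes the general connectivity statement: for any $\HA$-module $N$ that is at least $(\delta+1)$-connective, $\Map_\HA(N,\Rkmn{m}{m+k})$ is contractible because $\HA$ is connective and $\Rkmn{m}{m+k}$ is $\delta$-coconnective (equivalently, $\tau_{\leq\delta} N \simeq 0$, and any map into a coconnective object factors through the coconnective cover of the source). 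You instead pass to $\HAk$-modules to exploit the finite $2^k$-cell structure and run an explicit induction along the cell filtration. Both work, and your argument is self-contained, but your parenthetical remark that passing to $\HAk$-modules is ``essential'' because $\Rkmn{m}{m+k}$ is not a finite $\HA$-cell object is not accurate: the paper's connectivity argument does not need finiteness of any kind, only that $\HA$ is connective and that $\Sigma^{2^j-1}\Rkmn{m}{m+k}$ is $(\delta+1)$-connective. So the change of ambient category buys you explicitness at the level of a concrete cell filtration, but it is a matter of taste, not necessity.

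Two small points worth noting. First, since $\HAk$ is only associative, you should briefly observe (as you implicitly do) that multiplication by an element of $\pi_\ast\HA$ is a map of left $\HAk$-modules because $\HAk$ is an $\HA$-algebra with $\HA$ commutative, so the $\HA$-action is central. Second, being null as an $\HAk$-module map implies being null as an $\HA$-module map by restriction along $\HA\to\HAk$, which is what you need to conclude the $\HA$-module splitting; you state this implicitly but it deserves a sentence.
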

\begin{proof}
  By Proposition~\ref{prop:Poincare}, the degree of the top class in the Frobenius algebra \(\HAn_\ast/(\zeta_{m+1},\dots,\zeta_{m+k})\) is 
    \[
        2(2^m-1)(2^k-1)=2^{m+k+1}-2^{m+1} - 2^{k+1}+2.
    \] 
    If \(N\) is any \(\HA\)-module that is at least \(\big(2(2^m-1)(2^k-1)+1\big)\)-connective, then the space of maps
    \[
        \Map_{\HA}\big(N,\Rkmn{m}{m+k}\big)
    \]
    is contractible and all maps are null-homotopic as maps of \(\HA\)-modules. Applying this to the map
    \[
    \xi_j\co \Sigma^{2^j-1}\Rkmn{m}{m+k}\to  \Rkmn{m}{m+k},
    \]
    we see that since \(j\geq m+k+1\), the source of the multiplication-by-\(\xi_j\) may is sufficiently highly connected to guarantee that this map  is actually null-homotopic. The mapping cone \(\Rkmn{m}{m+k}\smashover{\HA}M_j\) then splits, and the result follows. The same proof applies to \(\overline{M}_j\).
\end{proof}

Inductively applying this result gives us the following splittings.
\begin{proposition}\label{prop:SmashSplitting}
  For any set \(I\) of natural numbers that are all greater than \(m+k\), there is a splitting of \(\HA\)-module spectra
  \[
  \Rkmn{m}{m+k} \smashover{\HA} M_I \simeq \Rkmn{m}{m+k} \wedge \left(\bigvee_{j \in D_I} S^j\right),
  \]
  where \(D_I\) is as in Notation~\ref{not:Dyadic}. The same result also holds for \(\overline M_I\).
\end{proposition}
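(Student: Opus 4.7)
The plan is induction on $|I|$, with the preceding proposition providing the base case $|I|=1$. For the inductive step, fix some $j \in I$ and set $I' = I \setminus \{j\}$. Since smash products of $\HA$-modules are associative, $M_I \simeq M_{I'} \smashover{\HA} M_j$, so
\[
\Rkmn{m}{m+k} \smashover{\HA} M_I \simeq \bigl(\Rkmn{m}{m+k} \smashover{\HA} M_{I'}\bigr) \smashover{\HA} M_j.
\]
By the inductive hypothesis, the parenthesized factor is equivalent to $\Rkmn{m}{m+k} \wedge W_{I'}$, where $W_{I'} := \bigvee_{i \in D_{I'}} S^i$, so the task reduces to splitting the $\xi_j$-multiplication map on $\Rkmn{m}{m+k} \wedge W_{I'}$.

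The crucial observation is that the connectivity argument in the preceding proposition already produces a nullhomotopy of the $\xi_j$-multiplication map on $\Rkmn{m}{m+k}$ itself, obtained by comparing $2^j - 1$ with the top degree $2(2^m-1)(2^k-1)$ of the coefficient ring $\HAk_\ast / (\zeta_{m+1}, \dots, \zeta_{m+k})$. This nullity is preserved by smashing with the ordinary spectrum $W_{I'}$, since the $\xi_j$-action on $\Rkmn{m}{m+k} \wedge W_{I'}$ factors as the $\xi_j$-action on $\Rkmn{m}{m+k}$ smashed with the identity on $W_{I'}$. The defining cofiber sequence for $-\smashover{\HA} M_j$ therefore splits, giving
\[
\bigl(\Rkmn{m}{m+k} \wedge W_{I'}\bigr) \smashover{\HA} M_j \simeq \Rkmn{m}{m+k} \wedge \bigl(W_{I'} \vee \Sigma^{2^j} W_{I'}\bigr).
\]

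To complete the induction, I would invoke the combinatorial identity $D_I = D_{I'} \sqcup (2^j + D_{I'})$, which holds because each finite subset $S \subset I$ either contains $j$ or does not. This yields $W_{I'} \vee \Sigma^{2^j} W_{I'} \simeq \bigvee_{i \in D_I} S^i$, as required. The parallel statement for $\overline M_I$ follows by the same induction, using the nullity of $\zeta_j$-multiplication on $\Rkmn{m}{m+k}$ already noted in the preceding proposition. I do not anticipate a substantive obstacle: the only point requiring care is that a nullhomotopy of an $\HA$-module map $f \colon X \to Y$ automatically smashes to a nullhomotopy of $f \wedge \mathrm{id}_W$ for any spectrum $W$, which is immediate by functoriality of the smash product.
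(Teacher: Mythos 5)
Your proof is correct and fills in exactly the induction the paper has in mind when it says ``Inductively applying this result gives us the following splittings.'' The key points you supply explicitly — that the inductive hypothesis yields an $\HA$-module equivalence with $\HA$ acting only on the $\Rkmn{m}{m+k}$ factor, so the $\xi_j$-action on $\Rkmn{m}{m+k} \wedge W_{I'}$ is $\xi_j \wedge \mathrm{id}$ and inherits the nullhomotopy from the single-generator case, and that $D_I = D_{I'} \sqcup (2^j + D_{I'})$ — are precisely the details the paper leaves implicit. The only point you leave unaddressed, as does the paper, is the passage from finite to arbitrary $I$; for infinite $I$ one writes $M_I$ as a filtered colimit of $M_{I_0}$ over finite $I_0 \subset I$ and notes the splittings are compatible with the structure maps, which is routine.
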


\begin{corollary}\label{cor:HomotopySplitting}
  For any set \(I\) of natural numbers that are all greater than \(m+k\), the homotopy of \(\Rkmn{m}{m+k} \smashover{\HA} M_I\) is the \({\HAn}_{\ast}\)-module
  \[
  \HAn_\ast / (\zeta_{m+1},\dots,\zeta_{m+k}) \otimes \F_2\{e_j \mid j \in D_I\},
  \]
  where \(|e_j|=j\). The same result also holds for \(\overline M_I\).
\end{corollary}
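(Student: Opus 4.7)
The plan is to reduce this immediately to Proposition~\ref{prop:SmashSplitting}. By that proposition, we already have a splitting of $\HA$-modules
\[
\Rkmn{m}{m+k} \smashover{\HA} M_I \simeq \Rkmn{m}{m+k} \wedge \left(\bigvee_{j \in D_I} S^j\right),
\]
so the first step is simply to apply $\pi_\ast(-)$. Since smashing with a wedge of spheres distributes over homotopy groups, this yields
\[
\pi_\ast\!\big(\Rkmn{m}{m+k} \smashover{\HA} M_I\big) \cong \pi_\ast\!\big(\Rkmn{m}{m+k}\big) \otimes_{\F_2} \F_2\{e_j \mid j \in D_I\},
\]
where $e_j$ records the generator in degree $j$ coming from $S^j$.

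Next I would identify the coefficient ring using Corollary~\ref{cor:TildeAkHomotopy}, which says that as an ${\HAk}_{\ast}$-module,
\[
\pi_\ast\!\big(\Rkmn{m}{m+k}\big) \cong \HAk_\ast/(\zeta_{m+1},\dots,\zeta_{m+k}).
\]
Substituting this into the tensor product above gives the claimed formula. The statement for $\overline{M}_I$ is identical: Proposition~\ref{prop:SmashSplitting} provides the analogous splitting, and the rest of the argument is the same.

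The only subtlety worth mentioning is the module structure: the claim is about $\HAn_\ast/(\zeta_{m+1},\dots,\zeta_{m+k})$-modules, not just abelian groups. This is inherited from the fact that the splitting in Proposition~\ref{prop:SmashSplitting} is a splitting of $\Rkmn{m}{m+k}$-modules (it arises from nullhomotopies of the multiplication-by-$\xi_j$ or $\zeta_j$ maps on $\Rkmn{m}{m+k}$), so taking $\pi_\ast$ preserves the module action of $\pi_\ast\!\big(\Rkmn{m}{m+k}\big) = \HAn_\ast/(\zeta_{m+1},\dots,\zeta_{m+k})$. There is no real obstacle here; this corollary is essentially a packaging of the previous two results, with the dyadic indexing from Notation~\ref{not:Dyadic} accounting for the fact that each factor $M_j$ (or $\overline{M}_j$) contributes two cells in degrees $0$ and $2^j$.
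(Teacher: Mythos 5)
Your proof is correct and is essentially the argument the paper intends, since this corollary is simply the conjunction of Proposition~\ref{prop:SmashSplitting} and Corollary~\ref{cor:TildeAkHomotopy}: apply $\pi_\ast$ to the splitting and read off the module structure. Your observation about why the $\HAn_\ast/(\zeta_{m+1},\dots,\zeta_{m+k})$-module structure is respected is correct, although the cleanest way to say it is that the self-maps being trivialized arise by applying $\Rkmn{m}{m+k}\smashover{\HA}(-)$ to the $\HA$-bimodule map $\xi_j$ (or $\zeta_j$) on $\HA$, which makes them left $\Rkmn{m}{m+k}$-module maps; the splitting therefore holds in left $\Rkmn{m}{m+k}$-modules and passes to homotopy groups as modules over $\pi_\ast\Rkmn{m}{m+k}$.
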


These results can now be specialized to the set \(I = \{m+k+d \mid 1 \leq d \leq n\}\).

\begin{corollary}\label{cor:SplittingRkmn}
  For any \(n\geq 0\), including \(n=\infty\), we have a splitting 
  \[
  \Rkmn{m}{m+k+n} \simeq \Rkmn{m}{m+k}\wedge \left(\bigvee_{j=0}^{2^n-1} S^{j(2^{m+k+1})}\right)
  \]
  of modules over \(\HA\).
\end{corollary}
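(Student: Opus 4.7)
The plan is to recognize the left-hand side as exactly the kind of smash product handled by Proposition~\ref{prop:SmashSplitting}, then compute the indexing set of dyadic expansions explicitly.

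First, from the definition,
\[
\Rkmn{m}{m+k+n} \;=\; \HAn \smashover{\HA} \bigwedge_{j=m+1}^{m+k+n} \overline{M}_j \;\simeq\; \Rkmn{m}{m+k} \smashover{\HA} \overline{M}_I,
\]
where \(I = \{m+k+1, m+k+2, \dots, m+k+n\}\) and \(\overline{M}_I = \bigwedge_{j \in I}^{\HA} \overline{M}_j\). This uses only the definition of \(\Rkmn{m}{m+k}\) together with the associativity of the smash product over \(\HA\), and separates the indices into the ``structural'' block \(m+1,\dots,m+k\) (which builds the Frobenius algebra) and the ``extra'' block \(I\) that sits above the top class.

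Next, every index in \(I\) is strictly greater than \(m+k\), so Proposition~\ref{prop:SmashSplitting} applies and yields
\[
\Rkmn{m}{m+k} \smashover{\HA} \overline{M}_I \;\simeq\; \Rkmn{m}{m+k} \wedge \bigvee_{j \in D_I} S^{j},
\]
where \(D_I\) is as in Notation~\ref{not:Dyadic}.

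It remains to identify \(D_I\). For \(S \subset I\),
\[
\sum_{i \in S} 2^{i} \;=\; 2^{m+k+1}\sum_{i \in S} 2^{\,i - (m+k+1)},
\]
and as \(S\) ranges over subsets of \(I = \{m+k+1,\dots,m+k+n\}\), the inner sum ranges exactly over the integers \(j\) with \(0 \le j \le 2^{n}-1\), by uniqueness of binary expansion. Thus \(D_I = \{\, j \cdot 2^{m+k+1} \mid 0 \le j \le 2^{n}-1\,\}\), and substituting gives the stated splitting for finite \(n\).

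For \(n = \infty\), the module \(\barHAn[m]\) is by definition the colimit of the finite smash products \(\HA \smashover{\HA} \bigwedge_{j=m+1}^{m+k+n} \overline{M}_j\) over \(n\) (using the natural maps \(\HA \to \overline{M}_{j}\)), so \(\Rkmn{m}{\infty} = \HAn \smashover{\HA} \barHAn[m]\) is the colimit of the \(\Rkmn{m}{m+k+n}\). The finite splittings are compatible as \(n\) grows, since the inclusion \(D_{I_n} \hookrightarrow D_{I_{n+1}}\) corresponds to including the first \(2^n\) summands into the first \(2^{n+1}\), so passing to the colimit yields the \(n=\infty\) statement. The only real content is the indexing computation; everything else is an immediate specialization of Proposition~\ref{prop:SmashSplitting}, so no serious obstacle is expected.
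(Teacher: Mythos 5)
Your proof is correct and follows essentially the same route as the paper: the corollary is obtained by specializing Proposition~\ref{prop:SmashSplitting} to the index set \(I = \{m+k+1,\dots,m+k+n\}\), and your identification \(D_I = \{\,j\cdot 2^{m+k+1} \mid 0 \le j \le 2^n-1\,\}\) via binary expansion is exactly the content the paper leaves implicit. The one small inaccuracy is the phrase ``by definition'' in the \(n=\infty\) case: \(\barHAm\) is defined as the universal associative quotient \(\HA\aquot\zeta_{m+1}\), and its identification with the colimit \(\overline{M}_{>m}\) of the iterated mapping cones is the content of Corollary~\ref{cor:algquotientismodquotient}, not a definition — but this does not affect the validity of the argument.
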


\begin{corollary}\label{cor:GapsInHomotopy}
  For each \(n\geq 0\), the homotopy of \(\Rkmn{m}{m+k+n}\) is the \({\HAn}_{\ast}\)-module
  \[
  \HAn_\ast / (\zeta_{m+1},\dots,\zeta_{m+k}) \otimes \left(\bigoplus_{j=0}^{2^n-1} \F_2\{e_{j(2^{m+k+1)}}\}\right).
  \]
\end{corollary}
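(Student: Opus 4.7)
The plan is simply to take homotopy groups of the splitting established in Corollary~\ref{cor:SplittingRkmn}. Since \(\pi_\ast\) converts wedges into direct sums and suspensions into degree shifts, the equivalence
\[
\Rkmn{m}{m+k+n} \simeq \Rkmn{m}{m+k} \wedge \bigvee_{j=0}^{2^n-1} S^{j(2^{m+k+1})}
\]
immediately yields
\[
\pi_\ast \Rkmn{m}{m+k+n} \cong \bigoplus_{j=0}^{2^n-1} \pi_{\ast - j(2^{m+k+1})} \Rkmn{m}{m+k}.
\]
Substituting the identification \(\pi_\ast \Rkmn{m}{m+k} \cong \HAn_\ast/(\zeta_{m+1},\dots,\zeta_{m+k})\) from Corollary~\ref{cor:TildeAkHomotopy}, and naming the generator of the \(j\)-th shifted summand \(e_{j(2^{m+k+1})}\), produces exactly the asserted formula as a direct sum of shifted copies of the finite algebra.

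The only point worth checking is that the isomorphism respects the \(\HAn_\ast\)-module structure rather than merely the underlying abelian group structure. The spectrum \(\Rkmn{m}{m+k+n} = \HAn \smashover{\HA}\bigl(\bigwedge_{j=m+1}^{m+k+n}\overline{M}_j\bigr)\) is naturally an \(\HAn\)-module, and the splittings in Corollary~\ref{cor:SplittingRkmn} are produced by iterating Proposition~\ref{prop:SmashSplitting}. Each such splitting comes from the observation that a multiplication-by-\(\xi_j\) or \(\zeta_j\) map has source too highly connected to admit a nontrivial \(\HA\)-linear map into the coconnective target \(\Rkmn{m}{m+k}\); the resulting nullhomotopies, and hence the wedge decomposition, are in fact \(\HAn\)-linear because \(\Rkmn{m}{m+k}\) itself carries an \(\HAn\)-module structure. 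Passing to \(\pi_\ast\) then delivers the claimed isomorphism of \(\HAn_\ast\)-modules.

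I do not expect a real obstacle, since the content is entirely contained in Corollaries~\ref{cor:SplittingRkmn} and~\ref{cor:TildeAkHomotopy}; this corollary is essentially a bookkeeping consequence of specializing Corollary~\ref{cor:HomotopySplitting} to \(I = \{m+k+1,\dots,m+k+n\}\), where the dyadic expansion set \(D_I\) becomes \(\{j\cdot 2^{m+k+1} \mid 0 \leq j \leq 2^n-1\}\).
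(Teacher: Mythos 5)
Your proposal is correct and matches the paper's (implicit) argument exactly: the corollary is stated there as an immediate consequence of the splitting in Corollary~\ref{cor:SplittingRkmn} together with Corollary~\ref{cor:TildeAkHomotopy}, equivalently as the specialization of Corollary~\ref{cor:HomotopySplitting} to \(I=\{m+k+1,\dots,m+k+n\}\) with \(D_I=\{j\cdot 2^{m+k+1}\mid 0\le j\le 2^n-1\}\), which is precisely your bookkeeping. Your extra remark verifying that the splitting is compatible with the \(\HAn_\ast\)-module structure is a reasonable care point that the paper leaves implicit.
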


These homotopy modules are all finite \(\F_2\)-vector spaces, and the top degree class is in dimension
\[
    2(2^m-1)(2^k-1)+(2^n-1)2^{m+k+1}=2^{m+k+n+1}-2^{m+1}-2^{k+1}+2.
\]

\begin{definition}
    For positive natural numbers \(k\) and \(m\) and natural number \(n\), let
    \[
    \delta=\delta(k,m,n)=2^{m+k+n+1}-2^{m+1}-2^{k+1}+2.
    \]
\end{definition}

Using this, we have a surprising consequence: these intermediate stages are actually all \(\HA\)-algebras.

\begin{theorem}\label{thm:SomeDoubleKilling}
  The \(\HA\)-module \(\Rkmn{m}{m+k+n}\) is equivalent to a Postnikov stage of \(\HAk\smashover{\HA}{\barHAm}\): there is an equivalence of \(\HA\)-modules
  \[
  \Rkmn{m}{m+k+n} \simeq P^{\delta(k,m,n)} 
  \Big( {\HAk}\smashover{\HA}{\barHAm}\Big).
  \]

  For any \(m,k\) and \(n\), \(\Rkmn{m}{m+k+n}\) has an associative algebra structure such that the unit 
  \[
  \HA\to \Rkmn{m}{m+k+n}
  \]
  is the natural map.
\end{theorem}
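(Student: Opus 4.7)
The plan is to realize $\Rkmn{m}{m+k+n}$ as the Postnikov truncation of $\HAk\smashover{\HA}\barHAm$ inside associative $\HA$-algebras, and then identify the underlying $\HA$-module with $\Rkmn{m}{m+k+n}$ via the splitting theorem. First, by Proposition~\ref{prop:aquotbasechange} we have
\[
\HAk\smashover{\HA}\barHAm \simeq \HAk\aquot\zeta_{m+1}
\]
as associative $\HAk$-algebras; this equips the smash product with an associative $\HA$-algebra structure whose unit is the natural map $\HA \to \HAk \to \HAk\smashover{\HA}\barHAm$.

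Next, I would identify the underlying $\HA$-module with a Postnikov section. Iterating Proposition~\ref{prop:SmashSplitting}, together with the fact (from Theorem~\ref{thm:RingStructure}) that $\barHAm$ is the colimit of finite smash products of the $\overline M_j$ for $j > m$, gives an $\HA$-module equivalence
\[
\HAk\smashover{\HA}\barHAm \simeq \Rkmn{m}{m+k} \wedge \bigvee_{j \geq 0} S^{j\cdot 2^{m+k+1}}.
\]
The $j$-th wedge summand has homotopy concentrated in the interval $[\,j\cdot 2^{m+k+1},\; j\cdot 2^{m+k+1} + 2(2^m-1)(2^k-1)\,]$, and the estimate $2(2^m-1)(2^k-1) = 2^{m+k+1} - 2^{m+1} - 2^{k+1} + 2 < 2^{m+k+1}$ makes these intervals pairwise disjoint, with a gap of zero homotopy between consecutive summands. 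Consequently the truncation $P^{\delta(k,m,n)}$ removes precisely the summands with $j \geq 2^n$ and keeps those with $0 \leq j \leq 2^n - 1$; applying Corollary~\ref{cor:SplittingRkmn} in reverse identifies the remaining wedge with $\Rkmn{m}{m+k+n}$.

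For the algebra structure, I would invoke that over the connective commutative ring spectrum $\HA$, Postnikov truncation $P^\delta$ lifts from $\HA$-modules to associative $\HA$-algebras, with truncation being a map of associative algebras. Concretely, $P^\delta B$ can be built by iterated associative quotients $B \aquot \alpha$ for $\alpha \in \pi_d B$ with $d > \delta$; each such step only alters homotopy in degrees $\geq d$, so the procedure converges to the $\HA$-module Postnikov section while producing an associative $\HA$-algebra. Applied to $B = \HAk\smashover{\HA}\barHAm$, this endows $\Rkmn{m}{m+k+n}$ with an associative $\HA$-algebra structure whose unit is the composite $\HA \to \HAk \to \HAk\smashover{\HA}\barHAm \to P^{\delta(k,m,n)}$, which is exactly the natural map.

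The main obstacle is the clean truncation argument: I must verify that Postnikov truncation respects the wedge decomposition, which reduces to the numerical gap $2(2^m-1)(2^k-1) < 2^{m+k+1}$ ensuring that consecutive wedge summands are separated by ranges of trivial homotopy. A secondary point is the existence of Postnikov towers inside associative $\HA$-algebras that compute the expected $\HA$-module truncation; this is standard over a connective commutative ring spectrum, via the argument that killing a class in high degree via $\aquot$ introduces no new homotopy below that degree.
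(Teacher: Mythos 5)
Your proposal is correct and follows essentially the same route as the paper: split \(\HAk\smashover{\HA}\barHAm\) into suspensions of the finite quotient, use the connectivity gap to identify the \(\delta(k,m,n)\)-truncation with \(\Rkmn{m}{m+k+n}\), and transfer the associative algebra structure through the multiplicative Postnikov tower. The only cosmetic difference is that the paper cites Dugger--Shipley for the existence of Postnikov towers of connective associative \(\HA\)-algebras, where you sketch the underlying cell-attachment argument, and it splits off suspensions of \(\Rkmn{m}{m+k+n}\) directly rather than working with the finer decomposition into suspensions of \(\Rkmn{m}{m+k}\).
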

\begin{proof}
    Let \(\delta = \delta(k,m,n)\). Corollary~\ref{cor:SplittingRkmn} says that we have an equivalence of \(\HA\)-module spectra
    \[
        \HAk\smashover{\HA}\barHAm\simeq \Rkmn{m}{m+k+n}\vee\bigvee_{r\geq 1}\Sigma^{r \cdot 2^{m+k+n+1}}\Rkmn{m}{m+k+n}.
    \]
    The first summand is \((\delta+1)\)-co-connective while the second is at least \((\delta+1)\)-connective, so we deduce that the inclusion
    \[
    \Rkmn{m}{m+k+n}\hookrightarrow \HAk\smashover{\HA}\barHAm
    \]
    becomes an equivalence on \(\delta\)'th Postnikov sections.

    Since \(\HA\) is connective, there is a Postnikov tower in \(\HA\)-modules that preserves associative algebras, meaning that all Postnikov truncations are natural associative \(\HA\)-algebras and all truncation maps are associative algebra maps, as shown by Dugger--Shipley \cite{DuggerShipleyPostnikov}. This gives us an associative algebra structure on 
  \[
  P^{\delta}\big(\HAk\smashover{\HA}\barHAm\big)\simeq \Rkmn{m}{m+k+n},
  \]
  with a canonical map of algebras from \(\HAk\smashover{\HA}\barHAm\).
\end{proof}

\begin{corollary}
  There is an equivalence of \(\HA\)-algebras
  \[
  \Rkmn{m}{m+k+n} \simeq \barHAm/(\xi_{k+1},\dots,\xi_{k+m+n}).
  \]
\end{corollary}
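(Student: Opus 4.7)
The plan is to read both sides as Postnikov truncations of the same underlying \(\HA\)-algebra, exploiting the \(C_2\)-symmetry of \(\HA = H\F_2 \wedge H\F_2\) that exchanges \(\xi_i\) with \(\zeta_i\). The left-hand side is already so identified by Theorem~\ref{thm:SomeDoubleKilling}:
\[
\Rkmn{m}{m+k+n} \simeq P^{\delta(k,m,n)}\!\big(\HAk \smashover{\HA} \barHAm\big)
\]
as an associative \(\HA\)-algebra. The remaining task is to prove an analogous statement for the right-hand side and then compare.

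I would apply Theorem~\ref{thm:SomeDoubleKilling} a second time with the roles of \(k\) and \(m\) interchanged, obtaining
\[
\HAn[m] \smashover{\HA} \bigwedge_{j=k+1}^{k+m+n} \overline{M}_j \simeq P^{\delta(m,k,n)}\!\big(\HAn[m] \smashover{\HA} \barHAn[k]\big),
\]
and then pull this back along the conjugation automorphism \(c\co \HA \to \HA\) that swaps the two \(H\F_2\) smash factors. Since \(c\) is an equivalence of commutative ring spectra, the pullback \(c^*\) is an automorphism of the category of associative \(\HA\)-algebras: it commutes with smash products, preserves connectivity, and commutes with Postnikov truncations, while sending \(\HAn[j]\) to \(\barHAn[j]\) and \(M_j\) to \(\overline{M}_j\). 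Applying \(c^*\) to the displayed equivalence yields
\[
\barHAm \smashover{\HA} \bigwedge_{j=k+1}^{k+m+n} M_j \simeq P^{\delta(m,k,n)}\!\big(\barHAm \smashover{\HA} \HAk\big),
\]
and the left-hand side is exactly what the notation \(\barHAm/(\xi_{k+1},\dots,\xi_{k+m+n})\) means.

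Finally, I would use that \(\HA\) is commutative, which gives a canonical associative-algebra equivalence \(\barHAm \smashover{\HA} \HAk \simeq \HAk \smashover{\HA} \barHAm\), and that the explicit formula \(\delta(k,m,n) = 2^{m+k+n+1} - 2^{m+1} - 2^{k+1} + 2\) is manifestly symmetric in \(k\) and \(m\), so \(\delta(m,k,n) = \delta(k,m,n)\). Chaining these identifications produces the desired \(\HA\)-algebra equivalence.

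The main obstacle, as I see it, is the bookkeeping around the conjugation step: one must verify that the chain of results Theorem~\ref{thm:SomeDoubleKilling} rests on — the regularity statement of Theorem~\ref{thm:RegularSequence}, the splittings of Proposition~\ref{prop:SmashSplitting} and Corollary~\ref{cor:SplittingRkmn}, and the Dugger--Shipley Postnikov-tower argument — all transport through pullback along \(c\), so that the conjugated form of the theorem is genuinely available for free rather than requiring an independent reproof. This is formal, but deserves an explicit remark in the written-up proof.
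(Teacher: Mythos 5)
Your proposal is correct and follows the same route the paper has in mind: the paper's one-line proof ("These two algebras are equivalent Postnikov stages of \(\HAn\smashover{\HA}\barHAm\)") implicitly invokes the conjugated form of Theorem~\ref{thm:SomeDoubleKilling} together with the symmetry \(\delta(k,m,n)=\delta(m,k,n)\) and the commutativity of \(\smashover{\HA}\), and you have spelled out exactly that chain. Your added care about why \(c^*\) is a symmetric monoidal automorphism that commutes with Postnikov truncations and so transports Theorem~\ref{thm:SomeDoubleKilling} without reproof is the right point to make explicit.
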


\begin{proof}
  These two algebras are equivalent Postnikov stages of \(\HAn \smashover{\HA} \barHAm\).
\end{proof}

\begin{remark}
Because of this equivalence, we can focus almost exclusively moving forward on the case where \(m\geq k\).
\end{remark}
  
We can also describe the relative homology of these ring spectra as comodules using the K\"unneth isomorphism; the following shows that the algebra structure is forced.

\begin{proposition}\label{prop:HomologyOfRkn}
    For any natural numbers \(k, m, n\geq 0\), we have an isomorphism of \(\F_2[{u}]\)-comodule algebras
    \[
    H_\ast^{\HA}\big(\Rkmn{m}{m+k+n}\big)\cong
    \F_2[{u}^{2^k}]\otimes \F_2[u^{2^m}]/ \left(u^{2^{k+m+n}} \otimes 1 + 1 \otimes u^{2^{k+m+n}}\right)
    \]
\end{proposition}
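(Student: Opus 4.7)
My plan is to identify \(H_\ast^\HA(\Rkmn{m}{m+k+n})\) via the algebra map coming from Postnikov truncation together with a primitives calculation; by symmetry of the claimed isomorphism I may assume \(m \geq k\) throughout.

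The associative algebra map \(\HAk \smashover{\HA} \barHAm \to \Rkmn{m}{m+k+n}\) from Theorem~\ref{thm:SomeDoubleKilling} induces a map of \(\F_2[u]\)-comodule algebras
\[f_\ast \colon \F_2[u^{2^k}] \otimes \F_2[u^{2^m}] \to H_\ast^\HA(\Rkmn{m}{m+k+n}),\]
where the source is identified via Proposition~\ref{prop:HomologyAkm}. Corollary~\ref{cor:SplittingRkmn} yields a wedge splitting \(\HAk \smashover{\HA} \barHAm \simeq \bigvee_{s \geq 0} \Sigma^{s \cdot 2^{m+k+n+1}} \Rkmn{m}{m+k+n}\) of \(\HA\)-modules. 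The inclusion of the bottom wedge summand is a \(\pi_{\leq \delta(k,m,n)}\)-iso section, hence it descends to a section of \(f_\ast\), so \(f_\ast\) is surjective with kernel vanishing below degree \(2^{m+k+n+1}\) and one-dimensional there. Moreover, any \(\kappa \in \ker f_\ast\) in that lowest degree is primitive: writing \(\psi(\kappa) = \sum c_i \otimes \kappa_i\), comodule functoriality forces each \(\kappa_i \in \ker f_\ast\), so \(|\kappa_i| \geq 2^{m+k+n+1}\), and combined with \(|c_i| + |\kappa_i| = 2^{m+k+n+1}\) and the counit axiom the coaction collapses to \(1 \otimes \kappa\).

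To identify \(\kappa\), I compute primitives of \(A := \F_2[u^{2^k}] \otimes \F_2[u^{2^m}]\). Writing \(x = u^{2^k} \otimes 1\) and \(y = 1 \otimes u^{2^m}\), so that \(\psi(x) = 1 \otimes x + u^{2^k} \otimes 1\) and \(\psi(y) = 1 \otimes y + u^{2^m} \otimes 1\), direct expansion shows that \(p_0 := x^{2^{m-k}} + y = u^{2^m} \otimes 1 + 1 \otimes u^{2^m}\) is primitive. The multiplication map \(\mu \colon A \to \F_2[u]\), coming from the two augmentations \(\HAk \to H\F_2\) and \(\barHAm \to H\F_2\), is a surjective comodule algebra map with image \(\F_2[u^{2^k}]\) and kernel the ideal \((p_0)\); by Lucas' theorem, the only primitive of \(\F_2[u^{2^k}]\) is the constant. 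Since \(A\) is an integral domain, multiplication by \(1 \otimes p_0\) on \(\F_2[u] \otimes A\) is injective, forcing every positive-degree primitive of \(A\) to have the form \(p_0 \cdot (\text{primitive})\); iterating yields the ring of primitives \(\F_2[p_0]\). In particular, in degree \(2^{m+k+n+1} = 2^{k+n} \cdot 2^{m+1}\) the unique nonzero primitive up to scalar is
\[p_0^{2^{k+n}} = u^{2^{m+k+n}} \otimes 1 + 1 \otimes u^{2^{m+k+n}}.\]

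Combining, the bottom-degree generator of \(\ker f_\ast\) equals \(u^{2^{m+k+n}} \otimes 1 + 1 \otimes u^{2^{m+k+n}}\), so the ideal it generates lies inside \(\ker f_\ast\). By the Künneth theorem \(H_\ast^\HA(\Rkmn{m}{m+k+n}) \cong \F_2[u^{2^k}] \otimes \bigotimes_{j=m+1}^{m+k+n} \F_2\{b_0, b_{2^j}\}\) as \(\F_2\)-vector spaces; the telescoping identity \(\prod_{j=m+1}^{m+k+n}(1 + t^{2^j}) = (1 - t^{2^{m+k+n+1}})/(1 - t^{2^{m+1}})\) shows that both the target of \(f_\ast\) and the claimed quotient algebra have Hilbert series \((1 - t^{2^{m+k+n+1}})/((1 - t^{2^{k+1}})(1 - t^{2^{m+1}}))\), so the ideal inclusion must be an equality. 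The main obstacle is the primitives computation, which depends essentially on \(A\) being an integral domain and on Lucas' theorem to rule out higher primitives in the quotient \(\F_2[u^{2^k}]\).
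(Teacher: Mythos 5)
Your proposal is correct and follows essentially the same strategy as the paper's proof: use the algebra map from Theorem~\ref{thm:SomeDoubleKilling} to get a surjective map of comodule algebras $f_\ast$, observe that the kernel is a comodule ideal whose lowest-degree piece consists of primitives, identify the unique candidate primitive, and then close the argument by a dimension count.

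Where you differ is in how you carry out two steps that the paper treats tersely. First, the paper determines the kernel's lowest degree by comparing the image of $H_\ast^\HA(M_{m+1,\dots,m+k+n}) \otimes \F_2[u^{2^k}]$ against $\F_2[u^{2^k}] \otimes \F_2[u^{2^m}]$ (via Corollary~\ref{cor:HomologyOfMI}), whereas you read it off from the wedge splitting of Corollary~\ref{cor:SplittingRkmn}; both give one dimension in degree $2^{m+k+n+1}$. Second, and more substantively, the paper simply asserts that ``there is only one'' primitive with leading term $1 \otimes u^{2^{k+m+n}}$, while you actually compute the full ring of primitives of $\F_2[u^{2^k}] \otimes \F_2[u^{2^m}]$ as $\F_2[p_0]$, using the short exact sequence of comodules $0 \to (p_0) \to A \to \F_2[u^{2^k}] \to 0$ together with the integral-domain structure. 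This is a cleaner and more verifiable route to the uniqueness claim, and the Hilbert-series comparison at the end makes the conclusion unambiguous. Your reduction to $m \geq k$ is also legitimate, matching Remark~5.10 in the paper, which relies on Corollary~5.9 and Proposition~\ref{prop:MvsBarM}.

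One small point of care, which you handle correctly but could state more precisely: when arguing that the lowest-degree kernel class $\kappa$ is primitive, the right formulation is to expand $\psi(\kappa) = \sum_j u^j \otimes \kappa_j$ against the monomial basis of $\F_2[u]$, so that each $\kappa_j$ is uniquely determined; then $(1\otimes f_\ast)\psi(\kappa) = 0$ forces each $\kappa_j \in \ker f_\ast$, and the degree constraint finishes. With a non-basis expansion $\sum c_i \otimes \kappa_i$, as you wrote, the individual $\kappa_i$ need not land in $\ker f_\ast$.
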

\begin{proof}
Theorem~\ref{thm:SomeDoubleKilling} shows that the composite map of \(\HA\)-modules
\[
\HAn/(\zeta_{m+1}, \dots, \zeta_{m+k+n}) \to \HAn \smashover{\HA} \barHAm \to P^\delta \big(\HAn \smashover{\HA} \barHAm\big),
\]
is an equivalence, and the latter map is a map of algebras. Applying the K\"unneth isomorphism gives us a composite
\[
\F_2[u^{2^k}] \otimes H_\ast^\HA(M_{m+1,\dots,m+k+n}) \to \F_2[u^{2^k}] \otimes \F_2[u^{2^m}] \to H_\ast^\HA\Big(P^\delta\big(\HAn \smashover{\HA} \barHAm\big)\Big),
\]
where the first map is a map of comodules and the second is a map of comodule algebras.
In particular, the second map is surjective, and by Corollary~\ref{cor:HomologyOfMI} the image of the first is 
\[
\F_2[u^{2^k}] \otimes \{1,u^{2^m},\dots,u^{2^{k+m+n}-1}\}.
\]
The kernel must be generated by a primitive element with leading term \(1 \otimes u^{2^{k+m+n}}\), but there is only one such element. We deduce that the Postnikov truncation induces a quotient map of rings
\[
\F_2[{u}^{2^k}]\otimes \F_2[u^{2^m}]\to \F_2[{u}^{2^k}]\otimes \F_2[u^{2^m}]/ \left(u^{2^{k+m+n}} \otimes 1 + 1 \otimes u^{2^{k+m+n}}\right)
\]
as desired.
\end{proof}

\begin{corollary}\label{cor:homologyofRkmn}
  For any natural numbers \(m \geq k\) and \(n \geq 0\), we have an isomorphism of comodule algebras
  \[
  H_\ast^\HA\big(\Rkmn{m}{m+k+n}\big) \cong \F_2[u^{2^k}] \otimes \F_2[e_{2^{m+1}}]/(e_{2^{m+1}}^{2^{k+n}})
  \]
  where \(e_{2^{m+1}}\) is the primitive of Corollary~\ref{cor:RewrittenHomologyAkm}.
\end{corollary}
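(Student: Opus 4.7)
The plan is to obtain Corollary~\ref{cor:homologyofRkmn} from Proposition~\ref{prop:HomologyOfRkn} by a purely algebraic change of generators that combines the Frobenius endomorphism in characteristic two with the hypothesis \(m \geq k\). The starting point is the presentation
\[
H_\ast^\HA\big(\Rkmn{m}{m+k+n}\big) \cong \F_2[u^{2^k}] \otimes \F_2[u^{2^m}]/\bigl(u^{2^{k+m+n}} \otimes 1 + 1 \otimes u^{2^{k+m+n}}\bigr),
\]
and the target presentation involves the primitive element \(e_{2^{m+1}} = u^{2^m}\otimes 1 + 1\otimes u^{2^m}\) of Corollary~\ref{cor:RewrittenHomologyAkm}.

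First I would rewrite the polynomial algebra \(\F_2[u^{2^k}] \otimes \F_2[u^{2^m}]\) in terms of the generators \(u^{2^k}\otimes 1\) and \(e_{2^{m+1}}\). Because \(m \geq k\), the element \(u^{2^m} \otimes 1 = (u^{2^k}\otimes 1)^{2^{m-k}}\) already lies in the subalgebra generated by \(u^{2^k}\otimes 1\), so substituting \(1\otimes u^{2^m}\) by \(e_{2^{m+1}} = (u^{2^k}\otimes 1)^{2^{m-k}} + 1\otimes u^{2^m}\) is a triangular invertible change of polynomial generators. This exhibits the underlying algebra as \(\F_2[u^{2^k}] \otimes \F_2[e_{2^{m+1}}]\), exactly as in Corollary~\ref{cor:RewrittenHomologyAkm}.

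Next I would translate the defining relation. Using that we are in characteristic two, the Frobenius expansion gives
\[
e_{2^{m+1}}^{2^{k+n}} = (u^{2^m}\otimes 1)^{2^{k+n}} + (1\otimes u^{2^m})^{2^{k+n}} = u^{2^{k+m+n}}\otimes 1 + 1\otimes u^{2^{k+m+n}},
\]
so the quotient relation is precisely \(e_{2^{m+1}}^{2^{k+n}} = 0\). This yields the asserted isomorphism of algebras. Finally, to upgrade this to an isomorphism of \(\F_2[u]\)-comodule algebras, I would note that the change of generators respects the coaction: \(u^{2^k}\otimes 1\) retains its usual coproduct, and \(e_{2^{m+1}}\) is primitive by Corollary~\ref{cor:RewrittenHomologyAkm}; passing to the quotient by a relation between these generators preserves these coproducts.

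There is no real obstacle here beyond bookkeeping; the argument is a direct manipulation of the presentation already in hand. The one point worth care is confirming that the substitution is an \emph{algebra} isomorphism of polynomial rings rather than just a vector space isomorphism, but the triangular form of the change of variables makes this immediate.
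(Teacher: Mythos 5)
Your proposal is correct and matches the paper's intended argument: the paper states this as an immediate corollary of Proposition~\ref{prop:HomologyOfRkn} combined with the change of generators already carried out in Corollary~\ref{cor:RewrittenHomologyAkm}, and the Frobenius identity \(e_{2^{m+1}}^{2^{k+n}} = u^{2^{k+m+n}}\otimes 1 + 1\otimes u^{2^{k+m+n}}\) is exactly the translation of the relation. Your attention to the triangular (hence invertible) nature of the substitution and to the preservation of the coaction is appropriate and complete.
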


\begin{corollary}
    When \(m \geq k\), the map on relative homology induced by the ring map
    \[
        \Rkmn{m}{m+k+n}\to \HAk/(\zeta_{m},\dots,\zeta_{m+k+n})
    \]
    is the ring map specified by
    \[
        e_{2^{m+1}}\mapsto e_{2^m}^2.
    \]
\end{corollary}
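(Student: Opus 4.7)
The plan is to realize the given ring map as arising from smashing the source with the unit \(\HA \to \overline{M}_m\) and then to compute on relative homology via K\"unneth. Unwinding the iterated mapping cone definition gives
\[
\HAk/(\zeta_m,\dots,\zeta_{m+k+n}) \simeq \Rkmn{m}{m+k+n} \smashover{\HA} \overline{M}_m,
\]
and the ring map in question is the natural map induced by the unit \(\HA \to \overline{M}_m\). By Corollary~\ref{cor:MkRelHomology} and Proposition~\ref{prop:MvsBarM}, \(H_\ast^\HA(\overline{M}_m) = \F_2\{b_0, b_{2^m}\}\) with the unit hitting \(b_0\); K\"unneth then identifies the induced map on homology as \(y \mapsto y \otimes b_0\).

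To interpret this, I would reconcile two descriptions of the target's relative homology: the K\"unneth description \(H_\ast^\HA(\text{source}) \otimes \F_2\{b_0, b_{2^m}\}\), and the direct description from Corollary~\ref{cor:homologyofRkmn}, applied with \(m\) replaced by \(m-1\) and \(n\) by \(n+1\), yielding \(\F_2[u^{2^k}] \otimes \F_2[u^{2^{m-1}}]\) modulo a Postnikov-type relation. These match via the isomorphism of \(\F_2[u^{2^m}]\)-modules
\[
\F_2[u^{2^m}] \otimes \F_2\{b_0, b_{2^m}\} \cong \F_2[u^{2^{m-1}}]
\]
sending \(b_0 \mapsto 1\) and \(b_{2^m} \mapsto u^{2^{m-1}}\); under this identification, the class \(u^{2^m}\) in the source's \(\F_2[u^{2^m}]\)-factor corresponds to \((u^{2^{m-1}})^2 \in \F_2[u^{2^{m-1}}]\).

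To finish, I would chase the primitive. The source element \(e_{2^{m+1}} = u^{2^m} \otimes 1 + 1 \otimes u^{2^m}\) of Corollary~\ref{cor:RewrittenHomologyAkm} maps under \(y \mapsto y \otimes b_0\), followed by the identification above, to \(u^{2^m} \otimes 1 + 1 \otimes (u^{2^{m-1}})^2\). When \(m > k\), the identity \(u^{2^m} = (u^{2^{m-1}})^2\) inside \(\F_2[u^{2^k}]\), together with characteristic 2, rewrites this image as \((u^{2^{m-1}} \otimes 1 + 1 \otimes u^{2^{m-1}})^2 = e_{2^m}^2\), as asserted. Since the map is a ring map that is the identity on the \(\F_2[u^{2^k}]\)-factor, this single calculation determines it completely. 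The boundary case \(m = k\) is the main technical obstacle: \(u^{2^{m-1}} \notin \F_2[u^{2^k}]\), the target ``primitive'' \(e_{2^m}\) is not defined via Corollary~\ref{cor:RewrittenHomologyAkm}, and the formula requires a direct verification from Proposition~\ref{prop:HomologyOfRkn}.
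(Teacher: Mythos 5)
Your argument is correct and matches the (unstated) proof the paper intends: the corollary is an immediate consequence of identifying the target as \(\Rkmn{m}{m+k+n}\smashover{\HA}\overline{M}_m\), applying the K\"unneth isomorphism to the unit map \(\HA\to\overline{M}_m\), and comparing with the explicit descriptions in Proposition~\ref{prop:HomologyOfRkn} and Corollary~\ref{cor:RewrittenHomologyAkm}; the paper offers no written proof to compare against, and your route is the natural one. Your chase showing that \(e_{2^{m+1}}=u^{2^m}\otimes 1+1\otimes u^{2^m}\) lands on \(u^{2^m}\otimes 1+1\otimes u^{2^m}\) inside \(\F_2[u^{2^k}]\otimes\F_2[u^{2^{m-1}}]\) is right and, since the map is a comodule algebra map fixing \(u^{2^k}\otimes 1\), it does determine everything; your caveat about \(m=k\) is a genuine (if minor) imprecision in the \emph{statement} rather than a gap in your proof---there \(u^{2^{m-1}}\otimes 1\) does not lie in \(\F_2[u^{2^k}]\), so \(e_{2^m}\) is not an element of the target and ``\(e_{2^m}^2\)'' can only be read as notation for the primitive \(u^{2^m}\otimes 1+1\otimes u^{2^m}\), under which reading your computation already covers that case with no further verification needed.
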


\begin{remark}
This calculation rules out any way to make the map
\[
\Rkmn{m}{m+k+n} \to  \HAk\smashover{\HA}\barHAm
\]
into a map of algebras that is compatible with the Postnikov identification, because there is no section of the map of algebras on relative homology.
\end{remark}

By Corollary~\ref{cor:homologyofRkmn}, we also have a nice finitary result for relative Adams spectral sequences.

\begin{corollary}\label{cor:AdamsETwoRkn}
For any natural numbers \(m\geq k\) and \(n\geq 0\), the relative Adams spectral sequence computing the homotopy of \(\Rkmn{m}{m+k+n}\) has
\[
E_2^{s,t}\cong \F_2[\xi_1,\dots,\xi_k]\otimes \F_2[e_{2^{m+1}}]/(e_{2^{m+1}}^{2^{n+k}}).
\]
\end{corollary}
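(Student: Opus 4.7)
My plan is to derive Corollary~\ref{cor:AdamsETwoRkn} from Corollary~\ref{cor:homologyofRkmn} and the change-of-rings isomorphism in Proposition~\ref{prop:changeofrings}. Recall from Corollary~\ref{cor:homologyofRkmn} that, as an $\F_2[u]$-comodule algebra,
\[
H_\ast^\HA\big(\Rkmn{m}{m+k+n}\big) \cong \F_2[u^{2^k}] \otimes \F_2[e_{2^{m+1}}]/(e_{2^{m+1}}^{2^{k+n}}).
\]
First I would observe that this comodule is cotensored up along the quotient $\F_2[u] \to \F_2[u]/(u^{2^k})$. Indeed, $u^{2^k}$ is primitive in $\F_2[u]$, so $\psi(u^{2^k a})$ lands in $\F_2[u^{2^k}] \otimes \F_2[u^{2^k}]$, while $e_{2^{m+1}}$ is primitive by Corollary~\ref{cor:RewrittenHomologyAkm} and so has trivial coaction. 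Both classes therefore have coactions that factor through the sub-Hopf algebra $\F_2[u^{2^k}] \subset \F_2[u]$.

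Next, I would apply the change-of-rings argument of Proposition~\ref{prop:changeofrings} verbatim. The restricted comodule over $\F_2[u]/(u^{2^k})$ is simply $\F_2[e_{2^{m+1}}]/(e_{2^{m+1}}^{2^{k+n}})$, and the image of the primitive $e_{2^{m+1}}$ in the quotient Hopf algebra remains primitive, so this truncated polynomial algebra carries the trivial coaction. Change of rings then gives
\[
E_2^{s,t} \cong \Ext^{s,t}_{\F_2[u]/(u^{2^k})}\!\big(\F_2,\, \F_2[e_{2^{m+1}}]/(e_{2^{m+1}}^{2^{k+n}})\big),
\]
and because the comodule structure on the second argument is trivial, this Ext splits as a tensor product
\[
\Ext^{s,t}_{\F_2[u]/(u^{2^k})}(\F_2, \F_2) \otimes \F_2[e_{2^{m+1}}]/(e_{2^{m+1}}^{2^{k+n}}).
\]

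Finally, the first factor was already identified, in the corollary immediately following Proposition~\ref{prop:changeofrings}, with $\F_2[\xi_1,\dots,\xi_k]$ in the correct bidegrees $(|\xi_i|,1)=(2^i-1,1)$; this was precisely the $E_2$-page for $\HAn$ itself. Assembling these pieces yields the claimed identification. I do not expect a substantive obstacle here: the proof is really a bookkeeping exercise combining the already-computed homology, the change-of-rings isomorphism, and the Ext calculation for $\HAn$. The only point that needs a moment's care is checking that the tensor decomposition of $H_\ast^\HA\big(\Rkmn{m}{m+k+n}\big)$ as a comodule descends to a tensor decomposition of Ext, and this is automatic once one observes that the $e_{2^{m+1}}$-factor has trivial coaction.
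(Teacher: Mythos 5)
Your proposal is correct and follows the same reasoning the paper leaves implicit: the corollary is stated directly after the lead-in ``By Corollary~\ref{cor:homologyofRkmn}, we also have a nice finitary result for relative Adams spectral sequences,'' and the intended inference is exactly the change-of-rings of Proposition~\ref{prop:changeofrings} applied to the relative homology, together with the earlier identification $\Ext_{\F_2[u]/(u^{2^k})}(\F_2,\F_2)\cong\F_2[\xi_1,\dots,\xi_k]$ and the observation that the $e_{2^{m+1}}$-factor has trivial coaction. Your careful unwinding of why the comodule is cotensored up and why the Ext then splits as a tensor product is a faithful reconstruction of that argument.
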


For \(n\geq 0\), all of the relative Adams spectral sequences carry essentially the same information, due to the \(\HA\)-module splitting of these into \(\Rkmn{m}{m+k}\)-modules from Corollary~\ref{cor:SplittingRkmn}.

\begin{proposition}
  For any \(n\geq 1\), the class 
  \[
  e_{2^{m+k+1}}=e_{2^{m+1}}^{2^k}
  \]
  is a permanent cycle in the relative Adams spectral sequence for \(\Rkmn{m}{m+k}\).
\end{proposition}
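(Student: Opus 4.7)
The plan is to exploit the $\HA$-module splitting of Corollary~\ref{cor:SplittingRkmn} to realize $e_{2^{m+k+1}}$ as detecting a genuine homotopy class. For $n \geq 1$, the splitting
\[
\Rkmn{m}{m+k+n} \simeq \Rkmn{m}{m+k} \wedge \left(\bigvee_{j=0}^{2^n-1} S^{j \cdot 2^{m+k+1}}\right)
\]
exhibits $\Sigma^{2^{m+k+1}}\Rkmn{m}{m+k}$ as a direct $\HA$-module wedge summand. Write $\iota$ for the corresponding inclusion. The unit class $1 \in E_2^{0,0}$ of the RASS for $\Rkmn{m}{m+k}$ detects the unit in $\pi_0$ and is a permanent cycle; after suspending, it gives a permanent cycle in bidegree $(t-s, s) = (2^{m+k+1}, 0)$ of the RASS for $\Sigma^{2^{m+k+1}}\Rkmn{m}{m+k}$, detecting a nonzero element of $\pi_{2^{m+k+1}}$.

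By naturality of the RASS applied to $\iota$, the induced map on $E_2$-pages is the inclusion of a wedge summand (up to the regrading). Hence the image of this suspended unit is a nonzero permanent cycle lying in bidegree $(2^{m+k+1}, 0)$ of the RASS for $\Rkmn{m}{m+k+n}$. The next step is to inspect this bidegree using Corollary~\ref{cor:AdamsETwoRkn}: each $\xi_i$ has filtration at least one, so any filtration-zero monomial is purely a power of $e_{2^{m+1}}$, and the topological-degree constraint $j \cdot 2^{m+1} = 2^{m+k+1}$ forces $j = 2^k$. Because $n \geq 1$, we have $2^k < 2^{n+k}$, so the class $e_{2^{m+1}}^{2^k}$ survives in the truncation $\F_2[e_{2^{m+1}}]/(e_{2^{m+1}}^{2^{n+k}})$. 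This class is therefore the unique candidate for the image of the unit, so it equals $e_{2^{m+k+1}}$ and is a permanent cycle.

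I do not anticipate a serious obstacle; the argument is essentially bookkeeping. I need to be confident that the $\HA$-module splitting of Corollary~\ref{cor:SplittingRkmn} induces a corresponding splitting of relative Adams $E_2$-pages, which follows from the additivity of the functor $H^{\HA}_*$ and the naturality of $\Ext$; and I need the uniqueness check in bidegree $(2^{m+k+1}, 0)$ to rule out all $\xi$-monomials, which is immediate because every $\xi_i$ raises filtration.
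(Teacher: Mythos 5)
Your proof is correct and follows essentially the same approach as the paper: both exploit the $\HA$-module splitting of Corollary~\ref{cor:SplittingRkmn} to exhibit the suspended unit as a genuine homotopy class in degree $2^{m+k+1}$ detected in filtration zero, then conclude by observing that $e_{2^{m+1}}^{2^k}$ is the unique filtration-zero class (equivalently, the unique primitive in relative homology) in that topological degree. Note also that the statement as printed has a typo — it should refer to the relative Adams spectral sequence for $\Rkmn{m}{m+k+n}$ rather than $\Rkmn{m}{m+k}$, where $e_{2^{m+1}}^{2^k}$ is already zero on $E_2$ — and you have correctly read past this.
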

\begin{proof}
  The splitting 
  \[
  \Rkmn{m}{m+k+n}\simeq \bigvee_{j=0}^{2^n-1} \Sigma^{j(2^{m+k+1})} \Rkmn{m}{m+k}
  \]
  shows that we have a non-trivial homotopy class \(\tilde{e}_{2^{m+k+1}}\) in degree \(2^{m+k+1}\) which has a non-trivial Hurewicz image. The class \(e_{2^{m+k+1}}\) is the only primitive in homology in that degree, and must therefore be a permanent cycle.
\end{proof}

\begin{corollary}\label{cor:HomotopyRkn}
  We have an isomorphism of algebras for any \(n\geq 0\):
  \[
  \pi_\ast\big(\Rkmn{m}{m+k+n}\big)\cong \HAk_\ast/(\zeta_{m+1},\dots,\zeta_{m+k})\otimes \F_2[\tilde{e}_{2^{m+k+1}}]/\tilde{e}_{2^{m+k+1}}^{2^n}.
  \]
\end{corollary}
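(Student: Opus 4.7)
The plan is to combine the \(\HA\)-module splitting of Corollary~\ref{cor:SplittingRkmn} with the permanent cycle supplied by the preceding proposition and with naturality under the Postnikov truncation map \(\Rkmn{m}{m+k+n}\to\Rkmn{m}{m+k}\). The case \(n=0\) is Corollary~\ref{cor:TildeAkHomotopy}, so I assume \(n\geq 1\) below. By Theorem~\ref{thm:SomeDoubleKilling}, \(\Rkmn{m}{m+k+n}\) carries an associative \(\HA\)-algebra structure and the Postnikov truncation to \(\Rkmn{m}{m+k}\) is an algebra map.

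Additively, Corollaries~\ref{cor:SplittingRkmn} and~\ref{cor:TildeAkHomotopy} identify \(\pi_\ast(\Rkmn{m}{m+k+n})\) as a \(\pi_\ast(\HA)\)-module with \(2^n\) shifted copies of \(\HAk_\ast/(\zeta_{m+1},\dots,\zeta_{m+k})\), the \(j\)-th summand occupying degrees in \([j\cdot 2^{m+k+1},\,j\cdot 2^{m+k+1}+\delta(k,m,0)]\). Since \(\delta(k,m,0)<2^{m+k+1}\) (because \(2^{m+1}+2^{k+1}>2\)), these intervals are disjoint. The Postnikov truncation is projection onto the \(j=0\) summand, whose ring structure is \(\HAk_\ast/(\zeta_{m+1},\dots,\zeta_{m+k})\) by Corollary~\ref{cor:TildeAkHomotopy}; combined with the \(\pi_\ast(\HA)\)-module splitting, this determines how each \(\xi_i\) acts on every summand.

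For the polynomial-truncated factor, \(\tilde{e}_{2^{m+k+1}}\) is a permanent cycle by the preceding proposition, detected by \(e_{2^{m+k+1}}=e_{2^{m+1}}^{2^k}\) on the \(0\)-line of the \(E_2\)-term from Corollary~\ref{cor:AdamsETwoRkn}. All powers \(e_{2^{m+k+1}}^{\,j}=e_{2^{m+1}}^{\,j\cdot 2^k}\) remain on the \(0\)-line, so they cannot be hit by differentials, and are nonzero in \(E_2\) (and hence in \(E_\infty\)) precisely when \(j\cdot 2^k<2^{k+n}\), i.e., when \(j<2^n\). Disjointness of the degree intervals then forces \(\tilde{e}_{2^{m+k+1}}^{\,j}\) to sit at the bottom degree of the \(j\)-th summand, where it is the generator. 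The relation \(\tilde{e}_{2^{m+k+1}}^{\,2^n}=0\) follows by degree, since \(2^{m+k+n+1}>\delta(k,m,n)\) exceeds the top homotopy dimension.

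The main subtlety is assembling these pieces into the claimed tensor product of algebras. The unit \(\pi_\ast(\HA)\to\pi_\ast(\Rkmn{m}{m+k+n})\) is central because its target is an associative \(\HA\)-algebra, so \(\tilde{e}_{2^{m+k+1}}\) commutes with every \(\xi_i\), making the natural map
\[
\HAk_\ast/(\zeta_{m+1},\dots,\zeta_{m+k})\otimes \F_2[\tilde{e}_{2^{m+k+1}}]/\bigl(\tilde{e}_{2^{m+k+1}}^{\,2^n}\bigr)\longrightarrow \pi_\ast(\Rkmn{m}{m+k+n})
\]
a well-defined algebra homomorphism. It is surjective because multiplication by \(\tilde{e}_{2^{m+k+1}}^{\,j}\) carries the \(j=0\) summand isomorphically onto the \(j\)-th summand, and both sides have the same total \(\F_2\)-dimension by Corollary~\ref{cor:GapsInHomotopy}, so it is an isomorphism.
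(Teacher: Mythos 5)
Your proof is correct and follows the same route the paper intends (the paper leaves this corollary without an explicit proof): combine the module splitting of Corollaries~\ref{cor:SplittingRkmn} and~\ref{cor:GapsInHomotopy} with the permanent cycle \(\tilde e_{2^{m+k+1}}\) from the preceding proposition and the associative algebra structure from Theorem~\ref{thm:SomeDoubleKilling}. Your added details --- centrality of the image of \(\pi_\ast\HA\), the identification of \(\tilde e_{2^{m+k+1}}^{\,j}\) with the bottom generator of the \(j\)-th summand via disjointness of degree intervals, and the concluding dimension count --- correctly fill in what the paper takes as immediate.
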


In fact, the topological splitting shows an even stronger result.
\begin{proposition}\label{prop:SplittingAdams}
  For any \(\HA\)-module \(M\), the relative Adams spectral sequence for the homotopy of \(\Rkmn{m}{m+k+n}\smashover{\HA}M\)
  is a direct sum of suspensions of copies of the relative Adams spectral sequence for \(\Rkmn{m}{m+k}\smashover{\HA}M\): for all \(r\geq 2\),
  \begin{multline*}
  E_r\big(\Rkmn{m}{m+k+n}\smashover{\HA}M\big)\cong \\ E_r\big(\Rkmn{m}{m+k}\smashover{\HA}M\big)\otimes\F_2[e_{2^{m+k+1}}]/e_{2^{m+k+1}}^{2^n},
  \end{multline*}
  and \(e_{2^{m+k+1}}\) is a permanent cycle.
\end{proposition}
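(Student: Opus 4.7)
The plan is to deduce Proposition~\ref{prop:SplittingAdams} from two ingredients already in hand: the $\HA$-module splitting in Corollary~\ref{cor:SplittingRkmn}, and the associative $\HA$-algebra structure on $\Rkmn{m}{m+k+n}$ from Theorem~\ref{thm:SomeDoubleKilling}. The first gives the additive splitting page by page; the second identifies the shift between wedge summands with multiplication by a single permanent cycle, producing the tensor product structure.

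First, I would smash the splitting of Corollary~\ref{cor:SplittingRkmn} with $M$ over $\HA$ to obtain an equivalence of $\HA$-modules
\[
\Rkmn{m}{m+k+n}\smashover{\HA}M \;\simeq\; \bigvee_{j=0}^{2^n-1}\Sigma^{j\cdot 2^{m+k+1}}\bigl(\Rkmn{m}{m+k}\smashover{\HA}M\bigr).
\]
The Baker--Lazarev relative Adams spectral sequence is built from a canonical resolution by $\HF_2$-modules in the category of $\HA$-modules and is additive in the input, so for every $r\geq 2$ the $E_r$-page of the spectral sequence for the left-hand side is the direct sum of $(2^n)$ suitably shifted copies of the $E_r$-page for $\Rkmn{m}{m+k}\smashover{\HA}M$, and all differentials decompose accordingly. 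This immediately gives the desired isomorphism as graded abelian groups with differentials.

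To promote the additive splitting to the tensor product form, I would use that $\Rkmn{m}{m+k+n}$ is an associative $\HA$-algebra and that $\Rkmn{m}{m+k+n}\smashover{\HA}M$ is a left module over it. Consequently, the relative Adams spectral sequence is a module over the spectral sequence for $\Rkmn{m}{m+k+n}$. The preceding proposition shows that $e_{2^{m+k+1}}\in E_2$ is a permanent cycle detecting the class $\tilde{e}_{2^{m+k+1}}\in\pi_{2^{m+k+1}}\Rkmn{m}{m+k+n}$ whose action on homotopy implements the suspension isomorphism sending the $j$-th wedge summand to the $(j+1)$-st and killing the top one. Thus the additive decomposition above is precisely the decomposition of the spectral sequence by powers of $e_{2^{m+k+1}}$, yielding
\[
E_r\big(\Rkmn{m}{m+k+n}\smashover{\HA}M\big)\;\cong\; E_r\big(\Rkmn{m}{m+k}\smashover{\HA}M\big)\otimes \F_2[e_{2^{m+k+1}}]/e_{2^{m+k+1}}^{2^n}.
\]

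The main obstacle is confirming that multiplication by $e_{2^{m+k+1}}$ realizes the summand shift already at the level of spectral sequence pages, not just on $\pi_\ast$. I would handle this by induction on $r$: on $E_2$, the identification follows from the K\"unneth formula for relative homology (Corollary~\ref{cor:homologyofRkmn} and Proposition~\ref{prop:HomologyOfRkn}), since $e_{2^{m+k+1}}$ is the unique primitive of its degree and the $\F_2[u]$-coaction determines its multiplicative action on $H_\ast^\HA(\Rkmn{m}{m+k+n}\smashover{\HA}M)$. Multiplicativity of the spectral sequence then carries this identification to every later page, and the fact that $e_{2^{m+k+1}}$ is a permanent cycle ensures the structure survives into $E_\infty$.
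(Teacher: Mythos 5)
Your proof is correct and takes essentially the same route as the paper: the key step, which is also the entirety of the paper's argument, is to smash the splitting of Corollary~\ref{cor:SplittingRkmn} with $M$ over $\HA$ and invoke additivity of the relative Adams spectral sequence over wedge decompositions. Your second and third paragraphs, on realizing the shift via multiplication by the permanent cycle $e_{2^{m+k+1}}$, supply more detail than the paper records but are consistent with its intent, since the tensor product with $\F_2[e_{2^{m+k+1}}]/e_{2^{m+k+1}}^{2^n}$ is just bookkeeping notation for the direct sum of $2^n$ shifted copies.
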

\begin{proof}
  This follows from smashing the splitting of \(\Rkmn{m}{m+k+n}\) with \(M\).
\end{proof}

\section{Duality}\label{sec:duality}

Recall from Proposition~\ref{prop:poincareduality} that the graded ring
\[
\F_2[\xi_1,\dots,\xi_k]/(\zeta_{m+1},\dots,\zeta_{m+k})\cong \pi_\ast \left(\Rkmn{m}{m+k}\right)
\]
is a commutative graded Frobenius algebra. We can promote such algebraic duality on the coefficient ring of \(\Rkmn{m}{m+k}\) to a spectral duality, including Gorenstein duality.

\begin{theorem}\label{thm:finiteduality}
  Suppose that \(R\) is a connective associative ring spectrum over \(H\F\) whose coefficient ring is a commutative graded Frobenius algebra over a field \(\F\) with top-dimensional class in degree \(\delta\). Then, for \(R\)-modules \(M\), there is a natural isomorphism
  \[
  \pi_* D_R M \cong  \left(\pi_{\delta-\ast}(M)\right)^\vee
  \]
  as modules over \(\pi_\ast R\), where \((-)^\vee\) denotes the \(\F\)-vector space dual.
\end{theorem}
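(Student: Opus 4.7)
The plan is to reduce the statement to a purely algebraic computation using two ingredients: a universal coefficient spectral sequence for $R$-modules, and the self-injectivity of $\pi_*R$ supplied by Proposition~\ref{prop:selfinjective}.

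First, because $R$ is a connective associative $H\F$-algebra, the standard cellular construction---taking a resolution of $M$ by wedges of shifted copies of $R$ and filtering $F_R(M,R)$ by the associated skeleta---produces a universal coefficient spectral sequence
\[
E_2^{s,t} = \Ext_{\pi_*R}^{s,t}(\pi_*M, \pi_*R) \Longrightarrow \pi_{t-s} F_R(M, R) = \pi_{t-s} D_R M,
\]
natural in $M$. Alternatively, I would invoke Shipley's equivalence between connective $H\F$-algebras and dg-$\F$-algebras to interpret $D_R M$ as a derived $\Hom$ in a dg-category of $\pi_*R$-modules, which gives the same $E_2$-page.

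Next, Proposition~\ref{prop:selfinjective} asserts that the commutative graded Frobenius algebra $\pi_*R$ is injective as a module over itself. Consequently $\Ext^{s,t}_{\pi_*R}(-,\pi_*R)$ vanishes for $s>0$, the spectral sequence collapses onto its $s=0$ edge, and I obtain a natural isomorphism of graded $\pi_*R$-modules
\[
\pi_* D_R M \cong \Hom_{\pi_*R}(\pi_*M, \pi_*R).
\]
Proposition~\ref{prop:selfinjective} also supplies the natural isomorphism $\Hom_{\pi_*R}(N, \pi_*R) \cong \Hom_\F(N, \Sigma^\delta\F)$ of $\pi_*R$-modules, valid for any $\pi_*R$-module $N$. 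Applying this with $N = \pi_*M$ gives $\Hom_\F(\pi_*M, \Sigma^\delta\F) \cong (\pi_{\delta-*}M)^\vee$, completing the identification and preserving the $\pi_*R$-module structure throughout.

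The main obstacle is ensuring that the universal coefficient spectral sequence is available and convergent for \emph{arbitrary} $R$-modules $M$, rather than only finite cell objects; for finite cell $M$ everything is routine, while in general one must handle filtered colimits and conditional convergence with some care. The key point that makes this obstacle tractable is that the collapse at $E_2$ coming from self-injectivity is robust: no nonzero differentials are possible regardless of $M$, and the strong convergence question reduces to the exactness of linear $\F$-duality, which is automatic.
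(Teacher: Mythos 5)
Your proof is correct and follows essentially the same route as the paper: both deploy the universal coefficient spectral sequence for \(F_R(M,R)\), collapse it at \(E_2\) onto the \(s=0\) line using self-injectivity of \(\pi_*R\), and then apply the duality isomorphism \(\Hom_{\pi_*R}(-,\pi_*R)\cong\Hom_\F(-,\Sigma^\delta\F)\) from Proposition~\ref{prop:selfinjective}. Your attention to convergence for arbitrary \(R\)-modules is a sound precaution that the paper leaves implicit, but it is part of the same argument rather than a different one.
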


\begin{proof}
  Because \(\pi_* R\) is self-injective, the universal coefficient spectral sequence
  \[
  \Ext_{\pi_* R}^{s,t} (\pi_* M, \pi_* R) \Rightarrow \pi_{t-s} D_R M
  \]
  always degenerates. This gives a natural isomorphism
  \[
  \pi_* D_R M \cong \Hom_{\pi_* R}(\pi_* M, \pi_* R).
  \]
  (In other words, Spanier--Whitehead duality is a type of Brown--Comenetz duality in the category of \(R\)-modules.) Expanding the identification \(\pi_* R \cong \Hom_{\F}(\pi_* R, \Sigma^\delta \F)\) from Proposition~\ref{prop:selfinjective}, this becomes an isomorphism 
  \[
  \Hom_{\pi_*R}(\pi_* M, \pi_*R) \cong \Hom_{\F}(\pi_* M, \Sigma^\delta \F)
  \]
  as desired.
\end{proof}

This gives us spectral Gorenstein duality as in Dwyer--Greenlees--Iyengar \cite{DwyerGreenleesIyengar}.

\begin{corollary}\label{cor:dualityAk}
  A ring spectrum \(R\) as in Theorem~\ref{thm:finiteduality} satisfies a Gorenstein duality condition of shift \(\delta\): we have an equivalence of \(R\)-module spectra
  \[
  F_{R}\big(H\F, R\big) \simeq \Sigma^{\delta} H\F.
  \]
\end{corollary}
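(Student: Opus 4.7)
The strategy is to apply Theorem~\ref{thm:finiteduality} directly to the $R$-module $M = H\F$. By definition $D_R(H\F) = F_R(H\F, R)$, and since $\pi_* H\F = \F$ is concentrated in degree $0$, the theorem yields an isomorphism of $\pi_*R$-modules
\[
\pi_* F_R(H\F, R) \;\cong\; \bigl(\pi_{\delta-*} H\F\bigr)^\vee,
\]
whose right-hand side is $\F$ concentrated in degree $\delta$ and on which $\pi_*R$ acts through the augmentation $\pi_*R \to \pi_* H\F \cong \F$.

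To promote this homotopy-group computation to an equivalence of $R$-module spectra, I would use the Postnikov tower in the category of $R$-modules. The perfect pairing of the graded Frobenius algebra between degrees $0$ and $\delta$ forces $\pi_0 R$ to be one-dimensional over $\F$, so $\pi_0 R \cong \F$ as a ring and the canonical map $R \to H\F$ realizes the zeroth Postnikov truncation of $R$ as an $R$-module. A choice of generator of $\pi_\delta F_R(H\F, R) \cong \F$ yields a map of $R$-modules $\Sigma^\delta R \to F_R(H\F, R)$. Because the target is $\delta$-coconnective, the universal property of Postnikov truncation factors this map canonically through the $\delta$-th truncation of $\Sigma^\delta R$, which is $\Sigma^\delta H\F$, giving
\[
\Sigma^\delta R \longrightarrow \Sigma^\delta H\F \longrightarrow F_R(H\F, R).
\]

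The second arrow is an $R$-module map that induces an isomorphism on the unique nonzero homotopy group in degree $\delta$, hence a weak equivalence. The only step requiring any care is verifying that the $\pi_*R$-action on $\pi_\delta F_R(H\F, R)$ factors through the augmentation, but this is delivered to us as part of the statement of Theorem~\ref{thm:finiteduality}. Everything else is a formal consequence of the connectivity of $R$ together with the identification $\pi_0 R \cong \F$, so the corollary presents no serious obstacle once the theorem is in hand.
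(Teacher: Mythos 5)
Your argument is correct and is exactly the intended deduction: the paper states this corollary without proof as an immediate consequence of Theorem~\ref{thm:finiteduality} applied to \(M = H\F\), and your Postnikov-truncation step (using \(\pi_0 R \cong \F\) from the Frobenius pairing to identify \(\tau_{\leq 0}R \simeq H\F\), then realizing the generator of \(\pi_\delta F_R(H\F,R)\) by an \(R\)-module map \(\Sigma^\delta H\F \to F_R(H\F,R)\) that is a \(\pi_*\)-isomorphism) is the standard way to upgrade the homotopy-group identification to an equivalence of \(R\)-modules.
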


Spectral duality for such \(R\) then gives us duality for its modules. 
\begin{proposition}\label{prop:dualityformodules}
Suppose that \(R\) is an \(\HA\)-algebra whose coefficient ring is a commutative graded Frobenius algebra over \(\F_2\), with top-dimensional class in degree \(\delta\).
\begin{enumerate}
\item \label{itemIJ}
For \(I,J\subseteq \N\) finite subsets, with \(s(I,J) = \sum\limits_{i\in I} 2^i + \sum\limits_{j\in J} 2^j\), there is an isomorphism
\[\pi_{*}\left(R\smashover{\HA} M_{I}\smashover{\HA} \overline{M}_J\right)
  \cong 
\left(  \pi_{\delta+s(I,J)-*}\left(R \smashover{\HA}
M_{I}\smashover{\HA}\overline{M}_J\right)\right)^\vee\]
of \(\pi_*R\)-modules.
\item \label{itemleqr} There is an isomorphism \[\pi_{*}\left(R\smashover{\HA} \End_{\HA}(M_{\leq r})\right) \cong \left( \pi_{\delta-*}\left(R\smashover{\HA} \End_{\HA}(M_{\leq r})\right)\right)^\vee \]
of \(\pi_*R\)-modules.
\end{enumerate}
Here \((-)^\vee\) denotes the \(\F_2\)-vector space dual.
\end{proposition}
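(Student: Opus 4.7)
The plan is to reduce both parts to Theorem~\ref{thm:finiteduality}, applied to the $R$-module $R \smashover{\HA} N$, after identifying the $R$-linear dual $D_R(R \smashover{\HA} N) \simeq R \smashover{\HA} D_\HA N$ for dualizable $\HA$-modules $N$.

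First, I would record the self-dualities of the individual cones. As noted at the start of \S\ref{sec:endo}, $D_\HA M_k \simeq \Sigma^{-2^k} M_k$ because $M_k$ is a two-cell complex with cells in degrees $0$ and $2^k$; the same argument applied to $\overline M_k = \HA/\zeta_k$ gives $D_\HA \overline M_k \simeq \Sigma^{-2^k} \overline M_k$. Since duality commutes with smash products of dualizable modules, for finite subsets $I, J \subseteq \N$ we obtain
\[
D_\HA\!\big(M_I \smashover{\HA} \overline M_J\big) \simeq \Sigma^{-s(I,J)}\big(M_I \smashover{\HA} \overline M_J\big).
\]

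Next, I would apply the standard base-change equivalence $D_R(R \smashover{\HA} N) \simeq R \smashover{\HA} D_\HA N$, which for $N$ dualizable over $\HA$ follows from the chain $F_R(R \smashover{\HA} N, R) \simeq F_\HA(N, R) \simeq R \smashover{\HA} F_\HA(N, \HA)$. Combined with the previous step, this gives
\[
D_R\!\big(R \smashover{\HA} M_I \smashover{\HA} \overline M_J\big) \simeq \Sigma^{-s(I,J)}\big(R \smashover{\HA} M_I \smashover{\HA} \overline M_J\big),
\]
and then Theorem~\ref{thm:finiteduality}, applied to $R \smashover{\HA} M_I \smashover{\HA} \overline M_J$ as an $R$-module, together with a reindexing by $s(I,J)$, yields part~(\ref{itemIJ}).

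For part~(\ref{itemleqr}), the key observation is that for any dualizable $\HA$-module $M$ one has $\End_\HA(M) \simeq D_\HA M \smashover{\HA} M$, so its dual
\[
D_\HA \End_\HA(M) \simeq D_\HA(D_\HA M) \smashover{\HA} D_\HA M \simeq M \smashover{\HA} D_\HA M \simeq \End_\HA(M)
\]
carries no overall shift. Applying the same base-change identification and Theorem~\ref{thm:finiteduality} with $M = M_{\leq r}$ then yields the claim directly. The main thing to verify throughout is the bookkeeping: the $\pi_* R$-module structures transported along each of these equivalences must agree with the evident one on $\pi_*(R \smashover{\HA} N)$. This is a naturality statement for the universal coefficient spectral sequence used in the proof of Theorem~\ref{thm:finiteduality}, and it is the only point requiring real care; no substantial computational obstacle remains.
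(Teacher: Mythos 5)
Your proposal is correct and follows essentially the same route as the paper: establish the shifted self-duality $D_\HA(M_I \smashover{\HA} \overline M_J) \simeq \Sigma^{-s(I,J)}(M_I \smashover{\HA} \overline M_J)$ (and the unshifted self-duality of $\End_\HA(M_{\leq r})$), transfer it along base change to $R$-modules, and apply Theorem~\ref{thm:finiteduality}. The only difference is cosmetic: you spell out the base-change step $D_R(R \smashover{\HA} N) \simeq R \smashover{\HA} D_\HA N$ and flag the module-structure bookkeeping, which the paper treats as implicit.
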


\begin{remark}
  By Proposition~\ref{prop:poincareduality} and Corollary~\ref{cor:HomotopyRkn}, the algebras
  \[
  \pi_\ast(\Rkmn{m}{m+k+n})
  \]
  are Frobenius algebras, and so Proposition~\ref{prop:dualityformodules} holds for all of the \(\HA\)-algebras \(\Rkmn{m}{m+k+n}\), with shift factor \(\delta(k,m,n)\).
\end{remark}

\begin{proof}
Let \(D_A(M):=F_{\HA}(M,\HA)\) be the Spanier--Whitehead duality functor in the category of \(\HA\)-modules.
Then
\[ D_{\HA}(M_{I}\smashover{\HA} \overline{M}_J)\simeq \Sigma^{-s(I,J)} M_{I}\smashover{\HA} \overline{M}_J.\]
Therefore, the same is true after base change to \(R\), i.e.,
\[ D_{R}(R\smashover{\HA}M_{I} \smashover{\HA} \overline{M}_J)\simeq \Sigma^{-s(I,J)}R\smashover{\HA} M_{I}\smashover{\HA} \overline{M}_J.\]
Combining this with Theorem~\ref{thm:finiteduality}, we get:
\begin{align*} 
\pi_{*+s(I,J)}\left(R\smashover{\HA} M_{I}\smashover{\HA} \overline{M}_J \right)
&\cong \pi_*D_{R}\big(R \smashover{\HA}
  M_{I}\smashover{\HA} \overline{M}_J\big) \\
  &\cong \left(\pi_{\delta-*}\left(R \smashover{\HA}
M_{I}\smashover{\HA}\overline{M}_J\right)\right)^\vee
\end{align*}
This proves item~\eqref{itemIJ}. For \eqref{itemleqr}, the proof is completely analogous, using the fact that \(\End_{\HA}(M_{\leq r})\) is Spanier--Whitehead self-dual because it is the endomorphism ring of a dualizable object:
\[
D_\HA(\End_{\HA}(M_{\leq r})) \simeq D_{\HA}\big(M_{\leq r}\smashover{\HA} D_{\HA}(M_{\leq r})\big)\simeq \End_{\HA}(M_{\leq r}). \qedhere
\]
\end{proof}

\section{Computations of relative Adams spectral sequences}\label{sec:Examples}

The homotopy rings of the ring spectra \({\HAk}\) can immediately be read out of the computations, since they are just the quotients by collections of generators. For the ring spectra \(\tRn\) (or more generally \(\Rkmn{m}{m+k+n}\)), the homotopy rings were determined in Corollary~\ref{cor:HomotopyRkn}, but this formulation can be hard to parse due to the complicated formul{\ae} for the classes \(\zeta_i\) in terms of the \(\xi_i\). The relative Adams spectral sequence can be used here to describe what happens up to associated graded.

For more general quotients, the relative Adams spectral sequence is a very helpful approach. In this case, we can simplify the procedure by finding the smallest quotient algebra \({\HAk}\) of \(\HA\) over which a given object is a module. The relative Adams spectral sequence here is a spectral sequence of modules, giving the desired result. Prototypical examples of this are given by the associative algebras \(\End_A(M_{<k})\), especially when base changed to \(\tRk\). Here, the differentials we describe control almost everything.

\subsection{Some families of differentials}\label{sec:ExampleRkn}

The relative Adams spectral sequence for modules over \(\HAn\) are the most important by Proposition~\ref{prop:SplittingAdams}. We begin by describing some basic differentials. For any \(n>k\), Corollary~\ref{cor:MkRelHomology} shows that the relative Adams spectral sequence for \(\HAk \smashover{\HA} \overline M_n\) has \(E_2\)-term
\[
\F_2[\xi_1,\dots,\xi_k]\otimes \F_2\{b_0, b_{2^{n}}\}.
\]

\begin{proposition}\label{prop:BasicAdamsDifferentials}
For any \(n=kq+r\) with \(q\geq 1\) and \(1\leq r \leq k\), in the relative Adams spectral sequence for \(\HAk \smashover{\HA} \overline M_n\) we have a differential
\[
d_{1+2^r+2^{r+k}+\dots+2^{n-k}}(b_{2^{n}})=\xi_r\xi_k^{2^r+2^{r+k}+\dots+2^{n-k}}.
\]
The next page is \(E_\infty\):
\[
\F_2[\xi_1,\dots,\xi_k]/(\xi_r\xi_k^{2^r+2^{r+k}+\dots+2^{n-k}}).
\]
\end{proposition}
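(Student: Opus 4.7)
The plan is to identify $\pi_*$ and $E_\infty$ algebraically and then read off the claimed differential from the only pattern compatible with $E_\infty$. Since $\overline{M}_n$ is the mapping cone of $\zeta_n$ and $\HAk_{\ast}=\F_2[\xi_1,\dots,\xi_k]$ is a domain, $\zeta_n$ is a non-zerodivisor and the cofiber sequence gives $\pi_*(\HAk\smashover{\HA}\overline{M}_n)\cong \HAk_{\ast}/(\zeta_n)$. For the $E_2$-page, Corollary~\ref{cor:MkRelHomology} and the K\"unneth isomorphism give $H_*^{\HA}(\HAk\smashover{\HA}\overline{M}_n)\cong \F_2[u^{2^k}]\otimes\F_2\{b_0,b_{2^n}\}$; because $n>k$ the term $u^{2^{n-1}}\otimes b_0$ in the coaction on $b_{2^n}$ vanishes modulo $u^{2^k}$, so $\F_2\{b_0,b_{2^n}\}$ is a trivial comodule over $\F_2[u]/(u^{2^k})$ and Proposition~\ref{prop:changeofrings} yields
\[
E_2 \cong \F_2[\xi_1,\dots,\xi_k]\otimes\F_2\{b_0,b_{2^n}\}.
\]

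Next I would pin down the lowest-filtration term of $\zeta_n$ inside $\HAk_{\ast}$. Milnor's relation, combined with $\xi_i=0$ for $i>k$, gives the recursion $\zeta_m=\sum_{i=1}^k \xi_i^{2^{m-i}}\zeta_{m-i}$ whenever $m>k$. By induction on $q$ I claim that the unique smallest-filtration term of $\zeta_n$ is
\[
\lambda := \xi_r\,\xi_k^{\,2^r+2^{r+k}+\dots+2^{n-k}},
\]
obtained by repeatedly selecting the $i=k$ summand until one reaches $\zeta_r$ and then taking its leading term $\xi_r$. For $i<k$ the factor $\xi_i^{2^{m-i}}$ alone has filtration at least $2^{m-k+1}$, while a geometric-series bound shows that the inductively predicted total filtration of the $i=k$ branch is strictly less than $2^{m-k+1}$, so no other term competes. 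Feeding this into the filtered short exact sequence $0\to\HAk_{\ast}\xrightarrow{\zeta_n\cdot}\HAk_{\ast}\to\HAk_{\ast}/(\zeta_n)\to 0$, multiplication by $\zeta_n$ becomes multiplication by $\lambda$ on the associated graded, which is injective since the polynomial ring is a domain; hence
\[
E_\infty \cong \F_2[\xi_1,\dots,\xi_k]/(\lambda).
\]

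Finally I would compare $E_2$ with $E_\infty$. The RASS is a module over the collapsing RASS for $\HAk$, so differentials are $\F_2[\xi_1,\dots,\xi_k]$-linear, the $\xi_i$ are permanent cycles, and the entire $b_0$-column consists of permanent cycles (it is the image of the $E_2$-page for $\HAk$ under the unit). For degree reasons any $d_s(b_{2^n})$ must take the form $x_s\cdot b_0$ with $x_s\in\F_2[\xi_1,\dots,\xi_k]$ of total degree $2^n-1$ and filtration $s$. For the minimal $s$ with $x_s\neq 0$, injectivity of multiplication by $x_s$ wipes out the entire $b_{2^n}$-column and leaves $\F_2[\xi_1,\dots,\xi_k]/(x_s)$ on the $b_0$-column; matching this with the computed $E_\infty$ forces $(x_s)=(\lambda)$, hence $x_s=\lambda$ and $s=1+2^r+2^{r+k}+\dots+2^{n-k}$, with the spectral sequence collapsing at $E_{s+1}$. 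The main obstacle is the inductive leading-term computation: one must rule out every competing monomial produced by iterating the Milnor recursion, which the filtration bound above achieves uniformly but requires care to state precisely.
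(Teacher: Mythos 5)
Your proposal is correct and follows essentially the same route as the paper: identify \(\pi_\ast(\HAk\smashover{\HA}\overline M_n)\cong\F_2[\xi_1,\dots,\xi_k]/(\zeta_n)\) from the cofiber sequence, compute the lowest-monomial-degree term of \(\zeta_n\) by iterating Milnor's recursion (this is exactly the paper's Lemma~\ref{lem:ZetaModM}, proved there by the same induction on \(q\) working modulo \(\mathfrak m^{j_q+2}\)), and conclude that this leading term must be the target of the differential on \(b_{2^n}\). Your extra bookkeeping --- the trivial-comodule identification of \(E_2\), the \(\F_2[\xi_1,\dots,\xi_k]\)-linearity forcing the differential, and the filtration bound ruling out competing \(i<k\) branches --- just makes explicit what the paper leaves implicit.
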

\begin{proof}
    Since we have a cofiber sequence
    \[
        \Sigma^{2^{n}-1}\HAk\xrightarrow{\zeta_{n}} \HAk\to \HAk \smashover{\HA} \overline M_n,
    \]
    the long exact sequence in homotopy gives us that the homotopy of the quotient is
    \[
    \pi_\ast \big(\HAk \smashover{\HA} \overline M_n\big)\cong \F_2[\xi_1,\dots,\xi_k]/(\zeta_{n}).
    \]
    The image of \(\zeta_n\) in the relative Adams spectral sequence must then be hit by a differential. This image records only the leading term of \(\zeta_{n}\) in terms of monomial degree. By Lemma~\ref{lem:ZetaModM} below, this is the stated term.
\end{proof}

\begin{lemma}\label{lem:ZetaModM}
    For any \(n=kq+r\) with \(q \geq 1\) and \(1\leq r \leq k\), we have
    \[
    \zeta_n\equiv \xi_r\xi_k^{2^r+2^{r+k}+\dots+2^{n-k}}\mod (\xi_{k+1},\xi_{k+2},\dots)+\text{higher degree monomials}.
    \]
\end{lemma}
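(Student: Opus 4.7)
I will prove the lemma by induction on $n$, using the reduction of Milnor's conjugation formula to $\HAk_\ast = \F_2[\xi_1,\dots,\xi_k]$. After setting $\xi_i = 0$ for $i > k$, the identity $\sum_{i+j=n} \xi_i^{2^j}\zeta_j = 0$ rearranges to
\[
\zeta_n \;=\; \sum_{i=1}^{\min(n,k)} \xi_i^{2^{n-i}}\,\zeta_{n-i},
\]
which is the only structural input I will use. As a base range, I will first verify that $\zeta_n \equiv \xi_n$ modulo higher monomial degree whenever $1 \leq n \leq k$: the $i = n$ summand contributes $\xi_n$ at monomial degree $1$, while every other summand carries a factor $\xi_i^{2^{n-i}}$ with $2^{n-i} \geq 2$, forcing monomial degree $\geq 3$.

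For $n > k$, write $n = kq + r$ with $q \geq 1$ and $1 \leq r \leq k$, and set $E(n) = 2^r + 2^{r+k} + \cdots + 2^{n-k}$, the target exponent of $\xi_k$. The inductive step isolates the $i = k$ summand of the reduced Milnor formula as the unique contribution at the smallest monomial degree. Applying either the base range (when $q=1$, so $n - k = r \leq k$) or the induction hypothesis on $n - k = k(q-1) + r$ (when $q \geq 2$) gives
\[
\xi_k^{2^{n-k}}\,\zeta_{n-k} \;\equiv\; \xi_k^{2^{n-k}}\cdot \xi_r\,\xi_k^{E(n-k)} \;=\; \xi_r\,\xi_k^{E(n)}
\]
modulo higher monomial degree, using the recursion $E(n) = E(n-k) + 2^{n-k}$ with the convention $E(r) := 0$.

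To finish, I have to show that every other summand $\xi_i^{2^{n-i}}\zeta_{n-i}$ (with $1 \leq i \leq k - 1$) lands in strictly higher monomial degree. Such a summand has monomial degree at least $2^{n-i} + 1 \geq 2^{n-k+1} + 1$. On the other hand, the binary expansion of $E(n)$ only uses bit positions $r, r+k, \ldots, n-k$, all at most $n-k$, so $E(n) \leq 2^{n-k+1} - 1$, and the target monomial $\xi_r\xi_k^{E(n)}$ has monomial degree $1 + E(n) \leq 2^{n-k+1}$, strictly less. I expect the main obstacle to be precisely this bookkeeping step, together with the clean matching of the recursion $E(n) = E(n-k) + 2^{n-k}$ with the power of $\xi_k$ supplied by Milnor's formula; once both are in place the induction is routine.
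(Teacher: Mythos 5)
Your proof is correct and follows essentially the same route as the paper's: reduce Milnor's formula modulo $(\xi_{k+1},\xi_{k+2},\dots)$, isolate the $i=k$ summand as the unique lowest-degree contribution, and induct in steps of $k$ using the recursion $E(n)=E(n-k)+2^{n-k}$ (the paper phrases this as induction on $q$ and works modulo powers of the augmentation ideal $\mathfrak m^{j_q+2}$). If anything, your write-up is slightly more complete, since you spell out the degree comparison $1+E(n)\leq 2^{n-k+1}<2^{n-i}+1$ that the paper's proof asserts without justification.
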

\begin{proof}
    Let \(j_q=2^r+2^{r+k}+\dots+2^{n-k}\), and let \(\mathfrak m\) denote the augmentation ideal of \(\F_2[\xi_1,\dots,\xi_k]\). We will verify the formula by induction on \(q\), working modulo terms of degree greater than \(j\). Since we are working modulo \(\xi_{k+1}\) and higher, Milnor's formula gives
    \[
        \zeta_n=\sum_{i=1}^{k} \zeta_{n-i}\xi_i^{2^{n-i}}\equiv \zeta_{n-k}\xi_k^{2^{n-k}}\mod\mathfrak m^{j_q+2}.
    \]
    If \(q=1\), then \(\zeta_{n-k}=\zeta_r\), which gives the base case. If \(q>1\), then by induction, 
    \[
    \zeta_{n-k}\equiv \xi_r\xi_k^{2^{j_{m-1}}}\mod \mathfrak m^{j_{q-1}+2},
    \]
    so 
    \[
        \zeta_n\equiv\zeta_{n-k}\xi_k^{2^{n-k}}\equiv \zeta_r\xi_k^{2^{j_{q-1}}+2^{n-k}}\mod \mathfrak m^{j_{q-1}+2^{n-k}+2}
    \]
    as needed.
\end{proof}

\begin{remark}
    It is only in identifying the leading term of \(\zeta_{n}\) that we used the assumption that \(q\geq 1\). For \(m= 0\), the homology changes, since \(H^{\HA}_\ast (M_{n})\) no longer restricts to a trivial \(\F_2[u^{2^n}]\)-comodule. 
\end{remark}

We can apply this in the more general case of the quotient by all \(\xi_{m+i}\) for \(1\leq i\leq k\). Recall from Corollary~\ref{cor:AdamsETwoRkn} that the relative Adams \(E_2\) term for \(\HAk/(\zeta_{m+1},\dots,\zeta_{m+k})\), for \(m\geq k\), is
\[
\F_2[\xi_1,\dots,\xi_k]\otimes \F_2[e_{2^{m+1}}]/e_{2^{m+1}}^{2^{k}}.
\]

\begin{theorem}\label{thm:BasicAdamsDiffs}
    For each \(1\leq i\leq k\), write \(m+i=kq+r\), with \(1\leq r\leq k\). In the relative Adams spectral sequence for \(\HAk/(\zeta_{m+1},\dots,\zeta_{m+k})\),  we have
    \[
    d_s\big(e_{2^{m+1}}^{2^{i-1}}\big)=\mycases{ 
    0 & s<1+2^r+2^{r+k}+\dots+2^{m+i-k} \\
    \xi_{i}\xi_k^{2^r+2^{r+k}+\dots+2^{m+i-k}} & s=1+2^r+2^{r+k}+\dots+2^{m+i-k}.
    }
    \]
\end{theorem}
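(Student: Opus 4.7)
The plan is to adapt the proof strategy of Proposition~\ref{prop:BasicAdamsDifferentials} to the iterated-quotient setting, pivoting on the vanishing of \(\zeta_{m+i}\) in \(\pi_\ast\) together with Lemma~\ref{lem:ZetaModM}. By Corollary~\ref{cor:TildeAkHomotopy}, we have \(\zeta_{m+i}=0\) in the homotopy ring of \(\HAk/(\zeta_{m+1},\dots,\zeta_{m+k})\), and Lemma~\ref{lem:ZetaModM} singles out \(\xi_r\xi_k^{j}\) with \(j = 2^r + 2^{r+k} + \dots + 2^{m+i-k}\) as the leading monomial. Since the relative Adams filtration coincides with the filtration by monomial degree, this leading term is the image of \(\zeta_{m+i}\) in \(E_\infty^{1+j,\ast}\), which must therefore vanish; as \(\xi_r\xi_k^j\) is nonzero in \(E_2\), it must be hit by some differential.

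I would next match the source via a bidegree computation against the \(E_2\)-term of Corollary~\ref{cor:AdamsETwoRkn}. Using the identity \(j(2^k-1) = 2^{m+i} - 2^r\), a \(d_{1+j}\) differential from the unique filtration-zero class \(e_{2^{m+1}}^{2^{i-1}}\) in internal degree \(2^{m+i}\) lands in exactly the bidegree of \(\xi_r\xi_k^j\), so the stated differential is at least bidegree-compatible. It then suffices to verify that no earlier differential on \(e_{2^{m+1}}^{2^{i-1}}\) is nonzero, for the claim on \(E_{1+j}\) will follow from the forced vanishing of the leading-term class.

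The vanishing of earlier differentials is the main obstacle, and I would handle it by induction on \(i\). For \(i=1\), the argument mirrors the base case: a parity and arithmetic check shows that \(1+j\) is the smallest \(s\) for which the equation \(\sum a_\ell(2^\ell - 1) = 2^{m+1}-1\) has a solution with \(\sum a_\ell = s\), so no target of smaller filtration is available to receive a differential from \(e_{2^{m+1}}\). For \(i \geq 2\), writing \(e_{2^{m+1}}^{2^{i-1}} = \bigl(e_{2^{m+1}}^{2^{i-2}}\bigr)^2\) and applying the Leibniz rule in characteristic \(2\) forces \(d_r\bigl(e_{2^{m+1}}^{2^{i-1}}\bigr) = 0\) as long as \(e_{2^{m+1}}^{2^{i-2}}\) is still present on \(E_r\). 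The inductive hypothesis ensures that \(e_{2^{m+1}}^{2^{i-2}}\) survives until page \(1+j'\) with \(j' < j\), which I would verify directly from the explicit formulas (e.g., when \(r>1\), \(j'=j/2\), and the case \(r=1\) is handled separately). For \(r\) in the remaining range \(1+j' < r < 1+j\), an enumeration of candidate target monomials \(\xi_1^{a_1}\cdots\xi_k^{a_k}e_{2^{m+1}}^b\) in the correct bidegree shows that each either has \(b=0\) and is excluded by the base-case minimality argument, or has \(b>0\) and has already been killed by a differential propagated via Leibniz from a smaller power of \(e_{2^{m+1}}\). The claimed differential then follows. The delicate part will be the bookkeeping of survivors in the last range, which I expect to be tractable only because the filtration-\(0\) stratum of the \(E_2\)-term is sparse and the Leibniz rule in characteristic \(2\) severely constrains which classes can support new differentials.
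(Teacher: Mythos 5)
Your proposal takes a genuinely different route from the paper, and the difference is significant. The paper's proof is a single line: there is a map of \(\HA\)-modules
\[
\HAn \smashover{\HA} \overline{M}_{m+i} \to \HAn/(\zeta_{m+1},\dots,\zeta_{m+k}),
\]
induced by the units \(\HA \to \overline{M}_j\) for \(j \neq m+i\), which on relative homology carries \(b_{2^{m+i}}\) to \(e_{2^{m+1}}^{2^{i-1}}\). Proposition~\ref{prop:BasicAdamsDifferentials} already establishes \emph{both} halves of the claimed dichotomy (vanishing for \(s < 1+j\) and the exact differential at \(s = 1+j\)) in the two-cell module \(\HAn \smashover{\HA} \overline{M}_{m+i}\), where the argument is clean because there are only two \(E_2\)-generators and the long exact sequence on homotopy directly computes the abutment. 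Naturality of the relative Adams spectral sequence then transports the whole statement at once. Lemma~\ref{lem:ZetaModM} and Proposition~\ref{prop:BasicAdamsDifferentials} are set up precisely so that this transfer is the entire proof.

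Your proposal instead tries to redo the computation from scratch inside the larger \(E_2\)-term \(\F_2[\xi_1,\dots,\xi_k]\otimes\F_2[e_{2^{m+1}}]/(e_{2^{m+1}}^{2^k})\), and this creates work that the paper avoids. The Leibniz argument for the vanishing of \(d_s\big(e_{2^{m+1}}^{2^{i-1}}\big) = d_s\big((e_{2^{m+1}}^{2^{i-2}})^2\big)\) is sound, but it only applies while the square root is alive, i.e.\ for \(s \leq 1 + j'\), leaving a nonempty range \(1+j' < s < 1+j\) to be handled. You gesture at an ``enumeration of candidate target monomials'' and a claim that monomials with \(b>0\) have ``already been killed by a differential propagated via Leibniz from a smaller power,'' but this is left entirely unverified, and it is the crux of your argument. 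Even granting the enumeration, there is a further logical step missing: you establish (modulo the enumeration) that \(e_{2^{m+1}}^{2^{i-1}}\) supports no differential shorter than \(d_{1+j}\), and that \(\xi_r\xi_k^j\) must vanish by \(E_\infty\), but to conclude \(d_{1+j}\big(e_{2^{m+1}}^{2^{i-1}}\big) = \xi_r\xi_k^j\) you also need that \(\xi_r\xi_k^j\) survives to \(E_{1+j}\) (i.e.\ is not hit earlier from some \emph{other} source) and that no longer differential from \(e_{2^{m+1}}^{2^{i-1}}\) happens instead. The paper's naturality argument handles all of these points simultaneously, because they are already established in the two-cell case where the bookkeeping is trivial. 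In short: the naturality argument you are not seeing is precisely the payoff for having proved Proposition~\ref{prop:BasicAdamsDifferentials} first.
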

\begin{proof}
    On relative homology, the map
    \[
    \HAk \smashover{\HA} \overline{M}_{m+i} \to \HAk/(\zeta_{m+1},\dots,\zeta_{m+k})
    \]
    takes the class \(b_{2^{m+i}}\) to the class \(e_{2^{m+1}}^{2^{i-1}}\). The result follows from naturality of the relative Adams spectral sequence.
\end{proof}

\begin{corollary}
    In the relative Adams spectral sequence for \(\tRn\), for each \(1\leq i\leq k\), we have
    \[
    d_r\big((e_{2^{k+1}})^{2^{i-1}}\big)=\mycases{ 
    0 & r<2^i+1 \\
    \xi_{i}\xi_k^{2^i} & r=2^i+1.
    }
    \]
\end{corollary}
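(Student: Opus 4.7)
The plan is simply to apply Theorem~\ref{thm:BasicAdamsDiffs} in the special case \(m=k\), since \(\tRn = \HAk/(\zeta_{k+1},\dots,\zeta_{2k})\) is exactly \(\Rkmn{k}{2k}\). The task reduces to parsing the index bookkeeping: for each \(1\leq i\leq k\), I need to unwind what \((q,r)\) with \(1\leq r\leq k\) is attached to the integer \(m+i = k+i\), and then simplify the geometric-sum exponent \(2^r + 2^{r+k} + \dots + 2^{m+i-k}\).

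First, I would observe that for \(1\leq i\leq k-1\), the decomposition \(k+i = 1\cdot k + i\) has \(q=1\) and \(r=i\in[1,k-1]\). For the boundary case \(i=k\), one writes \(2k = 1\cdot k + k\), again giving \(q=1\) and \(r=k\). In all cases therefore \(q=1\) and \(r=i\). Next, I would note that \(m+i-k = i = r\), so the geometric sum
\[
2^r + 2^{r+k} + \dots + 2^{m+i-k}
\]
starts and ends at the same exponent and consists of the single term \(2^i\). Consequently the page number \(1 + 2^r + 2^{r+k} + \dots + 2^{m+i-k}\) from Theorem~\ref{thm:BasicAdamsDiffs} collapses to \(1+2^i\), and the target class \(\xi_i\xi_k^{2^r+2^{r+k}+\dots+2^{m+i-k}}\) collapses to \(\xi_i\xi_k^{2^i}\).

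Finally, I would cite Theorem~\ref{thm:BasicAdamsDiffs} with these simplified values to conclude that
\[
d_r\big(e_{2^{k+1}}^{2^{i-1}}\big) = \begin{cases} 0 & r < 2^i+1, \\ \xi_i\xi_k^{2^i} & r = 2^i+1, \end{cases}
\]
as claimed. There is no real obstacle here — the statement is genuinely a corollary in the strict sense, and the only substantive check is verifying that the boundary case \(i=k\) is handled correctly by the convention \(1\leq r\leq k\) in Theorem~\ref{thm:BasicAdamsDiffs}. I would mention this explicitly so the reader does not worry about the edge case \(r=0\) that would naively arise from the division algorithm applied to \(2k=2\cdot k + 0\).
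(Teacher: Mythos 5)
Your proposal is correct and is exactly the specialization the paper intends: the corollary is the case \(m=k\) of Theorem~\ref{thm:BasicAdamsDiffs}, where \(k+i = 1\cdot k + i\) gives \(q=1\), \(r=i\), the geometric sum degenerates to the single term \(2^i\), and \(e_{2^{m+1}} = e_{2^{k+1}}\). The paper offers no separate argument, and your explicit check of the boundary case \(i=k\) (taking \(r=k\) rather than \(r=0\)) is a worthwhile clarification.
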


More difficult is understanding the remaining differentials. The reason is that, while the classes \(\zeta_{k+1},\dots,\zeta_{2k}\) form a regular sequence in \(\HAn_\ast\), the leading terms do not. This forces some higher differentials. We now spell this out for \(\RTwoFour\). More surprising, however, is that in various quotients of \(\tRn\) wherein we kill also {\emph{lower}} \(\xi\)s and \(\zeta\)s, these differentials control the entire relative Adams spectral sequence. We show examples of this in Sections~\ref{sec:AllButXiTwo} and \ref{sec:AllButXiThree}.

\subsection{The relative Adams spectral sequence for \tpfstr{\(\tilde \HA\langle 2\rangle\)}{A~<2>}}\label{sec:Atilde2}

\begin{figure}[ht]
    \centering
    \includegraphics[width=\textwidth]{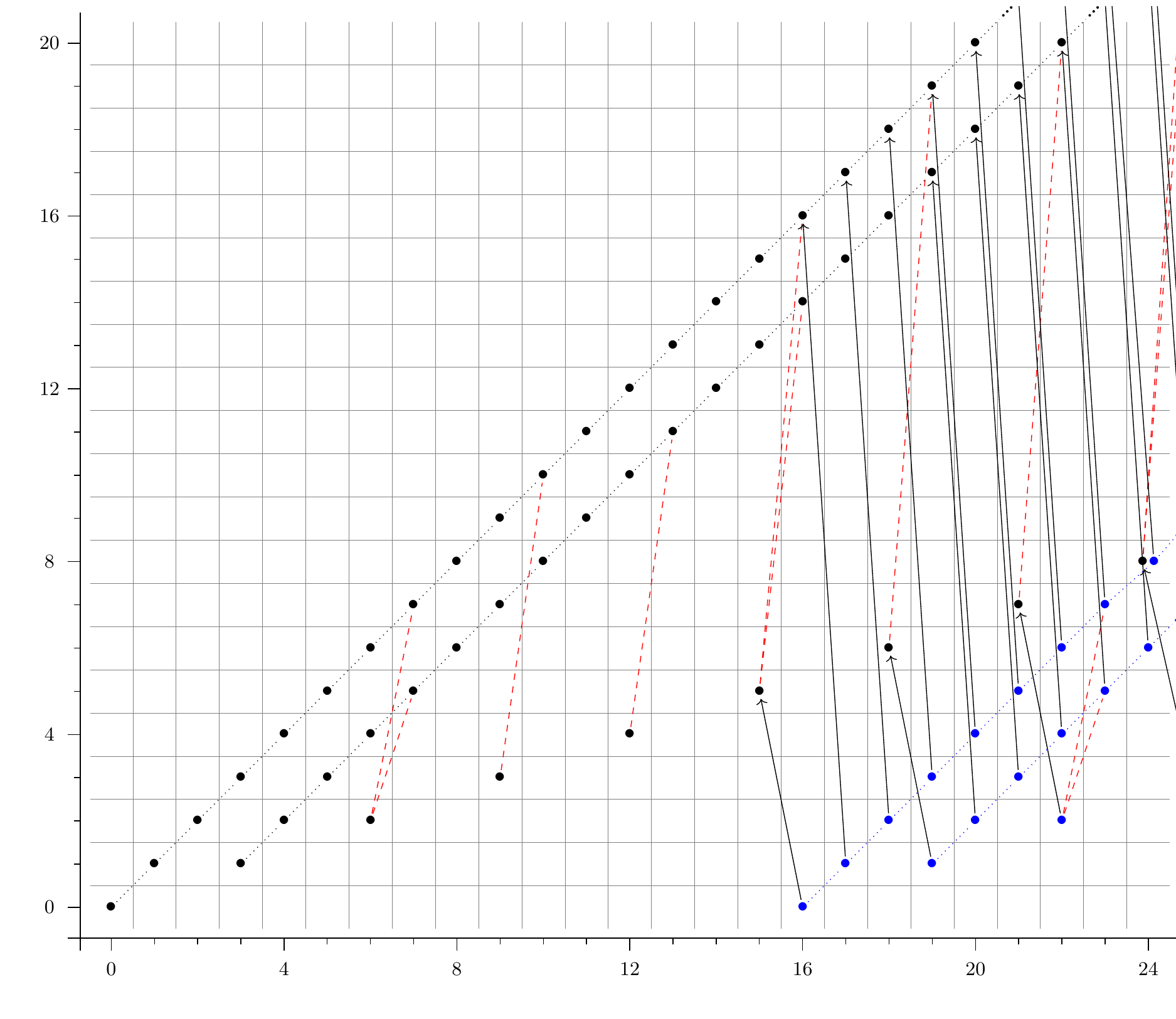}
    \caption{The relative Adams spectral sequence starting at \(E_5\) for \(\RTwoFour\)}
    \label{fig:ASSRTwoFour}
\end{figure}

As a comodule algebra, Proposition~\ref{prop:HomologyOfRkn} shows we have
\[
H_\ast^{\HA}\big(\RTwoFour\big)\cong \F_2[{u}^4]\otimes \F_2[e_8]/e_8^4,
\]
where \(e_8\) is a primitive degree \(8\) class. The relative Adams \(E_2\)-term is therefore
\[
E_2=\F_2[\xi_1,\xi_2]\otimes \F_2[e_8]/e_8^4,
\]
with \(e_8\) in relative Adams filtration \(0\). Figure~\ref{fig:ASSRTwoFour} shows the spectral sequence from \(E_5\) on; we encourage the reader to use this as a guide to the computation. In this, bullets indicate copies of \(\F_2\). The red dashed lines are ``exotic'' \(\xi_1\)-multiplications. The classes in blue are the submodule generated by the class \(e_8^2\).

Proposition~\ref{prop:BasicAdamsDifferentials} (or Corollary~\ref{cor:AdamsdThree}) gives our first differential, killing \(\xi_3 + \zeta_3\).

\begin{proposition}
    There is a \(d_3\)-differential
    \[
        d_3(e_8)=\xi_1\xi_2^2.
    \]
\end{proposition}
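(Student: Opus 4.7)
The plan is to derive this differential as an immediate specialization of Theorem~\ref{thm:BasicAdamsDiffs} (equivalently, by naturality from Proposition~\ref{prop:BasicAdamsDifferentials}). Setting $k = m = 2$ and $i = 1$, we write $m+i = 3 = 2\cdot 1 + 1$, so in the notation of that theorem we have $q = 1$ and $r = 1$. The theorem then predicts a differential of page
\[
s = 1 + 2^r = 1 + 2 = 3,
\]
and value $\xi_i \xi_k^{2^r + \cdots} = \xi_1 \xi_2^{2}$, which is exactly the claim.

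To see this concretely, first note that Milnor's formula for conjugation gives
\[
\zeta_3 = \xi_3 + \xi_1^4 \zeta_2 + \xi_2^2 \zeta_1.
\]
Substituting $\zeta_1 = \xi_1$ and $\zeta_2 = \xi_2 + \xi_1^3$ and reducing modulo $\xi_3$ yields
\[
\zeta_3 \equiv \xi_1 \xi_2^2 + \xi_1^4 \xi_2 + \xi_1^7 \pmod{\xi_3},
\]
so the leading monomial of $\zeta_3$ modulo $(\xi_3, \xi_4, \dots)$ is $\xi_1 \xi_2^2$. This agrees with Lemma~\ref{lem:ZetaModM} for $n = 3$, $k = 2$, $r = 1$.

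Next, I would invoke naturality. The canonical map of $\HA$-modules
\[
\HAk[2] \smashover{\HA} \overline{M}_3 \longrightarrow \RTwoFour = \HAk[2]/(\zeta_3, \zeta_4)
\]
sends $b_8$ to $e_8$ on relative homology (both are the unique nonzero primitive-mod-lower-filtration class in degree $8$, or, equivalently, both are the Hurewicz image of the degree-$8$ generator obtained by coning $\zeta_3$). By Proposition~\ref{prop:BasicAdamsDifferentials} applied with $k = 2$, $n = 3$, we have $d_3(b_8) = \xi_1 \xi_2^2$ in the source spectral sequence, and naturality transports this differential to $d_3(e_8) = \xi_1 \xi_2^2$ in the target.

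There is essentially no obstacle here beyond verifying that $b_8 \mapsto e_8$ under the comparison map on relative homology, which is clear from Proposition~\ref{prop:HomologyOfRkn}: both classes are identified with the image of $u^4 \otimes 1 + 1 \otimes u^4$ (the unique primitive in degree $8$ lifting the non-primitive generator). The remaining higher differentials on $e_8^2$ and $e_8^3$ — which are implicitly promised by Figure~\ref{fig:ASSRTwoFour} and by Theorem~\ref{thm:BasicAdamsDiffs} for $i = 2$ — would require separate arguments that are not part of this proposition.
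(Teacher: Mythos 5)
Your proof is correct and follows the same route as the paper, which states the differential as an immediate consequence of Proposition~\ref{prop:BasicAdamsDifferentials} (equivalently Corollary~\ref{cor:AdamsdThree}) via Theorem~\ref{thm:BasicAdamsDiffs}. You have simply unwound the citation by checking the Milnor-formula computation of $\zeta_3 \equiv \xi_1\xi_2^2$ modulo $\xi_3$ and higher terms, and by verifying the naturality step $b_8 \mapsto e_8$, both of which agree with the paper.
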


This gives an \(E_4\) page
\[
\F_2[\xi_1,\xi_2]/(\xi_1\xi_2^2)\otimes E(e_8^2).
\]
Proposition~\ref{prop:BasicAdamsDifferentials} then gives the differential on \(e_8^{2}\), killing \(\xi_4 + \zeta_4\).

\begin{proposition}
    There is a \(d_5\)-differential
    \[
        d_5(e_8^2)=\xi_2^5.
    \]
\end{proposition}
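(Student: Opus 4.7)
The plan is to derive this as an immediate specialization of Theorem~\ref{thm:BasicAdamsDiffs}. Taking $k=m=2$ and $i=2$, we write $m+i=4=2\cdot 1+2$ in the form $kq+r$ with $1\leq r\leq k$, giving $q=1$ and $r=2$. Theorem~\ref{thm:BasicAdamsDiffs} then produces a differential of length $1+2^{r}=5$ on $e_{2^{m+1}}^{2^{i-1}}=e_8^{2}$, landing on $\xi_i\xi_k^{2^{r}}=\xi_2\cdot\xi_2^{4}=\xi_2^{5}$. This is exactly the claim.

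Unwinding one level: by Proposition~\ref{prop:BasicAdamsDifferentials}, the relative Adams spectral sequence for $\HAn[2]\smashover{\HA}\overline{M}_4$ carries a differential $d_5(b_{16})=\xi_2^{5}$, which records the fact that $\zeta_4$ dies in $\pi_*(\HAn[2]\smashover{\HA}\overline{M}_4)$ and that, by Lemma~\ref{lem:ZetaModM}, the leading monomial of $\zeta_4$ modulo $(\xi_3,\xi_4,\dots)$ is indeed $\xi_2^{5}$. Naturality of the relative Adams spectral sequence along the map $\HAn[2]\smashover{\HA}\overline{M}_4\to \tRn[2]$, which sends $b_{16}$ to $e_8^{2}$, transports this differential to the spectral sequence for $\tRn[2]$.

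It then remains to verify that $d_5$ is the first nonzero differential on $e_8^{2}$ and that the target survives to $E_5$. Both are brief bidegree checks: for $r<5$, a putative $d_r(e_8^{2})$ would land in bidegree $(t-s,s)=(15,r)$, but no monomial $\xi_1^{a}\xi_2^{b}$ in $\F_2[\xi_1,\xi_2]$ satisfies $a+b=r<5$ and $a+3b=15$ simultaneously; and the only relation imposed on $E_4$, namely $\xi_1\xi_2^{2}=0$, does not involve $\xi_2^{5}$. The only (mild) obstacle is matching the indexing conventions of Theorem~\ref{thm:BasicAdamsDiffs} to the case at hand, which is essentially mechanical.
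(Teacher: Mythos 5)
Your proposal is correct and follows the paper's own route: the paper derives this differential exactly as a specialization of Proposition~\ref{prop:BasicAdamsDifferentials} (equivalently Theorem~\ref{thm:BasicAdamsDiffs} with \(k=m=i=2\), \(q=1\), \(r=2\)), recording that \(\zeta_4\) must die and that its leading term modulo \((\xi_3,\xi_4,\dots)\) is \(\xi_2^5\). Your extra bidegree check that \(d_5\) is the first possible nonzero differential on \(e_8^2\) and that \(\xi_2^5\) survives to \(E_5\) is a harmless (and correct) elaboration of what the paper leaves implicit.
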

Note that this leaves \(\xi_1^k e_8^2\) for \(k\geq 1\) as a \(d_5\)-cycle, since
\[
d_5(\xi_1^k e_8^2)=\xi_1^k d_5(e_8^2)=\xi_1^k \xi_2^5=d_3(e_8)\xi_1^{k-1}\xi_2^3=0,
\]
and we also have \(\xi_1^k\xi_2 e_8^2\) for \(k\geq 1\) for similar reasons.

\begin{proposition}
    As an \(\F_2[\xi_1,\xi_2]\)-module, \(E_6\) is
    \[
    \F_2[\xi_1,\xi_2]/(\xi_1\xi_2^2,\xi_2^5)\oplus \F_2[\xi_1,\xi_2]/(\xi_2^2)\cdot \{\xi_1e_8^2\}.
    \]
\end{proposition}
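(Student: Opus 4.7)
The plan is to compute $E_6$ as the homology of $E_5$ under the $d_5$-differential, using the formula $d_5(e_8^2)=\xi_2^5$ from the previous proposition. Decompose
\[
E_5\cong \F_2[\xi_1,\xi_2]/(\xi_1\xi_2^2)\otimes E(e_8^2)
\]
as $S_0\oplus S_1$, with $S_0=\F_2[\xi_1,\xi_2]/(\xi_1\xi_2^2)\cdot 1$ and $S_1=\F_2[\xi_1,\xi_2]/(\xi_1\xi_2^2)\cdot e_8^2$. Since the relative Adams spectral sequence for $\HA$ collapses at $E_2$, the generators $\xi_1$ and $\xi_2$ are permanent cycles there, and by naturality along the unit map $\HA\to\RTwoFour$, their images in $S_0$ remain permanent cycles in the spectral sequence for $\RTwoFour$. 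Hence every element of $S_0$ is a $d_5$-cycle, and the Leibniz rule identifies $d_5\colon E_5\to E_5$ with the $S_0$-linear map $S_1\to S_0$ sending $a\cdot e_8^2$ to $a\xi_2^5$.

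Next I would compute the cokernel: the image of $d_5$ in $S_0$ is the cyclic submodule generated by $\xi_2^5$, so
\[
S_0/\mathrm{im}(d_5)\;\cong\;\F_2[\xi_1,\xi_2]/(\xi_1\xi_2^2,\xi_2^5),
\]
which is the first summand in the claim.

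Then I would compute the kernel by lifting to the UFD $\F_2[\xi_1,\xi_2]$. A lift $\tilde a$ satisfies $\tilde a\xi_2^5\in(\xi_1\xi_2^2)$ iff $\xi_1\mid \tilde a\xi_2^3$, and since $\xi_1$ and $\xi_2$ are coprime, iff $\xi_1\mid \tilde a$; the converse implication is immediate. Therefore $\ker d_5=(\xi_1)/(\xi_1\xi_2^2)\cdot e_8^2$. Multiplication by $\xi_1$ presents $(\xi_1)/(\xi_1\xi_2^2)$ as the quotient of $\F_2[\xi_1,\xi_2]$ by the annihilator of $\xi_1$ modulo $\xi_1\xi_2^2$, namely $(\xi_2^2)$; so $\ker d_5\cong \F_2[\xi_1,\xi_2]/(\xi_2^2)\cdot\{\xi_1 e_8^2\}$, the second summand.

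The main (modest) obstacle is verifying that $S_0$ consists entirely of $d_5$-cycles, since this underwrites both the Leibniz reduction and the resulting identification $E_6\cong \big(S_0/\mathrm{im}(d_5)\big)\oplus\ker(d_5)$. Once this naturality argument is in hand, the remainder is elementary commutative algebra in $\F_2[\xi_1,\xi_2]$.
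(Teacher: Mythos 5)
Your proof is correct and follows essentially the same route as the paper: the paper records $d_5(e_8^2)=\xi_2^5$, observes that $\xi_1^k e_8^2$ and $\xi_1^k\xi_2 e_8^2$ ($k\geq 1$) are $d_5$-cycles because $\xi_1^k\xi_2^5$ already vanishes modulo $\xi_1\xi_2^2$, and then states the $E_6$-page, leaving the kernel/cokernel bookkeeping implicit. Your decomposition $E_5=S_0\oplus S_1$, the permanent-cycle argument for $S_0$ via the module structure over the collapsed spectral sequence for $\HA$, and the lift-to-$\F_2[\xi_1,\xi_2]$ computation identifying $\ker d_5=(\xi_1)/(\xi_1\xi_2^2)\cdot e_8^2\cong \F_2[\xi_1,\xi_2]/(\xi_2^2)\cdot\{\xi_1 e_8^2\}$ simply make that bookkeeping explicit.
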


We now remark that, in the \emph{abutment} \(\F_2[\xi_1,\xi_2]/(\zeta_3, \zeta_4)\) of this spectral sequence, there is an unexpected relation:
\[
\xi_1^{16} = \xi_1 \zeta_4 + (\xi_2^3 + \xi_1^6 \xi_2 + \xi_1^9) \zeta_3 \equiv 0
\]
Therefore, \(\xi_1^{16}\) must be the target of a differential. The only class available to accomplish this is \(\xi_1 e_8^2\).
We deduce the following. 

\begin{proposition}
    We have \(d_{15}\)-differentials
    \[
        d_{15}(\xi_1^k e_8^2)=\xi_1^{15+k}\text{ and }d_{15}(\xi_1^k\xi_2e_8^2)=\xi_1^{15+k}\xi_2,
    \]
    for \(k\geq 1\).
\end{proposition}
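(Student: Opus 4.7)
The plan is to first pin down the single differential $d_{15}(\xi_1 e_8^2) = \xi_1^{16}$, and then propagate it to all $k \geq 1$ by the module structure. The argument for the single differential is essentially laid out in the discussion immediately preceding the proposition: $\xi_1^{16}$ lies in the ideal $(\zeta_3, \zeta_4)$ and so vanishes in the abutment $\pi_\ast(\RTwoFour)$, but on $E_6$ it is a nonzero class at bidegree $(t-s, s) = (16, 16)$, so it must be the target of some differential. From the $E_6$-page description
\[
E_6 \cong \F_2[\xi_1,\xi_2]/(\xi_1\xi_2^2,\xi_2^5)\;\oplus\;\F_2[\xi_1,\xi_2]/(\xi_2^2)\cdot\{\xi_1 e_8^2\},
\]
the classes at bidegrees of the form $(17, s)$ are precisely $\xi_1 e_8^2$ at $(17, 1)$, $\xi_1^{14}\xi_2$ at $(17, 15)$, and $\xi_1^{17}$ at $(17, 17)$. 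A $d_r$ from $(17, s)$ to $(16, 16)$ requires $r = 16 - s \geq 2$, leaving only the candidate $d_{15}(\xi_1 e_8^2) = \xi_1^{16}$; and a parallel check shows $\xi_1^{16}$ supports no outgoing differential, since the positions $(15, s)$ with $s > 16$ are empty on $E_6$.

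This single candidate is therefore forced, and simultaneously the \emph{a priori} possibility $d_{13}(\xi_1 e_8^2) = \xi_1^{13} \xi_2$ is ruled out: any nonzero differential $d_r(\xi_1 e_8^2)$ with $r<15$ would destroy $\xi_1 e_8^2$ before page $15$, leaving $\xi_1^{16}$ with nothing to kill it.

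To propagate to arbitrary $k \geq 1$, we use that the relative Adams spectral sequence for $\RTwoFour$ is a module spectral sequence over the one for $\HA$, in which $\xi_1$ and $\xi_2$ are permanent cycles. The Leibniz rule then gives
\[
d_{15}(\xi_1^k e_8^2) = \xi_1^{k-1} \cdot d_{15}(\xi_1 e_8^2) = \xi_1^{k+15}, \qquad d_{15}(\xi_1^k \xi_2 e_8^2) = \xi_2 \cdot d_{15}(\xi_1^k e_8^2) = \xi_1^{k+15} \xi_2.
\]
The sources survive to $E_{15}$ because $\xi_1 e_8^2$ does. The principal remaining obstacle is verifying that the target classes $\xi_1^{k+15}$ and $\xi_1^{k+15}\xi_2$ are nonzero on $E_{15}$. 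This reduces to the same style of bigrading inspection: the parity constraint $2b = 1+r$ on monomials in the first summand of $E_6$ forces any potentially obstructing source to lie in the second summand, and by Leibniz any such source reduces to a multiple of $d_r(\xi_1 e_8^2) = 0$ for $r < 15$. The argument is entirely finite and parity-based, but it is the only place where the combinatorics of $E_6$ has to be unpacked carefully.
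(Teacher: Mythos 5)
Your proposal is correct and follows essentially the same route as the paper: the relation $\xi_1^{16} = \xi_1\zeta_4 + (\xi_2^3+\xi_1^6\xi_2+\xi_1^9)\zeta_3$ forces $\xi_1^{16}$ to die, the bidegree count on $E_6$ leaves $\xi_1 e_8^2$ as the only possible source (which also rules out any earlier differential off that class), and the module structure over the collapsed relative Adams spectral sequence for $\HA$ propagates the differential to all $k\geq 1$. The extra survival checks you flag at the end are handled exactly as you indicate, by the parity of the filtration jump together with the Leibniz rule.
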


This leaves an \(E_{16}\)-page
\[
\F_2[\xi_1,\xi_2]/(\xi_1\xi_2^2, \xi_2^5, \xi_1^{16}).
\]
All generators here are permanent cycles, so the spectral sequence degenerates at this point.

\begin{remark}
The differentials correspond to relations that are not homogeneous with respect to the degree filtration, imposing  multiplicative extensions on the spectral sequence. To solve them, we must carefully remember to expand {\emph{all}} formul{\ae} modulo the corresponding jump in filtration. For example, trying to work with \(\zeta_4\) modulo only filtration at least \(6\) gives an incorrect pattern of differentials including \(d_{13}\) and \(d_{17}\). Careful calculation of the differentials is equivalent to calculation of the leading terms in a Gr\"obner basis for the ideal \((\zeta_{k+1},\dots, \zeta_{2k})\).
\end{remark}

\begin{remark}
The relations imposed by setting \(\zeta_i = 0\) are complicated enough to make determining the exact structure of the finite-dimensional algebras \(\tRnast\) complex. Calculations with Sage indicate some conjectural patterns:
\begin{enumerate}
    \item for \(j < k\), \(2^{2k-j+1}\) is the smallest power of \(\xi_j\) which is zero in \(\tRnast\), and
    \item the element \(\xi_k^{2^{k+1}-2}\) generates the topmost nonzero degree in \(\tRnast\).
\end{enumerate}
\end{remark}

\subsection{The homotopy of \tpfstr{\(\tilde \HA\langle 2\rangle \wedge_{\HA}\End_{\HA}(M_1)\)}{EndM smash A~<2>}}
\label{sec:AllButXiTwo}

In this section, we will discuss the coefficient ring of 
\[
\RTwoFour\smashover{\HA}\End_{\HA}(M_1)\simeq \Sigma^{-2} \RTwoFour\smashover{\HA} M_{1}\smashover{\HA} \overline{M}_{1}.
\]
Since this is an associative algebra under \(\RTwoFour\), the relative Adams spectral sequence is a spectral sequence of modules over that of \(\RTwoFour\). We present a picture through a range with all differentials and extensions in Figure~\ref{fig:ASSAllButXiTwo}.

Our descriptions of the homologies and relative Adams \(E_2\)-terms for \(\HAn\) and for \(\End_{\HA}(M_{\leq k})\) give the following.

\begin{corollary}
As an associative algebra over the relative Adams \(E_2\)-term for \(\RTwoFour\), the relative Adams \(E_2\)-term for \(\pi_\ast (\RTwoFour\smashover{\HA}\End_{\HA}(M_1))\) is
\[
\F_2[\xi_1, \xi_2]\otimes E(\beta_{\Neg2}) / (\xi_1 \beta_{\Neg2}, \xi_1+ \xi_2 \beta_{\Neg2}  ) \otimes \F_2[e_8]/(e_8^4),
\] 
with \(\beta_{\Neg2}, e_8\) in relative Adams filtration \(0\).
\end{corollary}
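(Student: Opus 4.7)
The plan is to combine the $E_2$-term computations of $\RTwoFour$ (from Corollary~\ref{cor:AdamsETwoRkn}) and $\End_{\HA}(M_1)$ (from Section~\ref{sec:endo}) via a K\"unneth decomposition of the relative homology followed by iterated change-of-rings, in exact parallel with the computation of $E_2(\End_{\HA}(M_k))$ given there.

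First, since $H\F_2$ is a field, the EKMM K\"unneth isomorphism (\cite[Theorem IV.4.1]{EKMM}) gives an isomorphism of $\F_2[u]$-comodule algebras
\[
H_\ast^\HA\!\big(\RTwoFour \smashover{\HA} \End_{\HA}(M_1)\big) \cong H_\ast^\HA(\RTwoFour) \otimes H_\ast^\HA\!\big(\End_{\HA}(M_1)\big).
\]
By Corollary~\ref{cor:homologyofRkmn} the first factor is $\F_2[u^4]\otimes \F_2[e_8]/(e_8^4)$ with $e_8$ primitive; by the analysis in Section~\ref{sec:endo} the second factor is cofree over the quotient coalgebra $\F_2[u]/u^2$ with cogenerators $\Sigma^{-2}H_\ast^\HA(M_1)$. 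Since cofreeness is preserved by tensor products, the full relative homology is cofree over $\F_2[u]/u^2$, the Cartan--Eilenberg spectral sequence for $\F_2[u^2]\subset\F_2[u]$ degenerates as in Section~\ref{sec:endo}, and
\[
E_2 \cong \Ext_{\F_2[u^2]}\!\Big(\F_2,\; \F_2[u^4]\otimes \F_2[e_8]/(e_8^4) \otimes \Sigma^{-2}H_\ast^\HA(M_1)\Big).
\]

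A second change-of-rings uses the fact that $\F_2[u^4]$ is cotensored up from $\F_2$ along $\F_2[u^2]\to\F_2[u^2]/u^4$, while $e_8$ and the cogenerators $b_0\otimes b_0$ and $b_2\otimes b_0+b_0\otimes b_2$ of $H_\ast^\HA(M_1)^{\otimes 2}$ are all $\F_2[u^2]$-primitive. This reduces the $E_2$-term to a polynomial-plus-exterior algebra of the form $\F_2[\xi_2]\otimes \F_2[e_8]/(e_8^4)\otimes E(\beta_{\Neg2})$. Promoting the presentation to an $E_2(\RTwoFour)$-algebra, via multiplicativity of the relative Adams spectral sequence and the unit map $\End_{\HA}(M_1)\to\RTwoFour \smashover{\HA}\End_{\HA}(M_1)$, introduces the relations $\xi_1\beta_{\Neg2}=0$ and $\xi_1+\xi_2\beta_{\Neg2}=0$ inherited from $E_2(\End_{\HA}(M_1))$, yielding exactly the claimed formula.

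The main obstacle is the bookkeeping of the $\HA_\ast$-module structure across these change-of-rings steps: the relation $\xi_1 = \xi_2\beta_{\Neg2}$ is invisible as a relation on the bare $\F_2[\xi_2]$-algebra $E_2$-page and only appears after identifying the action of $\xi_1\in\HA_\ast$ via the algebra map $\HA\to\End_{\HA}(M_1)$. I would verify this by naturality of the relative Adams spectral sequence applied to the unit map above, ensuring the relations carry over unchanged from the factor's $E_2$-term.
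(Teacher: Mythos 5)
Your proposal is correct and uses exactly the mechanism the paper implicitly relies on: K\"unneth for relative homology, then the cofreeness/Cartan--Eilenberg reduction from \S\ref{sec:endo} followed by the coinduction change-of-rings of Proposition~\ref{prop:changeofrings}, and finally naturality along the unit map $\End_{\HA}(M_1)\to\RTwoFour\smashover{\HA}\End_{\HA}(M_1)$ to recover the presentation as an algebra over $E_2(\RTwoFour)$. The paper states this corollary without proof as an immediate consequence of the earlier $E_2$-computations for $\HAn$ and $\End_{\HA}(M_{\leq k})$; your write-up is a correct and faithful filling-in of those details.
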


\begin{figure}[ht]
    \centering
    \includegraphics[width=\textwidth]{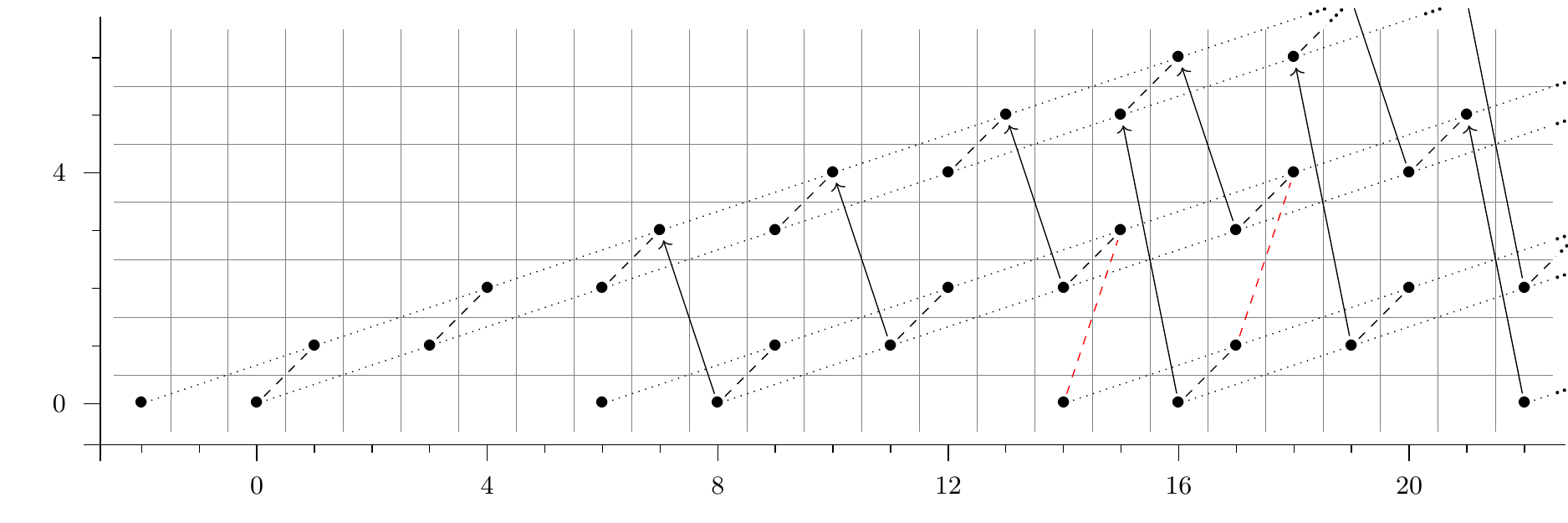}
    \caption{The relative Adams spectral sequence for \(\RTwoFour\smashover{\HA}\End_{\HA}(M_{1})\)}
    \label{fig:ASSAllButXiTwo}
\end{figure}

The differential \(d_3(e_8) = \xi_1 \xi_2^2\) for \(\RTwoFour\) carries through directly, using the relation
\[
\xi_1=\xi_2\beta_{\Neg2}.
\] 

\begin{proposition}\label{prop:d3Everythingbutxi2}
There are \(d_3\)-differentials
\[
d_3(e_8) = \xi_2^3 \beta_{\Neg2}\text{ and }
d_3(e_8^3) = \xi_2^3 \beta_{\Neg2} e_8^2.
\]
\end{proposition}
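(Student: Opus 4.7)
The plan is to deduce both differentials from the known differential $d_3(e_8) = \xi_1 \xi_2^2$ in the relative Adams spectral sequence for $\RTwoFour$, via naturality plus the multiplicative structure in $\End_\HA(M_1)$.

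First, the map of associative $\HA$-algebras $\RTwoFour \to \RTwoFour \smashover{\HA} \End_\HA(M_1)$ induces a map of relative Adams spectral sequences, and the target is a module spectral sequence over the source. The class $e_8 \in E_2^{0,8}$ in the target is the image of $e_8$ from the source, so by naturality we obtain $d_3(e_8) = \xi_1 \xi_2^2$ in the spectral sequence of $\RTwoFour \smashover{\HA} \End_\HA(M_1)$. Now the $E_2$-term has the imposed relation $\xi_1 + \xi_2 \beta_{\Neg2} = 0$ coming from the description of $\pi_\ast \End_\HA(M_1)$, so in $E_2$ we may freely substitute $\xi_1 = \xi_2 \beta_{\Neg2}$, yielding $d_3(e_8) = \xi_2^3 \beta_{\Neg2}$.

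For the second differential, I would apply the Leibniz rule to $e_8^3 = e_8 \cdot e_8^2$. Since we work at the prime $2$, we have $d_3(e_8^2) = 2 e_8 \, d_3(e_8) = 0$ (again, naturality from the $\RTwoFour$ computation confirms that $e_8^2$ first supports a nontrivial differential on $E_5$, not on $E_3$). Consequently
\[
d_3(e_8^3) = d_3(e_8) \cdot e_8^2 + e_8 \cdot d_3(e_8^2) = \xi_2^3 \beta_{\Neg2} \cdot e_8^2,
\]
which is the stated formula.

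The only potential obstacle is confirming that the relation $\xi_1 = \xi_2 \beta_{\Neg2}$ may be used inside a $d_3$-target on $E_3$, but this is automatic because the relation already holds at $E_2$ (it is built into the change-of-rings presentation) and hence persists to $E_3$. So the whole argument is a short consequence of naturality, the Leibniz rule in characteristic $2$, and the defining relation on $E_2$.
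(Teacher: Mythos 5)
Your proof is correct and follows the same route the paper takes: it pushes the known differential $d_3(e_8) = \xi_1 \xi_2^2$ from $\RTwoFour$ into the module spectral sequence via naturality of the ring map $\RTwoFour \to \RTwoFour \smashover{\HA}\End_\HA(M_1)$, rewrites the target using the $E_2$-relation $\xi_1 = \xi_2\beta_{\Neg2}$, and then applies the Leibniz rule in characteristic $2$ (so $d_3(e_8^2)=0$) to obtain $d_3(e_8^3)$. This is exactly what the paper's surrounding text indicates; you have simply filled in the Leibniz step, which the paper leaves implicit.
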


Similarly, the differential \(d_5(e_8^2) = \xi_2^5\) carries over without change.

\begin{proposition}\label{prop:d5DiffAllButXiTwo}
  There are \(d_5\)-differentials
  \[
    d_5(e_8^2) = \xi_2^5\text{ and }
    d_5(e_8^3\beta_{\Neg2}) = \xi_2^5 e_8\beta_{\Neg2}.
  \]
\end{proposition}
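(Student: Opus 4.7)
The proposition asserts two $d_5$-differentials in a multiplicative relative Adams spectral sequence, and I would obtain both by combining naturality along the unit map $\RTwoFour \to \RTwoFour \smashover{\HA} \End_\HA(M_1)$ with the Leibniz rule, following the strategy already used in Proposition~\ref{prop:d3Everythingbutxi2}.

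For the first differential $d_5(e_8^2) = \xi_2^5$, the unit is an associative $\HA$-algebra map, so it induces a map of multiplicative relative Adams spectral sequences. Under the presentation of the target $E_2$-term as a module over the $E_2$-term for $\RTwoFour$, the class $e_8$ is pulled back from the source, so $e_8^2 \mapsto e_8^2$ and $\xi_2^5 \mapsto \xi_2^5$; the $d_5$-differential already established in Section~\ref{sec:Atilde2} for $\RTwoFour$ then transports directly. For the second differential, I would factor $e_8^3 \beta_{\Neg2} = e_8^2 \cdot (e_8 \beta_{\Neg2})$ and apply Leibniz. Both factors survive to $E_5$: $d_3(e_8^2)=0$ in characteristic two, and $d_3(e_8 \beta_{\Neg2}) = d_3(e_8)\beta_{\Neg2} = \xi_2^3 \beta_{\Neg2}^2 = 0$ by exteriority of $\beta_{\Neg2}$. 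Leibniz then gives
\[
d_5(e_8^3 \beta_{\Neg2}) \;=\; d_5(e_8^2)\cdot (e_8 \beta_{\Neg2}) \;+\; e_8^2 \cdot d_5(e_8 \beta_{\Neg2}),
\]
whose first summand is the desired $\xi_2^5 \, e_8 \beta_{\Neg2}$ from the first part. It remains to verify $d_5(e_8 \beta_{\Neg2}) = 0$. The target sits in bidegree $(t-s,s)=(5,5)$, and the relations $\xi_1 \beta_{\Neg2} = 0$ and $\xi_1 = \xi_2 \beta_{\Neg2}$ in the $E_2$-term together force $\xi_1^2 = \xi_2(\xi_1\beta_{\Neg2}) = 0$. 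A short enumeration of monomials $\xi_1^{a_1}\xi_2^{a_2}\beta_{\Neg2}^{\epsilon}e_8^{k}$ with the required filtration and stem, using this nilpotence, shows there are no nonzero candidates.

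The main potential obstacle is the vanishing of $d_5(e_8 \beta_{\Neg2})$, since this class does not descend from $\RTwoFour$ and so cannot be controlled by naturality alone. However, once $\xi_1^2 = 0$ on the $E_2$-page is noted, the bidegree count is short, and the obstruction is routine; everything else follows directly from the algebra structure on the spectral sequence and the differential already verified in $\RTwoFour$.
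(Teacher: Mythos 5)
Your proposal is correct and matches the paper's (essentially unwritten) argument: the first differential is transported by naturality along the algebra map from \(\RTwoFour\), and the second follows from the module structure via \(e_8^3\beta_{\Neg2}=e_8^2\cdot(e_8\beta_{\Neg2})\). Your extra check that \(d_5(e_8\beta_{\Neg2})=0\) — using \(\xi_1^2=\xi_2(\xi_1\beta_{\Neg2})=0\) to empty bidegree \((5,5)\) — is a detail the paper leaves implicit, and it is verified correctly.
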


\begin{proposition}
There are hidden \(\xi_1\)-multiplications
\begin{align*}
  \xi_1 \cdot e_8^2 \beta_{\Neg2} &= \xi_2^3 e_8 \beta_{\Neg2}\\  
  \xi_1 \cdot \xi_2e_8^2 \beta_{\Neg2} &= \xi_2^4 e_8 \beta_{\Neg2}. 
\end{align*}
\end{proposition}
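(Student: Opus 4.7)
My plan is to establish both hidden $\xi_1$-extensions in one shot by combining Frobenius duality for $R := \RTwoFour \smashover{\HA} \End_\HA(M_1)$ with the centrality of classes coming from $\pi_\ast \HA$. By Proposition~\ref{prop:dualityformodules}, the homotopy $\pi_\ast(R)$ is self-dual with shift $\delta = \delta(2,2,0) = 18$, and the duality pairing factors through the ring multiplication into a one-dimensional top group $\pi_{18}(R)$.

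The first step is to read off the relevant homotopy groups from Figure~\ref{fig:ASSAllButXiTwo}, using the differentials of Propositions~\ref{prop:d3Everythingbutxi2} and~\ref{prop:d5DiffAllButXiTwo} together with the $E_2$-relation $\xi_1 \beta_{\Neg 2} = 0$. Each of
\[
\pi_3(R) = \F_2\{\xi_2\},\quad \pi_4(R) = \F_2\{\xi_1 \xi_2\},\quad \pi_{14}(R) = \F_2\{e_8^2 \beta_{\Neg 2}\},
\]
\[
\pi_{15}(R) = \F_2\{\xi_2^3 e_8 \beta_{\Neg 2}\},\quad \pi_{18}(R) = \F_2\{\xi_2^4 e_8 \beta_{\Neg 2}\}
\]
is one-dimensional: the other $E_2$-candidates either already vanish on $E_2$ (the expected class $\xi_1 e_8^2 \beta_{\Neg 2}$ in degree $15$, filtration $1$, is killed by $\xi_1 \beta_{\Neg 2} = 0$) or are killed by the listed $d_3$ and $d_5$ differentials (most notably $\xi_2^5$ in degree $15$ and $\xi_2^6$ in degree $18$ via $d_5(e_8^2) = \xi_2^5$). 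Hence any non-zero lift of $\xi_1 \cdot e_8^2 \beta_{\Neg 2}$ to $\pi_{15}(R)$ is forced to be detected by $\xi_2^3 e_8 \beta_{\Neg 2}$.

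Frobenius duality then forces the pairing $\pi_4(R) \otimes \pi_{14}(R) \to \pi_{18}(R)$ to be a perfect pairing of one-dimensional groups, so the generators multiply non-trivially:
\[
\xi_1 \xi_2 \cdot e_8^2 \beta_{\Neg 2} = \xi_2^4 e_8 \beta_{\Neg 2} \ne 0.
\]
Since $\xi_2$ is the image of a class in $\pi_\ast \HA$ and therefore central in $\pi_\ast(R)$, I may rewrite $\xi_2 \cdot (\xi_1 \cdot e_8^2 \beta_{\Neg 2}) = \xi_1 \xi_2 \cdot e_8^2 \beta_{\Neg 2} \ne 0$. This rules out $\xi_1 \cdot e_8^2 \beta_{\Neg 2} = 0$ in the one-dimensional group $\pi_{15}(R)$, forcing the first identity $\xi_1 \cdot e_8^2 \beta_{\Neg 2} = \xi_2^3 e_8 \beta_{\Neg 2}$. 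Multiplying this equality by the central class $\xi_2$ yields the second identity $\xi_1 \cdot \xi_2 e_8^2 \beta_{\Neg 2} = \xi_2^4 e_8 \beta_{\Neg 2}$.

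The main obstacle will be the pointwise dimension count: verifying that the five groups above are each one-dimensional requires carefully sweeping out all candidate monomials in a given bidegree and tracking the effect of $d_3$, $d_5$, and the relations $\xi_1 = \xi_2 \beta_{\Neg 2}$ and $\xi_1 \beta_{\Neg 2} = 0$ on them. Once those one-dimensionalities are in place, the rest of the argument is essentially formal: the Frobenius pairing delivers the non-triviality of $\xi_1 \xi_2 \cdot e_8^2 \beta_{\Neg 2}$, and centrality of $\xi_2$ transfers the conclusion to $\xi_1 \cdot e_8^2 \beta_{\Neg 2}$.
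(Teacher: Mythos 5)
Your argument is correct and follows essentially the paper's own route: both rest on Proposition~\ref{prop:dualityformodules}\eqref{itemleqr} with \(r=1\), \(k=2\) (self-duality with shift \(18\)) plus the one-dimensionality of the relevant homotopy groups, the only difference being that the paper dualizes the \(\xi_1\)-multiplications out of degrees \(0\) and \(3\) directly, whereas you dualize the \(\xi_1\xi_2\)-multiplication out of degree \(0\) and then peel off the central factor \(\xi_2\). One small imprecision: the pairing supplied by Theorem~\ref{thm:finiteduality} is the module-duality pairing \(\pi_\ast D_R M \otimes \pi_\ast M \to \F_2\), not literally the ring multiplication of \(\RTwoFour\smashover{\HA}\End_{\HA}(M_1)\) into \(\pi_{18}\) (whose top nonzero homotopy group is actually in degree \(20\)), but since \(\xi_1\xi_2\) is the image of a class in \(\pi_\ast\HA\), the \(\pi_\ast\RTwoFour\)-module form of the duality is all your argument needs.
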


\begin{proof}
This follows from the duality of Section~\ref{sec:duality}. We apply Proposition~\ref{prop:dualityformodules}, item~\eqref{itemleqr} in the case \(r=1\) and \(k=2\). We find that the homotopy groups of this spectrum are, as a module, self-dual with a shift of \(18\). The \(\xi_1\)-multiplication out from degrees \(0\) and \(3\) then resolves the \(\xi_1\)-multiplication into degrees \(18\) and \(15\).
\end{proof}

\subsection{The homotopy of \tpfstr{\(\tilde \HA\langle 3\rangle \wedge_{\HA}\End_{\HA}(M_{\leq 2})\)}{EndM smash A~<3>}}\label{sec:AllButXiThree}
In this section, we will discuss the coefficient ring of 
\[
\RThreeSix\smashover{\HA}\End_{\HA}(M_{\leq 2})\simeq \Sigma^{-6} \RThreeSix\smashover{\HA} M_{\leq 2}\smashover{\HA} \overline{M}_{\leq 2}.
\]
This ring spectrum is an associative algebra over the spectrum \(\RThreeSix\) and its relative Adams spectral sequence is an algebra over the relative Adams spectral sequence of \(\RThreeSix\). 

\begin{proposition}
The relative Adams \(E_2\)-term for \(\pi_*\big(\RThreeSix\smashover{\HA}\End_{\HA}(M_{\leq 2})\big)\) is:
\[
\F_2[\xi_1,\xi_2,\xi_3]\otimes E(\beta_{\Neg2},\beta_{\Neg4}) /
\big(\xi_1+\xi_2\beta_{\Neg2}, \xi_2+\xi_3\beta_{\Neg4}\big)\otimes\F_2[e_{16}]/e_{16}^8.
\]
\end{proposition}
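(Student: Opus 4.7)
The strategy is to compute the relative homology via Künneth and then apply two successive change-of-rings. By the Künneth theorem \cite[Theorem~IV.4.1]{EKMM},
\[
H_*^\HA\!\big(\RThreeSix \smashover{\HA} \End_\HA(M_{\leq 2})\big) \cong H_*^\HA(\RThreeSix) \otimes H_*^\HA(\End_\HA(M_{\leq 2}))
\]
as \(\F_2[u]\)-comodule algebras. Corollary~\ref{cor:homologyofRkmn} identifies the first factor as \(\F_2[u^8] \otimes \F_2[e_{16}]/(e_{16}^8)\), and the calculation in the proof of the \(E_2\) for \(\End_\HA(M_{\leq k})\) identifies the second as \(\Sigma^{-6} H_*^\HA(M_{\leq 2})^{\otimes 2}\), cofree over the quotient coalgebra \(\F_2[u]/(u^4)\).

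Since \(\RThreeSix\) is an algebra over \(\HAn[3] = \HA\aquot \xi_4\), Proposition~\ref{prop:changeofrings} gives
\[
E_2 \cong \Ext_{\F_2[u]/(u^8)}\!\big(\F_2,\ H_*^\HA(\overline{M}_4 \smashover{\HA} \overline{M}_5 \smashover{\HA} \overline{M}_6) \otimes H_*^\HA(\End_\HA(M_{\leq 2}))\big)
\]
via the decomposition \(\RThreeSix \simeq \HAn[3] \smashover{\HA} \overline{M}_4 \smashover{\HA} \overline{M}_5 \smashover{\HA} \overline{M}_6\). The coaction on \(H_*^\HA(\overline{M}_4 \smashover{\HA} \overline{M}_5 \smashover{\HA} \overline{M}_6)\) becomes primitive modulo \(u^8\), since each \(b_{2^j}\) with \(j\geq 4\) coacts via \(u^{2^{j-1}}\) with exponent \(\geq 8\); this factor therefore contributes the tensor factor \(\F_2[e_{16}]/(e_{16}^8)\) to the \(E_2\). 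For the remaining factor, the Hopf algebra extension \(E(u^4) \to \F_2[u]/(u^8) \to \F_2[u]/(u^4)\), together with the cofree structure over \(\F_2[u]/(u^4)\), reduces the computation to \(\Ext_{E(u^4)}(\F_2, \Sigma^{-6}\F_2[u]/(u^4)) = \F_2[\xi_3] \otimes \Sigma^{-6}\F_2[u]/(u^4)\), where \(\xi_3\) corresponds to the \(\Ext^1\)-class \([u^4]\).

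The algebra structure is then fixed by the natural \(\HA\)-algebra maps from \(\RThreeSix\) and \(\End_\HA(M_{\leq 2})\). These contribute the generators \(\xi_1, \xi_2, \xi_3, e_{16}\) (with \(e_{16}^8 = 0\)) from the \(E_2\)-term of \(\RThreeSix\), the generators \(\beta_{\Neg 2}, \beta_{\Neg 4}\) from that of \(\End_\HA(M_{\leq 2})\), and the relations \(\xi_1 + \xi_2\beta_{\Neg 2} = 0\) and \(\xi_2 + \xi_3\beta_{\Neg 4} = 0\), which already hold on the \(E_2\)-term of \(\End_\HA(M_{\leq 2})\). A Poincaré series comparison between the explicit Ext output (with basis \(\{\xi_3^a \beta_{\Neg 2}^{\epsilon}\beta_{\Neg 4}^{\delta} e_{16}^b : a\geq 0,\ \epsilon,\delta\in\{0,1\},\ 0\leq b\leq 7\}\)) and the claimed algebra confirms that they agree, so these generators and relations exhaust the \(E_2\).

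The main obstacle is the careful tracking of \(\F_2[u]\)-comodule structures through both change-of-rings steps and the identification of the multiplicative structure on the cogenerator \(\Sigma^{-6}\F_2[u]/(u^4)\) with the exterior algebra \(E(\beta_{\Neg 2}, \beta_{\Neg 4})\). In particular, one must rule out any spurious relation such as \(\xi_3 = \xi_4 \beta_{\Neg 8}\); this is automatic because \(\beta_{\Neg 8}\) is not present in our spectrum (we do not smash with an \(\End_\HA(M_3)\)-factor), and any further relation would force the total \(E_2\)-dimension below the value produced by the explicit Ext computation.
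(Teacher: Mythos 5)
Your proposal is correct, and it fills in details that the paper leaves implicit — the paper simply asserts this proposition as a corollary of the earlier identifications of relative homology and \(E_2\)-terms, in parallel with the \(k=2\) case in \S\ref{sec:AllButXiTwo}. Your route is essentially the one the paper has in mind: use the Künneth isomorphism of comodules, Proposition~\ref{prop:changeofrings} to pass to \(\Ext_{\F_2[u]/(u^8)}\), and a Cartan–Eilenberg argument exploiting cofreeness over \(\F_2[u]/(u^4)\). A few remarks. First, the two-step change of rings you use is a clean way to avoid the naive (and incorrect) temptation to tensor the \(E_2\)-terms directly over \(\F_2[\xi_1,\xi_2,\dots]\), since \(E_2(\End_\HA(M_{\leq 2}))\) is not flat over the base. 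Second, the restricted \(E(u^4)\)-coaction on the cogenerator \(V \cong \Sigma^{-6}\F_2[u]/(u^4)\) is trivial for degree reasons (\(V\) lives in degrees \(-6\) through \(0\) while \(u^4\) has degree \(8\)); you don't say this explicitly, but it is needed to conclude \(\Ext_{E(u^4)}(\F_2,V) \cong \F_2[\xi_3]\otimes V\), and it is immediate. Third, the surjectivity of the map from the claimed quotient algebra, which is implicit in your Poincaré-series comparison, follows from the algebra maps out of \(E_2(\RThreeSix)\) and \(E_2(\End_\HA(M_{\leq 2}))\) — the latter supplying the exterior generators \(\beta_{\Neg 2},\beta_{\Neg 4}\) whose products span the \(4\)-dimensional cogenerator in filtration zero, and the former supplying \(\xi_1,\xi_2,\xi_3,e_{16}\); once surjectivity is known, the dimension count finishes it.
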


The differentials in Theorem~\ref{thm:BasicAdamsDiffs} will imply all the differentials in the relative Adams spectral sequence for \(\pi_*\big(\RThreeSix\smashover{\HA}\End(M_{\leq 2})\big)\). All the differentials are shown in Figure~\ref{fig:ASSAllButXi3Diff}, and the \(E_\infty\)-page, together with the exotic multiplications by \(\xi_1\) and \(\xi_2\) are shown in Figure~\ref{fig:ASSAllButXi3EinftyDiff}.

\begin{proposition}
For \(0\leq k\leq 3\), there are \(d_3\)-differentials
\[
d_3(e_{16}^{2k+1}) = \xi_3^3 e_{16}^{2k} \beta_{\Neg2}\beta_{\Neg4}.
\]
\end{proposition}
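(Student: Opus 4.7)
The plan is to derive these differentials by naturality from Theorem~\ref{thm:BasicAdamsDiffs} applied to \(\RThreeSix\) itself, and then propagate to higher odd powers via the Leibniz rule. First, I would specialize Theorem~\ref{thm:BasicAdamsDiffs} to \(\RThreeSix = \HAn[3]/(\zeta_4,\zeta_5,\zeta_6)\), which is the case \(k = m = 3\). For \(i = 1\) we have \(m + i = 4 = 3 \cdot 1 + 1\), so \(q = 1\) and \(r = 1\), and the theorem gives
\[
d_3(e_{16}) = \xi_1\xi_3^2
\]
in the relative Adams spectral sequence for \(\RThreeSix\).

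Next, I would transport this differential along the unit map \(\RThreeSix \to \RThreeSix\smashover{\HA}\End_{\HA}(M_{\leq 2})\), which is a map of associative algebras and hence induces a map of multiplicative spectral sequences. The class \(e_{16}\) in the \(E_2\)-page of the target is the image of \(e_{16}\) in \(\RThreeSix\), so the formula \(d_3(e_{16}) = \xi_1\xi_3^2\) persists in the \(E_2\)-page of \(\RThreeSix\smashover{\HA}\End_{\HA}(M_{\leq 2})\). Using the defining relations \(\xi_1 = \xi_2\beta_{\Neg2}\) and \(\xi_2 = \xi_3\beta_{\Neg4}\) in this \(E_2\)-page, we can rewrite
\[
\xi_1\xi_3^2 = \xi_2\beta_{\Neg2}\xi_3^2 = \xi_3\beta_{\Neg4}\beta_{\Neg2}\xi_3^2 = \xi_3^3\beta_{\Neg2}\beta_{\Neg4},
\]
which gives the \(k=0\) case of the claim.

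Finally, to obtain the formulas for \(1 \leq k \leq 3\), I would apply the Leibniz rule. In characteristic \(2\), the rule gives \(d_3(e_{16}^{2k}) = 2k \cdot e_{16}^{2k-1}\, d_3(e_{16}) = 0\), so every even power \(e_{16}^{2k}\) is a \(d_3\)-cycle. Hence
\[
d_3(e_{16}^{2k+1}) = d_3(e_{16})\cdot e_{16}^{2k} + e_{16}\cdot d_3(e_{16}^{2k}) = \xi_3^3\beta_{\Neg2}\beta_{\Neg4}\cdot e_{16}^{2k},
\]
as claimed. Since the naturality argument is straightforward and the Leibniz step is purely formal, there is no real obstacle beyond bookkeeping; the only subtle point is to remember that the generator \(\xi_1\) of the \(\RThreeSix\)-part of the \(E_2\)-page is identified with \(\xi_3\beta_{\Neg4}\beta_{\Neg2}\) in the endomorphism algebra, which is exactly what converts the basic differential of Theorem~\ref{thm:BasicAdamsDiffs} into the stated form.
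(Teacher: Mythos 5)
Your proposal is correct and follows essentially the same route as the paper: the paper likewise reduces to the case \(k=0\) by observing that \(e_{16}^2\) is a \(d_3\)-cycle, obtains \(d_3(e_{16}) = \xi_3^2\xi_1\) by naturality from the basic differential for \(\RThreeSix\), and rewrites it as \(\xi_3^3\beta_{\Neg2}\beta_{\Neg4}\) using the relations \(\xi_1 = \xi_2\beta_{\Neg2}\) and \(\xi_2 = \xi_3\beta_{\Neg4}\). Your write-up merely makes the appeal to Theorem~\ref{thm:BasicAdamsDiffs} and the Leibniz step more explicit.
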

\begin{proof}
Since \(e_{16}^2\) is a \(d_3\)-cycle, it suffices to show the case \(k=0\). There, we have by naturality
\[
d_3(e_{16}) = \xi_3^2 \xi_1
= \xi_3^2 \xi_2 \beta_{\Neg2}
= \xi_3^3 \beta_{\Neg2}\beta_{\Neg4}.\qedhere
\]
\end{proof}

Note that this leaves \(d_3\)-cycles 
\[
e_{16}^{2k+1}\beta_{\Neg2}, e_{16}^{2k+1}\beta_{\Neg4},\text{ and }e_{16}^{2k+1}\beta_{\Neg2}\beta_{\Neg4}.
\]

\begin{proposition}
For all \(\epsilon_1=0,1\) and \(\epsilon_2=0,1\), we have \(d_5\)-differentials
\[
d_5\big(e_{16}^{2+4\epsilon_1}(e_{16}\beta_{\Neg2})^{\epsilon_2}\big) = \xi_3^5 \beta_{\Neg4} e_{16}^{4\epsilon_1}(e_{16}\beta_{\Neg2})^{\epsilon_2}.
\]
\end{proposition}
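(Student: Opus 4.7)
The plan is to derive all four differentials from a single computation of $d_5(e_{16}^2)$ via naturality, then propagate it across the four cases using the multiplicative structure of the relative Adams spectral sequence.

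\textbf{Step 1: Compute $d_5(e_{16}^2)$ by naturality.} The unit map $\RThreeSix \to \RThreeSix \smashover{\HA} \End_{\HA}(M_{\leq 2})$ is a map of associative $\HA$-algebras and induces a map of multiplicative relative Adams spectral sequences. The previously established differential $d_5(e_{16}^2) = \xi_2 \xi_3^4$ in $\RThreeSix$ therefore pulls back, and the $E_2$-relation $\xi_2 = \xi_3 \beta_{\Neg4}$ converts it to
\[
d_5(e_{16}^2) = \xi_3^5 \beta_{\Neg4},
\]
which is the $\epsilon_1 = \epsilon_2 = 0$ case.

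\textbf{Step 2: Show that $e_{16}^4$ and $e_{16}\beta_{\Neg2}$ are $d_5$-cycles.} For $e_{16}^4 = (e_{16}^2)^2$, the Leibniz rule gives $d_5(e_{16}^4) = 2 e_{16}^2 d_5(e_{16}^2) = 0$ in characteristic two. For $e_{16}\beta_{\Neg2}$, Leibniz does not factor directly because $e_{16}$ is killed by $d_3(e_{16}) = \xi_3^3 \beta_{\Neg2}\beta_{\Neg4}$ and so does not survive to $E_5$. Instead, I would argue by sparsity: any $d_5$ on a class in bidegree $(t-s,s) = (14,0)$ lands in bidegree $(13,5)$. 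Using the $E_2$-relations $\xi_1 = \xi_3\beta_{\Neg2}\beta_{\Neg4}$ and $\xi_2 = \xi_3\beta_{\Neg4}$, the $E_2$-page is $\F_2$-spanned by monomials $\xi_3^a \beta_{\Neg2}^{\epsilon_2} \beta_{\Neg4}^{\epsilon_4} e_{16}^b$ with $a,b \geq 0$, $b < 8$, and $\epsilon_i \in \{0,1\}$. The only generator of positive filtration is $\xi_3$, so a class of filtration $5$ must carry the factor $\xi_3^5$ of topological degree $35$; subtracting at most $6$ from $\beta_{\Neg2}\beta_{\Neg4}$ still leaves topological degree at least $29$, well above $13$. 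Thus bidegree $(13,5)$ is empty on $E_2$, hence on $E_5$, and $d_5(e_{16}\beta_{\Neg2}) = 0$.

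\textbf{Step 3: Conclude via Leibniz.} Applying the Leibniz rule on $E_5$ together with Steps 1 and 2 yields
\[
d_5\!\left(e_{16}^{2+4\epsilon_1}(e_{16}\beta_{\Neg2})^{\epsilon_2}\right) = d_5(e_{16}^2) \cdot e_{16}^{4\epsilon_1}(e_{16}\beta_{\Neg2})^{\epsilon_2} = \xi_3^5 \beta_{\Neg4} \cdot e_{16}^{4\epsilon_1}(e_{16}\beta_{\Neg2})^{\epsilon_2}
\]
for all four choices of $\epsilon_1, \epsilon_2 \in \{0,1\}$, since the cross terms coming from $d_5$ of the auxiliary factors vanish.

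The main obstacle is the sparsity check in Step 2 for $d_5(e_{16}\beta_{\Neg2})$, because the obvious Leibniz decomposition is unavailable once $e_{16}$ has been killed by $d_3$. The verification must be made on the explicit $E_2$-basis after substituting the linear relations among the $\xi_i$ and $\beta_{\Neg2^j}$, rather than by a naive page-by-page derivation.
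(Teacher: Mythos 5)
Your proposal is correct and follows the paper's own route exactly: pull back the differential $d_5(e_{16}^2)=\xi_2\xi_3^4$ from $\RThreeSix$ by naturality, rewrite it as $\xi_3^5\beta_{\Neg4}$ using the $E_2$-relation $\xi_2=\xi_3\beta_{\Neg4}$, and propagate by the multiplicative structure. The paper compresses the propagation step into ``the remaining differentials follow from the ring structure''; your explicit checks that $e_{16}^4$ and $e_{16}\beta_{\Neg2}$ are $d_5$-cycles (the latter by the sparsity of filtration $5$ in total degree $13$) are exactly the details that phrase is eliding.
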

\begin{proof}
Naturality shows that we have the differential
\[
d_5(e_{16}^2) = \xi_3^4 \xi_2= \xi_3^5 \beta_{\Neg4}.
\]
The remaining differentials follow from the ring structure.
\end{proof}

With the ring structure and the relation imposed by the \(d_3\)-differential, we see that the classes
\[
e_{16}^{2+4\epsilon_1}\beta_{\Neg2}\quad \text{and} \quad e_{16}^{2+4\epsilon_1}\beta_{\Neg4}
\]
are \(d_5\)-cycles.

\begin{proposition}
There are \(d_9\)-differentials
\begin{align*}
d_9(e_{16}^{4}) &=  \xi_3^9 \\
d_9(e_{16}^{4}\beta_{\Neg2}) &=  \xi_3^9 \beta_{\Neg2} \\
d_9(e_{16}^{5}\beta_{\Neg2}) &=  \xi_3^9 e_{16} \beta_{\Neg2}\\
d_9(e_{16}^{5}\beta_{\Neg4}) &=  \xi_3^9 e_{16}\beta_{\Neg4} \\
d_9(e_{16}^{6}\beta_{\Neg2}) &=  \xi_3^9 e_{16}^2 \beta_{\Neg2}\\
d_9(e_{16}^{6}\beta_{\Neg4}) &=  \xi_3^9 e_{16}^2 \beta_{\Neg4}\\
d_9(e_{16}^{7}\beta_{\Neg4}) &=  \xi_3^9e_{16}^3 \beta_{\Neg4} \\
d_9(e_{16}^{7}\beta_{\Neg2}\beta_{\Neg4}) &=  \xi_3^9 e_{16}^3 \beta_{\Neg2}\beta_{\Neg4}.
\end{align*}
\end{proposition}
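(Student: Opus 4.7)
The plan is to derive all eight differentials from a single base differential $d_9(e_{16}^4) = \xi_3^9$ using naturality and the Leibniz rule, mirroring the pattern of the preceding $d_3$ and $d_5$ propositions. First, I would obtain the base differential from Theorem~\ref{thm:BasicAdamsDiffs} applied to $\RThreeSix$: setting $k = m = 3$ and $i = 3$ gives $m+i = 6 = 1\cdot 3 + 3$, so $q = 1$ and $r = 3$, and the theorem yields $d_{1+2^3}(e_{16}^{2^{i-1}}) = \xi_3 \cdot \xi_3^{2^3} = \xi_3^9$. Naturality along the unit $\RThreeSix \to \RThreeSix\smashover{\HA}\End_{\HA}(M_{\leq 2})$ then propagates this to the target spectral sequence.

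The remaining seven differentials each have the form $d_9(e_{16}^4 \cdot x) = \xi_3^9 \cdot x$, where $x$ ranges over
$\beta_{\Neg 2}$, $e_{16}\beta_{\Neg 2}$, $e_{16}\beta_{\Neg 4}$, $e_{16}^2\beta_{\Neg 2}$, $e_{16}^2\beta_{\Neg 4}$, $e_{16}^3\beta_{\Neg 4}$, and $e_{16}^3\beta_{\Neg 2}\beta_{\Neg 4}$. Since $d_r(e_{16}^4) = 0$ for $r < 9$ by characteristic-$2$ Leibniz together with the factorisation $e_{16}^4 = (e_{16}^2)^2$, each identity would follow directly from the Leibniz rule provided that $x$ is a $d_r$-cycle for $r \leq 9$. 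I would establish the cycle conditions by checking that the $d_3$ Leibniz contribution vanishes via the relations $\beta_{\Neg 2}^2 = \beta_{\Neg 4}^2 = 0$, that the $d_5$ Leibniz contribution either vanishes similarly or lands on a class already killed at $E_4$ (for instance $\xi_3^5 \beta_{\Neg 2}\beta_{\Neg 4}$, which is hit by $d_3(\xi_3^2 e_{16})$), and that the intermediate $d_r$ for $r \in \{4, 6, 7, 8\}$ vanish by a bidegree count on the reduced $E_2$-basis $\xi_3^a \beta_{\Neg 2}^\epsilon \beta_{\Neg 4}^\delta e_{16}^n$ of bidegree $(7a - 2\epsilon - 4\delta + 16n,\, a)$, using parity in $\mathbb Z/2$ and $\mathbb Z/16$.

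The main obstacle is the bookkeeping at $E_9$, which also explains the conspicuous absence of a differential on $e_{16}^4 \beta_{\Neg 4}$: one must verify that each target $\xi_3^9 \cdot x$ actually persists to $E_9$. Leibniz does predict $d_9(e_{16}^4 \beta_{\Neg 4}) = \xi_3^9 \beta_{\Neg 4}$, but the target has already been killed at $E_6$ by the earlier differential $d_5(\xi_3^4 e_{16}^2) = \xi_3^9 \beta_{\Neg 4}$, so this collapses to zero. The seven differentials that do appear are precisely those whose Leibniz-predicted targets survive to $E_9$; verifying survival case by case --- by showing that no earlier $d_s$ with $s < 9$ lands on the relevant $\xi_3^9 \cdot x$ --- is the delicate heart of the argument. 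Combining the explicit images of the two preceding propositions with bidegree counting handles all seven cases.
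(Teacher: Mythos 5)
Your proof is correct and takes essentially the same route as the paper: the paper's entire argument is the one-sentence observation that all eight differentials follow by Leibniz from the base differential $d_9(e_{16}^4)=\xi_3^9$ in the relative Adams spectral sequence for $\RThreeSix$, pushed forward along the unit map. You supply the same skeleton but fill in the bookkeeping the paper leaves implicit: deriving the base $d_9$ from Theorem~\ref{thm:BasicAdamsDiffs} with $k=m=i=3$, checking which classes $x$ survive to $E_9$, and explaining the notable omission of $e_{16}^4\beta_{\Neg 4}$ by noting $\xi_3^9\beta_{\Neg 4}$ is already hit by $d_5(\xi_3^4 e_{16}^2)$ --- all of which is accurate and consistent with the earlier $d_3$ and $d_5$ propositions.
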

\begin{proof}
 These differentials all follow from applying the Leibniz rule to the \(d_9\)-differential 
 \[
 d_9(e_{16}^4) = \xi_3^9\] 
 in the relative Adams spectral sequence for \(\pi_* (\HA \langle 3 \rangle/(\xi_4, \xi_5, \xi_6))\). 
\end{proof}

\begin{proposition}\label{prop:xi1ExtensionAllButxi3}
  There is a hidden \(\xi_1\)-multiplication
  \[
    \xi_1 \cdot e_{16}^2\beta_{\Neg2}  = \xi_3^3 e_{16}\beta_{\Neg2}\beta_{\Neg4} 
  \]
  into degree 31.
\end{proposition}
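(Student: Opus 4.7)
The plan is to deduce this hidden extension by duality, following the strategy used for the analogous extension in Section~\ref{sec:AllButXiTwo}. First, I would apply Proposition~\ref{prop:dualityformodules}, item~\eqref{itemleqr}, with $r=2$ and $k=3$. Combined with Corollary~\ref{cor:HomotopyRkn} and Proposition~\ref{prop:poincareduality}, this identifies $\pi_*\bigl(\RThreeSix \smashover{\HA} \End_\HA(M_{\leq 2})\bigr)$ as a commutative graded Frobenius algebra over $\F_2$ with top degree $\delta(3,3,0)=98$, equipped with a perfect pairing satisfying $\langle \xi_1 x, y\rangle = \langle x, \xi_1 y\rangle$.

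Second, I would pin down the target. An enumeration of monomials $\xi_3^a e_{16}^b \beta_{\Neg 2}^{\epsilon_1}\beta_{\Neg 4}^{\epsilon_2}$ of stem $31$ on $E_2$ yields only the two candidates $\xi_3^5\beta_{\Neg 4}$ and $\xi_3^3 e_{16}\beta_{\Neg 2}\beta_{\Neg 4}$. The first is precisely $d_5(e_{16}^2)$. The second is a $d_3$-cycle because the only class in stem $32$ at compatible filtration is $e_{16}^2$, which is itself a $d_3$-cycle, and no higher differential can reach it. No other $E_\infty$-classes in stem $31$ exist at intermediate filtrations, so the hidden extension must be a jump from filtration $1$ to filtration $3$ with the forced target $\xi_3^3 e_{16}\beta_{\Neg 2}\beta_{\Neg 4}$ as soon as $\xi_1\cdot e_{16}^2\beta_{\Neg 2}$ is nonzero.

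Third, I would use duality to verify non-vanishing. The same kind of enumeration shows that $\pi_{30}$ is $1$-dimensional on $E_\infty$, spanned by $e_{16}^2\beta_{\Neg 2}$, and by the Frobenius duality so is $\pi_{68}$. Via the identity $\langle \xi_1 x, y\rangle = \langle x, \xi_1 y\rangle$, establishing $\xi_1\cdot e_{16}^2\beta_{\Neg 2}\neq 0$ in $\pi_{31}$ is equivalent to establishing that the multiplication $\xi_1\colon \pi_{67}\to \pi_{68}$ is nonzero, and this is where the main obstacle lies. Concretely, I would identify a generator $y$ of $\pi_{67}$ as the Frobenius dual of $\xi_3^3 e_{16}\beta_{\Neg 2}\beta_{\Neg 4}$, factor the top class $t\in\pi_{98}$ as $t = y\cdot \xi_3^3 e_{16}\beta_{\Neg 2}\beta_{\Neg 4}$, and then match this factorization against $t = (\xi_1 y)\cdot e_{16}^2\beta_{\Neg 2}$ using the filtration-$3$ relation $\xi_1 = \xi_3\beta_{\Neg 2}\beta_{\Neg 4}$ (obtained by composing $\xi_1 = \xi_2\beta_{\Neg 2}$ and $\xi_2 = \xi_3\beta_{\Neg 4}$). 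Once non-vanishing is secured, the $1$-dimensionality of $\pi_{31}$ forces the claimed identification.
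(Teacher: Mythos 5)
Your proposal goes wrong in its third step, and the gap is not fixable within the duality framework alone. Duality relates the claimed hidden multiplication $\xi_1\colon\pi_{30}\to\pi_{31}$ to the dual multiplication $\xi_1\colon\pi_{67}\to\pi_{68}$, and your reduction to that dual map is correct. But that is an \emph{equivalence} of statements, not a proof of either one: duality is equally consistent with both $\xi_1$-maps being zero. To break the symmetry you would need to establish one of the two nonvanishings independently, and your proposed factorization of the top class $t$ does not do this. Concretely, the relation $\xi_1=\xi_3\beta_{\Neg2}\beta_{\Neg4}$ that you invoke makes $(\xi_1 y)\cdot e_{16}^2\beta_{\Neg2}$ vanish on the $E_\infty$-page because the product picks up a $\beta_{\Neg2}^2$; and the rewriting $\xi_3^3 e_{16}\beta_{\Neg2}\beta_{\Neg4}=\xi_1\cdot\xi_3^2 e_{16}$ is unusable because $\xi_3^2 e_{16}$ supports a $d_3$ and so does not survive to represent a homotopy class. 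Every factorization of $t$ through $\xi_1\cdot(\text{filtration }0 \text{ class})$ therefore itself passes through a hidden extension, so the argument is circular. This is corroborated by the structure of the paper: the very next proposition derives a long list of hidden extensions from Proposition~\ref{prop:xi1ExtensionAllButxi3} by exactly the duality argument you propose, which would be redundant if duality sufficed to establish the seed case as well.

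The paper's proof instead detects the extension directly with Massey products. From $d_3(e_{16})=\xi_3^3\beta_{\Neg2}\beta_{\Neg4}$ and $d_5(e_{16}^2)$ one obtains that $e_{16}\beta_{\Neg2}\beta_{\Neg4}$ and $e_{16}^2\beta_{\Neg2}$ detect certain triple Massey products, and the juggling formula then identifies $\xi_1\cdot(e_{16}^2\beta_{\Neg2})$ with $\xi_3^3(e_{16}\beta_{\Neg2}\beta_{\Neg4})$ up to indeterminacy, which is shown to vanish. This is the technique that reaches past the $E_\infty$-page; your duality-only route cannot do this. (There is also a minor inaccuracy in your setup: $\delta(3,3,0)=98$ is the \emph{duality shift}, i.e.\ $\pi_n\cong(\pi_{98-n})^\vee$; the actual top nonzero degree of $\pi_*\bigl(\RThreeSix\smashover{\HA}\End_\HA(M_{\leq 2})\bigr)$ is $104$ since the bottom class $\beta_{\Neg2}\beta_{\Neg4}$ sits in degree $-6$. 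And identifying the duality pairing with the multiplication pairing of a Frobenius algebra on the whole of $\pi_*\bigl(\RThreeSix\smashover{\HA}\End_\HA(M_{\leq 2})\bigr)$ — rather than only $\pi_*\RThreeSix$-linearity, which is what Proposition~\ref{prop:dualityformodules} directly gives — would itself require an argument.)
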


\begin{proof}
The previous results show that we have a \(d_3\)-differential
\[
d_3(e_{16}) = \xi_3^3 \beta_{\Neg2} \beta_{\Neg4}.
\]
The surviving class \(e_{16}\beta_{\Neg2} \beta_{\Neg4} \) then detects the Massey product \(\langle \beta_{\Neg2} \beta_{\Neg4}, \beta_{\Neg2}, \xi_3^3 \beta_{\Neg4}\rangle\). There is also a \(d_5\)-differential
\[
d_5(e_{16}^2) = \xi_3^5 \beta_{\Neg2},
\]
and the surviving class \(e_{16}^2\beta_{\Neg2} \) then detects the Massey product \(\langle \beta_{\Neg2}, \xi_3^3 \beta_{\Neg4}, \xi_3^2\rangle\). Multiplying by \(\xi_1 = \xi_3 \beta_{\Neg2} \beta_{\Neg4}\), we get a juggling formula:
\[
\beta_{\Neg2} \beta_{\Neg4} \langle \beta_{\Neg2}, \xi_3^3 \beta_{\Neg4}, \xi_3^2\rangle = \xi_3^2 \langle \beta_{\Neg2} \beta_{\Neg4}, \beta_{\Neg2}, \xi_3^3 \beta_{\Neg4} \rangle
\]
Multiplying through by \(\xi_3\), we find that \(\xi_1 (e_{16}^2\beta_{\Neg2}) = \xi_3^3 ( e_{16}\beta_{\Neg2} \beta_{\Neg4})\) up to indeterminacy. The indeterminacy consists of multiples of \(\xi_3^2 \beta_{\Neg2} \beta_{\Neg4}\), of which there are none in this degree.
\end{proof}

After multiplication with \(\xi_3\), the extension in Proposition~\ref{prop:xi1ExtensionAllButxi3} also implies that there is a hidden \(\xi_1\)-multiplication
\[
    \xi_1 \cdot \xi_3 e_{16}^2\beta_{\Neg2}  = \xi_3^4 e_{16}\beta_{\Neg2}\beta_{\Neg4} 
  \]
  into degree 38.
  
\begin{proposition}
We have the following hidden \(\xi_1\)-multiplications: 
\begin{align*}
\xi_1 \cdot e_{16}^4 \beta_{\Neg4} &= \xi_3^7 e_{16} \beta_{\Neg4} \,\,\,(\text{into degree } 61)\\
\xi_1 \cdot \xi_3 e_{16}^4 \beta_{\Neg4} &= \xi_3^8 e_{16} \beta_{\Neg4} \,\,\,(\text{into degree } 68)\\
\xi_1 \cdot e_{16}^6 \beta_{\Neg2}\beta_{\Neg4} &= \xi_3^7 e_{16}^3 \beta_{\Neg2}\beta_{\Neg4} \,\,\,(\text{into degree } 91)\\
\xi_1 \cdot \xi_3e_{16}^6 \beta_{\Neg2}\beta_{\Neg4} &= \xi_3^8 e_{16}^3 \beta_{\Neg2}\beta_{\Neg4} \,\,\,(\text{into degree } 98)
\end{align*}
and the following hidden \(\xi_2\)-multiplications:
\begin{align*}
\xi_2 \cdot e_{16}^4 \beta_{\Neg2}\beta_{\Neg4} &=\xi_3^7 e_{16} \beta_{\Neg4} \,\,\,(\text{into degree } 61) \\
\xi_2 \cdot \xi_3 e_{16}^4 \beta_{\Neg2}\beta_{\Neg4} &=\xi_3^8 e_{16} \beta_{\Neg4} \,\,\,(\text{into degree } 68) \\
\xi_2 \cdot e_{16}^4 \beta_{\Neg4} &= \xi_3^{5}e_{16}^2 \beta_{\Neg4} \,\,\,(\text{into degree } 63) \\
\xi_2 \cdot \xi_3 e_{16}^4 \beta_{\Neg4} &= \xi_3^{6}e_{16}^2 \beta_{\Neg4} \,\,\,(\text{into degree } 70) \\
\xi_2 \cdot \xi_3^2e_{16}^4 \beta_{\Neg4} &= \xi_3^{7}e_{16}^2 \beta_{\Neg4} \,\,\,(\text{into degree } 77) \\
\xi_2 \cdot \xi_3^3 e_{16}^4 \beta_{\Neg4} &= \xi_3^{8}e_{16}^2 \beta_{\Neg4} \,\,\,(\text{into degree } 84) \\
\xi_2 \cdot e_{16}^5 \beta_{\Neg2}\beta_{\Neg4}&= \xi_3^5 e_{16}^3 \beta_{\Neg2} \beta_{\Neg4} \,\,\,(\text{into degree } 77)\\ 
\xi_2 \cdot \xi_3 e_{16}^5 \beta_{\Neg2}\beta_{\Neg4}&= \xi_3^6 e_{16}^3 \beta_{\Neg2} \beta_{\Neg4} \,\,\,(\text{into degree } 84)\\ 
\xi_2 \cdot \xi_3^2 e_{16}^5 \beta_{\Neg2}\beta_{\Neg4}&= \xi_3^7 e_{16}^3 \beta_{\Neg2} \beta_{\Neg4} \,\,\,(\text{into degree } 91)\\ 
\xi_2 \cdot \xi_3^3e_{16}^5 \beta_{\Neg2}\beta_{\Neg4}&= \xi_3^8 e_{16}^3 \beta_{\Neg2} \beta_{\Neg4} \,\,\,(\text{into degree } 98)\\ 
\xi_2 \cdot e_{16}^6 \beta_{\Neg2}\beta_{\Neg4} &= \xi_3^7e_{16}^3\beta_{\Neg4} \,\,\,(\text{into degree } 93) \\ 
\xi_2 \cdot \xi_3 e_{16}^6 \beta_{\Neg2}\beta_{\Neg4} &= \xi_3^8e_{16}^3 \beta_{\Neg4} \,\,\,(\text{into degree } 100).
\end{align*}
These extensions are shown in Figure~\ref{fig:ASSAllButXi3EinftyDiff}. 
\end{proposition}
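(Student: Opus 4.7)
The plan is to extract all of these extensions from a small set of Massey product arguments by leveraging two structural tools: the Gorenstein self-duality of Proposition~\ref{prop:dualityformodules} and the multiplicative action of $\xi_3$ on the $E_\infty$-page.

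First, applying Proposition~\ref{prop:dualityformodules}\eqref{itemleqr} to $R = \RThreeSix$ with $r = 2$ yields a self-duality of $\pi_\ast\bigl(\RThreeSix\smashover{\HA}\End_\HA(M_{\leq 2})\bigr)$ with shift
\[
\delta = \delta(3,3,0) = 2^{7} - 2\cdot 2^{4} + 2 = 98.
\]
Under this duality, a hidden $\xi_i$-multiplication from degree $d$ to degree $d+|\xi_i|$ pairs with a hidden $\xi_i$-multiplication from degree $\delta - d - |\xi_i|$ to degree $\delta - d$. Matching degrees shows, for instance, that the $\xi_1$-extension into $68$ is dual to the one into $31$ established in Proposition~\ref{prop:xi1ExtensionAllButxi3}, the $\xi_1$-extension into $61$ is dual to the one into $38$, and analogously among the $\xi_2$-extensions near the top of the range. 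It therefore suffices to establish roughly half of the listed extensions as base cases and deduce the rest by duality.

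For the base cases, I would imitate the Massey product argument of Proposition~\ref{prop:xi1ExtensionAllButxi3}. Each $E_\infty$-surviving class of the form $e_{16}^j \beta_{\Neg 2}^{\epsilon_1}\beta_{\Neg 4}^{\epsilon_2}$ adjacent to one of the differentials $d_3(e_{16}) = \xi_3^3\beta_{\Neg 2}\beta_{\Neg 4}$, $d_5(e_{16}^2) = \xi_3^5\beta_{\Neg 2}$, or $d_9(e_{16}^4) = \xi_3^9$ detects a specific Massey product. Multiplying such a detection by the $E_2$-relations $\xi_1 = \xi_3\beta_{\Neg 2}\beta_{\Neg 4}$ or $\xi_2 = \xi_3\beta_{\Neg 4}$ and applying the juggling formula re-expresses the product as a different Massey product that in favorable bidegrees is detected uniquely by the claimed target class. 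Extensions with a $\xi_3$-prefactor on both sides then follow from $\xi_3 \cdot (\xi_i x) = \xi_i \cdot (\xi_3 x)$ once the simpler versions are known, producing paired rows of the statement such as $\xi_1\cdot e_{16}^4\beta_{\Neg 4}$ and its $\xi_3$-multiple $\xi_1\cdot \xi_3 e_{16}^4\beta_{\Neg 4}$. The numerous $\xi_2$-extensions admit a uniform treatment: because the relation $\xi_2 = \xi_3\beta_{\Neg 4}$ already holds on $E_2$, a hidden $\xi_2$-action is computed by first identifying $\xi_3\beta_{\Neg 4}\cdot x$ modulo higher filtration and then resolving any residual hidden $\xi_3$-action.

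The main obstacle will be the systematic control of the Massey product indeterminacies. As one approaches the top degree $\delta = 98$, the number of surviving classes in each bidegree grows and candidate indeterminacy terms multiply; for example, the $\xi_2$-extensions into degree $77$ involve two distinct source classes landing on different targets, so the relevant Massey product representatives must be disentangled by the ring structure before their indeterminacies can be bounded. I would handle this uniformly by enumerating the $\F_2$-basis of $\pi_\ast$ in each relevant bidegree using the splitting of Corollary~\ref{cor:SplittingRkmn} together with the Frobenius pairing of Proposition~\ref{prop:poincareduality}, and verifying bidegree by bidegree that no spurious contribution can occur. Once duality, $\xi_3$-multiplication, and Massey juggling are in place, the remaining verification is a finite check.
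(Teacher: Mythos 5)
Your high-level use of Proposition~\ref{prop:dualityformodules}\eqref{itemleqr} with $k=3$, $r=2$ and shift $\delta = 98$ matches the paper's proof exactly, and your degree-matching that the $\xi_1$-extensions into $68$ and $61$ are dual to those into $31$ and $38$ from Proposition~\ref{prop:xi1ExtensionAllButxi3} is precisely what the paper intends. The divergence comes after that: the paper's proof asserts that \emph{all} of the listed extensions follow from duality combined with only the single extension of Proposition~\ref{prop:xi1ExtensionAllButxi3} and the visible product structure on the $E_2$-page (in particular the relations $\xi_1 = \xi_3\beta_{\Neg2}\beta_{\Neg4}$, $\xi_2 = \xi_3\beta_{\Neg4}$ and the known action of $\xi_3$ and $e_{16}$), without any further Massey-product calculations. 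You instead propose to prove roughly half the extensions as fresh ``base cases'' via new Massey-product arguments modeled on Proposition~\ref{prop:xi1ExtensionAllButxi3}, and only then use duality for the other half. That route would probably succeed, but it re-introduces exactly the indeterminacy bookkeeping you flag as the main obstacle, and that is work the paper's argument deliberately avoids: once the perfect duality pairing is in place, the adjointness of the $\pi_*R$-action forces the ranks (and then the targets, since the relevant bidegrees are one-dimensional) of the $\xi_1$- and $\xi_2$-multiplications on the high-degree end from the completely explicit, non-hidden multiplications near degree $0$, plus the two known hidden extensions. So you should not need a single new Massey product. I'd also note that invoking Corollary~\ref{cor:SplittingRkmn} here is not quite apposite: that splitting is a feature of $\Rkmn{m}{m+k+n}$, whereas the module in question is $\RThreeSix\smashover{\HA}\End_\HA(M_{\le 2})$, for which the relevant basis enumeration comes directly from the $E_\infty$-page already displayed, not from that splitting.
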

\begin{proof}
By Proposition~\ref{prop:dualityformodules}, item~\eqref{itemleqr} with \(k=3\) and \(r=2\)
we find that the homotopy groups of this spectrum are, as a module, self-dual with a shift of 98. By duality, all the extensions now follow from known extensions (Proposition~\ref{prop:xi1ExtensionAllButxi3}) and product structures on the \(E_2\)-page. 
\end{proof}

\afterpage{\begin{landscape}
\begin{figure}[p]
    \includegraphics[width =8in]{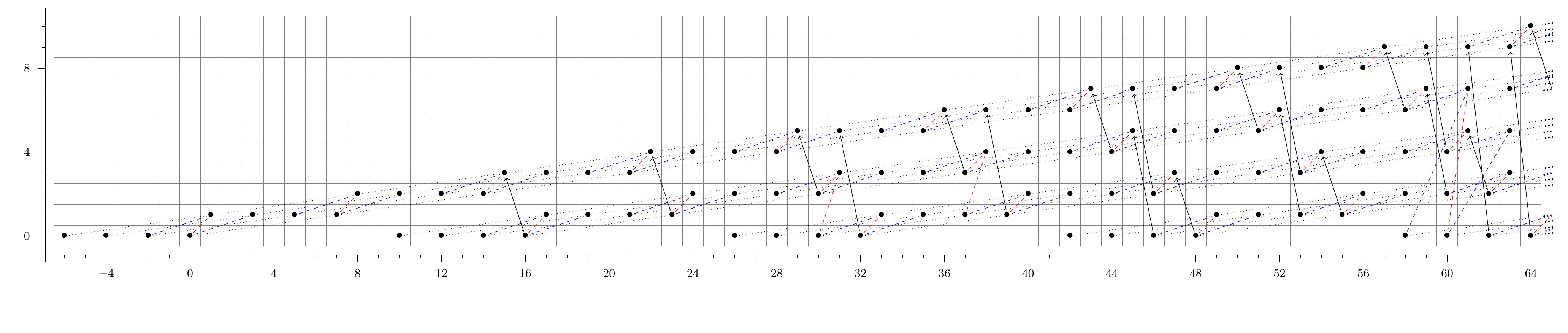}
    \includegraphics[width =8in]{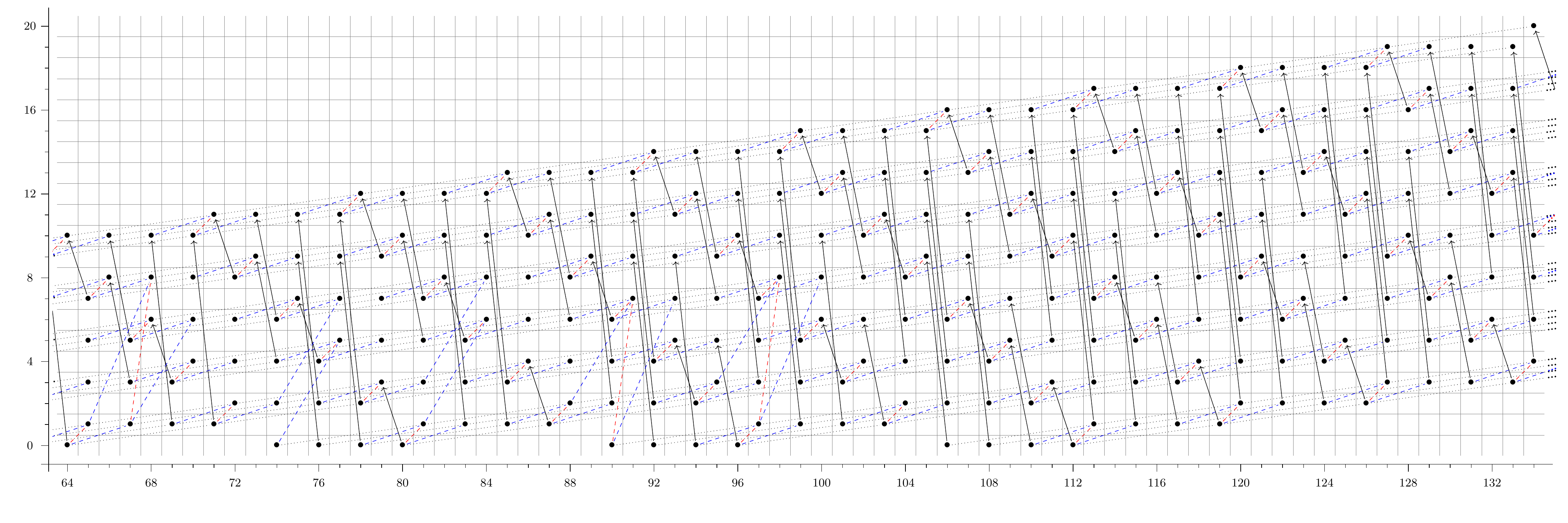}
    \caption{The relative Adams spectral sequence for \(\pi_*\big(\RThreeSix\smashover{\HA}\End_{\HA}(M_{\leq 2})\big)\)}
    \label{fig:ASSAllButXi3Diff}
\end{figure}

\begin{figure}[p]
    \includegraphics[width =8in]{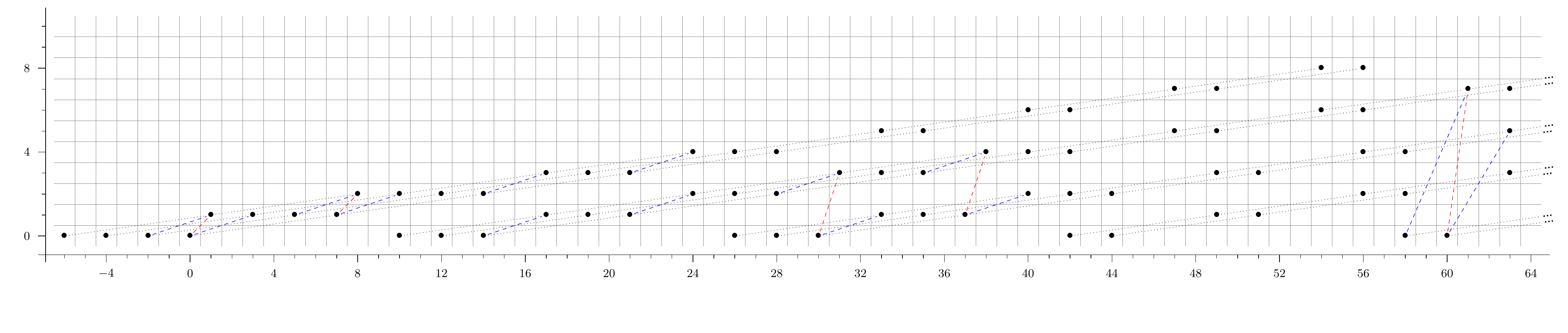}
    \includegraphics[width =8in]{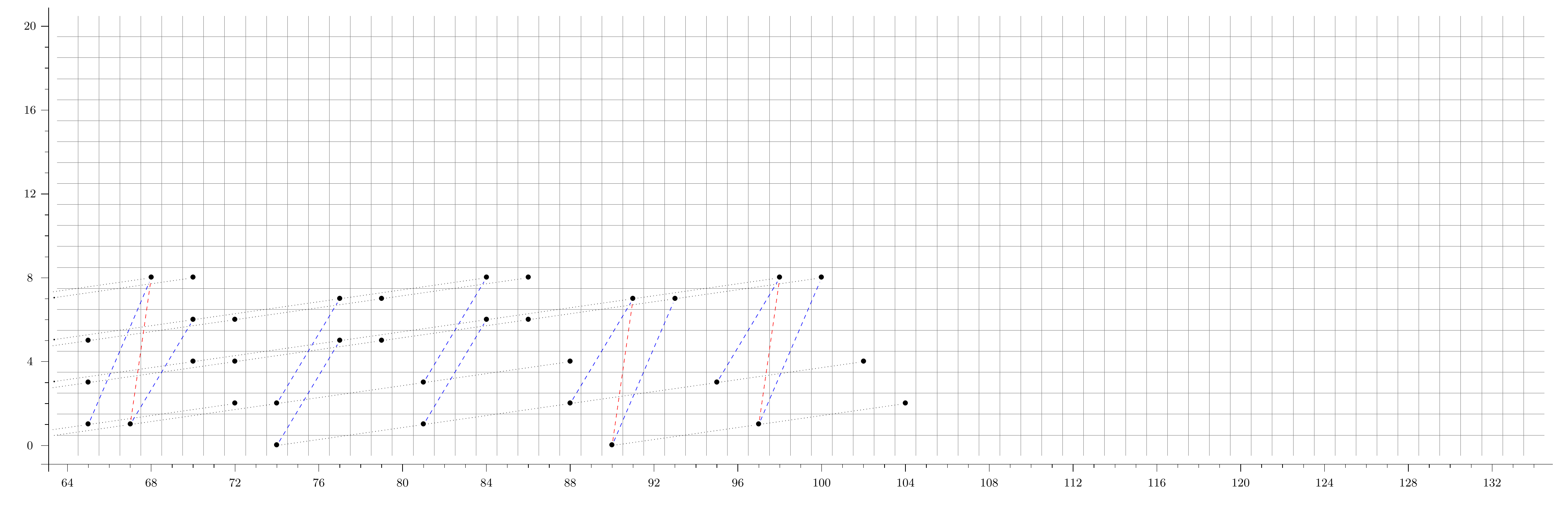}
    \caption{The \(E_\infty\)-page of the relative Adams spectral sequence for \(\pi_*\big(\RThreeSix\smashover{\HA}\End_{\HA}(M_{\leq 2})\big)\), with hidden extensions}
    \label{fig:ASSAllButXi3EinftyDiff}
\end{figure}
\end{landscape}}

\appendix

\section{Multiplicative operations}\label{sec:CupkAlgs}

Important to our calculations of the quotient rings \(\HA \aquot \alpha\) is the knowledge that the Dyer--Lashof operation \(Q_1\) preserves the kernel of the map \({\HA_{\ast}} \to \pi_*(\HA \aquot \alpha)\). To show this, we need a brief operadic digression.

The main precedent for this is the transgression. Serre's transgression theorems assert the compatibility of maximal-length differentials in the Serre spectral sequence with the Steenrod squaring operations in \(H^\ast(X)\) \cite{SerreSS}, and May showed similar results for the Dyer--Lashof operations in \(H_\ast(\Omega^k X)\) \cite{MayAlgebraic}. On the chain level, a cycle \(x \in C\) has an extended power \(Q_k(x)\) represented by Steenrod's cup-\(k\) product \(x \smile_k x\). If \(f\co C \to D\) is a map of algebras with these operations, and \(f(x) = \partial y\), then
\[
f(Q_k(x)) = \partial \left[f(x) \smile_k y + y \smile_{k-1} y\right].
\]
The formula on the right only depends on a cup-\((k-1)\) product in \(D\) and a cup-\(k\) product expressing that the image of \(f\) is, in some sense, central in \(D\). In this section, we will show that this is part of a more general story.

\subsection{Cup-\tpfstr{\(k\)}{k} algebras}\label{sec:cupkfirst}

\begin{notation}
    For \(k \geq 1\), let \(\Sksig[k]\) denote the unit sphere in the \(k\)-dimensional real sign representation of \(C_2\): a \((k-1)\)-sphere with the antipodal \(C_2\)-action. Let  \(D(k\sigma)\) be the corresponding closed disk.
\end{notation}

\begin{definition}
  Suppose that \(R\) is a commutative ring spectrum.  A \emph{cup-\(k\) algebra} over \(R\) is an \(R\)-module spectrum \(A\) with a map 
  \[
  m_k\colon \Sksig[(k+1)]_+ \smashover{C_2} (A \smashover{R} A) \to A.
  \]
  A \emph{map} of cup-\(k\) algebras is a map preserving this structure.
\end{definition}

\begin{remark}
  Equivalently, a cup-\(k\) algebra is a \(C_2\)-equivariant map
  \[
  \Sksig[(k+1)] \to \Map_R(A \smashover{R} A, A).
  \]
  Any \(E_{k+1}\)-operad \(\mathcal{O}\) has a \(C_2\)-equivariant homotopy equivalence  
  \[\Sksig[(k+1)] \simeq \mathcal{O}(2).\]
  Forgetting all but the binary operations then determines a forgetful functor from \(E_{k+1}\) algebras to cup-\(k\) algebras.
\end{remark}

\begin{example}
  A cup-zero algebra is an \(R\)-module \(A\) with a binary multiplication map \(m\co A \smashover{R} A \to A\).
\end{example}

More generally, we can get an inductive description of cup-\(k\) algebras.

\begin{definition}
  Suppose that \(A\) is a cup-\(k\) algebra. The \emph{bracket} is the composite map
    \[
    \Sigma^k (A \smashover{R} A) \simeq \big(C_{2+}\wedge S^{k-1}\big)\smashover{C_2} (A\wedge A) \to \Sksig[(k+1)]_+ \smashover{C_2} A \smashover{R} A \to A.
    \]
\end{definition}

\begin{remark}
On homotopy groups, the bracket induces a \(\pi_*R\)-bilinear (up to sign) \emph{Browder bracket}
    \[
      [-,-]\co \pi_m(A) \times \pi_n(A) \to \pi_{m+k+n}(A).
    \]
\end{remark}
\begin{proposition}
  A cup-\(k\) algebra is equivalent to a cup-\((k-1)\) algebra with a chosen nullhomotopy of the bracket \(\Sigma^{k-1} A \smashover{R} A \to A\).
\end{proposition}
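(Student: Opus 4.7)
The plan is to realize this equivalence via a natural cofiber sequence of based $C_2$-spaces and then map into $A$. Write $S((k+1)\sigma)$ as the join $S(k\sigma) \ast S^0$, with $C_2$ acting by the antipodal action on $S(k\sigma)$ and by swap on $S^0$. Collapsing the equator $S(k\sigma) \subset S((k+1)\sigma)$ identifies the quotient with $C_{2+} \wedge S^k$ (the two open hemispheres become two copies of $S^k$ that are swapped). This gives a cofiber sequence of based $C_2$-spaces
\[
S(k\sigma)_+ \to S((k+1)\sigma)_+ \to C_{2+} \wedge S^k.
\]

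Next, I would apply the colimit-preserving functor $(-) \smashover{C_2} (A \smashover{R} A)$, where $C_2$ acts on $A \smashover{R} A$ by swap. Since $(C_{2+} \wedge X) \smashover{C_2} Y \simeq X \wedge Y$ when $X$ has trivial $C_2$-action, the quotient term becomes $\Sigma^k (A \smashover{R} A)$ and we obtain a cofiber sequence of $R$-modules
\[
S(k\sigma)_+ \smashover{C_2}(A \smashover{R} A) \to S((k+1)\sigma)_+ \smashover{C_2}(A \smashover{R} A) \to \Sigma^k(A \smashover{R} A),
\]
which we extend one step to the left to obtain the connecting map
\[
\Sigma^{k-1}(A \smashover{R} A) \to S(k\sigma)_+ \smashover{C_2}(A \smashover{R} A).
\]
Applying $\Map_R(-,A)$ turns this into a fiber sequence of mapping spaces, so the space of cup-$k$ structures on $A$ is equivalent to the space of pairs consisting of a cup-$(k-1)$ structure $m_{k-1}$ together with a nullhomotopy of the composite of $m_{k-1}$ with the connecting map displayed above.

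The final step is to recognize that composite as the bracket. Unpacking the cofiber sequence, the connecting map is induced by the inclusion $C_{2+} \wedge S^{k-2} \hookrightarrow S(k\sigma)_+$ (a suspension of the attaching map of the top cell of $S(k\sigma)$), smashed over $C_2$ with $A \smashover{R} A$ and rewritten via the equivalence $\Sigma^{k-1}(A \smashover{R} A) \simeq (C_{2+} \wedge S^{k-2}) \smashover{C_2}(A \wedge A)$. Post-composing with $m_{k-1}$ yields precisely the definition of the bracket for the cup-$(k-1)$ algebra structure. The main delicate point is keeping track of the equivariance in identifying the quotient $S((k+1)\sigma)_+/S(k\sigma)_+$ with $C_{2+} \wedge S^k$ and in checking that the connecting map agrees on the nose (up to the canonical suspension equivalences) with the map implicit in the definition of the bracket; once that is done, the result follows formally.
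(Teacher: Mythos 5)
Your proposal is correct and follows essentially the same route as the paper: both use the cofiber sequence of $C_2$-spectra relating $\Sksig[k]_+$, $\Sksig[(k+1)]_+$, and $C_{2+}\wedge S^k$, smash over $C_2$ with $A\smashover{R}A$, and identify the resulting connecting map composed with the cup-$(k-1)$ structure as the bracket. The only quibble is a cosmetic indexing slip in your last paragraph ($(C_{2+}\wedge S^{k-2})\smashover{C_2}(A\wedge A)$ is $\Sigma^{k-2}(A\smashover{R}A)$, so the connecting map lives on $C_{2+}\wedge S^{k-1}$), which does not affect the argument.
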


\begin{proof}
We have a natural equivariant inclusion \(\Sksig[k]\hookrightarrow S\big((k+1)\sigma\big)\), and the quotient is the pointed space \(C_{2+}\wedge S^{k}\). Stably, this determines a cofiber sequence
\[
C_{2+}\wedge\Sigma^\infty S^{k-1}\to \Sigma^\infty_+ \Sksig[k]\to \Sigma^{\infty}_+ S\big((k+1)\sigma).
\]
Smashing over \(C_2\) with \(A \smashover{R} A\) gives us a cofiber sequence of \(R\)-modules, and the result follows by definition of cup-\(k\) structures.
\end{proof}

\begin{example}
  A cup-one algebra structure is equivalent to a choice of a binary multiplication \(m\co A \smashover{R} A \to A\), and a chosen ``commutativity homotopy'' \(h\co m \Rightarrow m \circ \tau\), where \(\tau\) is the twist map.
\end{example}

\subsection{Operations}\label{sec:cupkops}

Analogously to the work of Bruner
\cite[\S IV.7]{Hinfinity}, a cup-\(k\) algebra has power operations.

\begin{proposition}
  Suppose that \(A\) is a cup-\(k\) algebra and that \(\alpha\co S^n \to A\) is a map. Then there is an induced natural map
  \[
    Q(\alpha)\co R \wedge \Sigma^n \RP_n^{n+k} \to A
  \]
  from a shifted stunted projective space to \(A\), whose restriction to the bottom cell \(S^{2n}\) is the square of \(\alpha\).
\end{proposition}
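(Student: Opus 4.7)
The plan is to build $Q(\alpha)$ by feeding the square of $\alpha$ into the cup-$k$ structure map $m_k$, and then to identify the resulting source spectrum with $R\wedge \Sigma^n\RP_n^{n+k}$ via a classical Thom-space description of stunted projective spaces.

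First, I would take the $R$-module extension of $\alpha$ and form the $C_2$-equivariant map
\[
\alpha\smashover{R}\alpha\colon \Sigma^n R\smashover{R}\Sigma^n R\to A\smashover{R} A,
\]
where the twist acts on the source by swapping the two $\Sigma^n R$ factors and on the target by the action used to define the cup-$k$ structure. Since $R$ carries trivial action, the source is equivariantly $R\wedge S^{n(1+\sigma)}$. Smashing in the free $C_2$-space $\Sksig[(k+1)]_+$, taking $C_2$-orbits, and postcomposing with $m_k$ produces
\[
\Sksig[(k+1)]_+\smashover{C_2}\bigl(\Sigma^n R\smashover{R}\Sigma^n R\bigr)\longrightarrow \Sksig[(k+1)]_+\smashover{C_2}(A\smashover{R} A)\xrightarrow{m_k} A.
\]

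Second, I would identify the domain. Pulling the trivial-action $\Sigma^n$ and the $R$-factor outside the $C_2$-orbit, the domain becomes
\[
R\wedge\Sigma^n\bigl(\Sksig[(k+1)]_+\smashover{C_2} S^{n\sigma}\bigr).
\]
The inner $C_2$-orbit spectrum is by construction the Thom spectrum of $n$ copies of the tautological line bundle $\gamma$ over $\RP^k=\Sksig[(k+1)]/C_2$, and the classical identification $\mathrm{Th}(n\gamma)\simeq \RP^{n+k}/\RP^{n-1}=\RP_n^{n+k}$ converts this to the desired $R\wedge\Sigma^n\RP_n^{n+k}$.

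Third, for the bottom-cell claim, the $S^{2n}$-cell of $\Sigma^n\RP_n^{n+k}$ arises from the equivariant inclusion of a single free $C_2$-orbit $\Sksig[1]_+=(C_2)_+\hookrightarrow \Sksig[(k+1)]_+$. After taking $C_2$-orbits this inclusion collapses $m_k$ to the underlying binary multiplication of the cup-$k$ structure, and precomposed with $\alpha\smashover{R}\alpha$ it produces $\alpha^2$ by definition. The main obstacle is the Thom-space identification $\Sksig[(k+1)]_+\smashover{C_2} S^{n\sigma}\simeq \RP_n^{n+k}$; once that is in hand, everything else is formal, and naturality in $A$ and $\alpha$ follows from the naturality of each step.
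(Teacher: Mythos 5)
Your proof is correct and follows essentially the same route as the paper's: form the equivariant map $\alpha\smashover{R}\alpha$, pass through $\Sksig[(k+1)]_+\smashover{C_2}(-)$ into $m_k$, identify the source as the Thom spectrum of $n(1+\sigma)$ over $\RP^k$ (hence $R\wedge\Sigma^n\RP_n^{n+k}$), and read off the bottom cell from the inclusion $C_{2+}\hookrightarrow\Sksig[(k+1)]_+$. The only cosmetic difference is that you explicitly split off the trivial $\Sigma^n$ before invoking the Thom-spectrum identification, whereas the paper cites the full equivariant statement (\cite[Theorem~V.2.14]{Hinfinity}) directly for the bundle $n+n\sigma$.
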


\begin{proof}
  The \(C_2\)-equivariant map \(\alpha \wedge \alpha\co S^n \wedge S^n \to A\wedge A\) determines a composite map of \(R\)-modules
  \[
    R \wedge \left(\Sksig[(k+1)] \smashover{C_2} (S^n \wedge S^n)\right) \to \Sksig[(k+1)] \smashover{C_2} A \smashover{R} A \to A.
  \]
  The equivariant smash product \(\Sksig[(k+1)]_+ \smashover{C_2} (S^n \wedge S^n)\) appearing on the left is the Thom spectrum of the equivariant bundle \((n + n\sigma)\) on \(\Sksig[(k+1)]/C_2 = \RP^k\), and this Thom spectrum is a shifted stunted projective space. (See \cite[Theorem V.2.14]{Hinfinity}.)

  By definition, the multiplication \(m\co A \smashover{R} A \to A\) is the composite
  \[
  A \smashover{R} A \cong C_{2+} \smashover{C_2} (A \smashover{R} A) \to \Sksig[(k+1)]_+ \smashover{C_2} (A \smashover{R} A) \to A.
  \]
  If we compose with the map \(\alpha \wedge \alpha\co S^n \wedge S^n \to A \smashover{R} A\), the result is the square of \(\alpha\).
\end{proof}

\begin{corollary}
  If \(n\) is even or \(2=0\) in \(\pi_0(R)\), there exist natural operations (sometimes called cup-\(1\) squares)
  \[
    Q_1\co \pi_{n}(A) \to \pi_{2n+1}(A)
  \]
  for cup-one \(R\)-algebras.
\end{corollary}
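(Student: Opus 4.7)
The plan is to specialize the construction of the preceding proposition to \(k=1\) and extract the top-cell contribution. Applying that proposition to \(\alpha\in\pi_n A\) produces a natural map
\[
Q(\alpha)\co R\wedge \Sigma^n \RP_n^{n+1}\to A
\]
whose restriction to the bottom cell \(R\wedge S^{2n}\) is \(\alpha^2\). The space \(\RP_n^{n+1}\) is a two-cell complex that fits into a cofiber sequence \(S^n\to S^n\to \RP_n^{n+1}\to S^{n+1}\) whose attaching map has degree \(1+(-1)^{n+1}\): this is \(0\) when \(n\) is even and \(2\) when \(n\) is odd.

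Under either hypothesis of the corollary, this attaching map becomes nullhomotopic after smashing with \(R\). Indeed, if \(n\) is even the map is already null, while if \(2=0\) in \(\pi_0 R\) then multiplication by \(2\) on any \(R\)-module is nullhomotopic. Consequently, in the category of \(R\)-modules there is a splitting
\[
R\wedge \Sigma^n \RP_n^{n+1}\simeq R\wedge S^{2n}\,\vee\, R\wedge S^{2n+1}.
\]

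Pick a section \(s\co R\wedge S^{2n+1}\to R\wedge\Sigma^n\RP_n^{n+1}\) of the quotient map onto the top cell, and define
\[
Q_1(\alpha) := Q(\alpha)\circ s \in \pi_{2n+1}A.
\]
Naturality in cup-one \(R\)-algebras is inherited from the naturality already established for \(Q(\alpha)\). The one remaining subtlety, which I expect to be the main point worth addressing carefully, is that \(Q_1(\alpha)\) depends a priori on the choice of section \(s\): any two sections differ by a map \(R\wedge S^{2n+1}\to R\wedge S^{2n}\), equivalently an element of \(\pi_1 R\) acting on \(\alpha^2\). Thus \(Q_1(\alpha)\) is canonically defined modulo this \(\pi_1 R\)-indeterminacy, and in the cases of interest (such as \(R=\HA\) or \(R=H\F_2\), for which \(\pi_1 R=0\)) the operation is strictly natural.
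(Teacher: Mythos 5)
Your proof is correct and follows essentially the same route as the paper's: identify the stunted projective space as a two-cell complex whose attaching map has degree \(1-(-1)^n\), observe that this vanishes after smashing with \(R\) under either hypothesis, choose a splitting, and project to the top cell. Your closing discussion of the \(\pi_1 R\)-indeterminacy in the choice of splitting is also the content of the remark that immediately follows this corollary in the paper.
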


\begin{proof}
  The stunted projective space \(\RP_n^{n+1}\) is the cofiber of the attaching map
  \[
  \big(1 - (-1)^n\big)\co S^n \to S^n
  \]
  for the \((n+1)\)-cell. If \(n\) is even or \(2=0\) in \(\pi_0(R)\), this implies that there is a splitting
  \[
    R \wedge \RP_n^{n+1} \simeq \Sigma^n R \vee \Sigma^{n+1} R.
  \]
  Choosing such a splitting in the homotopy category, any map \(\alpha\co S^n \to A\) determines a natural composite operation
  \[
    S^{2n+1} \to \Sigma^n R \wedge \RP_n^{n+1}
    \xrightarrow{Q(\alpha)} A.\qedhere
  \]
\end{proof}

\begin{example}
  When \(R\) is \(H\F_2\), \(Q_1\) is the Dyer-Lashof operation of the same name.
\end{example}

\begin{remark}
  Two different splittings may determine two different operations \(Q_1\) and \(Q'_1\). However, the difference involves the square: by considering the homotopy groups of \(\Sigma^n R \vee \Sigma^{n+1} R\), there exists some element \(u \in \pi_1 R\) such that
  \[
  Q_1(\alpha) - Q'_1(\alpha) = u \alpha^2
  \]
  for all \(\alpha\). This means that \(Q_1(\alpha)\) may involve choices, but the ideal generated by \(\alpha\) and \(Q_1(\alpha)\) does not.
\end{remark}

\subsection{Centrality}\label{sec:cupkcentrality}
\begin{definition}
  Suppose that \(A\) is a cup-\(k\) algebra. A \emph{central cup-\((k-1)\) algebra over \(A\)} consists of the following data:
  \begin{enumerate}
      \item a cup-\((k-1)\) algebra \(B\),
      \item a map \(\eta\co A \to B\) of cup-\((k-1)\) algebras, and
      \item a factorization
      \[
      \begin{tikzcd}
      \left(\Sksig[k]_+ \wedge A \smashover{R} B\right) \coprod_{\Sksig[k]_+ \wedge A \smashover{R} A}
      \left(D(k\sigma)_+ \wedge A \smashover{R} A\right) \ar[d] \ar[dr] &\\
      D(k\sigma)_+ \wedge A \smashover{R} B \ar[r,dotted]
      & B.
      \end{tikzcd}
      \]
      Here the diagonal map from the pushout is induced by the commutative diagram
      \[
        \begin{tikzcd}[row sep = small]
        \Sksig[k]_+ \wedge A \smashover{R} A \ar[rr] \ar[dd] &&
        D(k\sigma)_+ \wedge A \smashover{R} A \ar[d]\\
        && A \ar[d] \\
        \Sksig[k]_+ \wedge A \smashover{R} B \ar[r] &
        \Sksig[k]_+ \wedge B \smashover{R} B \ar[r] &
        B,
        \end{tikzcd}
      \]
      with the upper-right vertical arrow being the map that trivializes the bracket operation of \(A\). 
  \end{enumerate}
\end{definition}

\begin{remark}
  This last commutative square helps to clarify the definition. The map \(D(k\sigma)_+ \wedge A \smashover{R} A \to A\) encodes the trivialization of the bracket on \(A\). Centrality asks for a \emph{compatible} trivialization of the bracket \(\Sigma^{k-1} A \smashover{R} B \to B\), and so the Browder bracket \([\eta(a), b]\) always vanishes.
  One aspect of this compatibility is that the two trivializations of \([\eta(a),\eta(a')] = \eta[a,a']\) are equivalent.
\end{remark}

\begin{remark}Our definition of centrality for these cup products captures the degree-\(2\) part of a more complicated structure. Namely, suppose that \(A\) is an \(E_k\) algebra. The tensor product over \(A\) gives the category \(LMod_A\) of left \(A\)-modules an \(E_{k-1}\)-monoidal structure. If \(B\) is an \(E_{k-1}\)-algebra in \(LMod_A\), then \(B\) is a central cup-\((k-1)\) algebra over \(A\).\end{remark}

\begin{example}
  Suppose that \(B\) is an \(E_{k-1}\) \(R\)-algebra. The object \(R\) is a strict unit for the tensor product of \(R\)-modules, which gives it a cup-\(k\) algebra structure via the projection
  \[
  \Sksig[(k+1)]_+ \smashover{C_2} R \smashover{R} R \to R.
  \]
  The fact that \(R\) is a strict unit for the \(E_{k-1}\)-multiplication on \(B\) makes \(B\) a central cup-\((k-1)\) algebra over \(R\).
\end{example}

Unwinding the adjoint maps in the definition of a central cup-\((k-1)\) algebra, we obtain the following alternative definition of the structure in terms of mapping spaces.
\begin{proposition}
  A central cup-\(k\)-algebra structure on \(B\) is equivalent to a commutative diagram of spaces:
  \[
  \begin{tikzcd}[row sep=small, column sep=small]
  \Sksig[k] \ar[dr] \ar[drr] \ar[dd] \\
  &D(k\sigma) \ar[r]
  & \Map_R(A \smashover{R} A, A) \ar[dd]\\
  \Map_R(B \smashover{R} B, B) \ar[dr] \ar[drr]\\
  &\Map_R(A \smashover{R} B, B) \ar[r] \ar[uu,leftarrow, crossing over]
  &\Map_R(A \smashover{R} A, B)
  \end{tikzcd}
  \]
\end{proposition}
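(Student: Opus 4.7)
The plan is to prove this proposition by a straightforward application of the smash-hom adjunction in the category of $R$-modules, together with patient identification of each map in the diagram with a piece of the data in the original centrality definition. For an unpointed space $X$ and $R$-modules $M,N$, a map $X_+ \wedge M \to N$ of $R$-modules corresponds under adjunction to a continuous map $X \to \Map_R(M,N)$; this is the only nontrivial ingredient.

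First I would apply this adjunction to the data $D(k\sigma)_+ \wedge A \smashover{R} B \to B$ that constitutes the centrality structure, producing the filling map $D(k\sigma) \to \Map_R(A \smashover{R} B, B)$ displayed in the diagram (the ``crossing over'' arrow). Next I would unpack the two boundary components of the pushout. The restriction $\Sksig[k]_+ \wedge A \smashover{R} B \to B$ corresponds to $\Sksig[k] \to \Map_R(A \smashover{R} B, B)$, and the defining diagram of centrality forces this to factor as $\Sksig[k] \to \Map_R(B \smashover{R} B, B) \to \Map_R(A \smashover{R} B, B)$, where the first map is the cup-$(k-1)$-structure on $B$ and the second is precomposition with $\eta \wedge \mathrm{id}_B$. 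Symmetrically, the other boundary component $D(k\sigma)_+ \wedge A \smashover{R} A \to B$ corresponds to $D(k\sigma) \to \Map_R(A \smashover{R} A, B)$, and the commutative triangle involving $A$ in the original definition forces this to factor through $\Map_R(A \smashover{R} A, A)$ --- that is, through the nullhomotopy $D(k\sigma) \to \Map_R(A \smashover{R} A, A)$ of the bracket on $A$ coming from its cup-$k$ structure --- followed by postcomposition with $\eta$.

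Finally I would observe that the further compatibility condition, namely that the two factorizations above agree when restricted to $\Sksig[k] \to \Map_R(A \smashover{R} A, B)$, is exactly the commutativity of the square with corners $\Map_R(B\smashover{R} B, B)$, $\Map_R(A\smashover{R} A, A)$, $\Map_R(A\smashover{R} B, B)$, and $\Map_R(A\smashover{R} A, B)$; this square simultaneously encodes that $\eta$ is a map of cup-$(k-1)$-algebras and that the pushout is glued consistently on its common boundary. The reverse direction, producing a central cup-$(k-1)$-algebra structure from any such commutative diagram of spaces, is obtained simply by running the adjunction backwards and reassembling the pushout. The only real work is careful bookkeeping to check that each triangle and square of the displayed diagram corresponds precisely to a piece of the original pushout datum; no deeper input is required.
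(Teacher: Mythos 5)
Your proposal is correct and is essentially the paper's own argument: the paper offers no proof beyond the sentence ``unwinding the adjoint maps in the definition,'' and your careful matching of each arrow in the prism to a piece of the pushout datum via the $(X_+ \wedge -) \dashv \Map_R(-,-)$ adjunction is exactly that unwinding. The only detail you might note is that the left-hand vertical map and the back square carry $C_2$-equivariance (as the paper exploits immediately afterward in Corollary~\ref{cor:cupkcube}), but the proposition as stated only asks for a commutative diagram of spaces, so your bookkeeping suffices.
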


The back square of this diagram is \(C_2\)-equivariant: by assumption, the twist isomorphisms of \(A \smashover{R} A\) and \(B \smashover{R} B\) are compatible with the action on \(\Sksig[k]\). This allows us to glue this diagram together with its mirror image along the back square, getting a cubical diagram. Moreover, this diagram will have a \emph{\(C_2\)-action}: writing this cube as a functor \(F\) from \(\{0 \to 1\}^3\) to spaces,
there is an isomorphism \(\phi\co F(a,b,c) \to F(b,a,c)\) of cubical diagrams satisfying \(\phi^2 = 1\).

\begin{corollary}\label{cor:cupkcube}
  A central cup-\(k\)-algebra gives rise to a commutative diagram
  \[
  \begin{tikzcd}[row sep=small, column sep=small]
  \Sksig[k] \ar[dr] \ar[rr] \ar[dd]
  && D(k\sigma)' \ar[dd] \ar[dr]\\
  & D(k\sigma)''
  &&\Map_R(A \smashover{R} A, A) \ar[dd]
   \ar[ll,crossing over,leftarrow]\\
  \Map_R(B \smashover{R} B, B) \ar[dr] \ar[rr]
  && \Map_R(B \smashover{R} A, B) \ar[dr]\\
  &\Map_R(A \smashover{R} B, B) \ar[rr]
   \ar[uu,crossing over,leftarrow]
  &&\Map_R(A \smashover{R} A, B)
  \end{tikzcd}
  \]
  Moreover, this diagram has a \(C_2\)-action, via the maps between function spaces induced by twist isomorphisms and the \(C_2\)-action on spheres and disks.
\end{corollary}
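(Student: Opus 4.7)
The plan is to assemble the cube by taking the commutative diagram of the immediately preceding proposition and gluing it to a twisted copy of itself along a shared back square. First, I would observe that the back square of the preceding diagram — the square \(\Sksig[k] \to D(k\sigma) \to \Map_R(A \smashover{R} A, A) \to \Map_R(A \smashover{R} A, B)\) together with \(\Sksig[k] \to \Map_R(B \smashover{R} B, B) \to \Map_R(A \smashover{R} A, B)\) — is \(C_2\)-equivariant, with the sphere and disk carrying their antipodal action and the mapping spaces out of \(A \smashover{R} A\) and \(B \smashover{R} B\) carrying the action induced by the twist. This is immediate from the fact that the cup-\(k\) and cup-\((k-1)\) structure maps on \(A\) and \(B\) are by definition \(C_2\)-equivariant maps out of \(\Sksig[(k+1)]_+ \smashover{C_2} (-)\) and \(\Sksig[k]_+ \smashover{C_2} (-)\).

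Next, I would apply the twist isomorphism \(\tau\co A \smashover{R} B \cong B \smashover{R} A\) and its restriction to \(A \smashover{R} A\) to produce a second copy of the preceding diagram, identical in structure but with \(\Map_R(A \smashover{R} B, B)\) replaced by \(\Map_R(B \smashover{R} A, B)\). Since \(\tau\) restricts on \(A \smashover{R} A\) to the twist already appearing in the cup-\(k\) structure, the two diagrams agree on the back square. Gluing them along this shared back square produces the claimed cube: the two sides of the cube correspond to the two ways of ordering \(A\) and \(B\) in the tensor product, and the two nullhomotopies encoded by \(D(k\sigma)'\) and \(D(k\sigma)''\) are the two instances of the centrality trivialization.

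Once the cube is constructed, the \(C_2\)-action is defined vertex-by-vertex as follows: antipodally on \(\Sksig[k]\), swapping \(D(k\sigma)'\) with \(D(k\sigma)''\) by the antipodal action on \(D(k\sigma)\), and by the twist on each mapping space (fixing \(\Map_R(A \smashover{R} A, A)\), \(\Map_R(A \smashover{R} A, B)\), \(\Map_R(B \smashover{R} B, B)\) as objects while acting on them, and swapping \(\Map_R(A \smashover{R} B, B) \leftrightarrow \Map_R(B \smashover{R} A, B)\)). Equivariance of each face of the cube is then a formal consequence of the equivariance of the back square together with the fact that the gluing used \(\tau\) itself.

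The step I expect to be the main obstacle is verifying that the two copies of \(D(k\sigma)\) genuinely fit together coherently — i.e., that the nullhomotopy of the bracket \(\Sigma^{k-1} A \smashover{R} B \to B\) provided by centrality and its twisted version extend to a single \(C_2\)-equivariant diagram over \(D(k\sigma)_+ \wedge (A \smashover{R} A)\) that restricts to the trivialization of the bracket on \(A\). But this is precisely what the pushout square in the definition of a central cup-\((k-1)\) algebra was designed to record, so after unwinding the adjunction between the pushout description and the mapping-space description given in the preceding proposition, no further computation is required beyond chasing the definitions.
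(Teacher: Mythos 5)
Your proposal is correct and takes essentially the same approach as the paper: it observes that the back square of the prism from the preceding proposition is $C_2$-equivariant, glues the prism to its mirror image (obtained via the twist isomorphisms) along this shared back square to form the cube, and reads off the $C_2$-action as the resulting swap of the two halves. This matches the paper's argument, which states exactly the same gluing and describes the $C_2$-action as an involution $\phi\co F(a,b,c)\to F(b,a,c)$ of cubical diagrams.
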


\subsection{The transgression}\label{sec:cupktransgression}

Our goal in this section is to prove the following result, analogous to the transgression. 

\begin{proposition}\label{prop:killpowers}
  Suppose that \(A\) is a cup-\(k\) \(R\)-algebra, and that \(B\) is a central cup-\((k-1)\) algebra over \(A\) with unit \(\eta\). If \(\alpha\co X \to A\) is a map of \(R\)-modules such that \(\eta \circ \alpha\) is nullhomotopic, then the induced map \(\Sksig[(k+1)] \smashover{C_2} (X \smashover{R} X) \to B\) is trivial.
\end{proposition}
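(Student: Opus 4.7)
The plan is to construct an explicit nullhomotopy of the composite
\[
\phi = \eta \circ m_k \circ (\mathrm{id} \wedge \alpha^{\wedge 2}) \co \Sksig[(k+1)]_+ \smashover{C_2} (X \smashover{R} X) \to B,
\]
modeled on the classical transgression/Bockstein formula
for Dyer--Lashof operations. The nullhomotopy \(h \co CX \to B\) of \(\eta \alpha\) plays the role of the chain-level witness that \(\eta\alpha\) bounds, and the nullhomotopy of \(\phi\) will amount to a structured version of the expression ``\(\eta(\alpha)\smile_k h+ h\smile_{k-1} h\)'': the mixed term will be built using centrality of \(B\) over \(A\) (a \(D(k\sigma)\)-product between \(A\) and \(B\)), and the pure \(B\)-term will use the cup-\((k-1)\) structure of \(B\).

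The first step will be to decompose the source of \(\phi\). Since \(\Sksig[(k+1)]\) is two copies of \(D(k\sigma)\) glued along \(\Sksig[k]\), swapped by \(C_2\), there is a \(C_2\)-pushout \(C_{2+}\wedge \Sksig[k]_+\rightrightarrows C_{2+}\wedge D(k\sigma)_+,\,\Sksig[k]_+\) that computes \(\Sksig[(k+1)]_+\). Smashing with \(X\smashover{R}X\) and quotienting by \(C_2\) collapses the free \(C_{2+}\) factors and yields an ordinary pushout of \(R\)-modules with corners \(\Sksig[k]_+ \wedge (X\smashover{R}X)\), \(D(k\sigma)_+ \wedge (X\smashover{R}X)\), \(\Sksig[k]_+ \smashover{C_2} (X\smashover{R}X)\), and \(\Sksig[(k+1)]_+\smashover{C_2}(X\smashover{R}X)\). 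A nullhomotopy of \(\phi\) will therefore be assembled from compatible nullhomotopies on the disk piece and on the equatorial piece.

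On the disk piece \(D(k\sigma)_+ \wedge (X\smashover{R}X)\), I will use \(h\) to replace the second factor by the contractible \(R\)-module \(CX\): the composite
\[
D(k\sigma)_+\wedge(X\smashover{R}X) \xrightarrow{1\wedge\alpha\wedge h} D(k\sigma)_+\wedge(A\smashover{R}B)\to B,
\]
with the last map supplied by the centrality factorization (item (3) of the definition of a central cup-\((k-1)\) structure), factors through the contractible object \(D(k\sigma)_+\wedge(X\smashover{R}CX)\) and is therefore canonically nullhomotopic. On the equatorial piece \(\Sksig[k]_+\smashover{C_2}(X\smashover{R}X)\), the restriction of \(\phi\) is \(m_{k-1}^B\circ(1\wedge(\eta\alpha)^{\wedge 2})\) because \(\eta\) is a map of cup-\((k-1)\) algebras; using \(h\wedge h\) to extend \((\eta\alpha)^{\wedge 2}\) over \(CX\smashover{R}CX\) gives the desired nullhomotopy of this piece through \(\Sksig[k]_+\smashover{C_2}(CX\smashover{R}CX)\). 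The compatibility between the two nullhomotopies along the fold map is precisely what the pushout square in the definition of central cup-\((k-1)\) structure asserts, identifying the bracket on \(B\) (used in the equatorial nullhomotopy) with the restriction to \(\Sksig[k]\) of the centrality extension (used in the disk nullhomotopy).

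The main obstacle will be \(C_2\)-equivariant coherence. The two hemispheres of \(\Sksig[(k+1)]\) are interchanged by \(C_2\), so using \(h\) on the second \(X\)-factor on one hemisphere forces one to use \(h\) on the first \(X\)-factor on the other. The cubical diagram of Corollary~\ref{cor:cupkcube} is exactly what is needed: its \(C_2\)-action provides the matching centrality extension \(D(k\sigma)_+\wedge(B\smashover{R}A)\to B\), ensuring the two hemisphere nullhomotopies are \(C_2\)-conjugate and descend to the \(C_2\)-quotient. Verifying this coherence, and checking that the three separately-constructed nullhomotopies really assemble via the pushout, is the technical heart of the proof.
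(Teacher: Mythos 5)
Your approach is the adjoint form of the paper's: you decompose the source $S((k+1)\sigma)_+ \wedge_{C_2}(X\wedge_R X)$ by straightening the equivariant span $D(k\sigma)' \leftarrow S(k\sigma) \to D(k\sigma)''$ (with $C_2$ swapping the two disks) into a pushout with no $C_2$-action on the indexing shape, whereas the paper applies $\Map_R(-,B)$ and decomposes $S((k+1)\sigma)$ itself. Your three pieces are the right ones, and the contractible objects you propose to factor through --- $D(k\sigma)_+\wedge(X\wedge_R CX)$ on the disk, $S(k\sigma)_+\wedge_{C_2}(CX\wedge_R CX)$ on the equator --- match (after adjunction) the contractible corners $\Map_R(CX\wedge_R X, B)$, $\Map_R(X\wedge_R CX,B)$, $\Map_R(CX\wedge_R CX,B)$ appearing in the paper's stacked cube.

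The gluing step, which you flag as the technical heart, is where the write-up does not close, and the appeal to the centrality pushout square does not fill it. Restricting your disk nullhomotopy to $S(k\sigma)_+\wedge(X\wedge_R X)$ gives a nullhomotopy through $X\wedge_R CX$; pulling back the equatorial one gives a nullhomotopy through $CX\wedge_R CX$. Two nullhomotopies of a common map, each factoring through some contractible object, need not agree, and the pushout square in the definition of centrality identifies the underlying maps to $B$, not these two pieces of nullhomotopy data. The paper avoids any piecewise gluing: it applies $\Map_R(-,B)$ to the strict cube of $R$-modules built from $X \to CX \to B$, stacks it with the coherent cube of Corollary~\ref{cor:cupkcube}, and then uses that the homotopy pushout of the bottom face is contractible because three of its corners are mapping spaces out of contractible objects; the trivialization of $S((k+1)\sigma) \to \Map_R(X\wedge_R X, B)$ then follows from the universal property of the homotopy pushout, with every compatibility packaged once in the commutativity of the stacked cube. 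The adjoint repair of your argument is to factor the \emph{entire} map $\phi$ through the (contractible) homotopy pushout of the span $D(k\sigma)_+ \wedge(X\wedge_R CX) \leftarrow S(k\sigma)_+ \wedge(X\wedge_R CX) \to S(k\sigma)_+ \wedge_{C_2}(CX\wedge_R CX)$ in one step, verifying that this is a map of spans receiving the original one and mapping to $B$ via the centrality structure --- rather than building nullhomotopies of the three pieces separately and trying to assemble them.
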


\begin{proof}
  Choose a nullhomotopy of \(\alpha\), in the form of a factorization \(X \to CX \to B\) of \(\eta \circ \alpha\) where \(CX\) is contractible. This gives rise to a cubical diagram of \(R\)-modules with a \(C_2\)-action:
  \[
    \begin{tikzcd}[row sep=small]
      X \smashover{R} X \ar[rr] \ar[dd] \ar[dr] &&
      CX \smashover{R} X \ar[dd] \ar[dr] \\
      & X \smashover{R} CX&&
      CX \smashover{R} CX \ar[dd]
      \ar[ll,crossing over,leftarrow] \\
      A \smashover{R} A \ar[rr] \ar[dr] &&
      B \smashover{R} A \ar[dr]\\
      & A \smashover{R} B \ar[rr] \ar[uu,crossing over,leftarrow] &&
      B \smashover{R} B
    \end{tikzcd}
  \]
  We can then apply \(\Map_R(-,B)\) and stack with the cube from Corollary~\ref{cor:cupkcube}. This gives a cubical diagram with a \(C_2\)-action:
  \[
    \begin{tikzcd}[column sep=small, row sep=small]
      \Sksig[k] \ar[dr] \ar[rr] \ar[dd]
      && D(k\sigma)' \ar[dd] \ar[dr]\\
      & D(k\sigma)''
      &&\Map_R(A \smashover{R} A, A) \ar[dd]
      \ar[ll,crossing over,leftarrow]\\
      \Map_R(CX \smashover{R} CX, B) \ar[dr] \ar[rr]
      && \Map_R(CX \smashover{R} X, B) \ar[dr]\\
      &\Map_R(X \smashover{R} CX, B) \ar[rr]
      \ar[uu,crossing over,leftarrow]
      &&\Map_R(X \smashover{R} X, B)
    \end{tikzcd}
  \]
  Note that \(CX \smashover{R} X\) and \(CX \smashover{R} CX\) are contractible, making three terms on the bottom of this cube into contractible spaces.
  
  A commutative square has a natural map from the homotopy pushout to the final object. Applying this to the top and bottom faces of this cubical diagram gives us the horizontal maps in the following commutative square of \(C_2\)-equivariant spaces:
  \[
    \begin{tikzcd}
      \hocolim(D(k\sigma)' \from \Sksig[k] \to D(k\sigma)'') \ar[r] \ar[d] &
      \Map_R(A \smashover{R} A, A) \ar[d]\\
      \hocolim(\ast \from \ast \to \ast) \ar[r] &
      \Map(X \smashover{R} X, B).
    \end{tikzcd}
  \]
  This is re-expressed as a homotopy commutative diagram of \(C_2\)-spaces
  \[
    \begin{tikzcd}
      \Sksig[(k+1)] \ar[r] \ar[d] &
      \Map_R(A \smashover{R} A, A) \ar[d]\\
      \ast \ar[r] &
      \Map(X \smashover{R} X, B).
    \end{tikzcd}
  \]
  The top map defines the cup-\(k\) algebra structure on \(A\), while the right-hand one is \(f \mapsto \eta \circ f \circ (\alpha \wedge \alpha)\). Taking adjoints gives a homotopy commutative diagram
  \[
    \begin{tikzcd}
      \Sksig[(k+1)] \smashover{C_2} (X \smashover{R} X) \ar[r] \ar[d] &
      \Sksig[(k+1)] \smashover{C_2} A \smashover{R} A \ar[r]
      & A \ar[d] \\
      \ast \ar[rr] &&
      B.
    \end{tikzcd}
  \]
  This is precisely a trivialization as desired.
\end{proof}

\begin{corollary}\label{cor:killingq}
Let \(A\) and \(B\) be as in Proposition~\ref{prop:killpowers}.
  For \(\alpha\co S^n \to A\) such that \(\eta \circ \alpha\) is nullhomotopic, the map \(\eta Q(\alpha)\co \Sigma^n\RP_{n}^{n+k} \to B\) is trivial.
\end{corollary}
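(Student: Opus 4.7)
The plan is to deduce this corollary directly from Proposition~\ref{prop:killpowers} by specializing the source $X$ to a shifted copy of $R$. First I would promote $\alpha\co S^n \to A$ to its adjoint $R$-module map $\tilde\alpha\co \Sigma^n R \to A$; because $B$ is an $R$-module, a chosen nullhomotopy of $\eta\circ\alpha$ as a map of spectra extends $R$-linearly to a nullhomotopy of $\eta\circ\tilde\alpha$ as a map of $R$-modules. This supplies the hypothesis of Proposition~\ref{prop:killpowers} with $X = \Sigma^n R$.

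Next I would identify the source $\Sksig[(k+1)]_+ \smashover{C_2} (\Sigma^n R \smashover{R} \Sigma^n R)$ appearing in that proposition with the domain $R \wedge \Sigma^n \RP_n^{n+k}$ of $Q(\alpha)$. Since $\Sigma^n R \smashover{R} \Sigma^n R \simeq R \wedge S^n \wedge S^n$ with $C_2$ acting by the swap on the two sphere factors, the equivariant construction factors as $R \wedge \bigl(\Sksig[(k+1)]_+ \smashover{C_2} (S^n \wedge S^n)\bigr)$, and this is exactly the Thom spectrum identification recalled in \S\ref{sec:cupkops} as $R \wedge \Sigma^n \RP_n^{n+k}$.

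Under this identification, the composite map $R \wedge \Sigma^n \RP_n^{n+k} \to B$ produced by Proposition~\ref{prop:killpowers} agrees with $\eta \circ Q(\alpha)$: both are built by smashing $\alpha \wedge \alpha$ with $\Sksig[(k+1)]_+$ over $C_2$, applying the cup-$k$ structure on $A$, and then postcomposing with $\eta$. The conclusion of the proposition then yields that $\eta \circ Q(\alpha)$ is nullhomotopic, as required. The only points requiring care are the compatibility of the equivariant quotient with smashing by the trivially equivariant factor $R$ and the matching of the two presentations of the composite; no new conceptual input beyond Proposition~\ref{prop:killpowers} is needed.
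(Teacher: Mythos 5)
Your proposal is correct and matches the paper's (implicit) argument: the corollary is stated without proof precisely because it follows from Proposition~\ref{prop:killpowers} by taking \(X = \Sigma^n R\) and invoking the Thom spectrum identification of \(\Sksig[(k+1)]_+ \smashover{C_2} (S^n \wedge S^n)\) with \(\Sigma^n \RP_n^{n+k}\) already recalled in \S\ref{sec:cupkops}. The points you flag as requiring care are exactly the right ones, and your handling of them is fine.
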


\begin{corollary}\label{cor:killingq1}
  Suppose that \(R\) is a commutative ring spectrum and \(B\) is an associative \(R\)-algebra, with unit \(\eta\co R \to B\). If \(n\) is even or \(2=0\) in \(R\), and \(\alpha \in \pi_n(R)\) satisfies \(\eta(\alpha) = 0\), then \(\eta(Q_1(\alpha)) = 0\).
\end{corollary}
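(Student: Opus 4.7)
The plan is to derive Corollary~\ref{cor:killingq1} as a direct consequence of Corollary~\ref{cor:killingq} in the case \(k=1\). First, I need to install the relevant cup-algebra structures. Since \(R\) is commutative, the multiplication \(R \smashover{R} R \to R\) is strictly \(C_2\)-equivariant, and so the projection from \(\Sksig[2]_+ \smashover{C_2} R \smashover{R} R\) exhibits \(R\) as a cup-\(1\) algebra over itself (in fact, a cup-\(k\) algebra for every \(k\)). Since \(B\) is associative, it is in particular a cup-\(0\) algebra over \(R\), and the example following the definition of centrality, applied with \(k=1\) and noting that an associative \(R\)-algebra is in particular an \(E_0\)-algebra in \(R\)-modules with \(R\) as a strict unit, shows that the unit \(\eta\co R \to B\) makes \(B\) into a central cup-\(0\) algebra over \(R\).

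Next I would invoke the transgression. By hypothesis, \(\eta(\alpha) = 0\), so the composite \(\eta \circ \alpha\co S^n \to R \to B\) is nullhomotopic. Applying Corollary~\ref{cor:killingq} with \(A = R\) and \(k=1\), the map
\[
  \eta \circ Q(\alpha)\co \Sigma^n \RP_n^{n+1} \to B
\]
is nullhomotopic.

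Finally, I would use the parity hypothesis to extract \(Q_1\). When \(n\) is even or \(2 = 0\) in \(\pi_0 R\), the attaching map \(\bigl(1-(-1)^n\bigr)\co S^n \to S^n\) becomes null after smashing with \(R\), so a splitting
\[
  R \wedge \RP_n^{n+1} \simeq \Sigma^n R \vee \Sigma^{n+1} R
\]
may be chosen, and by construction \(Q_1(\alpha)\) is the composite of the inclusion of the \(S^{2n+1}\) summand into \(\Sigma^n R \wedge \RP_n^{n+1}\) with \(Q(\alpha)\). Composing with \(\eta\), this factors through \(\eta \circ Q(\alpha)\), which we just showed to be nullhomotopic, so \(\eta(Q_1(\alpha)) = 0\).

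The only substantive step is the first one — checking that an associative \(R\)-algebra really is a central cup-\(0\) algebra over \(R\) in the sense of the definition. The factorization diagram one needs amounts to extending the trivialization of the bracket on \(R\) (which is trivial because \(R\) is strictly commutative) across the \(\eta\)-mixed term \(R \smashover{R} B\); this follows because the relevant multiplication \(R \smashover{R} B \to B\) is strictly the module action, and so the bracket map \(\Sigma^0 R \smashover{R} B \to B\) is canonically trivialized by the \(D(\sigma)_+ \wedge R \smashover{R} B\)-parametrized nullhomotopy coming from \(R\)'s commutativity and \(B\)'s associativity. Everything else is an unwinding of definitions together with the cited corollaries.
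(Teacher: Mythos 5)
Your proposal is correct and takes essentially the paper's intended route: set \(A = R\), invoke the Example to install the central cup-\(0\) structure on \(B\), apply Corollary~\ref{cor:killingq} with \(k = 1\), then split \(R \wedge \RP_n^{n+1}\). One small caution about the bookkeeping in your first step. You write that ``an associative \(R\)-algebra is in particular an \(E_0\)-algebra'' and cite the Example with \(k = 1\) on that basis, matching the Example's literal wording (``Suppose that \(B\) is an \(E_{k-1}\) \(R\)-algebra''). But the definition of a central cup-\((k-1)\) algebra requires \(B\) itself to carry a cup-\((k-1)\) structure, and by the Remark in \S\ref{sec:cupkfirst} an \(E_m\)-algebra only forgets to a cup-\((m-1)\) algebra; so for \(k = 1\) the cup-\(0\) structure on \(B\) genuinely requires \(E_1\), i.e., associativity, not merely \(E_0\). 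You do supply this correctly in the sentence ``Since \(B\) is associative, it is in particular a cup-\(0\) algebra over \(R\),'' and your last paragraph verifies the centrality factorization directly, so the argument is sound; just be aware that the appeal to \(E_0\) is a red herring inherited from what appears to be an off-by-one in the Example's statement, and the real input is the unital binary multiplication on \(B\).
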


\printbibliography
\end{document}